% September 24, 2015: Final version besides Appendix C
\documentclass[reqno, 11pt, a4paper]{amsart}

%
% loaded packages
%
\usepackage{latexsym,ifthen,xspace}
\usepackage{amsmath,amssymb,amsthm}
\usepackage{enumerate, calc}
\usepackage[colorlinks=true, pdfstartview=FitV, linkcolor=blue,
  citecolor=blue, urlcolor=blue, pagebackref=false]{hyperref}
\usepackage[text={33pc,605pt},centering]{geometry}    %11pt
\usepackage{graphicx}

%
% pdf setting
%
 %\newif\ifpdf
 %  \ifx\pdfoutput\undefined
 %  \pdffalse
 %  % we are not running PDFLaTeX
 %\else
 %  \pdfoutput=1
 %  % we are running PDFLaTeX
 %  \pdftrue
 %\fi
%\pdfpagesattr{/CropBox [90 70 505.276 760.89]}  %11pt
%\pdfpagesattr{/CropBox [80 70 515.276 760.89]}  %12pt

%
% page layout
%
\linespread{1.1}

%
% definition of additional environments
%
\newtheorem{theorem}{Theorem}[section]
\newtheorem{lemma}[theorem]{Lemma}
\newtheorem{prop}[theorem]{Proposition}

\newtheorem{corro}[theorem]{Corollary}

\theoremstyle{definition}
\newtheorem{definition}[theorem]{Definition}

\theoremstyle{remark}
\newtheorem{remark}[theorem]{Remark}

\numberwithin{equation}{section}

%
% definition of font abbreviations
%
\DeclareMathAlphabet{\mathsl}{OT1}{cmss}{m}{sl}
\SetMathAlphabet{\mathsl}{bold}{OT1}{cmss}{bx}{sl}

%
% Greek letters
%

\newcommand{\om}{\ensuremath{\omega}}

%
% Calligraphic letters
%

%
% Blackboard letters
%

\newcommand{\bbE}{\ensuremath{\mathbb E}}

\newcommand{\bbN}{\ensuremath{\mathbb N}} 
 
\newcommand{\bbP}{\ensuremath{\mathbb P}} 
 
\newcommand{\bbR}{\ensuremath{\mathbb R}}

\newcommand{\bbZ}{\ensuremath{\mathbb Z}} 
%
% Frakur letters
%

%
% Bold letters
%

%
% Bold Symbol letters
%

%
% definition of additional commands
%

\overfullrule=5pt % to show overfull boxes

\DeclareMathOperator{\supp}{\mathrm{supp}}

\def\indicator{{\mathchoice {1\mskip-4mu\mathrm l}%
{1\mskip-4mu\mathrm l}{1\mskip-4.5mu\mathrm l}%
{1\mskip-5mu\mathrm l}}}

\begin{document}

\title[Continuity and estimates of the Liouville heat kernel]{Continuity and estimates of the Liouville heat kernel with applications to spectral dimensions}

%    Remove any unused author tags.

%    author one information
\author{Sebastian Andres}
\address{Rheinische Friedrich-Wilhelms Universit\"at Bonn}
\curraddr{Endenicher Allee 60, 53115 Bonn}
\email{andres@iam.uni-bonn.de}
\thanks{S.A.\ was partially supported by the CRC 1060 \lq\lq The Mathematics of Emergent Effects\rq\rq, Bonn.}

%    author two information
\author{Naotaka Kajino}
\address{Department of Mathematics, Graduate School of Science, Kobe University}
\curraddr{Rokkodai-cho 1-1, Nada-ku, Kobe 657-8501, Japan.}
\email{nkajino@math.kobe-u.ac.jp}
\thanks{N.K.\ was partially supported by JSPS KAKENHI Grant Number 26287017.}

\subjclass[2010]{Primary: 60J35, 60J55, 60J60, 60K37; Secondary: 31C25, 60J45, 60G15.}

\keywords{Liouville quantum gravity, Gaussian multiplicative chaos, Liouville Brownian motion, heat kernel, spectral dimension}

\date{October 2, 2015}

\dedicatory{}

\begin{abstract}
The \emph{Liouville Brownian motion (LBM)}, recently introduced by Garban,
Rhodes and Vargas and in a weaker form also by Berestycki,
is a diffusion process evolving in a planar random geometry induced by the
\emph{Liouville measure} $M_\gamma$, formally written as
$M_\gamma(dz)=e^{\gamma X(z)-{\gamma^2} \bbE[X(z)^2]/2}\, dz$, $\gamma\in(0,2)$,
for a (massive) Gaussian free field $X$. It is an $M_\gamma$-symmetric
diffusion defined as the time change of the two-dimensional Brownian motion
by the positive continuous additive functional with Revuz measure $M_\gamma$.

In this paper we provide a detailed analysis of the heat kernel $p_t(x,y)$
of the LBM. Specifically, we prove its joint continuity, a locally uniform
sub-Gaussian upper bound of the form
$p_t(x,y)\leq C_{1} t^{-1} \log(t^{-1})
	\exp\bigl(-C_{2}((|x-y|^{\beta}\wedge 1)/t)^{\frac{1}{\beta -1}}\bigr)$
for $t\in(0,\frac{1}{2}]$ for each $\beta>\frac{1}{2}(\gamma+2)^2$,
and an on-diagonal lower bound of the form
$p_{t}(x,x)\geq C_{3}t^{-1}\bigl(\log(t^{-1})\bigr)^{-\eta}$
for $t\in(0,t_{\eta}(x)]$, with $t_{\eta}(x)\in(0,\frac{1}{2}]$
\emph{heavily dependent on $x$}, for each $\eta>18$ for
\emph{$M_{\gamma}$-almost every}\ $x$.
As applications, we deduce that the pointwise spectral dimension equals
$2$ $M_\gamma$-a.e.\ and that the global spectral dimension is also $2$.
\end{abstract}

\maketitle

\section{Introduction}
One of the main mathematical issues in the theory of two-dimensional
Liouville quantum gravity is to construct a random geometry on a
two-dimensional manifold (say $\bbR^2$ equipped with the Euclidian metric
$dx^2$) which can be formally described by
a Riemannian metric tensor of the form
\begin{align} \label{eq:LQG}
 e^{\gamma X(x)} \, dx^2,
\end{align}
where $X$ is a massive Gaussian free field on $\bbR^2$ defined on a probability
space $(\Omega,\mathcal{A},\bbP)$ and $\gamma \in(0,2)$ is a parameter.
The study of Liouville quantum gravity is mainly motivated by the so-called KPZ-formula
(for Knizhnik, Polyakov and Zamolodchikov), which relates some geometric
quantities in a number of models in statistical physics to their formulation
in a setup governed by this random geometry. In this context, by the KPZ relation
the parameter $\gamma$  can be expressed in terms of a certain physical constant
called the central charge of the underlying model. We refer to \cite{DS11} and
to the survey article \cite{Ga13} for more details on this topic.

However, to give rigorous sense to the expression \eqref{eq:LQG} is a highly
non-trivial problem. Namely, as the correlation function of the Gaussian free
field $X$ exhibits short scale logarithmically divergent behaviour,
the field $X$ is not a function but only a random distribution.
In other words, the underlying geometry is too rough to make sense in the
classical Riemannian framework, so some regularisation is required.
While it is not clear how to execute a regularisation procedure on the level of
the metric, the method performs well enough to construct the associated
volume form. More precisely, using the theory of Gaussian multiplicative chaos
established by Kahane in \cite{Ka85} (see also \cite{RV13a}),
by a certain cutoff procedure one can define the associated volume measure
$M_\gamma$ for $\gamma \in (0,2)$, called the \emph{Liouville measure}.
It can be interpreted as being given by
\begin{align*}
 M_\gamma(A)=\int_A e^{\gamma X(z)-\tfrac{\gamma^2} 2  \bbE[X(z)^2]} dz,
\end{align*}
but this expression for $M_\gamma$ is only very formal, for $M_\gamma$ is known
to be singular with respect to the Lebesgue measure by a result
\cite[(141)]{Ka85} by Kahane (see also \cite[Theorems~4.1 and 4.2]{RV13a}).
Recently, in \cite{GRV13} Garban, Rhodes and Vargas have constructed the natural
diffusion process $\mathcal{B}=(\mathcal{B}_t)_{t\geq 0}$ associated with
\eqref{eq:LQG}, which they call the \emph{Liouville Brownian motion (LBM)}.
Similar results have been simultaneously obtained in a weaker form also by
Berestycki \cite{Be13}. On a formal level, $\mathcal{B}$ is the solution of the SDE
\begin{align*}
d\mathcal{B}_t=e^{- \tfrac \gamma 2 X(\mathcal{B}_t)+\tfrac{\gamma^2} 4  \bbE[X(\mathcal{B}_t)^2]} d\bar B_t, 
\end{align*}
where $\bar B=(\bar B_t)_{t\geq 0}$ is a standard Brownian motion on $\bbR^2$
independent of $X$. In view of the Dambis-Dubins-Schwarz theorem this SDE
representation suggests defining the LBM $\mathcal{B}$
as a time change of another planar Brownian motion $B=(B_t)_{t\geq 0}$.
This has been rigorously carried out in \cite{GRV13}, and then by general theory
the LBM turns out to be symmetric with respect to the Liouville measure $M_\gamma$.
In the companion paper \cite{GRV13a} Garban, Rhodes and Vargas also identified 
the Dirichlet form associated with $\mathcal{B}$ and they showed that
the transition semigroup is absolutely continuous with respect to $M_\gamma$,
meaning that the Liouville heat kernel $p_t(x,y)$ exists.
Moreover, they observed that the intrinsic metric $d_{\mathcal{B}}$
generated by that Dirichlet form is identically zero, which indicates that
\begin{align*}
 \lim_{t \downarrow 0} t \log p_t(x,y)=-\frac{d_{\mathcal{B}}(x,y)^2}{2}=0, \qquad x,y \in \bbR^2,
\end{align*}
and therefore some non-Gaussian heat kernel behaviour is expected. This
degeneracy of the intrinsic metric is known to occur typically for diffusions
on fractals, whose heat kernels indeed satisfy the so-called sub-Gaussian
estimates; see e.g.\ the survey articles \cite{Ba13,Ku14} and references therein.

In this paper we continue the analysis of the Liouville heat kernel,
which has been initiated simultaneously and independently in \cite{MRVZ14}.
As our first main results we obtain the continuity of the heat kernel and
a rough upper bound on it.

\begin{theorem} \label{thm:cont_hk}
Let $\gamma\in (0,2)$. Then $\bbP$-a.s.\ the following hold: A (unique) jointly
continuous version $p=p_{t}(x,y):(0,\infty)\times\bbR^2 \times\bbR^2\to[0,\infty)$
of the Liouville heat kernel exists and is $(0,\infty)$-valued,
%The Liouville heat kernel $p_t(x,y)$ is $(0,\infty)$-valued and jointly
%continuous on $(0,\infty)\times \bbR^2 \times \bbR^2$,
and in particular the Liouville Brownian motion $\mathcal{B}$ is irreducible.
Moreover, the associated transition semigroup $(P_t)_{t>0}$ defined by
\begin{align*}
P_t f(x):=E_x[f(\mathcal{B}_t)]=\int_{\bbR^2} p_t(x,y) f(y) \, M_\gamma(dy),
	\qquad x\in \bbR^2,
\end{align*}
is strong Feller, i.e.\ $P_t f$ is continuous for any bounded Borel measurable
$f: \bbR^2 \rightarrow \bbR$.
\end{theorem}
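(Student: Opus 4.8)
\textit{Proof proposal.}
The natural framework is the Dirichlet form description of the LBM. By \cite{GRV13a} the form $(\mathcal{E},\mathcal{F})$ associated with $\mathcal{B}$ on $L^2(\bbR^2,M_\gamma)$ is a version of the classical energy form $\mathcal{E}(f,g)=\tfrac12\int_{\bbR^2}\nabla f\cdot\nabla g\,dx$; it is strongly local, regular, conservative (the LBM does not explode, by \cite{GRV13}), contains $C_c^\infty(\bbR^2)$ in its domain, and — since $\bbR^2$ is connected and $M_\gamma$ has full topological support — is irreducible. I will also use the following $\bbP$-a.s.\ properties of $M_\gamma$, all contained in \cite{Ka85,RV13a,GRV13}: $M_\gamma$ is non-atomic, is positive and finite on bounded open sets, and for every compact $K$ and every $\xi>0$ satisfies $\sup_{x\in K}M_\gamma(B(x,r))\le C_{K,\xi}(\omega)\,r^{2-\xi}$ for all small $r$. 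In particular $x\mapsto\int_{B(x_0,R)}\bigl|\log|x-y|\bigr|\,M_\gamma(dy)$ is finite and continuous, locally uniformly in $x_0$, and has a modulus of continuity depending only on this volume profile.

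\textit{Step 1 (a bounded, jointly measurable version).} From \cite{GRV13a} the heat kernel exists only as an $M_\gamma\otimes M_\gamma$-a.e.\ defined object, so the first task is to produce a genuinely bounded version via ultracontractivity of $(P_t)$. Combining the two-dimensional Sobolev/Moser--Trudinger inequality for the energy form with the volume bounds above, one obtains a Nash-type inequality
\[
 \|f\|_{L^2(M_\gamma)}^{2}\ \le\ \Lambda\!\bigl(\mathcal{E}(f,f)/\|f\|_{L^2(M_\gamma)}^{2}\bigr)\,\|f\|_{L^1(M_\gamma)}^{2},\qquad f\in\mathcal{F}\cap L^1(M_\gamma),
\]
with an explicit rate function $\Lambda$ (it suffices here to get any polynomial bound, e.g.\ $\|f\|_{L^2(M_\gamma)}^{2+\varepsilon}\le C\,\mathcal{E}(f,f)\|f\|_{L^1(M_\gamma)}^{\varepsilon}$; the sharp form carrying the logarithmic correction is established later). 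By the well-known equivalence between such inequalities and ultracontractivity (Carlen--Kusuoka--Stroock) this yields $\|P_t\|_{L^1(M_\gamma)\to L^\infty}\le\Phi(t)<\infty$ for every $t>0$; by duality and interpolation also $\|P_t\|_{L^2(M_\gamma)\to L^\infty}\le\Phi(t)^{1/2}$. Hence a jointly measurable, symmetric version $p_t(x,y)$ of the kernel exists with $0\le p_t(x,y)\le\Phi(t/2)$, satisfying Chapman--Kolmogorov and $\int_{\bbR^2}p_t(x,y)\,M_\gamma(dy)=1$. I expect this step — proving a functional inequality strong enough for ultracontractivity in the absence of volume doubling for $M_\gamma$ — to be the main obstacle.

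\textit{Step 2 (joint continuity).} Fix $s>0$ and $x_0\in\bbR^2$. Since $p_s(x_0,\cdot)\in L^1(M_\gamma)\cap L^\infty\subset L^2(M_\gamma)$, analyticity of the symmetric Markov semigroup gives $\|LP_r\|_{L^2\to L^2}\le C/r$, so $u:=p_{2s}(x_0,\cdot)=P_{s/2}\bigl(P_{3s/2}p_s(x_0,\cdot)\bigr)\in\mathrm{Dom}(L)$ with $u\in L^\infty$, $Lu=P_{s/2}\bigl(LP_{3s/2}p_s(x_0,\cdot)\bigr)\in L^\infty$, and $\|u\|_\infty,\|Lu\|_\infty$ bounded by constants depending only on $s$ (not on $x_0$). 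Testing the identity $\mathcal{E}(u,\varphi)=-\int (Lu)\varphi\,dM_\gamma$ against $\varphi\in C_c^\infty(\bbR^2)\subset\mathcal{F}$ gives $\tfrac12\Delta u=(Lu)\,M_\gamma$ in $\mathcal{D}'(\bbR^2)$; representing $u$ on a ball $B(x_0,R)$ as a logarithmic potential of the Radon measure $2(Lu)M_\gamma$ plus a function harmonic on $B(x_0,2R)$, and using the volume estimates of the first paragraph together with interior gradient bounds for harmonic functions, one concludes that $y\mapsto p_{2s}(x_0,y)$ is continuous with a modulus of continuity that is locally uniform in $x_0$. Thus $(x,y)\mapsto p_{2s}(x,y)$ is continuous in $y$ locally uniformly in $x$, and — by symmetry of $p_{2s}$ — continuous in $x$ for each fixed $y$; a triangle-inequality argument then gives joint continuity on $\bbR^2\times\bbR^2$ for each $t=2s>0$, and continuity in $t$ follows from the strong continuity of $(P_t)$ on $L^2(M_\gamma)$ together with this equicontinuity. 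This produces a jointly continuous version $p$ on $(0,\infty)\times\bbR^2\times\bbR^2$; it agrees $M_\gamma$-a.e.\ with the density from \cite{GRV13a}, hence $P_tf(x)=E_x[f(\mathcal{B}_t)]=\int p_t(x,y)f(y)\,M_\gamma(dy)$, and uniqueness of the continuous version is immediate since $M_\gamma$ has full support.

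\textit{Step 3 (positivity, irreducibility, strong Feller).} Suppose $p_t(x,y_0)=0$ for some $t>0$ and $x,y_0\in\bbR^2$. From $p_t(x,y_0)=\int p_{t/2}(x,z)p_{t/2}(z,y_0)\,M_\gamma(dz)$ with non-negative continuous integrand and $\supp M_\gamma=\bbR^2$ we get $p_{t/2}(x,z)p_{t/2}(z,y_0)=0$ for \emph{every} $z$, so $\bbR^2$ is the union of the two closed sets $\{p_{t/2}(x,\cdot)=0\}$ and $\{p_{t/2}(y_0,\cdot)=0\}$; one of them, say the first, then contains a non-empty open set $V$, and $\int_V p_{t/2}(x,\cdot)\,dM_\gamma=0$ with $M_\gamma(V)>0$ contradicts irreducibility of $(\mathcal{E},\mathcal{F})$ (which, being conservative, forces $P_r\indicator_V>0$ $M_\gamma$-a.e.\ for every $r>0$). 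Hence $p_t>0$ everywhere, so $\bbP_x(\mathcal{B}_t\in U)=\int_U p_t(x,y)\,M_\gamma(dy)>0$ for every non-empty open $U$ and every $x$, i.e.\ $\mathcal{B}$ is irreducible. Finally, for bounded Borel $f$ and $x_n\to x$, joint continuity of $p_t$, the uniform bound $p_t\le\Phi(t/2)$, and the uniform tightness of the family $\{p_t(x_n,\cdot)M_\gamma\}_n$ of probability measures (a standard consequence of conservativeness and of $\mathcal{B}$ being a time change of planar Brownian motion) allow passing to the limit under the integral, so $P_tf(x_n)\to P_tf(x)$; thus $(P_t)$ is strong Feller.
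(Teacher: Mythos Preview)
Your Step 1 has a genuine gap: the global Nash inequality (and hence global ultracontractivity $\|P_t\|_{L^1(M_\gamma)\to L^\infty}<\infty$) that you claim on all of $\bbR^2$ is not established and is in fact not expected to hold. The volume bound $M_\gamma(B(x,r))\le C_{K,\xi}r^{2-\xi}$ is only available \emph{locally uniformly} (the constant blows up with the compact $K$), and this dependence propagates through Faber--Krahn and Nash. A direct test with $f\approx\indicator_{B(0,R)}$ shows your polynomial Nash inequality $\|f\|_{L^2(M_\gamma)}^{2+\varepsilon}\le C\,\mathcal{E}(f,f)\|f\|_{L^1(M_\gamma)}^{\varepsilon}$ would force $M_\gamma(B(0,R))^{1-\varepsilon/2}\lesssim R$, which fails since $M_\gamma(B(0,R))$ grows like $R^2$. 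The paper therefore proves ultracontractivity only for the \emph{killed} process on bounded $U$ (via a Faber--Krahn inequality whose constant depends on $R$ with $U\subset B(R)$), and then passes to $\bbR^2$ by showing that $p^{B(l)}_t-p^{B(k)}_t$ is uniformly small as $k\to\infty$; this last step requires an off-diagonal bound (so that $p^{B(n)}_t$ stays bounded on $B(R+1)^c\times B(R)$ uniformly in $n$) together with a uniform estimate on $P_x[\tau_{B(k)}<t]$. None of this machinery is avoidable if you want continuity on the whole plane, and your sketch does not provide a substitute.

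Your Step 2, by contrast, contains an idea genuinely different from the paper's. The paper obtains continuity on bounded $U$ via the eigenfunction expansion of $p^U_t$, after first proving that the \emph{resolvents} $R^U_\lambda$ are strong Feller by a Brownian coupling argument (this is what makes each eigenfunction continuous). Your route---observe that $u:=p^U_t(x_0,\cdot)\in\mathcal F_U$ satisfies $\tfrac12\Delta u=(\mathcal L_U u)\,M_\gamma$ in $\mathcal D'(U)$ with $\mathcal L_U u\in L^\infty$, and then read off continuity from the logarithmic potential of a measure with density bounded against $M_\gamma$---would, on a bounded domain, bypass the coupling entirely. This is a nice alternative, but as written it still has two loose ends: (a) the passage from ``continuous in $y$ with modulus uniform in $x$'' to joint continuity via the $M_\gamma\!\otimes\!M_\gamma$-a.e.\ symmetry of the kernel needs an extra argument (symmetry of the \emph{continuous} version is not immediate), and (b) you must still identify the continuous kernel with the transition density $P_x[\mathcal B^U_t\in dy]$ for \emph{every} starting point $x$, which the paper does via Laplace transform uniqueness. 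Finally, your positivity argument in Step 3 implicitly uses the strong Feller property (irreducibility alone gives $P_{t/2}\indicator_V>0$ only $M_\gamma$-a.e., not at the specific point $x$), so the order should be continuity $\Rightarrow$ strong Feller $\Rightarrow$ positivity; and the ``uniform tightness'' you invoke for strong Feller is itself non-trivial here---in the paper it is Lemma~5.6.
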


\begin{theorem} \label{thm:on-diag_glob}
Let $\gamma\in (0,2)$. Then $\bbP$-a.s., for any $\beta>\frac{1}{2}(\gamma+2)^2$
and any bounded $U\subset \bbR^2$ there exist random constants
$C_i=C_i(X,\gamma,U, \beta)>0$, $i=1,2$, such that
\begin{align} \label{eq:on-diag_glob}
 p_t(x,y)=p_t(y,x) \leq C_1 t^{-1} \log(t^{-1})
	\exp\biggl(-C_2 \Bigl(\frac{|x-y|^\beta \wedge 1}{t} \Bigr)^{\frac{1}{\beta -1}}\biggr)
\end{align}
for all $t\in (0, \tfrac 1 2]$, $x \in \bbR^2$ and $y\in U$,
where $|\cdot|$ denotes the Euclidean norm on $\bbR^2$.
\end{theorem}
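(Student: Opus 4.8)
The plan is to derive \eqref{eq:on-diag_glob} by combining a locally uniform on-diagonal upper bound $p_t(z,z)\le C t^{-1}\log(t^{-1})$ with a sub-Gaussian upper bound on the exit times of $\mathcal{B}$ from Euclidean balls, and feeding both into the standard Chapman--Kolmogorov splitting argument. Both ingredients rest on one probabilistic input about the Liouville measure: that $\bbP$-a.s., for every $\beta>\tfrac12(\gamma+2)^2$ and every bounded $U'\subset\bbR^2$ there is $r_0=r_0(X,\gamma,U',\beta)>0$ with
\[
M_\gamma\bigl(B(z,r)\bigr)\ \ge\ r^{\beta}\qquad\text{for all }z\in U'\text{ and }r\in(0,r_0].
\]
I would establish this first. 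It is exactly here that the hypothesis on $\beta$ is consumed: the negative moments of Gaussian multiplicative chaos obey $\bbE[M_\gamma(B(z,r))^{-s}]\le C_s\, r^{-(2+\gamma^2/2)s-\gamma^2 s^2/2}$ for every $s>0$, so Markov's inequality gives $\bbP(M_\gamma(B(z,r))<r^{\beta})\le C_s\, r^{\,s(\beta-2-\gamma^2/2)-\gamma^2 s^2/2}$, and optimizing over $s$ makes this exponent $(\beta-2-\gamma^2/2)^2/(2\gamma^2)$, which exceeds $2$ precisely when $\beta>\tfrac12(\gamma+2)^2$. A union bound over an $r$-net of $U'$, Borel--Cantelli along dyadic scales, and the monotonicity $B(z,2r)\supset B(z',r)$ for $|z-z'|\le r$ then promote this to all $z$ and all small $r$ simultaneously. (The threshold is sharp: $\tfrac12(\gamma+2)^2$ is the a.s.\ worst-case decay rate of the $M_\gamma$-mass of a small ball, governed by the thickness $-2$ points of the underlying Gaussian free field.)

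Granting the volume lower bound, the on-diagonal estimate follows because the Dirichlet form of $\mathcal{B}$ carries the \emph{same} energy $\tfrac12\int|\nabla f|^2\,dx$ as planar Brownian motion, only its reference measure being replaced by $M_\gamma$; hence the effective resistance from a point $z$ to $\partial B(z,r)$ is the Brownian one, of order $\log(1/r)$ as $r\downarrow0$. Combining this with the volume lower bound through a general heat-kernel lemma of the form $p_t(z,z)\le C/M_\gamma(B(z,r))$ whenever $t\lesssim R_{\mathrm{eff}}(z,B(z,r)^c)\,M_\gamma(B(z,r))$, and choosing the scale $r=r_t$ with $r_t^{\beta}\log(1/r_t)\asymp t$, yields $p_t(z,z)\le C/r_t^{\beta}\le C\,t^{-1}\log(t^{-1})$, uniformly for $z$ in a bounded set and $t\in(0,\tfrac12]$; the logarithmic factor is the imprint of the two-dimensional logarithmic Green function.

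For the exit-time estimate, write $\sigma^{\mathcal{B}}_\rho=\inf\{t:|\mathcal{B}_t-z|\ge\rho\}$ and recall that $\mathcal{B}_t=B_{\tau_t}$, so $\sigma^{\mathcal{B}}_\rho=F_{\sigma^B_\rho}$ with $F$ the Liouville clock and $\sigma^B_\rho$ the Brownian exit time of $B(z,\rho)$. The Revuz correspondence gives $E^B_z[F_{\sigma^B_\rho}]=\int_{B(z,\rho)}G^B_{B(z,\rho)}(z,w)\,M_\gamma(dw)\ge c\,M_\gamma(B(z,\rho/2))\ge c\,\rho^{\beta}$, using the volume lower bound and $G^B_{B(z,\rho)}(z,w)\ge c>0$ for $|w-z|\le\rho/2$. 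The decisive step is to upgrade this mean bound to a uniform probabilistic one: there are constants $a,\theta\in(0,\infty)$, depending only on $X,\gamma,U',\beta$, with $P^B_z(F_{\sigma^B_\rho}\ge a\rho^{\beta})\ge\theta$ for all $z\in U'$ and $\rho\le\rho_0$. Granting this, a routine chaining argument closes the proof: $\mathcal{B}$ must cross each of the $\asymp\rho/r$ concentric annuli of width $r$ separating $z$ from $\partial B(z,\rho)$, by the strong Markov property each crossing takes Liouville time at least $a r^{\beta}$ with conditional probability at least $\theta$, and choosing $r\asymp(t/\rho)^{1/(\beta-1)}$ and applying a Chernoff bound to the number of fast crossings produces $P_z(\sigma^{\mathcal{B}}_\rho\le t)\le C\exp(-C'(\rho^{\beta}/t)^{1/(\beta-1)})$. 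Splitting $p_{2t}(x,y)=\int p_t(x,w)p_t(w,y)\,M_\gamma(dw)$ at $|w-x|=|x-y|/2$, bounding each of the two resulting pieces by (on-diagonal bound)$\,\times P_\cdot(\sigma^{\mathcal{B}}_{|x-y|/2}\le t)$ via the symmetry $p_t(x,y)=p_t(y,x)$, relabelling $2t$ as $t$, and truncating the distance at scale $1$ (which is what produces $|x-y|^{\beta}\wedge1$), then gives \eqref{eq:on-diag_glob} for $x,y$ in a fixed bounded set; the reduction of general $x\in\bbR^2$ to this case, where $M_\gamma$ lacks uniform regularity at infinity, is a separate and somewhat delicate technical point that I do not elaborate here.

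I expect the main obstacle to be precisely the probabilistic upgrade $E^B_z[F_{\sigma^B_\rho}]\gtrsim\rho^{\beta}\Longrightarrow P^B_z(F_{\sigma^B_\rho}\gtrsim\rho^{\beta})\gtrsim1$. The mean bound is driven by atypical Brownian paths that wander into regions of unusually large $M_\gamma$-mass, and therefore does not by itself preclude $F_{\sigma^B_\rho}$ from being far below $\rho^{\beta}$ with probability close to one. The way around this is to exploit the \emph{all-scales} form of the volume lower bound---that $M_\gamma(B(w,\delta))\ge\delta^{\beta}$ for \emph{every} $w$ and \emph{every} small $\delta$---which forces $M_\gamma$ not to concentrate at any scale and hence makes the Liouville mass accumulated along a Brownian excursion robustly comparable to its mean; turning this into a quantitative estimate (for instance via a decomposition of the excursion into conditionally independent annulus crossings together with a second-moment bound for the corresponding increments of $F$) is the technically demanding part of the argument.
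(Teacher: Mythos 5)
Your overall architecture (volume lower bound for $M_\gamma$ $\Rightarrow$ on-diagonal bound plus exit-time lower-tail bound $\Rightarrow$ sub-Gaussian bound by chaining and Chapman--Kolmogorov splitting) matches the paper's, and you correctly locate where $\beta>\frac{1}{2}(\gamma+2)^2$ is consumed: your optimisation of the negative-moment exponent is exactly the computation behind the paper's Lemma~\ref{lem:vol_dec} (with $\alpha_1=(2+\tilde\xi(q))/q$ at $q=2/\gamma$). But both of your remaining ingredients have genuine gaps. For the on-diagonal bound you invoke a lemma of the form \lq\lq $p_t(z,z)\le C/M(B(z,r))$ whenever $t\lesssim R_{\mathrm{eff}}(z,B(z,r)^c)\,M(B(z,r))$\rq\rq\ with $R_{\mathrm{eff}}\asymp\log(1/r)$. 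That lemma belongs to the theory of resistance forms and requires points to be non-polar; here the energy is $\frac12\int|\nabla f|^2\,dx$ in the plane, singletons are polar, and $R_{\mathrm{eff}}(z,B(z,r)^c)=g_{B(z,r)}(z,z)=\infty$ (this is precisely why $\int_0^\infty e^{-\lambda t}p_t(x,x)\,dt=\infty$, as noted in Remark~\ref{rmk:spctr_dim}). The quantity of order $\log(1/r)$ is the Green kernel at separated arguments, not a point-to-boundary resistance, so the lemma does not apply as stated. The paper instead proves a Faber--Krahn inequality $\lambda_1(U)\gtrsim\bigl(M(U)\log(2+1/M(U))\bigr)^{-1}$ (Proposition~\ref{prop:FK}), via the elementary inequality $ab\le a\log(1+a)+e^b$ applied to $\langle G_Uf,f\rangle$ together with the volume bounds, and then runs a Nash-type iteration to get $\|T^U_t\|_{L^1\to L^\infty}\le Ct^{-1}\log(t^{-1})$.

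The second gap you identify yourself but do not close, and your proposed repair would fail. You plan to upgrade $E_z[F_{\sigma^B_\rho}]\gtrsim\rho^\beta$ to $P_z(F_{\sigma^B_\rho}\gtrsim\rho^\beta)\gtrsim 1$; as you note, a first-moment lower bound gives no lower-tail control, and a Paley--Zygmund/second-moment argument would require $E[F_{\sigma^B_\rho}^2]\lesssim E[F_{\sigma^B_\rho}]^2$ uniformly in $z$, which amounts to uniform comparability of $M_\gamma$-masses of nearby balls --- exactly the kind of control that is claimed to fail in \cite[Remark~A.2]{BGRV14}, as quoted in the introduction. The paper's actual mechanism is different and is the essential missing idea: it controls the \emph{negative moments} $E_x[\tau_{B(x,r)}^{-q}]$ directly, starting from the annealed bound $\bbE E_x[\tau_{B(x,r)}^{-q}]\le cr^{-\tilde\xi(q)}$ of \cite[Proposition~2.12]{GRV13}, localising it by Borel--Cantelli over dyadic grids, and transferring from grid points to arbitrary centres via the strong Markov property and the explicit harmonic-measure comparison $\mu^y_{x,r_n}\le c\,\mu_{x,r_n}$ (Proposition~\ref{prop:negmom_tau}); \v{C}eby\v{s}ev then gives $P_x[\tau_{B(x,r)}\le\delta r^\beta]\le\varepsilon$ at once, and the chaining self-improvement (your Chernoff step, outsourced to \cite[Theorem~7.2]{GK14}) completes the tail bound. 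A negative-moment bound controls the lower tail of $\tau$ by construction, which is why this route works where the first-moment route cannot.
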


Since $\beta>\frac{1}{2}(\gamma+2)^2>2$, the off-diagonal part
$\exp\bigl(-C_{2}((|x-y|^{\beta}\wedge 1)/t)^{\frac{1}{\beta -1}}\bigr)$
of the bound \eqref{eq:on-diag_glob} indicates that the process diffuses
slower than the two-dimensional Brownian motion, which is why such a bound
is called \emph{sub}-Gaussian. We do not expect that the lower bound
$\frac{1}{2}(\gamma+2)^2$ for the exponent $\beta$ is best possible. Unfortunately,
Theorem~\ref{thm:on-diag_glob} alone does not even exclude the possibility
that $\beta$ could be taken arbitrarily close to $2$, which in the case of the two-dimensional
torus has been in fact disproved in a recent result \cite[Theorem~5.1]{MRVZ14}
by Maillard, Rhodes, Vargas and Zeitouni showing that $\beta$ satisfying
\eqref{eq:on-diag_glob} for small $t$ must be at least $2+\gamma^{2}/4$.
In this sense the Liouville heat kernel does behave anomalously,
which is natural to expect from the degeneracy of the intrinsic metric
associated with the LBM.

From the conformal invariance of the planar Brownian motion $B$
it is natural to expect that the LBM $\mathcal{B}$ as a time change of $B$
admits two-dimensional behaviour, as was observed by physicists in \cite{ABNRW98}
and in a weak form proved in \cite{RV13} (see Remark~\ref{rmk:spctr_dim} below).
The on-diagonal part $t^{-1}\log(t^{-1})$ in \eqref{eq:on-diag_glob} shows a
sharp upper bound in this spirit except for a logarithmic correction, and
%in Section~\ref{sec:lb} below
we will also prove the following on-diagonal
lower bound valid for \emph{$M_\gamma$-a.e.}\ $x \in \bbR^2$, which matches
\eqref{eq:on-diag_glob} besides another logarithmic correction.

\begin{theorem}\label{thm:on-diag_low}
Let $\gamma\in (0,2)$. Then $\bbP$-a.s., for $M_\gamma$-a.e.\ $x\in \bbR^2$,
for any $\eta>18$ there exist random constants $C_3=C_3(X,\gamma,|x|,\eta)>0$
and $t_0(x)=t_0(X,\gamma,\eta,x)\in (0,\frac{1}{2}]$ such that
\begin{align} \label{eq:on-diag_low}
p_t(x,x)\geq C_3 t^{-1} \bigl(\log(t^{-1})\bigr)^{-\eta}, \qquad \forall t\in (0,t_0(x)].
\end{align}
\end{theorem}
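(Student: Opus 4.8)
The plan is to derive the on-diagonal lower bound from the upper bound of Theorem~\ref{thm:on-diag_glob} together with mass-transport-type estimates on the Liouville measure, via a time-change comparison with the underlying planar Brownian motion. The starting point is the standard reproducing identity $p_{2t}(x,x)=\int_{\bbR^2} p_t(x,y)^2\,M_\gamma(dy)\geq P_t\indicator_{B(x,r)}(x)^2/M_\gamma(B(x,r))$ for any $r>0$, by Cauchy--Schwarz. Thus we need a \emph{lower} bound on the probability $P_x[\mathcal{B}_t\in B(x,r)]=P_t\indicator_{B(x,r)}(x)$ of staying in a small Euclidean ball up to time $t$, combined with an \emph{upper} bound on the Liouville mass $M_\gamma(B(x,r))$, and then we optimise over $r$.

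The first key step is to control $M_\gamma(B(x,r))$ for $M_\gamma$-a.e.\ $x$ at small scales. By a now-classical multifractal analysis of Gaussian multiplicative chaos (Kahane's theory, refined scaling estimates as in \cite{RV13a}, or the local estimates already used to construct the LBM in \cite{GRV13}), for $M_\gamma$-a.e.\ $x$ one has $M_\gamma(B(x,r))\leq r^{\,2+\gamma^2/2-\varepsilon}$, or more precisely a bound of the form $M_\gamma(B(x,r))\le r^{2}(\log(1/r))^{q}$ up to a multiplicative constant, for all $r$ below some $x$-dependent threshold; the powers of the logarithm here are exactly what will produce the $(\log(t^{-1}))^{-\eta}$ factor and the constraint $\eta>18$. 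This is where the strong $x$-dependence of $t_0(x)$ enters: the threshold radius and the constant $C_3$ depend on how fast the (massive) GFF oscillates near $x$, which is only controlled almost everywhere.

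The second key step is the lower bound on the exit-time / staying probability. Since $\mathcal{B}$ is the time change of planar Brownian motion $B$ by the PCAF with Revuz measure $M_\gamma$, the event $\{\mathcal{B}$ stays in $B(x,r)$ for time $t\}$ corresponds to $\{B$ stays in $B(x,r)$ for PCAF-time $t\}$, so it suffices to show that the PCAF $F$ of $B$ accumulates at least $t$ units of Liouville clock before $B$ exits $B(x,r)$, with probability bounded below. For this one uses that $B$ spends order $r^2$ Euclidean time in $B(x,r)$ before exiting (with good probability), and that on that time interval the Liouville clock grows at least like $\int_{B(x,r)}(\text{local time})\,M_\gamma(dz)$, which for a typical starting point $x$ is at least of order $M_\gamma(B(x,r))$ times a polynomial-in-$\log$ correction; again the relevant lower bound on the Liouville clock is available $M_\gamma$-a.e.\ from the same multifractal estimates. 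Choosing $r=r(t)$ so that this accumulated clock is exactly of order $t$ yields $P_x[\mathcal{B}_t\in B(x,r)]\geq c$ for a deterministic $c>0$.

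Plugging these two bounds into the reproducing inequality gives $p_{2t}(x,x)\gtrsim 1/M_\gamma(B(x,r(t)))$, and since $M_\gamma(B(x,r(t)))$ is comparable to $t$ up to a power of $\log(1/t)$, we obtain $p_{2t}(x,x)\gtrsim t^{-1}(\log(t^{-1}))^{-\eta}$ for all $t$ below the $x$-dependent threshold, as claimed; replacing $2t$ by $t$ is harmless. I expect the main obstacle to be the quantitative lower bound on the Liouville clock accumulated by $B$ before exiting a small ball: one must show not merely that the clock is positive but that it is at least $M_\gamma(B(x,r))$ up to controlled logarithmic losses, uniformly enough in $r\downarrow0$, and this requires combining the occupation-time formula for $B$ with the $M_\gamma$-a.e.\ two-sided volume estimates for $M_\gamma$-balls while keeping careful track of all powers of $\log(1/r)$ — it is the bookkeeping of these logarithms that forces $\eta>18$ rather than a smaller exponent, and tightening it would require sharper GMC scaling input than we use here.
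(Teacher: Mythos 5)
Your skeleton coincides with the paper's: the paper also starts from the Cauchy--Schwarz/reproducing inequality
$p_{2t}^{B(x,r)}(x,x)\geq P_x[\tau_{B(x,r)}>t]^2/M\bigl(B(x,r)\bigr)$
(Proposition~\ref{prop:lowbound_dirHK}) and chooses $r=r(t)$ so that $M\bigl(B(x,r/2)\bigr)\asymp t$. But the central quantitative step of your argument --- that $P_x[\mathcal{B}_t\in B(x,r(t))]\geq c$ for a \emph{deterministic} $c>0$ --- is asserted, not proved, and it is in fact stronger than what the paper obtains. Your heuristic only controls the \emph{first moment} of the Liouville clock accumulated before exiting $B(x,r)$ (via the occupation-time formula), and a lower bound on $E_x[F_{T_{B(x,r)}}]$ alone gives no lower bound on the probability that the clock exceeds $t$. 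The paper's route is the first-moment inequality $P_x[\tau_{B(x,r)}>t]\geq (E_x[\tau_{B(x,r)}]-t)/\sup_{y\in B(x,r)}E_y[\tau_{B(x,r)}]$ (Proposition~\ref{prop:tailtau2}), which requires a matching \emph{upper} bound on the mean exit time from an arbitrary interior starting point. Both bounds come from the identity $E_y[\tau_U]=\int_U g_U(y,z)\,M(dz)$ (Lemma~\ref{lem:mom_tau_Green}) and the logarithmic singularity of the planar Green kernel: $E_x[\tau_{B(x,r)}]\geq c_2 M\bigl(B(x,r/2)\bigr)$ (Lemma~\ref{lem:mom_tau_low}) but only $\sup_y E_y[\tau_{B(x,r)}]\leq C M\bigl(B(x,3r)\bigr)\log(r^{-1})$ (Lemma~\ref{lem:mom_tau_up}). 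The resulting staying probability is therefore \emph{not} bounded below by a constant; it degenerates like $M\bigl(B(x,r/2)\bigr)/\bigl(M(B(x,3r))\log(r^{-1})\bigr)$, and the whole difficulty is to control this ratio of concentric ball masses.

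This also means your diagnosis of where $\eta>18$ comes from is misdirected. It is not produced by GMC multifractal scaling, and the bound $M\bigl(B(x,r)\bigr)\leq r^2(\log(1/r))^q$ is not the relevant input (for $M$-a.e.\ $x$ the exponent is $2+\gamma^2/2$, and the paper stresses in the introduction that uniform control of ratios $M(B(x,2r))/M(B(x,r))$ is precisely what is \emph{unavailable}). Instead, the paper proves a purely measure-theoretic statement (Proposition~\ref{prop:gen_volasymp}): for \emph{any} Radon measure $\mu$ and $\mu$-a.e.\ $x$, $\mu\bigl(B(x,2r)\bigr)\leq 8\bigl(\log(r^{-1})\bigr)^{\kappa}\mu\bigl(B(x,r)\bigr)$ for small $r$ and any $\kappa>2$, via \v{C}eby\v{s}ev and Borel--Cantelli (the restriction $\kappa>2$ comes from summability of $n^{-\kappa/2}$). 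Feeding this into the chain of ratios in Corollary~\ref{cor:spctr_dim} yields $\eta=2+8\kappa$, whence $\eta>18$. Without (a) the two-sided mean exit time bounds and (b) this a.e.\ doubling estimate with logarithmic loss, your argument does not close; supplying them is essentially reconstructing Subsection~\ref{sec:proof_on-diag_low}.
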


Combining the on-diagonal estimates in Theorems~\ref{thm:on-diag_glob} and \ref{thm:on-diag_low},
we can immediately identify the pointwise spectral dimension as $2$.

\begin{corro} \label{thm:spctr_dim}
 Let $\gamma\in (0,2)$. Then $\bbP$-a.s., for $M_\gamma$-a.e.\ $x\in \bbR^2$,
\begin{align*}
 \lim_{t\downarrow 0} \frac{2 \log p_t(x,x)}{-\log t}=2.
\end{align*}
\end{corro}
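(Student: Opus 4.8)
The plan is to derive the limit by sandwiching $\log p_{t}(x,x)$ between the on-diagonal parts of the upper estimate \eqref{eq:on-diag_glob} and the lower estimate \eqref{eq:on-diag_low}, and then to observe that both of these bounds differ from $-\log t$ only by a term of order $\log\log(t^{-1})$, which is negligible after division by $-\log t$ as $t\downarrow0$.

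Concretely, I would first fix a single event $\Omega_{0}\in\cA$ with $\bbP(\Omega_{0})=1$ on which the conclusions of Theorems~\ref{thm:cont_hk}, \ref{thm:on-diag_glob} and \ref{thm:on-diag_low} all hold (possible since each is a $\bbP$-a.s.\ statement), and argue on $\Omega_{0}$ from now on; there $p$ is jointly continuous and $(0,\infty)$-valued, so $\log p_{t}(x,x)$ is a well-defined real number for all $t>0$ and $x\in\bbR^{2}$. Fix $\beta\ldef\tfrac12(\gamma+2)^{2}+1$. Applying Theorem~\ref{thm:on-diag_glob} with this $\beta$ to each bounded set $U=\overline{B(0,R)}$, $R\in\bbN$, and setting $y=x$ in \eqref{eq:on-diag_glob}, I would get for each $R$ a constant $C_{1}(R)>0$ with $p_{t}(x,x)\le C_{1}(R)\,t^{-1}\log(t^{-1})$ for all $t\in(0,\tfrac12]$ and $|x|\le R$. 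Hence, for any $x\in\bbR^{2}$ (choosing $R\in\bbN$ with $|x|\le R$) and all $t\in(0,\tfrac12]$, using $-\log t>0$,
\begin{align*}
 \frac{2\log p_{t}(x,x)}{-\log t}
 \le 2+\frac{2\log C_{1}(R)+2\log\log(t^{-1})}{-\log t}\xrightarrow[t\downarrow 0]{}2,
\end{align*}
so that $\limsup_{t\downarrow 0}\frac{2\log p_{t}(x,x)}{-\log t}\le 2$ for \emph{every} $x\in\bbR^{2}$.

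For the matching lower inequality, fix $\eta\ldef 19$ (any admissible value works). By Theorem~\ref{thm:on-diag_low}, for $M_{\gamma}$-a.e.\ $x\in\bbR^{2}$ there are $C_{3}(x)>0$ and $t_{0}(x)\in(0,\tfrac12]$ with $p_{t}(x,x)\ge C_{3}(x)\,t^{-1}(\log(t^{-1}))^{-\eta}$ for all $t\in(0,t_{0}(x)]$, whence for such $x$ and all $t\in(0,t_{0}(x)]$,
\begin{align*}
 \frac{2\log p_{t}(x,x)}{-\log t}
 \ge 2+\frac{2\log C_{3}(x)-2\eta\log\log(t^{-1})}{-\log t}\xrightarrow[t\downarrow 0]{}2,
\end{align*}
so $\liminf_{t\downarrow 0}\frac{2\log p_{t}(x,x)}{-\log t}\ge 2$ for $M_{\gamma}$-a.e.\ $x$. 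Combining the two one-sided bounds on $\Omega_{0}$ gives the claim.

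There is no genuinely hard step here: the only points requiring (routine) care are the bookkeeping of the $\bbP$-null sets so that all three theorems hold on one common full-probability event, the exhaustion of $\bbR^{2}$ by the balls $\overline{B(0,R)}$ needed to make the upper bound hold simultaneously for all $x$, and the elementary fact that $\log\log(t^{-1})/(-\log t)\to 0$ as $t\downarrow 0$, which annihilates the logarithmic corrections in both estimates. All the substance sits in Theorems~\ref{thm:on-diag_glob} and~\ref{thm:on-diag_low}.
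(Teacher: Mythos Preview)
Your proof is correct and follows exactly the same route as the paper: combine the on-diagonal part of the upper bound from Theorem~\ref{thm:on-diag_glob} (setting $y=x$ so the exponential factor equals $1$) with the lower bound from Theorem~\ref{thm:on-diag_low}, and note that the logarithmic corrections vanish after division by $-\log t$. The paper records this only as ``immediate'' from those two theorems (see also the proof of Corollary~\ref{thm:spctr_dim_loc}), so your write-up is simply a spelled-out version of the same argument.
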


Essentially from Theorems~\ref{thm:on-diag_glob} and \ref{thm:on-diag_low}
we shall further deduce that the global spectral dimension, that is the
growth order of the Dirichlet eigenvalues of the generator on bounded
open sets, is also $2$; see Subsection~\ref{sec:spctr_dim} for details.

\begin{remark} \label{rmk:spctr_dim}
 In \cite[Theorem~3.6]{RV13} the following result on the spectral dimension
has been proved: $\bbP$-a.s., for any $\alpha>0$ and for all $x\in \bbR^2$, 
\begin{align} \label{eq:sp_dim_int1}
\lim_{y\to x} \int_0^\infty e^{-\lambda t} t^\alpha p_t(x,y) \, dt <\infty, \qquad \forall \lambda >0,
 \end{align}
and %, for $\alpha=0$,  for all $x\in \bbR^2$,
 \begin{align}\label{eq:sp_dim_int2}
\lim_{y\to x} \int_0^\infty e^{-\lambda t} p_t(x,y) \, dt =\infty, \qquad \forall \lambda >0.
 \end{align}
In \cite{RV13} the left hand sides were interpreted as the integrals in $t$
of the on-diagonal heat kernel $p_t(x,x)$, which was needed due to the lack
of the knowledge of the continuity of $p_t(x,y)$.  
By Theorem~\ref{thm:cont_hk} this interpretation can be made rigorous now,
and moreover, \eqref{eq:sp_dim_int1}  follows immediately from
Theorem~\ref{thm:on-diag_glob}. On the other hand, \eqref{eq:sp_dim_int2}
is actually an easy consequence of the Dirichlet form theory.
Indeed,  by \cite[Exercises~2.2.2 and  4.2.2]{FOT11}
$\int_0^\infty e^{-\lambda t} p_t(x,x) \, dt$ is equal to the reciprocal 
of the $\lambda$-order capacity of the singleton $\{x\}$ with respect to
the LBM, and this capacity is zero by \cite[Lemma~6.2.4 (i)]{FOT11} and
the fact that the same holds for the planar Brownian motion.
\end{remark}

The proofs of our main results above are mainly based on the moment estimates
for the Liouville measure $M_{\gamma}$ by \cite{Ka85,RV10} and those for the
exit times of the LBM $\mathcal{B}$ from balls by \cite{GRV13}, together with
the general fact from time change theory that the Green operator of the LBM
has exactly the same integral kernel as that of the planar Brownian motion
(see \eqref{eq:defG_U} below). To turn those moment estimates into
$\bbP$-almost sure statements, we need some Borel-Cantelli arguments that
cannot provide us with uniform control on various random constants over unbounded sets.
For this reason we can expect the estimate \eqref{eq:on-diag_glob} to hold
only \emph{locally} uniformly, so that in Theorem~\ref{thm:on-diag_glob}
we cannot drop the dependence of the constants
$C_{1},C_{2}$ on $U$  or the cutoff of $|x-y|$ at $1$ in the exponential.
Also to remove the logarithmic corrections in \eqref{eq:on-diag_glob}
and \eqref{eq:on-diag_low} and the restriction to $M_\gamma$-a.e.\ points in
Theorem~\ref{thm:on-diag_low} and Corollary~\ref{thm:spctr_dim} one would need
to have good uniform control on the ratios of the $M_\gamma$-measures of
concentric balls with different radii. However, we cannot hope for such control
in view of \cite[Remark~A.2]{BGRV14}, where it is claimed that
\begin{align*}
\limsup_{r\downarrow 0}\sup_{x\in B(0,1)}
	\frac{M_{\gamma}\bigl(B(x,2r)\bigr)}{M_{\gamma}\bigl(B(x,r)\bigr)^{1-\eta}}=\infty
\end{align*}
for any $\eta\in\bigl(-\infty,\frac{\gamma^{2}}{4+\gamma^{2}}\bigr)$ $\bbP$-a.s.,
with $B(x,R):=\{y \in \bbR^2:\, |x-y| < R \}$ for $x\in \bbR^2$ and $R>0$.

The LBM can also be constructed on other domains like the torus,
the sphere or planar domains $D\subset \bbR^2$ equipped with a log-correlated
Gaussian field like the (massive or massless) Gaussian free field
(cf.\ \cite[Section~2.9]{GRV13}). In fact, Theorem~\ref{thm:cont_hk} has been
simultaneously and independently obtained in \cite{MRVZ14} for the LBM
on the torus, where thanks to the boundedness of the space one can 
utilise the eigenfunction expansion of the heat kernel to 
prove its continuity and the strong Feller property of the semigroup.
On the other hand, in our case of $\bbR^2$ the Liouville heat kernel
$p_t(x,y)$ does not admit such an eigenfunction expansion and the proof
of its continuity and the strong Feller property requires some additional
arguments.  Therefore, although the proofs of our results should directly
transfer to the other domains mentioned above, we have decided to work on
the plane $\bbR^2$ in this paper for the sake of simplicity and in order
to stress that our methods also apply to the case of unbounded domains.

In \cite{MRVZ14} Maillard, Rhodes, Vargas and Zeitouni have also obtained upper
and lower estimates of the Liouville heat kernel on the torus. Their heat kernel
upper bound in \cite[Theorem~4.2]{MRVZ14} involves an on-diagonal part of the
form $C  t^{-(1+\delta)}$ for any $\delta>0$ and an off-diagonal part
of the form $\exp\bigl(-C (|x-y|^\beta /t )^{\frac{1}{\beta -1}}\bigr)$
for any $\beta>\beta_0(\gamma)$, where $\beta_0(\gamma)$ is a constant larger
than our lower bound $\frac{1}{2}(\gamma+2)^2$ on the exponent $\beta$ and
satisfies $\lim_{\gamma \uparrow 2}\beta_0(\gamma)=\infty$. Thus
Theorem~\ref{thm:on-diag_glob} gives a better estimate, and we prove it
by self-contained, purely analytic arguments while the proof in \cite{MRVZ14}
relies on \eqref{eq:sp_dim_int1}, whose proof in \cite{RV13} is technically
involved. Concerning lower bounds, an on-diagonal lower bound as in
Theorem~\ref{thm:on-diag_low} is not treated in \cite{MRVZ14}.
On the other hand, their off-diagonal lower bound \cite[Theorem~5.1]{MRVZ14},
which implies the bound $\beta\geq 2+\gamma^{2}/4$ for any such exponent
$\beta$ as in \eqref{eq:on-diag_glob} (in the case of the torus) as mentioned
above after Theorem~\ref{thm:on-diag_glob}, is not covered by our results.

The rest of the paper is organised as follows. In Section~\ref{sec:LBM} we recall
the construction of the LBM in \cite{GRV13} and introduce the precise setup. 
In Section~\ref{sec:prelim} we prove preliminary estimates on the volume decay
of the Liouville measure and on the exit times from balls needed in the proofs.
In Section~\ref{sec:feller} we show that the resolvent operators of the LBM
killed upon exiting an open set have the strong Feller property,
which is needed in Section~\ref{sec:cont_ub} to prove
Theorems~\ref{thm:cont_hk} and \ref{thm:on-diag_glob}.
In Subsection~\ref{sec:Ubounded} we show the continuity of the Dirichlet
heat kernel associated with the killed LBM on a bounded open set
by using its eigenfunction expansion, and in Subsection~\ref{sec:Uunbounded}
we then deduce the continuity of the heat kernel and the strong Feller property
on unbounded open sets, as well as Theorem~\ref{thm:on-diag_glob},
using a recent result in \cite{GK14}.
Finally, in Section~\ref{sec:lb} we show the on-diagonal lower bound in
Theorem~\ref{thm:on-diag_low} and thereby identify the pointwise and
global spectral dimensions as $2$. 

Throughout the paper, we write $C$ for random positive constants depending
on the realisation of the field $X$, which may change on each appearance,
whereas  the numbered random positive constants $C_i$ will be kept the same.
Analogously, non-random positive constants will be denoted by $c$ or $c_i$,
respectively. The symbols $\subset$ and $\supset$ for set inclusion \emph{allow}
the case of the equality. We denote by $|\cdot|$ the Euclidean norm on $\bbR^2$
and by $B(x,R):=\{y \in \bbR^2:\, |x-y| < R \}$, $x\in \bbR^2$, $R>0$, open
Euclidean balls in $\bbR^2$ and for abbreviation we set $B(R):=B(0,R)$.
Lastly, for non-empty $U\subset \bbR^2$ and $f:U\rightarrow \bbR$ we write
$\| f\|_{\infty}:=\| f\|_{\infty,U}:= \sup_{x\in U} |f(x)|$.

\section{Liouville Brownian motion} \label{sec:LBM}
\subsection{Massive Gaussian free field and Liouville measure}
Consider a massive Gaussian free field $X$ on the whole plane $\bbR^2$, i.e.\ a
Gaussian Hilbert space associated with the Sobolev space $\mathcal{H}^1_m$
defined as the closure of $C_c^\infty(\bbR^2)$ with respect to the inner product
\begin{align*}
 \langle f,g\rangle_m:= m^2 \int_{\bbR^2} f(x) \, g(x) \, dx + \int_{\bbR^2} \nabla f(x) \cdot \nabla g(x) \, dx,
\end{align*}
where $m>0$ is a parameter called the mass. More precisely,
$(\langle X, f\rangle_m)_{f\in \mathcal{H}^1_m}$ is a family of Gaussian random
variables on a probability space $(\Omega, \mathcal{A}, \bbP)$ with mean $0$
and covariance
\begin{align*}
 \bbE\bigl[\langle X, f\rangle_m  \langle X, g\rangle_m\bigr]= 2\pi \langle  f,g \rangle_m.
\end{align*}
In other words, the covariance function of $X$ is given by the massive Green function
$g^{(m)}$ associated with the operator $m^2-\Delta$, which can be written as
\begin{align} \label{eq:mGreenFunc}
 g^{(m)}(x,y)=\int_0^\infty \frac{1}{2u} e^{-\frac{m^2}{2}u-\frac{|x-y|^2}{2u}} \, du
	= \int_1^\infty \frac{k^{(m)}\bigl(u(x-y)\bigr)}{u} \, du
\end{align}
with
\begin{align*}
 k^{(m)}(z):= \frac 1 2 \int_0^\infty e^{-\frac{m^2}{2v}|z|^2-\frac v 2} \, dv.
\end{align*}
Following \cite{GRV13} we now introduce an $n$-regularised version of $X$.
To that aim let $(a_n)_{n\geq 0}\subset\mathbb{R}$ be an unbounded strictly
increasing sequence with $a_0=1$ and let $(Y_n)_{n\geq 1}$ be a family of
independent \emph{continuous} Gaussian fields on $\bbR^2$ defined also on
$(\Omega, \mathcal{A}, \bbP)$ with mean $0$ and covariance
\begin{align} \label{eq:def_kn}
 \bbE\bigl[ Y_n(x) Y_n(y)\bigr]
	=\int_{a_{n-1}}^{a_{n}} \frac{k^{(m)}\bigl(u(x-y)\bigr)}{u} \, du
	=:g^{(m)}_n(x,y);
\end{align}
here, such $Y_n$ can be constructed by applying a version \cite[Problem~2.2.9]{KS91}
of the Kolmogorov-\v{C}entsov continuity theorem to a Gaussian field on $\bbR^2$
with mean $0$ and covariance $g^{(m)}_n$, which in turn exists by the Kolmogorov
extension theorem (see e.g.\ \cite[Theorems~12.1.2 and 12.1.3]{Du02})
since $\bigl(g^{(m)}_n(x,y)\bigr)_{x,y\in \Xi}$ is a non-negative definite
real symmetric matrix for any finite $\Xi\subset\bbR^2$.
Then for each $n\geq 1$, the $n$-regularised field $X_n$ is defined as
\begin{align*}
 X_n(x):=\sum_{k=1}^n Y_k(x),\qquad x\in \bbR^2,
\end{align*}
and the associated random Radon measure $M_n=M_{\gamma,n}$ on $\bbR^2$ is given by
\begin{align} \label{eq:dfn_Mn}
 M_n(dx):=\exp\Bigl(\gamma X_n(x)-\tfrac {\gamma^2} 2 \bbE\bigl[X_n(x)^2\bigr]\Bigr)\,dx
\end{align}
with a parameter $\gamma \geq 0$. By the classical theory of
Gaussian multiplicative chaos established in Kahane's seminal work \cite{Ka85}
(see also \cite{RV13a}) we have the following:
$\bbP$-a.s.\ the family $(M_n)_{n\geq 1}$ converges vaguely on $\bbR^2$
to a random Radon measure $M=M_\gamma$ called the \emph{Liouville measure},
whose law is uniquely determined by $\gamma$ and the covariance function
$g^{(m)}$ of $X$, and $M$ has full support $\bbP$-a.s.\ for $\gamma \in [0,2)$
and is identically zero $\bbP$-a.s.\ for $\gamma\geq 2$. Throughout the rest
of this paper, we assume that $\gamma \in (0,2)$ is fixed and we will drop it
from our notation, although the quantities defined through the Liouville
measure $M=M_{\gamma}$ will certainly depend on $\gamma$.

\subsection{Definition of Liouville Brownian Motion}
The Liouville Brownian motion has been constructed by Garban, Rhodes and Vargas
in \cite{GRV13} as the canonical diffusion process under the geometry induced
by the measure $M$. More precisely, they have constructed
a positive continuous additive functional $F=\{F_t\}_{t\geq 0}$ of the planar
Brownian motion $B$ naturally associated with the measure $M$ and they have
defined the LBM as $\mathcal{B}_t=B_{F^{-1}_t}$.
In this subsection we briefly recall the construction.

Let $\Omega':=C([0,\infty),\bbR^2)$, let $B=(B_t)_{t\geq 0}$ be the coordinate
process on $\Omega'$ and set $\mathcal{G}^0_\infty:=\sigma(B_s; \, s<\infty)$
and $\mathcal{G}^0_t:=\sigma(B_s; \, s\leq t)$, $t\geq 0$. Let
$\{P_x\}_{x\in \bbR^2}$ be the family of probability measures on
$(\Omega',\mathcal{G}^0_\infty)$ such that for each $x\in \bbR^2$,
$B=(B_t)_{t\geq 0}$ under $P_x$ is a two-dimensional Brownian motion starting
at $x$. We denote by $\{ \mathcal{G}_t\}_{t\in [0,\infty]}$ the minimum completed
admissible filtration for $B$ with respect to $\{P_x\}_{x\in \bbR^2}$
as defined e.g.\ in \cite[Section~A.2]{FOT11}.
Moreover, let $\{\theta_t\}_{t\geq 0}$ be the family of shift mappings on
$\Omega'$, i.e.\ $B_{t+s}=B_t \circ \theta_s$, $s,t \geq 0$. Finally, we write
$q_t(x,y):=(2\pi t)^{-1} \exp\bigl(-|x-y|^2/(2t)\bigr)$, $t>0$, $x,y\in \bbR^2$,
for the heat kernel associated with $B$.

\begin{definition}
i) A $[-\infty,\infty]$-valued stochastic process $A=(A_t)_{t\geq 0}$
on $(\Omega',\mathcal{G}_{\infty})$
is called a \emph{positive continuous additive functional (PCAF)} of $B$
\emph{in the strict sense}, if $A_t$ is $\mathcal{G}_t$-measurable for every
$t\geq 0$ and if there exists a set $\Lambda \in \mathcal{G}_\infty$,
called a \emph{defining set} for $A$, such that
\begin{enumerate}
\item [a)] for all $x\in \bbR^2$, $P_x[\Lambda]=1$,
\item [b)] for all $t\geq 0$, $\theta_t(\Lambda)\subset \Lambda$,
\item [c)] for all $\om \in \Lambda$,
$[0,\infty)\ni t\mapsto A_t(\om)$ is a $[0,\infty)$-valued continuous function
with $A_0(\om)=0$ and
\begin{align*}
 A_{t+s}(\om)=A_t(\om)+ A_s \circ \theta_t (\om), \qquad \forall s,t\geq 0.
\end{align*}
\end{enumerate}

ii) Two such functionals $A^1$ and $A^2$ are called \emph{equivalent}
if $P_x[A^1_t=A^2_t]=1$ for all $t>0$, $x\in \bbR^2$, or equivalently,
there exists $\Lambda \in \mathcal{G}_\infty$ which is a defining set for
both $A^1$ and $A^2$ such that $A^1_t(\om)=A^2_t(\om)$ for all $t\geq 0$,
$\om\in \Lambda$. Equivalent PCAFs in the strict sense will always
be identified hereafter.

iii) For any such $A$, a Borel measure $\mu_A$ on $\bbR^2$ satisfying 
\begin{align*}
 \int_{\bbR^2} f(y) \, \mu_A(dy)
	=\lim_{t\downarrow 0} \frac 1 t \int_{\bbR^2} E_x \Bigl[ \int_0^t f(B_s) \, dA_s \Bigr] \, dx
\end{align*}
for any non-negative Borel function $f:\bbR^2\to[0,\infty]$ is called the
\emph{Revuz measure} of $A$, which exists uniquely by general theory
(see e.g.\ \cite[Theorem~A.3.5]{CF12}).
%Equivalent PCAFs give rise to the same Revuz measure.
\end{definition}

For every $n\in \bbN$ let now
$F^n_t: \Omega \times \Omega' \rightarrow [0,\infty)$ be defined as
\begin{align} \label{eq:dfn_Fn}
 F^n_t:=\int_0^t
	\exp\Bigl(\gamma X_n(B_s)- \tfrac{\gamma^2} 2 \bbE\bigl[X_n(B_s)^2\bigr]\Bigr)
	\, ds, \qquad t\geq 0,
\end{align}
which is strictly increasing in $t$.
% and let $\mathcal{B}^n_t:=B_{(F^n)_t^{-1}}$, $t\geq 0$. 
Note that for every $n$ the functional $F^n=(F^n_t)_{t\geq 0}$ considered
as a process defined on $(\Omega',\mathcal{G}^{0}_{\infty})$ is a PCAF of
$B$ in the strict sense with defining set $\Omega'$ and Revuz measure $M_n$.

%In \cite{GRV13} Garban, Rhodes and Vargas have proved that for any $x\in \bbR^2$, $\bbP\times P_x$-a.s.\ the functionals $F^n$ converge to a strictly increasing continuous function $F$ satisfying $\lim_{t \to \infty} F_t=\infty$, and that $\bbP$-a.s.\ for all $x\in\bbR^2$, under $P_x$ the processes $\mathcal{B}^n$ converge in law.  We summarize their result in the following two theorems.

\begin{theorem}[{\cite[Theorem~2.7]{GRV13}}] \label{thm:GRV13}
%Almost surely w.r.t.\ $\bbP$
$\bbP$-a.s.\ the following hold:
\begin{enumerate}
 \item[i)] There exists a unique PCAF $F$ in the strict sense
whose Revuz measure is $M$.

\item[ii)] For all $x\in \bbR^2$, $P_x$-a.s., $F$ is strictly increasing and
satisfies $\lim_{t \to \infty} F_t=\infty$.

\item[iii)]  For all $x\in \bbR^2$, $F^n$ converges to $F$ in $P_x$-probability
in the space $C([0,\infty),\bbR)$ equipped with the topology of uniform
convergence on compact sets.
\end{enumerate}
The process $(\mathcal{B},\{P_x\}_{x\in\bbR^2})$, $\bbP$-a.s.\ defined by
$\mathcal{B}_t:=B_{F^{-1}_t}$, $t\geq 0$, is called the (massive)
\emph{Liouville Brownian motion (LBM)}.
\end{theorem}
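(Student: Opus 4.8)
The plan is to construct $F$ as an almost-sure limit of the approximating functionals $F^n$ and then to identify its Revuz measure through the Revuz correspondence of Fukushima's theory. Fix $t\geq 0$ and $x\in\bbR^2$ and work under the joint law $\bbP\otimes P_x$. The first observation is that $(F^n_t)_{n\geq 1}$ is a non-negative martingale with respect to $\sigma(Y_1,\dots,Y_n)\vee\mathcal{G}^0_\infty$: since $Y_{n+1}$ is independent of $(Y_1,\dots,Y_n)$ and of $B$ and $\bbE[\exp(\gamma Y_{n+1}(z)-\tfrac{\gamma^2}{2}\bbE[Y_{n+1}(z)^2])]=1$ for all $z$, Fubini's theorem gives $\bbE[F^{n+1}_t\mid\sigma(Y_1,\dots,Y_n)\vee\mathcal{G}^0_\infty]=F^n_t$, and $\bbE[F^n_t]=t$. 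The subcriticality $\gamma\in(0,2)$ then permits, via Kahane's convexity inequality and the moment estimates for Gaussian multiplicative chaos of \cite{Ka85,RV10} (comparing the Brownian occupation functional defining $F^n_t$ to an exactly scale-invariant chaos and controlling the occupation of $B$ over $[0,t]$), a uniform bound $\sup_n\bbE^{\bbP\otimes P_x}[(F^n_t)^p]<\infty$ for some $p=p(\gamma)>1$. Hence the martingale is uniformly integrable and $(F^n_t)_n$ converges $\bbP\otimes P_x$-a.s.\ and in $L^1(\bbP\otimes P_x)$; in particular, for $\bbP$-a.e.\ realisation of $X$, $F^n_t$ converges in $P_x$-probability, for every $x$, to a limit $F_t$.

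To upgrade this pointwise-in-$t$ convergence to convergence in $C([0,\infty),\bbR)$, note that each $t\mapsto F^n_t$ is continuous and non-decreasing, so it suffices to show that the limit $t\mapsto F_t$, a priori defined along a countable dense set of times, extends to a continuous (necessarily non-decreasing) function: monotonicity then forces the convergence to be locally uniform by a Dini-type argument. Continuity of the limit is obtained from a uniform-in-$n$ modulus-of-continuity estimate of Kolmogorov--\v{C}entsov type for $F^n$, again deduced from H\"older's inequality together with the moment input of the previous step and the Gaussian tails of $B$. This yields tightness of $(F^n)_n$ in $C([0,\infty),\bbR)$ and a continuous non-decreasing limit $F$, establishing assertion (iii). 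Because each $F^n$ is a strict-sense PCAF of $B$ with defining set $\Omega'$, the additivity identity $F^n_{t+s}=F^n_t+F^n_s\circ\theta_t$, the $\mathcal{G}_t$-measurability, and the initial condition $F^n_0=0$ pass to the limit on a suitable $\Lambda\in\mathcal{G}_\infty$ with $P_x[\Lambda]=1$ for every $x$ and $\theta_t(\Lambda)\subset\Lambda$; thus $F$ is a PCAF of $B$ in the strict sense. (Here one invokes Fukushima's construction of a strict PCAF from a smooth measure, for which one checks that $M$ a.s.\ charges no set of zero capacity in $\bbR^2$ --- a consequence of $M$ being non-atomic together with its multifractal structure.)

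It remains to identify the Revuz measure of $F$. For each $n$, the Revuz measure of $F^n$ is $M_n$, i.e.\ $\tfrac1t\int_{\bbR^2}E_x\bigl[\int_0^t f(B_s)\,dF^n_s\bigr]\,dx=\int_{\bbR^2}f\,dM_n$ for every non-negative continuous $f$ with compact support; passing to the limit $n\to\infty$ --- legitimate by the uniform integrability from the first paragraph --- and using the vague convergence $M_n\to M$ identifies the Revuz measure of $F$ as $M$. Uniqueness of a strict PCAF with prescribed Revuz measure is the injectivity of the Revuz correspondence \cite[Theorem~A.3.5]{CF12}, which gives (i). For (ii), strict monotonicity of $F$ under every $P_x$ follows from $M$ having full support (valid for $\gamma\in(0,2)$), since the set on which a PCAF strictly increases is the fine support of its Revuz measure, here all of $\bbR^2$; and $\lim_{t\to\infty}F_t=\infty$ follows by combining the recurrence of planar Brownian motion with a Borel--Cantelli argument using the additivity of $F$ and the strict positivity of the $F$-increments collected over successive returns of $B$ to a fixed ball.

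The technical heart of the argument is the pair of uniform-in-$n$ estimates in the first two paragraphs: the $L^p$ moment bound and the modulus-of-continuity bound for the approximations $F^n$. This is where the singularity of $M$, the particular regularisation scheme, and the requirement that the exceptional $\bbP$-null set be chosen uniformly in the starting point $x$ all have to be reconciled, and it genuinely requires the full strength of Kahane's convexity inequality and the fine moment estimates for Gaussian multiplicative chaos in the subcritical regime $\gamma\in(0,2)$. Once these are available, the extraction of a strict-sense PCAF and the identification of its Revuz measure are comparatively routine applications of Fukushima's theory of additive functionals.
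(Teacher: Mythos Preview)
The paper does not give its own proof of this theorem: it is simply quoted from \cite[Theorem~2.7]{GRV13}. What the paper \emph{does} prove is a refinement (Proposition~\ref{prop:pcaf}, established in Appendix~\ref{app:pcaf}) asserting joint $\mathcal{A}\otimes\mathcal{G}^0_\infty$-measurability of $F$ and giving a concrete construction of the defining set $\Lambda$. That proof takes the convergence $F^n_t\to F_t$ for each fixed $x$ (Theorem~\ref{thm:constrF}, itself cited from \cite{GRV13}) as input and then bootstraps from ``$\bbP\times P_x$-a.s.\ for each $x$'' to ``$\bbP$-a.s.\ for all $x$'' via a Markov-property argument (Lemma~\ref{lem:lambda_t}): one integrates the starting point against a smooth reference measure to obtain a single $\bbP$-full-measure set on which the convergence holds for Lebesgue-a.e.\ starting point, and then uses the transition density of $B$ at any positive time to upgrade to every starting point. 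The additional ingredient needed to glue increments back to time zero is the small-time control $\lim_{t\downarrow 0}\sup_{n}\sup_{x\in B(R)}E_x[F^n_t]=0$ from \cite{GRV13v2}.

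Your outline is a plausible sketch of the original \cite{GRV13} argument rather than of anything in this paper, and it is honest about where the real work lies. But there is a genuine gap precisely at the point you flag in the last paragraph: you obtain $\bbP\otimes P_x$-a.s.\ convergence for each fixed $x$, and then assert that the $\bbP$-exceptional set can be made independent of $x$. Nothing in your argument secures this, and your proposed route --- a Kolmogorov--\v{C}entsov modulus-of-continuity estimate uniform in $n$ --- would still produce, for each $x$, a $\bbP\otimes P_x$-null exceptional set, not a single $\bbP$-null set valid simultaneously for all $x$. The paper's Appendix~\ref{app:pcaf} shows one clean way to close this gap (Markov property plus absolute continuity of $P_x[B_t\in\cdot]$), and without some mechanism of that kind your construction does not yield a strict-sense PCAF with a defining set of full $P_x$-measure for \emph{every} $x$.
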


Thanks to Theorem~\ref{thm:GRV13}, we can apply the general theory of time
changes of Markov processes to have the following properties of the LBM:
First, it is a recurrent diffusion on $\bbR^2$ by \cite[Theorems~A.2.12 and 6.2.3]{FOT11}.
Furthermore by \cite[Theorem~6.2.1 (i)]{FOT11} (see also \cite[Theorem~2.18]{GRV13}),
the LBM is $M$-symmetric, i.e.\ its transition semigroup $(P_t)_{t>0}$ given by
\begin{align*}
P_t (x,A):=E_x[\mathcal{B}_t \in A]
\end{align*}
for $t\in(0,\infty)$, $x\in\bbR^{2}$ and a Borel set $A\subset\bbR^{2}$,
satisfies
\begin{align*}
 \int_{\bbR^2} P_t f \cdot g \, dM = \int_{\bbR^2} f \cdot P_t g \, dM 
\end{align*}
for all Borel measurable functions $f,g:\ \bbR^2 \rightarrow [0,\infty]$.
Here the Borel measurability of $P_t(\cdot, A)$ can be deduced from
\cite[Corollary~2.20]{GRV13} (or from Proposition~\ref{prop:pcaf} below). 

\begin{remark}
\cite[Corollary~2.20]{GRV13} states that $(P_t)_{t>0}$ is a Feller semigroup,
meaning that $P_t$ preserves the space of bounded continuous functions.
Note that this is different from the notion of a Feller semigroup as for
instance in \cite{CF12,FOT11}, i.e.\ a strongly continuous Markovian semigroup
on the space of continuous functions vanishing at infinity.
It is not known whether $(P_t)_{t>0}$ is a Feller semigroup in the latter sense.
\end{remark}

It is natural to expect that the LBM can be constructed in such a way that it
depends measurably on the randomness of the field $X$. However, this measurability
does not seem obvious from the construction in \cite{GRV13}, since there the
existence of the PCAF $F$ has been deduced from some general theory on the
Revuz correspondence for $\bbP$-a.e.\ fixed realisation of $M$. To overcome
this issue, in the following proposition we show for $\bbP$-a.e.\ environment
the pathwise convergence of $F^n$ towards $F$ in an appropriate
$\{P_x\}_{x\in \bbR^2}$-a.s.\ sense which also ensures the measurability
of $F_t$ and $\mathcal{B}_t$ with respect to the product $\sigma$-field
$\mathcal{A}\otimes \mathcal{G}^0_\infty$ for all $t\geq 0$.
The proof is given in Appendix~\ref{app:pcaf}.

\begin{prop} \label{prop:pcaf}
There exists a set  $\Lambda \in \mathcal{A}\otimes \mathcal{G}_\infty^0$
such that the following hold:

\begin{enumerate}

 \item[i)] For $\bbP$-a.e.\ $\om\in \Omega$, $P_x[\Lambda^\om]=1$ for any $x\in \bbR^2$,
	where $\Lambda^\om:=\{ \om'\in\Omega':\,  (\om,\om')\in \Lambda\}$.

 \item[ii)] For every $(\om,\om')\in \Lambda$ the following limits exist
	in $\bbR$ for all $0<s\leq t$:
	\begin{align*}
	 F_{s,t}(\om,\om')&:=\lim_{n\to \infty} \bigl(F^n_t(\om,\om')-F^n_s(\om,\om')\bigr),   \\
	 F_t(\om,\om')&:= \lim_{u\downarrow 0} F_{u,t}(\om,\om').
	\end{align*}
	Moreover, with $F_0(\om,\om'):=0$, $[0,\infty)\ni t\mapsto F_t(\om,\om')\in[0,\infty)$
	is continuous, strictly increasing and satisfies $\lim_{t\to \infty} F_t(\om,\om')=\infty$.

 \item[iii)] Let $t\geq 0$ and set $F_t:=t$ on $\Lambda^c$.
	Then $F_t$ is $\mathcal{A} \otimes \mathcal{G}_\infty^0$-measurable.

 \item [iv)] For $\bbP$-a.e.\ $\om\in\Omega$, the process
	$\bigl(F_t(\om,\cdot)\bigr)_{t\geq 0}$ is a PCAF of $B$ in the strict sense
	with defining set $\Lambda^\om$. 
\end{enumerate}

\end{prop}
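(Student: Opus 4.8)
The plan is to construct the exceptional set $\Lambda$ by patching together, for each $n$, the measurability of $F^n_t$ on the product space, and then extracting a common defining set on which the sequence $(F^n)$ converges locally uniformly in $t$ with the required monotonicity and growth. First I would observe that each $F^n_t$, as defined by \eqref{eq:dfn_Fn}, is jointly measurable in $(\om,\om')$: the integrand $(\om,\om',s)\mapsto\exp(\gamma X_n(\om,B_s(\om'))-\frac{\gamma^2}{2}\bbE[X_n(B_s)^2])$ is $\mathcal{A}\otimes\mathcal{G}^0_\infty\otimes\mathcal{B}([0,\infty))$-measurable because $X_n=\sum_{k=1}^n Y_k$ is by hypothesis a continuous field, hence jointly measurable on $\Omega\times\bbR^2$, and $s\mapsto B_s(\om')$ is continuous; integrating in $s$ over $[0,t]$ preserves joint measurability by Fubini/Tonelli. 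So $F^n_t$ is $\mathcal{A}\otimes\mathcal{G}^0_\infty$-measurable for each $t$, and in fact $(\om,\om')\mapsto F^n_\cdot(\om,\om')\in C([0,\infty),\bbR)$ is measurable into the Polish space $C([0,\infty),\bbR)$ with the topology of locally uniform convergence.

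Next I would set up the convergence. By Theorem~\ref{thm:GRV13}(iii), for $\bbP$-a.e.\ $\om$ and every $x\in\bbR^2$, $F^n(\om,\cdot)\to F(\om,\cdot)$ in $P_x$-probability in $C([0,\infty),\bbR)$; together with the monotonicity of $F^n_t$ in $n$ away from $t=0$ (more precisely, the increments $F^n_t-F^n_s$ are monotone in an appropriate sense, or one uses that $M_n\to M$ vaguely with the standard GMC martingale structure), I would upgrade this to $P_x$-a.s.\ convergence along the full sequence, first for the increments $F^n_t-F^n_s$ with $0<s\le t$ rational and then, by monotone/continuity arguments, for all real $0<s\le t$; this yields $F_{s,t}(\om,\om')$ and then $F_t(\om,\om')=\lim_{u\downarrow0}F_{u,t}(\om,\om')$. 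I would then \emph{define}
\[
\Lambda:=\bigl\{(\om,\om'):\ \text{the limits }F_{s,t}(\om,\om')\ (0<s\le t\ \text{rational})\text{ and }F_t(\om,\om')\text{ exist in }\bbR,\text{ and the map }t\mapsto F_t\text{ is continuous, strictly increasing, with }F_t\to\infty\bigr\},
\]
which is $\mathcal{A}\otimes\mathcal{G}^0_\infty$-measurable since all the conditions are countable combinations of measurable events (limits of the measurable $F^n$, evaluated at rationals, together with separability of the relevant path conditions). Properties (i) and (ii) are then immediate from the $P_x$-a.s.\ convergence and from Theorem~\ref{thm:GRV13}(i)--(ii): for $\bbP$-a.e.\ $\om$, $P_x[\Lambda^\om]=1$ because $P_x$-a.s.\ the limits exist and $(F_t(\om,\cdot))_{t\ge0}$ coincides with the PCAF $F$ of Theorem~\ref{thm:GRV13}, which is continuous, strictly increasing and explodes. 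Part (iii) follows by extending $F_t$ to be $t$ on $\Lambda^c$: on $\Lambda$ it is a measurable limit of the $F^n_t$, on $\Lambda^c$ it is the measurable function $t$, and $\Lambda$ is measurable, so $F_t$ is $\mathcal{A}\otimes\mathcal{G}^0_\infty$-measurable.

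For part (iv) I would verify the three defining-set axioms (a)--(c) for $\Lambda^\om$ for $\bbP$-a.e.\ $\om$. Axiom (a), $P_x[\Lambda^\om]=1$, is part (i). Axiom (c), the continuity, normalisation $F_0=0$, and the additivity $F_{t+s}(\om,\om')=F_t(\om,\om')+F_s(\om,\theta_t\om')$, follows on $\Lambda$ by passing to the limit in the corresponding identity for $F^n$, which holds pathwise for every $\om'$ since $F^n$ is a PCAF with defining set $\Omega'$ and additive functional identity $F^n_{t+s}=F^n_t+F^n_s\circ\theta_t$; the limit identity extends to $\Lambda^\om$ by the definition of that set. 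The shift-invariance axiom (b), $\theta_t(\Lambda^\om)\subset\Lambda^\om$, is where I expect the main (though still routine) care is needed: one must check that if $(\om,\om')\in\Lambda$ then $(\om,\theta_t\om')\in\Lambda$, i.e.\ that the defining properties are stable under time-shift of the Brownian path, using the cocycle identity for $F^n$ and the fact that $F^n_s\circ\theta_t=F^n_{t+s}-F^n_t$ converges because both terms on the right do; strict monotonicity and explosion of $s\mapsto F_s(\om,\theta_t\om')$ then transfer from those of $F$. Assembling (a)--(c) gives that $(F_t(\om,\cdot))_{t\ge0}$ is a PCAF of $B$ in the strict sense with defining set $\Lambda^\om$, completing the proof. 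The genuinely delicate point is the first one — establishing joint $\mathcal{A}\otimes\mathcal{G}^0_\infty$-measurability of the limiting object $F_t$ rather than merely for each fixed $\om$ — and this is handled precisely by the explicit measurable formula for $\Lambda$ above together with the joint measurability of each $F^n_t$.
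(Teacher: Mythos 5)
Your overall architecture (product-measurability of each $F^n_t$, a set $\Lambda$ defined through existence of limits of increments at rational times, verification of the PCAF axioms via the cocycle identity $F^n_{t+s}=F^n_t+F^n_s\circ\theta_t$) matches the paper's, and your treatment of parts iii) and iv) is essentially the argument given there. But there is a genuine gap at the heart of the proposition, precisely where you declare i) and ii) ``immediate''. The convergence input available from \cite{GRV13} is: \emph{for each fixed} $x\in\bbR^2$, $\bbP\times P_x$-a.s.\ the limit $\lim_nF^n_t$ exists for all $t$ (Theorem~\ref{thm:constrF}). By Fubini this yields, for each $x$, a $\bbP$-null set $N_x$ outside of which $P_x[\Lambda^\om]=1$; but $\bigcup_{x}N_x$ is an uncountable union of null sets, whereas the proposition demands a \emph{single} $\bbP$-null set that works for \emph{all} starting points simultaneously. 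Your proposal never addresses this exchange of quantifiers. The paper's resolution is the content of Lemma~\ref{lem:lambda_t}: integrate $\bbE P_x[\Lambda_0^\om]=1$ against a Gaussian reference measure to get that $\bbP$-a.s.\ $P_y[(\Lambda_0^\om)^c]=0$ for Lebesgue-a.e.\ $y$, and then use the Markov property at a \emph{positive} time $t$ together with the absolute continuity of $P_x[B_t\in dy]$ to conclude $P_x[\Lambda_t^\om]=P_x[\theta_t^{-1}(\Lambda_0^\om)]=1$ for \emph{every} $x$ and every $t>0$. This controls only the time-shifted sets $\Lambda_t$ with $t>0$, which is exactly why the paper's $\Lambda$ is built from $\bigcap_{q\in\mathbb{Q}\cap(0,\infty)}\Lambda_q$ and the behaviour at $t=0$ must be handled separately: the existence of $F_t=\lim_{u\downarrow0}F_{u,t}$ is obtained by a Cauchy argument from $0\le F_{u,t}-F_{s,t}\le\liminf_nF^n_s$ together with $\lim_{s\downarrow0}\liminf_nF^n_s=0$ $P_x$-a.s.\ for all $x$, and the latter is where the uniform-in-$x$ moment bound of Lemma~\ref{lem:Fn_Tsmall} ($\lim_{t\downarrow0}\sup_{n}\sup_{x\in B(R)}E_x[F^n_t]=0$) enters via Fatou's lemma. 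Your proposal neither justifies the existence of the $u\downarrow0$ limit nor invokes any uniform-in-$x$ input, so parts i) and ii) remain unproved as written.

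Two further points. The increments $F^n_t-F^n_s$ are \emph{not} monotone in $n$ (for a fixed Brownian path they form a positive martingale in $n$ under $\bbP$, not a monotone sequence), so the proposed upgrade from convergence in $P_x$-probability to full-sequence $P_x$-a.s.\ convergence via monotonicity fails; one must instead invoke the $\bbP\times P_x$-a.s.\ convergence established in \cite[Lemma~2.8]{GRV13}, and that result reintroduces the $x$-dependent null sets discussed above. Finally, contrary to your closing remark, the joint measurability of $F^n_t$ and of $\Lambda$ is the routine part of the proof; the genuinely delicate point is the uniformity in the starting point just described.
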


The previous proposition implies easily that $F$ indeed has the Revuz measure $M$.
More strongly, we have the following proposition valid for any starting point
$x\in\bbR^2$ $\bbP$-a.s., which we prove in Appendix~\ref{app:revuz}
in a slightly more general setting for later use.

\begin{prop}  \label{prop:revuz}
%Almost surely w.r.t.\ $\bbP$
$\bbP$-a.s., for all $x\in \bbR^2$ and all Borel measurable functions
$\eta: [0,\infty)\to [0,\infty]$ and $f: \bbR^2 \to [0,\infty]$,
\begin{align*} 
 E_x\Bigl[ \int_0^{\infty} \eta(t) f(B_t) \, dF_t \Bigr]
	=\int_0^\infty \int_{\bbR^2} \eta(t) f(y) q_t(x,y) \, M(dy) \, dt,
\end{align*}
and in particular, for any $t>0$,
\begin{align*}
 \int_{\bbR^2} f(y) \, M(dy)= \frac 1 t \int_{\bbR^2} E_x \Bigl[ \int_0^t f(B_s) \, dF_s \Bigr] \, dx.
\end{align*}

\end{prop}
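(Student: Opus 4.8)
Proposition~\ref{prop:revuz} asserts a Revuz-type identity for the additive functional $F$ with its Green-measure reformulation. The natural strategy is to pass to the limit in the analogous identity for the approximating PCAFs $F^n$, for which everything is explicit, and then upgrade the resulting equality of measures (valid after integrating in the starting point $x$) to a pointwise-in-$x$ statement using the Markov property and the $\bbP$-a.s.\ pathwise convergence established in Proposition~\ref{prop:pcaf}. I would first observe that for each fixed $n$, since $F^n_t=\int_0^t\exp(\gamma X_n(B_s)-\tfrac{\gamma^2}{2}\bbE[X_n(B_s)^2])\,ds$ is an absolutely continuous additive functional, Fubini's theorem and the fact that $q_t(x,\cdot)$ is the density of $B_t$ give, for all $x\in\bbR^2$,
\begin{align*}
 E_x\Bigl[\int_0^\infty \eta(t) f(B_t)\,dF^n_t\Bigr]
 &= \int_0^\infty \eta(t)\, E_x\bigl[f(B_t)\,e^{\gamma X_n(B_t)-\tfrac{\gamma^2}{2}\bbE[X_n(B_t)^2]}\bigr]\,dt\\
 &= \int_0^\infty\int_{\bbR^2}\eta(t) f(y)\, q_t(x,y)\, M_n(dy)\,dt,
\end{align*}
i.e.\ the claimed identity holds verbatim with $F$ replaced by $F^n$ and $M$ by $M_n$.

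**Passing to the limit.** The right-hand side converges to $\int_0^\infty\int_{\bbR^2}\eta(t)f(y)q_t(x,y)\,M(dy)\,dt$ because $\bbP$-a.s.\ $M_n\to M$ vaguely and one can localise: first treat bounded $\eta$ supported in $[\delta,T]$ and $f$ continuous with compact support, where vague convergence applies directly (using that $y\mapsto q_t(x,y)$ is continuous and bounded away from $0$ in $t\ge\delta$), then remove the cutoffs by monotone convergence in $\eta$ and $f$. For the left-hand side I would use part~iii) of Theorem~\ref{thm:GRV13} (or Proposition~\ref{prop:pcaf}ii)), namely that $F^n\to F$ uniformly on compacts in $P_x$-probability; combined with the uniform-integrability / monotonicity structure one gets $\int_0^t f(B_s)\,dF^n_s\to\int_0^t f(B_s)\,dF_s$ in $P_x$-probability for the localised integrands, and then one lifts the cutoffs by monotone convergence as before. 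The delicate point is that weak convergence of $F^n$ to $F$ does not immediately give convergence of the Lebesgue--Stieltjes integrals $\int f(B_s)\,dF^n_s$; I expect this to be the main obstacle, and I would handle it by approximating $f$ from below by continuous functions, using that for continuous bounded $f$ and a uniformly convergent sequence of increasing functions one does have $\int_0^t f(B_s)\,dF^n_s\to\int_0^t f(B_s)\,dF_s$ pathwise on the a.s.\ event of uniform convergence, then invoking dominated/monotone convergence to reach general Borel $f$.

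**From integrated to pointwise identity, and the second assertion.** The above yields the first displayed identity for every $x\in\bbR^2$, $\bbP$-a.s. For the second (the Revuz-measure characterisation), I would apply the first identity with $\eta=\indicator_{[0,t]}$ and integrate over $x\in\bbR^2$ against Lebesgue measure: by Fubini and $\int_{\bbR^2}q_s(x,y)\,dx=1$ one obtains $\int_{\bbR^2}E_x[\int_0^t f(B_s)\,dF_s]\,dx=\int_0^t\int_{\bbR^2}f(y)\,M(dy)\,ds=t\int_{\bbR^2}f\,dM$, which is exactly the asserted formula after dividing by $t$. Since the appendix proof is stated to be carried out ``in a slightly more general setting for later use'', I would actually run the same argument with $f$ replaced by a space-time function $f(s,B_s)$, or equivalently keep $\eta$ general throughout, which costs nothing. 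The only genuine subtlety beyond the limiting argument above is measurability in $\om$ — needed so that the $\bbP$-a.s.\ statements make sense simultaneously for all $x$ — but this is exactly what Proposition~\ref{prop:pcaf}iii) was set up to provide, so I would simply cite it at the point where the joint $\cA\otimes\cG^0_\infty$-measurability of $F_t$ is used.
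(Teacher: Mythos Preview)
Your overall strategy matches the paper's: reduce by a monotone class argument to continuous $\eta$ and $f$ with compact supports, write down the identity for $F^n$ and $M_n$, and pass to the limit on each side separately using vague convergence of $M_n\to M$ on the right and pathwise convergence of $dF^n\to dF$ on the left. Your derivation of the second displayed identity from the first is also correct.

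The one step you gloss over is the left-hand limit. You write that ``combined with the uniform-integrability / monotonicity structure'' one obtains convergence, but you do not indicate how uniform integrability of $\bigl\{\int_0^{t}f(B_s)\,dF^n_s\bigr\}_{n\geq 1}$ under $P_x$ is to be established, and this is in fact the only substantive piece of work in the proof. (Your phrasing is also slightly backwards: for continuous compactly supported $\eta$ and $f$, the $P_x$-a.s.\ convergence $\int\eta(t)f(B_t)\,dF^n_t\to\int\eta(t)f(B_t)\,dF_t$ follows directly from Proposition~\ref{prop:pcaf}~ii) via weak convergence of $dF^n_s$ to $dF_s$ on compact time intervals; uniform integrability is what is then needed to pass the limit inside $E_x[\cdot]$.) The paper supplies this as a separate lemma by proving a uniform-in-$n$ bound on $E_x\bigl[\bigl(\int_0^{t}f(B_s)\,dF^n_s\bigr)^2\bigr]$: after Fubini the second moment is controlled by a double integral of kernels of the form $(1+\log^+\frac{t}{|\cdot|^{2}})$ against $M_n\times M_n$, and these are bounded uniformly in $n$ using the $\bbP$-a.s.\ uniform-in-$n$ volume decay $\sup_{n}M_n\bigl(B(x,r)\bigr)\leq C r^{\alpha_2-\varepsilon}$ from \cite[Theorem~2.2]{GRV13}. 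Without this input there is no obvious monotonicity or domination to exploit, since neither $F^n_t$ nor $M_n$ is monotone in $n$.
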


\subsection{The Liouville Dirichlet form}
By virtue of Propositions~\ref{prop:pcaf} and \ref{prop:revuz},
we can apply the general theory of Dirichlet forms to obtain an explicit
description of the Dirichlet form associated with the LBM,
as it has been done in \cite{GRV13, GRV13a}.

Denote by $H^1(\bbR^2)$ the standard Sobolev space, that is
\begin{align*}
 H^1(\bbR^2)=\{ f\in L^2(\bbR^2, dx):\, \nabla f \in L^2(\bbR^2, dx) \},
\end{align*}
%Recall that the Dirichlet form associated with $B$ is given by $(\mathcal{E}, H^1_{\mathrm{loc}}(\bbR^2,dx))$, where
on which we define the form
\begin{align} \label{eq:def_dirform}
 \mathcal{E}(f,g)=\frac 1 2 \int_{\bbR^2} \nabla f \cdot \nabla g \, dx.
\end{align}
Recall that $(\mathcal{E}, H^1(\bbR^2))$ is the Dirichlet form of the planar
Brownian motion $B$. By $H^1_e(\bbR^2)$ we denote the extended Dirichlet space,
that is the set of $dx$-equivalence classes of Borel measurable functions $f$
on $\bbR^2$ such that $\lim_{n\to \infty} f_n=f \in \bbR$ $dx$-a.e.\ for some
$(f_n)_{n\geq 1} \subset H^1(\bbR^2)$ satisfying
$\lim_{k,l \to \infty}\mathcal{E}(f_k-f_l, f_k-f_l)=0$.
By \cite[Theorem~2.2.13]{CF12} we have the following identification of $H^1_e(\bbR^2)$:
\begin{align*}
 H^1_e(\bbR^2)=\{ f \in L^2_{\mathrm{loc}}(\bbR^2, dx): \nabla f \in L^2(\bbR^2, dx) \}. 
\end{align*}
The \emph{capacity} of a set $A\subset \bbR^2$ is defined by
\begin{align*}
 \mathrm{Cap}(A)=\inf_{\substack{B \subset \bbR^2 \, \text{open} \\ A \subset B}}
	\inf_{\substack{f \in H^1(\bbR^2) \\ f|_{B}\geq 1 \, \text{$dx$-a.e.} }}
	\Bigl\{\mathcal{E}(f,f)+\int_{\bbR^2} f^2 \, dx \Bigr\}.
\end{align*}
A set $A\subset \bbR^2$ is called \emph{polar} if $\mathrm{Cap}(A)=0$.
We call a function $f$ \emph{quasi-continuous} if for any $\varepsilon >0$
there exists an open $U\subset \bbR^2$ with $\mathrm{Cap}(U) < \varepsilon$
such that $f|_{\bbR^2 \setminus U}$ is real-valued and continuous.
By \cite[Theorem~2.1.7]{FOT11} any $f\in H^1_e(\bbR^2)$ admits a
quasi-continuous $dx$-version $\widetilde{f}$,
which is unique up to polar sets by \cite[Lemma~2.1.4]{FOT11}.

Then, as the Liouville measure $M$ is a Radon measure on $\bbR^2$
and does not charge polar sets by \cite[Theorem~2.2]{GRV13}
(or by Propositions~\ref{prop:pcaf}, \ref{prop:revuz} and \cite[Theorem~4.1.1 (i)]{CF12}),
the Dirichlet form $(\mathcal{E},\mathcal{F})$ of the LBM $\mathcal{B}$ is a
strongly local regular symmetric Dirichlet form on $L^2(\bbR^2,M)$ which takes on
the following explicit form by \cite[Theorem~6.2.1]{FOT11}: The domain is given by
\begin{align*}
 \mathcal{F}=\bigl\{ u \in L^2(\bbR^2,M):\, u= \widetilde{f}
	\text{ $M$-a.e.\ for some  } f\in H^1_e(\bbR^2) \bigr\},
\end{align*}
which can be identified with
$\bigl\{f \in H^1_e(\bbR^2): \, \widetilde{f} \in L^2(\bbR^2,M) \bigr\}$
by \cite[Lemma~6.2.1]{FOT11}, and for $f,g\in \mathcal{F}$ the form
$\mathcal{E}(f,g)$ is given by \eqref{eq:def_dirform}.

\subsection{The killed Liouville Brownian motion}

Let $U$ be a non-empty open subset  of $\bbR^2$ and let $U\cup \{ \partial_U \}$
be its one-point compactification. We denote by
$T_U:=\inf \{ s\geq 0: \, B_s \not \in U\}$ the exit time of the
Brownian motion $B$ from $U$ and by
$\tau_U:=\inf \{ s\geq 0: \, \mathcal{B}_s \not \in U\}$
that of the LBM $\mathcal{B}$, where $\inf\emptyset:=\infty$.
Since by definition $\mathcal{B}_t=B_{F^{-1}_t}$, $t\geq 0$, and
$F$ is a homeomorphism on $[0,\infty)$, we have $\tau_U=F_{T_U}$. Let now
$B^U=(B^U_t)_{t\geq 0}$ and $\mathcal{B}^U=(\mathcal{B}^U_t)_{t\geq 0}$
denote the Brownian motion and the LBM, respectively,
killed upon exiting $U$. That is, they are diffusions on $U$ defined by
\begin{align*}
 B^U_t:= \begin{cases}
B_t & \text{if $t< T_U$,} \\
\partial_U & \text{if $t\geq T_U$,}
 \end{cases}
\qquad
 \mathcal{B}^U_t:= \begin{cases}
\mathcal{B}_t & \text{if $t< \tau_U$,} \\
\partial_U & \text{if $t\geq \tau_U$.}
 \end{cases}
\end{align*}
Then for $t,\lambda \in (0,\infty)$, the semigroup operator $P^U_t$ and the
resolvent operator $R^U_\lambda$ associated with the killed LBM $\mathcal{B}^U$
are expressed as, for each Borel function $f: U\rightarrow[-\infty,\infty]$ and
with the convention $f(\partial_U):=0$,
\begin{align*}
 P^U_t f(x):= E_x\bigl[f(\mathcal{B}^U_t)\bigr]
	\quad \text{and} \quad
	R^U_\lambda f(x):=E_x\Bigl[\int_0^{\tau_U} e^{-\lambda t} f(\mathcal{B}_t) \, dt \Bigr],
	\qquad x\in\bbR^2,
\end{align*}
provided the integrals exist. If $U$ is bounded, as a time change of $B^U$ the
killed LBM $\mathcal{B}^U$ has the same integral kernel for its Green operator
$G_U$ as $B^U$, namely for any non-negative Borel function
$f: U\rightarrow [0,\infty]$ and $x\in\bbR^2$,
\begin{align} \label{eq:defG_U}
 G_Uf(x):= E_x\Bigl[\int_0^{\tau_U} f(\mathcal{B}_t) \, dt \Bigr]
	= E_x\Bigl[\int_0^{T_U} f(B_t) \, dF_t \Bigr]
	= \int_U g_U(x,y) f(y) \, M(dy)
\end{align}
(cf.\ Proposition~\ref{prop:corrF_M}).
Here $g_U$ denotes the Euclidean Green kernel given by
\begin{align} \label{eq:killed_green_U}
 g_U(x,y)=\int_0^{\infty} q^U_t(x,y) \, dt, \qquad x,y\in \bbR^2,
\end{align}
for the heat kernel $q^U_t(x,y)$ of $B^U$:
$q^U=q^U_t(x,y):(0,\infty)\times U\times U\to[0,\infty)$ is the jointly
continuous function such that $P_x[B^U_t\in dy]=q^U_t(x,y)\,dy$ for $t>0$ and
$x\in U$, and we set $q^U_t(x,y):=0$ for $t>0$ and $(x,y)\in(U\times U)^{c}$.
Finally, we recall (see e.g.\ \cite[Example~1.5.1]{FOT11}) that
the Green function $g_{B(x_0,R)}$ over a ball $B(x_0,R)$ is of the form
\begin{align} \label{eq:killed_green}
 g_{B(x_0,R)}(x,y)=\frac 1 \pi \log \frac 1 {|x-y|}+\Psi_{x_0,R}(x,y)
	, \qquad x,y\in B(x_0,R),
\end{align}
for some continuous function $\Psi_{x_0,R}:B(x_0,R)\times B(x_0,R)\to\bbR$.

\section{Preliminary estimates} \label{sec:prelim}

\subsection{Volume decay estimates}
For our analysis of the Liouville heat kernel some good control on the volume
of small balls under the Liouville measure is needed. An upper estimate has
already been established in \cite{GRV13}, and we provide a similar lower bound
in the next lemma. The argument is based on some bounds on the negative moments
of the measure of small balls. Such bounds have been proved in \cite{RV10} in
the case where the limiting random measure is obtained through approximation
of the covariance kernel of the Gaussian free field by convolution. Since it is
not clear to the authors whether the cutoff procedure producing the approximating
measures $M_n$ is covered by the results in \cite{RV10}, we give a comparison
argument in Lemma~\ref{lem:neg_mom}.

In the rest of this section, we write
$\tilde \xi(q):=(2+\frac{\gamma^{2}}{2})q+\frac{\gamma^2}{2}q^2$ for $q>0$.

\begin{lemma} \label{lem:vol_dec}
 Let $\alpha_1:=\frac{1}{2}(\gamma+2)^2$ and $\alpha_2:=\frac{1}{2}(2-\gamma)^2$.
Then $\bbP$-a.s., for any $\varepsilon>0$ and any $R\geq 1$ there exist
$C_i=C_i(X, \gamma, R, \varepsilon)>0$, $i=4,5$, such that
\begin{align} \label{eq:voldec}
 C_4 r^{\alpha_1+\varepsilon}\leq M\bigl(B(x,r)\bigr) \leq C_5 r^{\alpha_2-\varepsilon},
	\qquad \forall x\in B(R), \, r\in(0,1].
\end{align}
\end{lemma}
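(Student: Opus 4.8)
The plan is to obtain the two bounds in \eqref{eq:voldec} separately, covering $B(R)$ by finitely many balls of dyadic radii and using a Borel–Cantelli argument to pass from moment estimates to $\bbP$-a.s.\ statements. The upper bound $M(B(x,r))\leq C_5 r^{\alpha_2-\varepsilon}$ is, as noted, essentially already available from \cite{GRV13}: it follows from the standard positive-moment estimate $\bbE[M(B(x,r))^q]\leq c\, r^{\tilde\xi(q)}$ for $q\in(0,2/\gamma)$ (valid by Kahane's theory, with $\tilde\xi$ the structure exponent introduced above the lemma), together with Markov's inequality and a union bound over an $r$-net of $B(R)$, and then Borel–Cantelli along $r=2^{-k}$. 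Optimising the choice of $q$ in the resulting exponent yields the Hölder exponent $\alpha_2=\frac12(2-\gamma)^2$ minus an arbitrarily small loss $\varepsilon$ coming both from the sub-optimal $q$ and from the discretisation; a chaining argument upgrades the bound from net points to all $x\in B(R)$.

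For the lower bound the key input is a negative-moment estimate for $M(B(x,r))$, which is the content of the separate Lemma~\ref{lem:neg_mom} alluded to in the text: for every $q>0$ one has $\bbE\bigl[M(B(x,r))^{-q}\bigr]\leq c(q,R)\, r^{-\zeta(q)}$ for $x\in B(R)$, $r\in(0,1]$, with some explicit exponent $\zeta(q)$. Since it is not clear that the cutoff measures $M_n$ are directly covered by the results of \cite{RV10}, this estimate will be proved by comparison — dominating $M(B(x,r))$ from below (in an appropriate stochastic sense, after conditioning on the coarse field or using Kahane's convexity inequality) by the measure built from a convolution-approximated field to which \cite{RV10} applies. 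Granting this, one writes $\bbP\bigl(M(B(x,r))\leq \lambda\bigr)\leq \lambda^{q}\,\bbE[M(B(x,r))^{-q}]\leq c\,\lambda^{q} r^{-\zeta(q)}$, takes $\lambda = r^{\alpha_1+\varepsilon}$, sums over an $r$-net of $B(R)$ at scale $r=2^{-k}$ (the net has $O(r^{-2})$ points), and checks that for $q$ chosen large enough the series $\sum_k 2^{2k}\, 2^{-k q(\alpha_1+\varepsilon)}\, 2^{k\zeta(q)}$ converges; then Borel–Cantelli gives, $\bbP$-a.s., that $M(B(x,2^{-k}))\geq 2^{-k(\alpha_1+\varepsilon)}$ for all net points and all large $k$. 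Finally, for general $x\in B(R)$ and $r\in(0,1]$ one picks $k$ with $2^{-k-1}<r\leq 2^{-k}$ and a net point $x'$ with $|x-x'|\leq 2^{-k-2}$, so that $B(x',2^{-k-2})\subset B(x,r)$, and monotonicity together with absorbing the constant factors into $C_4$ yields $M(B(x,r))\geq C_4\, r^{\alpha_1+\varepsilon}$.

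The main obstacle I expect is precisely the negative-moment bound: establishing that the particular cutoff construction of $M$ via the fields $Y_n$ (covariance $g_n^{(m)}$) admits the same negative-moment control as the convolution approximations treated in \cite{RV10}. One must either set up a Kahane-type convexity comparison between the two families of log-correlated fields — controlling the difference of their covariance kernels at the relevant scales — or condition on the low-frequency part and reduce to a scale-invariant situation; care is needed because $M$ is singular and the small-ball measure can be extremely small, so the argument must be robust enough to give a bound for all $q>0$ with an exponent $\zeta(q)$ that does not grow too fast (no worse than quadratically in $q$), which is what makes the Borel–Cantelli series summable after choosing $q$ large. Once Lemma~\ref{lem:neg_mom} is in hand, the rest is the routine net-plus-Borel–Cantelli-plus-chaining scheme described above.
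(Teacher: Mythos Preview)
Your plan is essentially identical to the paper's proof: the upper bound is deferred to \cite[Theorem~2.2]{GRV13}, and the lower bound is obtained from the negative-moment estimate of Lemma~\ref{lem:neg_mom} via \v{C}eby\v{s}ev, a union bound over the dyadic grid $\Xi_{R,n}$, and Borel--Cantelli, followed by inclusion of a grid ball in $B(y,r)$. Your description of Lemma~\ref{lem:neg_mom} (comparison with the convolution approximation of \cite{RV10} through Kahane's convexity inequality) is also exactly what the paper does.

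There is one concrete slip you should fix before carrying this out. You write that one ``checks that for $q$ chosen large enough'' the series $\sum_k 2^{2k}2^{-kq(\alpha_1+\varepsilon)}2^{k\zeta(q)}$ converges. This is false: the negative-moment exponent is $\zeta(q)=\tilde\xi(q)=(2+\tfrac{\gamma^2}{2})q+\tfrac{\gamma^2}{2}q^2$, which grows quadratically, so for large $q$ the exponent $2+\tilde\xi(q)-q(\alpha_1+\varepsilon)$ is positive and the series diverges. What you actually need is $q(\alpha_1+\varepsilon)>2+\tilde\xi(q)$, i.e.\ $\alpha_1+\varepsilon>(2+\tilde\xi(q))/q$. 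The right-hand side is minimised at $q=2/\gamma$ with minimum value exactly $\alpha_1$, so the correct (and essentially only) choice is $q=2/\gamma$, for which the series becomes $\sum_k c\,2^{-\varepsilon q k}<\infty$. The paper makes precisely this choice. Your remark that $\zeta(q)$ ``no worse than quadratic'' suffices is therefore misleading: quadratic growth is what \emph{forces} the specific optimisation, and any $q$ away from $2/\gamma$ would give a strictly worse exponent than $\alpha_1$.
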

\begin{proof}
By the monotonicity of \eqref{eq:voldec} in $\varepsilon$ and $R$ it suffices
to show \eqref{eq:voldec} $\bbP$-a.s.\ for each $\varepsilon$ and $R$.
The upper bound is proved in \cite[Theorem~2.2]{GRV13}.
We show the lower bound in the same manner. Let $q:=2/\gamma$, so that
$\alpha_1= (2+ \tilde \xi (q))/q$. Let $\varepsilon>0$ and $R\geq 1$ be fixed,
and for $n\geq 1$ we set $r_n:=2^{-n}R$ and
\begin{align} \label{def:grid}
 \Xi_{R,n}:=\bigl\{ \bigl(\tfrac k {2^{n}}R, \tfrac l {2^{n}}R \bigr) : \, k,l\in \bbZ,\ |k|,|l| \leq 2^{n} \bigr\} \subset [-R,R]^2.
\end{align}
Then for each $n\geq 1$, by \v{C}eby\v{s}ev's inequality
%the translation invariance of the law of $M$
and Lemma~\ref{lem:neg_mom},
\begin{align*}
 &\bbP\Bigl[\min_{x\in \Xi_{R,n}} M\bigl(B(x,r_n)\bigr) \leq 2^{-n(\alpha_1+\varepsilon)} \Bigr] \\
 &\mspace{23mu}	=\bbP\Bigl[\max_{x\in \Xi_{R,n}} M\bigl(B(x,r_n)\bigr)^{-q} \geq 2^{n(\alpha_1+\varepsilon)q}\Bigr]
	\leq \sum_{x\in \Xi_{R,n}} \bbP\Bigl[ M\bigl(B(x,r_n)\bigr)^{-q} \geq 2^{n(\alpha_1+\varepsilon)q}\Bigr] \\
 &\mspace{23mu}\leq 2^{-n(\alpha_1+\varepsilon)q} \sum_{x\in \Xi_{R,n}}\bbE\Bigl[ M\bigl(B(x,r_n)\bigr)^{-q}\Bigr]
	\leq 2^{-n(\alpha_1+\varepsilon)q} 2^{2n+3}c2^{n\tilde{\xi}(q)}
%	\leq c 2^{n(2+\tilde\xi(q)-(\alpha_1+\varepsilon)q)}
	=c 2^{-\varepsilon q n}
\end{align*}
for some $c=c(\gamma,R)>0$. Thus
$\sum_{n=1}^\infty \bbP\bigl[\min_{x\in \Xi_{R,n}} M\bigl(B(x,r_n)\bigr)
	\leq 2^{-n(\alpha_1+\varepsilon)} \bigr] <\infty$,
so that by the Borel-Cantelli lemma $\bbP$-a.s.\ for some $C=C(X,\gamma,R,\varepsilon)>0$
we have that $M\bigl(B(x,r_n)\bigr)\geq C 2^{-n (\alpha_1+\varepsilon)}$
for all $n\geq 1$ and  all $x\in \Xi_{R,n}$.
Since for every $y\in B(R)$ and $r\in(0,1]$ we have $B(y,r)\supset B(x,r_n)$
for some $x\in \Xi_{R,n}$ with $n$ satisfying
$\frac{1}{4}r\leq r_{n}<\frac{1}{2}r$, the claim follows.
\end{proof}

\subsection{Exit time estimates}
In this subsection we provide some lower estimates on the exit times from balls
which are needed in the proof of Theorems~\ref{thm:cont_hk} and \ref{thm:on-diag_glob}.
More precisely, we establish estimates on the tail behaviour at zero of these exit
times by showing certain $\bbP$-a.s.\ local uniform bounds on their negative moments.

Let $\{\vartheta_{t}\}_{t\geq 0}$ denote the family of shift mappings for
the LBM $\mathcal{B}$, which is defined by
$\vartheta_{t}(\omega'):=\theta_{F^{-1}_{t}(\omega')}(\omega')$
for $t\geq 0$ and $\omega'\in\Omega'$ and satisfies
$F^{-1}_{s+t}=F^{-1}_{s}+F^{-1}_{t}\circ\vartheta_{s}$ and hence
$\mathcal{B}_{t+s}=\mathcal{B}_{t}\circ\vartheta_{s}$ for $s,t\geq 0$
on $\Lambda^\om$ by virtue of $F_{t+s}=F_{t}+F_{s}\circ\theta_{t}$,
$s,t\geq 0$ (cf.\ \cite[Subsection~A.3.2]{CF12}).

\begin{prop} \label{prop:negmom_tau}
Let $q>0$. Then $\bbP$-a.s., for any $\kappa> 2+ \tilde \xi (q)$ and any $R\geq 1$
there exists a random constant $C_6=C_6(X,\gamma,R,q,\kappa)>0$ such that 
\begin{align} \label{eq:mom_exit}
 E_x\bigl[\tau_{B(x,r)}^{-q}\bigr] \leq C_6 r^{-\kappa},
	\qquad \forall  x\in B(R),\, r\in (0,1],
\end{align}
\end{prop}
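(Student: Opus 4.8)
The plan is to reduce the bound on negative moments of the LBM exit time $\tau_{B(x,r)}$ to negative-moment bounds for the Liouville measure $M$ of small balls, via the Green-function identity \eqref{eq:defG_U}. The key elementary observation is that for a non-negative functional $A$ on a probability space and $q>0$ one has the representation
\begin{align*}
E[A^{-q}] = \frac{1}{\Gamma(q)} \int_0^\infty \lambda^{q-1} E[e^{-\lambda A}] \, d\lambda,
\end{align*}
so it suffices to control $E_x[e^{-\lambda \tau_{B(x,r)}}]$, or rather, to extract enough decay in $\lambda$. A cleaner route (which I expect the authors take) is to use the chain of stopping-time / Markov-property arguments: if $q\in(0,1]$, then by subadditivity of $t\mapsto t^q$ and the strong Markov property of $\mathcal{B}$ at the exit time, one gets for any integer $m\geq 1$ a bound like $E_x[\tau_{B(x,r)}^{-q}] \le$ something in terms of $P_x[\tau_{B(x,r)} \le s]$ for small $s$; and for general $q>0$ one first proves the case $q\in(0,1]$ and then bootstraps, or simply notes $\tau^{-q} = (\tau^{-\lceil q\rceil})^{q/\lceil q\rceil}$ with Jensen. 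The heart of the matter is therefore a good upper bound on $P_x[\tau_{B(x,r)}\le s]$, equivalently a lower bound on $\tau_{B(x,r)}$ in probability.

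To bound $P_x[\tau_{B(x,r)}\le s]$ from above, I would use the Green operator identity. Since $\tau_{B(x,r)}=F_{T_{B(x,r)}}$ and $F$ is a PCAF with Revuz measure $M$, one has from \eqref{eq:defG_U} with $f\equiv 1$ that
\begin{align*}
E_x\bigl[\tau_{B(x,r)}\bigr] = G_{B(x,r)} 1(x) = \int_{B(x,r)} g_{B(x,r)}(x,y) \, M(dy).
\end{align*}
More usefully, localizing to a smaller concentric ball gives a lower bound: for $y$ in the inner half-ball $B(x,r/2)$, the Green function $g_{B(x,r)}(x,y) = \frac{1}{\pi}\log\frac{1}{|x-y|} + \Psi_{x,r}(x,y)$ from \eqref{eq:killed_green} is bounded below by a constant multiple of $\log(1/r)$ when $|x-y|\le r/2$ — wait, that is not bounded below uniformly; instead one uses that $g_{B(x,r)}(x,y)\ge c$ for $y$ in an annulus at scale $r$, giving $E_x[\tau_{B(x,r)}] \ge c\, M(B(x,r)\setminus B(x,r/2))$, and more to the point one wants the quenched lower bound $\tau_{B(x,r)} \ge c\, M(B(x, cr))$ with high probability. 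This is exactly the content of the exit-time estimates proved in \cite{GRV13} (their Lemma on exit times from balls), which I would invoke: $P_x[\tau_{B(x,r)} \le s] \le$ a bound involving $s / M(B(x,r'))$ for a suitable $r'\asymp r$, via Chebyshev applied to the Green-kernel lower bound combined with the Markov property to handle the full exit rather than just hitting the inner ball. Then feeding in the negative-moment estimate for $M(B(x,r))$ — precisely the bound $\bbE[M(B(x,r))^{-q}] \le c\, r^{\tilde\xi(q)}$ used in the proof of Lemma~\ref{lem:vol_dec} (from \cite{RV10} and Lemma~\ref{lem:neg_mom}) — converts the probabilistic exit bound into the claimed moment bound $E_x[\tau_{B(x,r)}^{-q}] \le C r^{-(2+\tilde\xi(q))}$, with the extra $r^{-2}$ coming from the $|k|,|l|\le 2^n$ cardinality of the grid $\Xi_{R,n}$ in the Borel–Cantelli union bound, which forces the exponent $\kappa$ to exceed $2+\tilde\xi(q)$ rather than equal it.

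Concretely, the steps in order: (1) fix $q$, $\kappa>2+\tilde\xi(q)$, $R\ge 1$; choose $\varepsilon>0$ small with $2+\tilde\xi(q)+\varepsilon<\kappa$. (2) Discretize: set $r_n=2^{-n}R$ and the grid $\Xi_{R,n}$ as in \eqref{def:grid}; it suffices by a covering/monotonicity argument to bound $E_x[\tau_{B(x,r_n)}^{-q}]$ uniformly for $x\in\Xi_{R,n}$ and then pass to general $x\in B(R)$, $r\in(0,1]$ by sandwiching $r$ between $2r_{n}$ and $4r_n$. (3) For fixed $x,n$: use the Green-function lower bound \eqref{eq:killed_green}–\eqref{eq:defG_U} together with the strong Markov property of $\mathcal{B}$ to show $P_x[\tau_{B(x,r_n)}\le s] \le c\, s\, M(B(x, r_{n+1}))^{-1}$ for all $s>0$ (this is where one cites \cite{GRV13}). (4) Convert: $E_x[\tau_{B(x,r_n)}^{-q}] = q\int_0^\infty s^{-q-1} P_x[\tau_{B(x,r_n)}\le s]\,ds$; this integral diverges at $s=0$ from step (3) alone, so one needs the refined bound $P_x[\tau_{B(x,r_n)}\le s]\le \exp(-c\,M(B(x,r_{n+1}))/s)$ for small $s$, which again comes from \cite{GRV13} via iterating the Markov property over $\sim M(B(x,r_{n+1}))/s$ independent excursions; then $E_x[\tau_{B(x,r_n)}^{-q}] \le C\, M(B(x,r_{n+1}))^{-q}$. (5) Take $\bbP$-expectation and apply Chebyshev plus the negative-moment bound $\bbE[M(B(x,r_{n+1}))^{-q}]\le c\,2^{-n\tilde\xi(q)}$ over the grid: $\bbP[\max_{x\in\Xi_{R,n}} M(B(x,r_{n+1}))^{-q} \ge 2^{n(2+\tilde\xi(q)+\varepsilon)}\cdot\text{const}] \le c\, 2^{2n}\, 2^{-n(2+\varepsilon)} = c\,2^{-\varepsilon n}$, which is summable. (6) Borel–Cantelli gives $\bbP$-a.s.\ a random $C_6$ with $E_x[\tau_{B(x,r_n)}^{-q}] \le C_6 r_n^{-(2+\tilde\xi(q)+\varepsilon)} \le C_6 r_n^{-\kappa}$ for all large $n$ and all $x\in\Xi_{R,n}$; adjust $C_6$ to cover small $n$ and pass to general $x,r$ as in step (2), absorbing the bounded distortion from $r\asymp r_n$ into the constant.

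I expect the main obstacle to be step (4): getting exponential (rather than merely linear) decay of $P_x[\tau_{B(x,r)}\le s]$ as $s\downarrow 0$, uniformly over $x$ in the grid, so that the negative moment integral converges. This requires iterating the strong Markov property at successive exit times of concentric sub-balls and controlling the product of the resulting single-step hitting probabilities; the cleanest implementation is to quote the relevant quantitative exit-time tail bound from \cite{GRV13} directly rather than reprove it. A secondary nuisance is the bookkeeping in step (2) to pass from the dyadic grid and scales $r_n$ to arbitrary $x\in B(R)$ and $r\in(0,1]$, using that $B(y,r)\supset B(x,r_{n+1})$ for a suitable grid point $x$ when $r_{n}\le r < r_{n-1}$ — entirely analogous to the end of the proof of Lemma~\ref{lem:vol_dec}.
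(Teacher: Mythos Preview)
Your outline diverges from the paper's proof in its main input and in the grid-to-general step, and both divergences create real gaps.

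The paper does \emph{not} pass through $M\bigl(B(x,r)\bigr)^{-q}$. Its starting point is the \emph{annealed} bound
\[
\bbE E_x\bigl[\tau_{B(x,r)}^{-q}\bigr]\le c\,r^{-\tilde\xi(q)}
\]
from \cite[Proposition~2.12]{GRV13} (obtained by Fatou from the $n$-regularised version). The Borel--Cantelli argument is then run not on $E_x[\tau_{B(x,r_n)}^{-q}]$ but on $E_{\mu_{x,r_n}}[\tau_{B(x,2r_n)}^{-q}]$, where $\mu_{x,r_n}$ is the \emph{exit distribution} of the LBM from $B(x,r_n)$; the annealed bound transfers to this quantity by integrating over $z\in\partial B(x,r_n)$ and using $B(z,r_n)\subset B(x,2r_n)$. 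This choice is the whole point, because of the next issue.

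Your step~(2)/(6) --- passing from grid points to general $y\in B(R)$ by ball inclusion ``$B(y,r)\supset B(x,r_{n+1})$'' as in Lemma~\ref{lem:vol_dec} --- does not work here: $E_y[\tau_{B(y,r)}^{-q}]$ starts the process at $y$, and there is no monotonicity relating it to $E_x[\tau_{B(x,r_{n+1})}^{-q}]$ started at a different point $x$. The paper resolves this by the strong Markov property at $\tau_{B(x,r_n)}$ (discarding the time up to that stopping time, which only increases $\tau^{-q}$) to get
\[
E_y\bigl[\tau_{B(y,r)}^{-q}\bigr]\le E_{\mu^y_{x,r_n}}\bigl[\tau_{B(x,2r_n)}^{-q}\bigr],
\]
and then compares $\mu^y_{x,r_n}\le c\,\mu_{x,r_n}$ via the explicit Poisson kernel of planar Brownian motion on a disk (an instance of elliptic Harnack). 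This is why the Borel--Cantelli had to be set up with the exit distribution in the first place.

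Separately, your step~(4) is a genuine obstacle, as you suspected: the bound $P_x[\tau_{B(x,r)}\le s]\le\exp\bigl(-cM(B(x,r'))/s\bigr)$ is not in \cite{GRV13}, and the excursion-iteration you sketch would require controlling $\sup_{z\in B(x,r)}P_z[\tau_{B(z,r')}\le s']$, i.e.\ exactly the uniform-in-starting-point estimate you are trying to prove. The paper's route via the annealed moment bound plus the exit-distribution/Harnack trick sidesteps this circularity entirely.
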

\begin{proof}
Since \eqref{eq:mom_exit} is weaker for larger $\kappa$ and smaller $R$,
it suffices to show \eqref{eq:mom_exit} $\bbP$-a.s.\ for each $\kappa$ and $R$. 
First we note that, letting $n\to\infty$ in \cite[Proposition~2.12]{GRV13}
by using \cite[Lemma~2.8]{GRV13} (see also Theorem~\ref{thm:constrF} below)
and Fatou's lemma, we get
\begin{align} \label{eq:negmom_annealed}
 \bbE E_x \big[ \tau_{B(x,r)}^{-q}\big]\leq c r^{-\tilde \xi(q)},
	\qquad \forall x\in \bbR^2, \, r\in (0,1],
\end{align}
for some $c=c(\gamma, q)>0$. As in the proof of Lemma~\ref{lem:vol_dec} above
let $r_n:=2^{-n}R$ and $\Xi_{R,n}$ be defined as in \eqref{def:grid}
for any $n\geq 1$.
% \begin{align*}
%  \Xi_{R,n}:=\left\{ \big(\tfrac k {2^{n+1}}R, \tfrac l {2^{n+1}}R \big) \, | \, k,l\in \bbZ,\, -2^{n+1} \leq k,l \leq 2^{n+1} \right\} \subset [-R,R]^2.
% \end{align*}
In the sequel we write $E_\mu$ for the expectation operator associated with
the law $P_\mu$ of a Brownian motion with initial distribution $\mu$. Let
$x\in \bbR^2$ and let $\mu_{x,r_n}:=P_x[\mathcal{B}_{\tau_{B(x,r_n)}} \in \cdot]$
be the distribution of the LBM upon exiting $B(x,r_n)$.
For any $z\in \partial B(x,r_n)$, since $B(z, r_{n})\subset B(x,2 r_{n})$
and hence $\tau_{B(z, r_{n})}\leq \tau_{B(x, 2 r_{n})}$,
by using \eqref{eq:negmom_annealed} we get
\begin{align*}
 \bbE E_z \bigl[ \tau_{B(x,2 r_{n})}^{-q}\bigr]
	\leq \bbE E_z \bigl[ \tau_{B(z, r_{n})}^{-q}\bigr]
	\leq c r_{n}^{-\tilde \xi(q)},
\end{align*}
provided $n$ is large enough so that $r_{n}\leq 1$.
By Fubini's theorem, the $\mu_{x,r_n}(dz)$-integral of this inequality becomes
\begin{align*}
 \bbE E_{\mu_{x,r_n}} \bigl[ \tau_{B(x,2 r_{n})}^{-q}\bigr]
	\leq  c r_{n}^{-\tilde \xi(q)}.
\end{align*}
Let now $\kappa> 2+\tilde \xi(q)$ and $n_0 \geq 1$ satisfying $r_{n_0}\in [\frac{1}{2},1]$
be fixed. Then for all $n\geq n_0$ we obtain by \v{C}eby\v{s}ev's inequality,
\begin{align*}
 \bbP\Bigl[\max_{x\in \Xi_{R,n+1}} E_{ \mu_{x,r_n}} \bigl[ \tau_{B(x,2 r_{n})}^{-q}\bigr]\geq  r_{n}^{-\kappa}\Bigr]
	&\leq  r_{n}^\kappa \sum_{x\in \Xi_{R,n+1}} \bbE E_{\mu_{x,r_n}} \bigl[ \tau_{B(x,2 r_{n})}^{-q}\bigr]
 \leq \frac c { 2^{n(\kappa-\tilde\xi(q)-2)}}
\end{align*}
for some $c=c(\gamma,R,q,\kappa)>0$. Hence by our choice of $\kappa$,
\begin{align*}
 \sum_{n\geq n_0} \bbP\Bigl[\max_{x\in \Xi_{R,n+1}}
	E_{\mu_{x,r_n}} \bigl[ \tau_{B(x,2r_{n})}^{-q}\bigr]\geq r_{n}^{-\kappa}\Bigr]
	< \infty
\end{align*}
and we apply the Borel-Cantelli lemma to obtain that $\bbP$-a.s.\ for all
$n\geq n_0$ and for all $x\in \Xi_{R,n+1}$,
\begin{align} \label{eq:neg_momhit}
 E_{\mu_{x,r_n}} \bigl[ \tau_{B(x,2r_{n})}^{-q}\bigr] \leq C r_{n}^{-\kappa}
\end{align}
for some random constant $C=C(X,\gamma, R,q,\kappa)>0$.

Now for any $r\in (0,1]$ we choose $n \geq n_0$ such that $r_n\leq \frac{2}{5}r < 2 r_{n}$.
For all $y\in B(R)$, by construction there exists $x\in \Xi_{R,n+1}$ such that
$|x-y|\leq \frac 1 2 r_{n}$.
Furthermore, from $B(x,r_n)\subset B(x,2 r_{n}) \subset B(y,r)$ we have
$\tau_{B(x,r_n)}\leq \tau_{B(x,2 r_{n})}
	=\tau_{B(x,r_n)}+\tau_{B(x,2 r_{n})}\circ\vartheta_{\tau_{B(x,r_n)}}
	\leq \tau_{B(y,r)}$,
and therefore by the strong Markov property \cite[Theorem~A.1.21]{CF12}
of $\mathcal{B}$,
\begin{align*}
 &E_y\bigl[ \tau_{B(y,r)}^{-q} \bigr]
	\leq E_y\bigl[ \tau_{B(x,2 r_{n})}^{-q} \bigr]
	\leq E_y\Bigl[\bigl( \tau_{B(x,r_n)}+ \tau_{B(x,2 r_{n})} \circ \vartheta_{\tau_{B(x,r_n)}} \bigr)^{-q} \Bigr] \\
 &\mspace{30mu}\leq E_y\Bigl[\bigl( \tau_{B(x,2 r_{n})} \circ \vartheta_{\tau_{B(x,r_n)}} \bigr)^{-q} \Bigr]
	= E_y\Bigl[E_{\mathcal{B}_{\tau_{B(x,r_n)}}} \bigl[\tau_{B(x,2 r_{n})}^{-q}\bigr] \Bigr]
	=E_{\mu^y_{x,r_n}}\bigl[\tau_{B(x,2 r_{n})}^{-q}\bigr],
\end{align*}
where $\mu^y_{x,r_n}:=P_y\bigl[\mathcal{B}_{\tau_{B(x,r_n)}} \in \cdot\bigr]$.
Since $\mu^y_{x,r_n}=P_y\bigl[B_{T_{B(x,r_n)}} \in \cdot\bigr]$ by
$\mathcal{B}_{\tau_{B(x,r_{n})}}=B_{T_{B(x,r_{n})}}$,
the exact formula for the distribution of a Brownian motion upon exiting balls
%(see e.g.\ \cite[Theorem~II.1.17]{Ba95})
(see e.g.\ \cite[Exercise~4.2.24]{KS91})
implies that $\mu^y_{x,r_n}\leq c \mu_{x,r_n}$ for some explicit constant $c>0$
(this can be regarded as an application of the scale-invariant elliptic Harnack
inequality). Thus
$E_y\bigl[ \tau_{B(y,r)}^{-q} \bigr]
	\leq E_{\mu^y_{x,r_n}}\bigl[\tau_{B(x,2 r_{n})}^{-q}\bigr]
	\leq c E_{\mu_{x,r_n}}\bigl[\tau_{B(x,2 r_{n})}^{-q}\bigr]$
and the claim follows from \eqref{eq:neg_momhit}.
\end{proof}

\begin{prop} \label{prop:tailtau1}
$\bbP$-a.s., for any  $\beta> \alpha_1$ and any $R\geq 1$ there exist
random constants $C_i=C_i(X,\gamma,R,\beta)>0$, $i=7,8$, such that 
\begin{align*}
 P_x[\tau_{B(x,r)}\leq t]
	\leq C_7 \exp\Bigl(-C_8 (r^\beta /t )^{\frac{1}{\beta -1}}\Bigr),
	\qquad \forall  x\in B(R), \, r\in (0,1], \, t>0.
\end{align*}
\end{prop}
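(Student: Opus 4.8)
The plan is to deduce the sub-exponential tail bound for the exit times from the negative moment estimate in Proposition~\ref{prop:negmom_tau} by a standard Chebyshev/optimization argument, in the spirit of the classical proof that a control on all moments of a positive random variable yields a tail bound with a stretched-exponential rate. First I would fix $\beta>\alpha_1=\frac{1}{2}(\gamma+2)^2$ and $R\geq 1$, and pick $q=q(\beta)>0$ large enough that $\beta>\frac{2+\tilde\xi(q)}{q}$; this is possible because $\tilde\xi(q)=(2+\frac{\gamma^2}{2})q+\frac{\gamma^2}{2}q^2$ grows quadratically, so $\frac{2+\tilde\xi(q)}{q}=\frac{2}{q}+(2+\frac{\gamma^2}{2})+\frac{\gamma^2}{2}q\to 2+\frac{\gamma^2}{2}+\ldots$, and its infimum over $q$ is exactly $\alpha_1$ (attained at $q=2/\gamma$, as noted in the proof of Lemma~\ref{lem:vol_dec}). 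Hence for $\beta>\alpha_1$ some $q$ works; then with $\kappa:=\beta q$ we have $\kappa>2+\tilde\xi(q)$, so Proposition~\ref{prop:negmom_tau} provides a random constant $C_6=C_6(X,\gamma,R,q,\kappa)>0$ with $E_x[\tau_{B(x,r)}^{-q}]\leq C_6 r^{-\beta q}$ for all $x\in B(R)$, $r\in(0,1]$.

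Next, by Chebyshev's inequality, for any $t>0$,
\begin{align*}
 P_x[\tau_{B(x,r)}\leq t]
	= P_x\bigl[\tau_{B(x,r)}^{-q}\geq t^{-q}\bigr]
	\leq t^{q}\, E_x\bigl[\tau_{B(x,r)}^{-q}\bigr]
	\leq C_6\, (t/r^{\beta})^{q}.
\end{align*}
This bound, however, is polynomial in $t/r^\beta$, not exponential; to upgrade it one exploits that we have such a bound \emph{for every} sufficiently large $q$, and feeds it into an iteration along the Markov property. Concretely, I would split the ball $B(x,r)$ into roughly $N$ concentric annuli and use that crossing $B(x,r)$ forces the LBM to traverse all of them; by the strong Markov property at successive exit times $\tau_{B(x,(j/N)r)}$ and the uniform-over-$B(R)$ moment bound of Proposition~\ref{prop:negmom_tau}, the event $\{\tau_{B(x,r)}\leq t\}$ is contained in the event that $N$ successive independent-ish exit times each are $\leq t$, giving $P_x[\tau_{B(x,r)}\leq t]\leq \bigl(C\,(Nt/r^{\beta})^{q}\bigr)^{N}$ after appropriate rescaling (here $C$ absorbs the $N$-dependence coming from annulus width $r/N$). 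Optimizing over $N$—choosing $N\asymp (r^{\beta}/t)^{1/(\beta-1)}$ up to constants and absorbing the combinatorial factors by possibly enlarging $q$—yields the claimed bound $C_7\exp(-C_8(r^\beta/t)^{1/(\beta-1)})$. For $t$ large relative to $r^\beta$ the exponent is $O(1)$ and the inequality is trivial after adjusting $C_7$, so it suffices to treat small $t$.

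The main obstacle I anticipate is handling the iteration cleanly while keeping the constants \emph{uniform} over $x\in B(R)$ and $r\in(0,1]$: each application of the strong Markov property restarts the LBM at a point $\mathcal{B}_{\tau_{B(x,\rho)}}=B_{T_{B(x,\rho)}}$ on the inner sphere, whose distribution is an explicit (Poisson-kernel) law that, as in the proof of Proposition~\ref{prop:negmom_tau}, is dominated by a constant multiple of the harmonic measure from the center; one must track that this multiplicative constant is absorbable and does not blow up over the $N$ iterations. A cleaner alternative that avoids some of this bookkeeping is to use the finer chaining already built into Proposition~\ref{prop:negmom_tau}—which was proved by a grid/Borel–Cantelli argument giving control uniformly over nearby base points—and to package the stretched-exponential upgrade via a Laplace-transform (exponential-moment) estimate: bound $E_x[\exp(\lambda\,\tau_{B(x,r)}^{-1/(\beta-1)}\wedge \text{something})]$ by summing the moment bounds, which after the sub-ball decomposition gives geometric decay and hence the exponential tail directly. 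Either way the arithmetic linking $q$, $\kappa=\beta q$, the number of annuli, and the final exponent $\frac{1}{\beta-1}$ is the delicate part, but it is routine once the decomposition is set up, and the constraint $\beta>\alpha_1$ enters exactly through the requirement that $\kappa>2+\tilde\xi(q)$ be achievable with $q$ large.
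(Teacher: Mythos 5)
Your proposal follows the same two\nobreakdash-step strategy as the paper: (1) combine \v{C}eby\v{s}ev's inequality with the locally uniform negative-moment bound of Proposition~\ref{prop:negmom_tau} to get a single-scale estimate of the form $P_x[\tau_{B(x,r)}\leq t]\leq C(t/r^{\beta})^{q}$, uniformly over $x\in B(R)$ and $r\in(0,1]$; (2) upgrade this to the stretched-exponential bound by a multi-scale chaining argument. The only structural difference is that the paper does not carry out step (2) by hand: it invokes \cite[Theorem~7.2]{GK14}, which states precisely that a uniform bound $P_x[\tau_{B(x,r)}\leq\delta r^{\beta}]\leq\varepsilon$ for some fixed $\varepsilon\in(0,1)$ and $\delta>0$ implies the tail bound $C_7\exp\bigl(-C_8(r^{\beta}/t)^{1/(\beta-1)}\bigr)$. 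A small remark on step (1): the paper simply takes $q=2/\gamma$, the minimiser of $q\mapsto(2+\tilde\xi(q))/q$, which works for \emph{every} $\beta>\alpha_1$ simultaneously; letting $q$ depend on $\beta$ as you propose forces you to handle the $q$-dependent null set in Proposition~\ref{prop:negmom_tau}, and ``$q$ large enough'' points in the wrong direction since $(2+\tilde\xi(q))/q\to\infty$ as $q\to\infty$ (you do correct this yourself).

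One concrete caution about step (2) as you primarily describe it. The containment of $\{\tau_{B(x,r)}\leq t\}$ in the intersection of the $N$ events ``the $j$th annulus-crossing time is $\leq t$'', combined with the strong Markov property and the single-scale bound at radius $r/N$, only yields $\bigl(C(tN^{\beta}/r^{\beta})^{q}\bigr)^{N}$, and optimising this over $N$ gives $\exp\bigl(-c(r^{\beta}/t)^{1/\beta}\bigr)$, which is strictly weaker than the claimed $\exp\bigl(-c(r^{\beta}/t)^{1/(\beta-1)}\bigr)$ in the relevant regime $r^{\beta}\geq t$. To reach the exponent $\frac{1}{\beta-1}$ one must exploit that the \emph{sum} of the $N$ crossing times is $\leq t$ (so that on average each is $\leq t/N$), which is done via the exponential-\v{C}eby\v{s}ev estimate $P[\sum_{j}\sigma_j\leq t]\leq e^{\lambda t}\prod_{j}\sup_y E_y[e^{-\lambda\sigma_j}]$ followed by optimisation in $\lambda$ and $N$ --- exactly the content of \cite[Theorem~7.2]{GK14} and of the Laplace-transform route you mention only as an ``alternative''. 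With that version of the chaining (and the Poisson-kernel domination you already identify for restarting the process uniformly over $B(R)$), your argument is complete and coincides with the paper's.
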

\begin{proof}
Let $\beta>\alpha_1$ and $R\geq 1$. By \cite[Theorem~7.2]{GK14}
it is enough to show that there exist $\varepsilon\in (0, 1)$ and $\delta>0$
such that $P_x[\tau_{B(x,r)}\leq \delta r^\beta]\leq \varepsilon$
for all $x\in B(R)$ and $r\in (0,1]$.
Indeed, let $\varepsilon\in (0, 1)$ and set $q:=2/\gamma$,
$\kappa:=\frac{1}{2}(\alpha_1+\beta)q$ and $\delta:=(\varepsilon/C_6)^{1/q}$, so that
$\kappa>\alpha_1 q=2+\tilde \xi(q)$ by $\beta> \alpha_1= (2+ \tilde \xi (q))/q$.
Then for any $x\in B(R)$ and $r\in (0,1]$,
by \v{C}eby\v{s}ev's inequality and Proposition~\ref{prop:negmom_tau} we have
\begin{align*}
 P_x[\tau_{B(x,r)}\leq \delta r^\beta]
	=P_x\bigl[(\tau_{B(x,r)})^{-q}\geq (\delta r^\beta)^{-q}\bigr]
	\leq C_6 \delta^q \, r^{\beta q-\kappa} \leq \varepsilon,
\end{align*}
proving the claim.
\end{proof}

\section{Strong Feller property of the resolvents} \label{sec:feller}
In this section we prove that the resolvent operator of the killed LBM
$\mathcal{B}^U$ has the strong Feller property. We will mainly follow
the arguments in \cite[Theorem~2.4]{GRV13a}, where the strong Feller
property of the original LMB $\mathcal{B}$ is established.
The essential ingredients are a coupling lemma and the following lemma.

\begin{lemma}[{\cite[Lemma~2.19]{GRV13v2}}] \label{lem:Fn_Tsmall}
$\bbP$-a.s., for all $R>0$,
\begin{align*}
 \lim_{t\downarrow 0} \sup_{n\geq 1} \sup_{x\in B(R)}E_x[F^n_t]=0.
\end{align*}
\end{lemma}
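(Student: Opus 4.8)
The plan is to exploit the identity
\[
 E_x[F^n_t]=\int_{\bbR^2} g^t(x,y)\,M_n(dy),\qquad g^t(x,y):=\int_0^t q_s(x,y)\,ds,
\]
which follows from Tonelli's theorem, \eqref{eq:dfn_Fn} and \eqref{eq:dfn_Mn}, and to estimate the right-hand side by splitting the $y$-integral over $B(x,\delta)$, $B(R+1)\setminus B(x,\delta)$ and $\bbR^2\setminus B(R+1)$. Since the claim is monotone in $R$, it suffices to treat $R\in\bbN$. The substitution $u=|x-y|^2/(2s)$ gives $g^t(x,y)=\tfrac1{2\pi}\int_{|x-y|^2/(2t)}^\infty u^{-1}e^{-u}\,du$, from which, for $t\in(0,\tfrac12]$, one reads off \emph{(i)} $g^t(x,y)\le\tfrac{t}{\pi|x-y|^2}e^{-|x-y|^2/(2t)}$ when $|x-y|^2\ge 2t$, and \emph{(ii)} $g^t(x,y)\le\tfrac1{2\pi}(\log\tfrac{2t}{|x-y|^2}+1)$ when $|x-y|\le\sqrt{2t}$, together with the monotonicity of $t\mapsto g^t(x,y)$.

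The argument rests on two $\bbP$-a.s.\ facts about $(M_n)_{n\ge1}$, both coming from its martingale structure: with $\cF_n:=\sigma(Y_1,\dots,Y_n)$ one has $\bbE[M_n(\,\cdot\,)\mid\cF_{n-1}]=M_{n-1}(\,\cdot\,)$, so $(M_n(B))_{n\ge1}$ is a nonnegative $(\cF_n)$-martingale for each Borel $B$, and likewise $(M_n(h))_{n\ge1}$ for each Borel $h\ge0$ with $\int h\,dx<\infty$. \textbf{(A)} Applied to $h_R(y):=\indicator_{B(R+1)}(y)+e^{-(|y|-R-1)^2/4}\indicator_{\{|y|\ge R+1\}}$ --- which dominates $\indicator_{B(R+1)}$ and satisfies $h_R(y)\ge e^{-|x-y|^2/4}$ for $x\in B(R)$, $|y|\ge R+1$ --- martingale convergence gives $W_R:=\sup_{n\ge1}M_n(h_R)<\infty$ $\bbP$-a.s. \textbf{(B)} Fix $q_0\in(1,4/\gamma^2)$ (a nonempty interval since $\gamma<2$), so that $\xi(q_0):=(2+\tfrac{\gamma^2}{2})q_0-\tfrac{\gamma^2}{2}q_0^2=2+(q_0-1)(2-\tfrac{\gamma^2}{2}q_0)>2$. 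Kahane's convexity inequality applied to $\gamma X_n$ and $\gamma X$ (whose covariances satisfy $\gamma^2\sum_{k\le n}g^{(m)}_k\le\gamma^2 g^{(m)}$) gives $\bbE[M_n(B)^{q_0}]\le\bbE[M(B)^{q_0}]$; with Doob's $L^{q_0}$-inequality and the classical bound $\bbE[M(B(x,r))^{q_0}]\le c\,r^{\xi(q_0)}$ ($x\in\bbR^2$, $r\in(0,1]$) this yields $\bbE[\sup_n M_n(B(x,r))^{q_0}]\le c'\,r^{\xi(q_0)}$, uniformly in $x,r$. A \v{C}eby\v{s}ev/Borel--Cantelli covering argument over the scales $2^{-k}$ and the grids \eqref{def:grid} (with $R$ replaced by $R+1$), analogous to the proof of Lemma~\ref{lem:vol_dec} and using any $\alpha\in(0,(\xi(q_0)-2)/q_0)$, then gives: $\bbP$-a.s.\ there is $C=C(X,\gamma,R)>0$ with $\sup_{n\ge1}M_n(B(x,r))\le C r^{\alpha}$ for all $x\in B(R)$, $r\in(0,1]$.

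Granting (A) and (B): fix $R\in\bbN$ and $\varepsilon>0$. For $\delta\in(0,\tfrac1{2e}]$ and $t\le\delta$, monotonicity of $g^t$ and \emph{(ii)} give $g^t(x,y)\le g^\delta(x,y)\le\tfrac1\pi\log\tfrac1{|x-y|}$ on $B(x,\delta)$, so using $\log\tfrac1{|x-y|}=\int_{|x-y|}^1 s^{-1}\,ds$, Tonelli and (B),
\[
 \int_{B(x,\delta)}g^t(x,y)\,M_n(dy)\le\frac1\pi\int_0^\delta\frac{M_n(B(x,s))}{s}\,ds+\frac1\pi M_n(B(x,\delta))\log\tfrac1\delta\le\frac{C}{\pi}\Bigl(\frac{\delta^\alpha}{\alpha}+\delta^\alpha\log\tfrac1\delta\Bigr)
\]
uniformly in $n\ge1$, $x\in B(R)$; choose $\delta$ making this $<\varepsilon/2$. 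With $\delta$ fixed, pick $t_\ast\in(0,\delta]$ with $\delta^2/(2t_\ast)\ge1$ and small enough that $\tfrac{t}{\pi\delta^2}e^{-\delta^2/(2t)}W_R<\tfrac\varepsilon4$ and $\tfrac t\pi e^{-1/(4t)}W_R<\tfrac\varepsilon4$ for $t\le t_\ast$. For such $t$, bound \emph{(i)} ($r\mapsto r^{-2}e^{-r^2/(2t)}$ is decreasing, and $g^t(x,y)\le\tfrac t\pi e^{-|x-y|^2/(2t)}\le\tfrac t\pi e^{-1/(4t)}e^{-|x-y|^2/4}\le\tfrac t\pi e^{-1/(4t)}h_R(y)$ on $\bbR^2\setminus B(R+1)$, where $|x-y|>1$), together with $M_n(B(R+1))\le M_n(h_R)\le W_R$, bound each of the remaining two integrals by $\varepsilon/4$. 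Hence $\sup_{n\ge1}\sup_{x\in B(R)}E_x[F^n_t]<\varepsilon$ for all $t\le t_\ast$; letting $\varepsilon\downarrow0$ and taking the union over $R\in\bbN$ completes the proof.

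The step I expect to be the main obstacle is \textbf{(B)}: since $g^t(x,y)\to0$ only pointwise and very non-uniformly near the diagonal, the proof genuinely needs a $\bbP$-a.s.\ bound on the $M_n$-mass of a small ball around \emph{every} point of $B(R)$ that is uniform \emph{both} in $n$ and in the centre. Producing it requires combining the martingale property of $n\mapsto M_n(B)$ (so that $\sup_n$ is a.s.\ finite and controllable via Doob's inequality) with Kahane's convexity inequality (to import the known multifractal moment asymptotics of the limit measure $M$), after which the Borel--Cantelli scheme of Lemma~\ref{lem:vol_dec} applies. Everything else --- the elementary bounds \emph{(i)}--\emph{(ii)} and fact \textbf{(A)} --- is routine.
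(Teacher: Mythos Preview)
The paper does not prove this lemma; it simply imports it from \cite[Lemma~2.19]{GRV13v2} (an earlier arXiv version of \cite{GRV13}), and the remark immediately following the statement explains that the result was removed from the final version of that paper. So there is no in-paper proof to compare against.

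Your argument is correct. Two comments:

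\begin{itemize}
\item Your Fact \textbf{(B)} --- the $\bbP$-a.s.\ bound $\sup_{n\ge1}M_n(B(x,r))\le Cr^\alpha$ uniform over $x\in B(R)$ and $r\in(0,1]$ --- is in fact already recorded in the paper as \eqref{eq:voldec_n} (Appendix~\ref{app:revuz}), where it is cited from \cite[Theorem~2.2]{GRV13}. So the Doob/Kahane/Borel--Cantelli machinery you set up is not needed here; you could simply invoke that estimate. Your derivation of it is nonetheless sound, with the minor caveat that Kahane's convexity inequality cannot be applied literally between $\gamma X_n$ and $\gamma X$ since $X$ is only a distribution: one compares $X_n$ with $X_N$ for $N>n$ and passes to the limit using $L^{q_0}$-boundedness of the martingale $(M_N(B))_N$, or else compares with an exactly log-correlated reference field as in Kahane's original argument (cf.\ the proof of Lemma~\ref{lem:neg_mom} in Appendix~C for a template). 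Either route gives $\sup_n\bbE[M_n(B(x,r))^{q_0}]\le c\,r^{\xi(q_0)}$.
\item The decomposition into the three regions, the elementary bounds \emph{(i)}--\emph{(ii)} on $g^t$, the layer-cake computation on $B(x,\delta)$, and the use of the nonnegative martingale $(M_n(h_R))_n$ to control the tail are all correct and routine, as you say.
\end{itemize}
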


\begin{remark}
The article \cite{GRV13v2} is an earlier version of \cite{GRV13},
but Lemma~\ref{lem:Fn_Tsmall} has been removed from the latter,
which is why we still cite \cite{GRV13v2} in this paper.
\end{remark}

\begin{prop} \label{prop:strFeller}
$\bbP$-a.s., for any non-empty open set $U\subset \bbR^2$ and for any
$\lambda>0$ the resolvent operator $R_\lambda^U$ is strong Feller, i.e.\ it maps
Borel measurable bounded functions on $U$ to continuous bounded functions on $U$. 
\end{prop}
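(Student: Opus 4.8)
The plan is to follow the strategy of \cite[Theorem~2.4]{GRV13a}, adapting it to the killed process. Fix a non-empty open $U\subset\bbR^2$, $\lambda>0$, a bounded Borel $f:U\to\bbR$ (extended by $f(\partial_U):=0$), and a point $x_0\in U$; I want to show $R^U_\lambda f$ is continuous at $x_0$. Choose $R$ with $B(x_0,4r)\subset U\cap B(R-1)$ for some small $r>0$. The key idea is to split, for $x$ near $x_0$, the resolvent integral at the exit time $\tau:=\tau_{B(x_0,r)}$ of the \emph{small} ball, using the strong Markov property of $\mathcal{B}$ (\cite[Theorem~A.1.21]{CF12}):
\begin{align*}
 R^U_\lambda f(x)
 = E_x\Bigl[\int_0^{\tau} e^{-\lambda t} f(\mathcal{B}_t)\,dt\Bigr]
 + E_x\Bigl[e^{-\lambda \tau}\,R^U_\lambda f(\mathcal{B}_\tau)\Bigr],
\end{align*}
valid because $B(x_0,r)\subset U$ so $\tau\le\tau_U$. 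The first term is small uniformly in $x\in B(x_0,r)$: since $\tau = F_{T}$ with $T:=T_{B(x_0,r)}$ the Brownian exit time, we have $\int_0^\tau e^{-\lambda t}|f(\mathcal{B}_t)|\,dt \le \|f\|_\infty\,\tau = \|f\|_\infty F_T$, and $E_x[F_T]\le E_x[F_{T\wedge s}] + E_x[F_T - F_{T\wedge s}]$; using Lemma~\ref{lem:Fn_Tsmall} together with the convergence $F^n\to F$ (Theorem~\ref{thm:GRV13}(iii), Proposition~\ref{prop:pcaf}) and Fatou, one gets $\sup_{x\in B(R)}E_x[F_{T\wedge s}]\le \sup_{n}\sup_{x\in B(R)}E_x[F^n_s]\to 0$ as $s\downarrow 0$, while the tail $E_x[F_T-F_{T\wedge s}]$ is controlled by $P_x[T>s]$ (small Brownian exit probabilities from $B(x_0,r)$ are uniformly small for $x$ deep inside, actually one uses that $T_{B(x_0,r)}$ under $P_x$, $x\in B(x_0,r/2)$, is small with high probability). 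So the first term is $\le\varepsilon\|f\|_\infty$ uniformly for $x$ in a neighbourhood of $x_0$.

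For the second term I use a coupling argument. Since $\mathcal{B}_\tau = B_{T}$ lies on $\partial B(x_0,r)$, and $R^U_\lambda f$ is a bounded Borel function (boundedness: $|R^U_\lambda f|\le\|f\|_\infty/\lambda$), it suffices to show that the exit distributions $\mu_x:=P_x[B_{T_{B(x_0,r)}}\in\cdot]$ and the discounting factors $E_x[e^{-\lambda\tau};\,\mathcal{B}_\tau\in\cdot]$ depend continuously on $x$ near $x_0$ in a strong enough sense. The cleanest route is the coupling lemma from \cite{GRV13a}: for $x,x'\in B(x_0,r/2)$ one couples two Brownian motions $B$ (from $x$) and $B'$ (from $x'$) so that they agree after their respective exits from $B(x_0,r)$ — more precisely, run independent Brownian motions until a suitable stopping time and then make them coalesce, exploiting that the normalised exit densities on $\partial B(x_0,r)$ (given by the Poisson kernel, \cite[Exercise~4.2.24]{KS91}) are mutually absolutely continuous with Radon–Nikodym densities uniformly close to $1$ as $x'\to x$ (scale-invariant Harnack, as already used in the proof of Proposition~\ref{prop:negmom_tau}). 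On the coupling event of probability $\ge 1-\varepsilon'$ the two copies of $B$ are identical after time $\max(T,T')$, so the two copies of $F$ built from them differ only through the pre-exit contribution, which is again $\le\|f\|_\infty\sup_x E_x[F_{T}]$ small; hence $E_x[e^{-\lambda\tau}R^U_\lambda f(\mathcal{B}_\tau)]$ and $E_{x'}[\cdots]$ differ by at most $\big(2\varepsilon' + C\varepsilon\big)\|f\|_\infty/\lambda$, giving continuity of the second term at $x_0$. Combining, $|R^U_\lambda f(x) - R^U_\lambda f(x_0)|$ can be made arbitrarily small, which is the claim.

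The main obstacle is the coupling step: making rigorous the statement that two Liouville Brownian motions started at nearby points can be coupled to agree after exiting a small ball, while simultaneously controlling the discount factor $e^{-\lambda\tau}$, which depends on the random clock $F$ and hence on the whole pre-exit Brownian path. One must be careful that the coupling is performed in the Brownian picture (where it is classical), and then transported through the time change; this is exactly where one invokes the uniform smallness of $E_x[F_T]$ (Lemma~\ref{lem:Fn_Tsmall}) so that discrepancies in the pre-exit segment — the only place the two paths differ — contribute negligibly to both the additive functional and the exponential discount. A secondary technical point is verifying that the appeal to \cite[Theorem~A.1.21]{CF12} (strong Markov property at $\tau_{B(x_0,r)}$ for $\mathcal{B}$) is legitimate and that $R^U_\lambda f$ restricted to $U$ is genuinely Borel measurable so that the decomposition makes sense; both follow from Proposition~\ref{prop:pcaf} and standard time-change theory. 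Everything else — the bound on the first term, boundedness of $R^U_\lambda f$ — is routine given the ingredients already assembled in Sections~2 and~3.
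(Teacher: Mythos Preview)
Your decomposition at the random stopping time $\tau=\tau_{B(x_0,r)}$ is a different route from the paper's, which instead splits at a \emph{deterministic} Brownian time $\varepsilon>0$:
\begin{align*}
R^U_\lambda f(x)=E_x\Bigl[\int_0^{T_U\wedge\varepsilon}e^{-\lambda F_t}f(B_t)\,dF_t\Bigr]
+E_x\bigl[\indicator_{\{T_U>\varepsilon\}}e^{-\lambda F_\varepsilon}R^U_\lambda f(B_\varepsilon)\bigr],
\end{align*}
invokes only the ordinary Markov property at the fixed time $\varepsilon$, and then uses the reflection-type coupling of \cite[Lemma~2.9]{GRV13} (the two Brownian motions from $x,y$ coalesce at a random time $T_{xy}$ with $P_{x,y}[T_{xy}\ge\varepsilon]\to 0$ as $|x-y|\to 0$), handling the extra indicator $\indicator_{\{T_U>\varepsilon\}}$ via the elementary bound $P_y[T_U\le\varepsilon]\le 2e^{-r_x^2/(4\varepsilon)}$ for $y$ in a small ball inside $U$.

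Your control of the first term has a genuine gap. You bound it by $\|f\|_\infty E_x[F_T]$ with $T=T_{B(x_0,r)}$ and write $E_x[F_T]\le E_x[F_{T\wedge s}]+E_x[F_T-F_{T\wedge s}]$, claiming the tail is ``controlled by $P_x[T>s]$''. But $E_x[\indicator_{\{T>s\}}(F_T-F_s)]$ cannot be bounded by any multiple of $P_x[T>s]$ without higher integrability of $F_T$, which you have not established; Lemma~\ref{lem:Fn_Tsmall} concerns only deterministic times and says nothing about $F$ evaluated at a random stopping time. The same unproved smallness of $E_x[\tau]$ reappears in your second-term estimate $|e^{-\lambda\tau^x}-e^{-\lambda\tau^{x'}}|\le\lambda(\tau^x+\tau^{x'})$. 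The fix is available in the paper: use $E_x[\tau_{B(x_0,r)}]=\int_{B(x_0,r)}g_{B(x_0,r)}(x,y)\,M(dy)$ from Proposition~\ref{prop:corrF_M} and bound the integral via \eqref{eq:killed_green} and Lemma~\ref{lem:vol_dec}. The paper's deterministic split sidesteps this entirely, since its first term is directly $\le\|f\|_\infty E_x[F_\varepsilon]$, to which \eqref{eq:F_Tsmall} applies. Separately, your proposed exit-point coupling (force $B^x_{T^x}=B^{x'}_{T^{x'}}$ with high probability, then continue identically) is \emph{not} the coupling of \cite{GRV13a}; such a coupling can indeed be built from the Poisson kernel, but it requires a nontrivial construction that you have only gestured at, whereas the paper's coupling is ready-made.
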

\begin{proof}
Throughout this proof, we fix any environment $\om \in \Omega$ such that all the
conclusions of Proposition~\ref{prop:pcaf} i), iv) and Lemma~\ref{lem:Fn_Tsmall} hold.
Note that by Proposition~\ref{prop:pcaf}, Lemma~\ref{lem:Fn_Tsmall} and
Fatou's lemma we have
\begin{align}\label{eq:F_Tsmall}
 \lim_{t\downarrow 0} \sup_{x\in B(R)}E_x[F_t]=0, \qquad \forall R\geq 1.
\end{align}

Let $U$ be a non-empty open subset of $\bbR^2$, let $\lambda>0$ and
let $f: U\rightarrow \bbR$ be Borel measurable and bounded.
Recall that $T_U=\inf\{s\geq 0:\, B_s\not\in U\}$ denotes the exit time of
the Brownian motion $B$ from $U$. Since $\tau_U=F_{T_U}$,
$R_{\lambda}^{U}f$ can be written as
\begin{align} \label{eq:decompRes}
 R_\lambda^Uf(x)&=E_x\Bigl[ \int_0^{\tau_U} e^{-\lambda t} f(\mathcal{B}_t) \, dt \Bigr]
	=E_x\Bigl[ \int_0^{T_U} e^{-\lambda F_t} f(B_t) \, dF_t \Bigr] \nonumber \\
 &= E_x\Bigl[ \int_0^{T_U \wedge \varepsilon } e^{-\lambda F_t} f(B_t) \, dF_t \Bigr]
	+ E_x\Bigl[ \int_{T_U\wedge \varepsilon}^{T_U} e^{-\lambda F_t} f(B_t) \, dF_t \Bigr] \nonumber \\
 &=:N_\varepsilon(x)+R_\lambda^{U,\varepsilon}f(x)
\end{align}
for any $x\in\bbR^2$ and any $\varepsilon >0$. It is immediate that
\begin{align} \label{eq:decompRes_Nepsilon}
 |N_\varepsilon(x)| \leq \|f\|_\infty \, E_x[F_\varepsilon],
\end{align}
whereas for $R_\lambda^{U,\varepsilon}f(x)$ the Markov property
%\cite[Theorem~A.1.21]{CF12}
of $B$ gives
\begin{align}\label{eq:decompRes_RUepsilon1}
 &R_\lambda^{U,\varepsilon}f(x)
	=E_x\Bigl[ \indicator_{\{T_U>\varepsilon\}} \int_{ \varepsilon}^{T_U} e^{-\lambda F_t} f(B_t) \, dF_t \Bigr] \\
 &\mspace{30mu}=E_x\biggl[ \indicator_{\{T_U>\varepsilon\}} e^{-\lambda F_\varepsilon} E_{B_\varepsilon} \Bigl[\int_{0}^{T_U} e^{-\lambda F_t} f(B_t) \, dF_t \Bigr] \biggr]
 =E_x\Bigl[ \indicator_{\{T_U>\varepsilon\}} e^{-\lambda F_\varepsilon} R_\lambda^U f(B_\varepsilon)\Bigr].
 \nonumber
\end{align}

To estimate $R_\lambda^{U,\varepsilon}f(x)-R_\lambda^{U,\varepsilon}f(y)$
we use the coupling lemma \cite[Lemma~2.9]{GRV13}, which allows to construct
for any $x,y \in \bbR^2$ a couple $(B^x,B^y)$ of Brownian motions
$B^x=(B^x_t)_{t\geq 0}$ and $B^y=(B^y_t)_{t\geq 0}$ with $(B^x_0,B^y_0)=(x,y)$
such that $B^x_t=B^y_t$ for any $t\in[T_{xy},\infty)$ for a random time
$T_{xy}$ satisfying
\begin{align}\label{eq:coupling}
 \lim_{\delta\downarrow 0}\sup_{x,y\in \bbR^2,\,|x-y|\leq\delta}P_{x,y}[T_{xy}\geq\varepsilon]=0
\end{align}
for any $\varepsilon>0$, where $P_{x,y}$ denotes the law of $(B^x,B^y)$.
Let $E_{x,y}$ denote the expectation under $P_{x,y}$ and set
$T^x_U:=T_U(B^x)$, $T^y_U:=T_U(B^y)$, $F^{x}_{t}:=F_{t}(B^x)$ and
$F^{y}_{t}:=F_{t}(B^y)$, with $T_U$ and $F_t$ for $t\geq 0$ regarded as
functions on the path space $\Omega'=C([0,\infty),\bbR^2)$.
Then according to \cite[Proof of Theorem~2.4]{GRV13a}, for any $\varepsilon>0$,
\begin{align}\label{eq:strFellerDiffF}
 \lim_{\delta\downarrow 0}\sup_{x,y\in B(R),\,|x-y|\leq\delta}
	E_{x,y}\Bigl[ \bigl| e^{-\lambda F^x_\varepsilon}- e^{-\lambda F^y_\varepsilon}\bigr|\Bigr]=0,
	\qquad \forall R\geq 1,
\end{align}
whose proof we repeat here for the sake of completeness.
Indeed, for any $\varepsilon'\in(0,\varepsilon]$, since
$F^{x}_{\varepsilon}-F^{x}_{T_{xy}}=F^{y}_{\varepsilon}-F^{y}_{T_{xy}}>0$
$P_{x,y}$-a.s.\ on $\{T_{xy}<\varepsilon\}$ by Proposition~\ref{prop:pcaf} i), ii),
\begin{align*}
 &E_{x,y}\Bigl[ \bigl| e^{-\lambda F^x_\varepsilon}- e^{-\lambda F^y_\varepsilon}\bigr|\Bigr]
	\leq 2P_{x,y}[T_{xy}\geq\varepsilon]
	+E_{x,y}\Bigl[ \indicator_{\{T_{xy}<\varepsilon\}} \bigl| e^{-\lambda F_\varepsilon^x}- e^{-\lambda F_\varepsilon^y}\bigr| \Bigr] \\
 &\mspace{30mu}=2P_{x,y}[T_{xy}\geq\varepsilon]
	+E_{x,y}\Bigl[ \indicator_{\{T_{xy}<\varepsilon\}}e^{-\lambda(F^x_\varepsilon-F^x_{T_{xy}})}\Bigl| e^{-\lambda F^x_{T_{xy}}}-e^{-\lambda F^y_{T_{xy}}}\Bigr|\Bigr]\\
% &\mspace{30mu}\leq 2P_{x,y}[T_{xy}\geq\varepsilon]+ E_{x,y}\Bigl[ \bigl| e^{-\lambda F_{T^x_{xy}}}- e^{-\lambda F^y_{T_{xy}}}\bigr| \Bigr]\\
 &\mspace{30mu}\leq 2P_{x,y}[T_{xy}\geq\varepsilon]+E_{x,y}\Bigl[ \bigl| \lambda F^x_{T_{xy}}-\lambda F^y_{T_{xy}}\bigr|\wedge 1 \Bigr] \\
 &\mspace{30mu}\leq 2P_{x,y}[T_{xy}\geq\varepsilon]+P_{x,y}[T_{xy}\geq\varepsilon']
	+\lambda E_{x,y}\bigl[\indicator_{\{T_{xy}<\varepsilon'\}}(F^x_{\varepsilon'}+F^y_{\varepsilon'})\bigr]\\
 &\mspace{30mu}\leq 3P_{x,y}[T_{xy}\geq\varepsilon']+\lambda\bigl(E_{x}[F_{\varepsilon'}]+E_{y}[F_{\varepsilon'}]\bigr)
\end{align*}
and taking $\lim_{\varepsilon'\downarrow 0}\limsup_{\delta\downarrow 0}\sup_{x,y\in B(R),\,|x-y|\leq\delta}$
yields \eqref{eq:strFellerDiffF} by \eqref{eq:coupling} and \eqref{eq:F_Tsmall}.

Now let $x,y\in\bbR^2$ and $\varepsilon>0$. From \eqref{eq:decompRes_RUepsilon1} we obtain
\begin{align}\label{eq:decompRes_RUepsilon2}
 \bigl| R_\lambda^{U,\varepsilon}f(x)- R_\lambda^{U,\varepsilon}f(y)\bigr|
	&= \Bigl| E_{x,y} \Bigl[ \indicator_{\{T^x_U>\varepsilon\}}  e^{-\lambda F^x_\varepsilon} R_\lambda^U f(B^x_\varepsilon)
	- \indicator_{\{T^y_U>\varepsilon\}}  e^{-\lambda F^y_\varepsilon} R_\lambda^U f(B^y_\varepsilon)\Bigr] \Bigr| \nonumber \\
&\leq \Bigl| E_{x,y} \Bigl[ \indicator_{\{T^x_U>\varepsilon\}}  e^{-\lambda F^x_\varepsilon} \bigl( R_\lambda^U f(B^x_\varepsilon) - R_\lambda^U f(B^y_\varepsilon) \bigr) \Bigr] \Bigr| \nonumber \\
&\mspace{50mu}+\Bigl| E_{x,y} \Bigl[ \bigl( \indicator_{\{T^x_U>\varepsilon\}}  e^{-\lambda F^x_\varepsilon}-\indicator_{\{T^y_U>\varepsilon\}}  e^{-\lambda F^y_\varepsilon}\bigr)  R_\lambda^U f(B^y_\varepsilon)\Bigr] \Bigr|.
\end{align}
Since on the event $\{T_{xy}<\varepsilon\}$ we have
$B^x_\varepsilon=B^y_\varepsilon$ and hence
$R_\lambda^U f(B^x_\varepsilon)=R_\lambda^U f(B^y_\varepsilon)$,
the first term in \eqref{eq:decompRes_RUepsilon2} can be estimated from above by
\begin{multline}\label{eq:decompRes_RUepsilon3}
 E_{x,y} \Bigl[ \bigl| R_\lambda^U f(B^x_\varepsilon) - R_\lambda^U f(B^y_\varepsilon) \bigr| \Bigr]
	=E_{x,y} \Bigl[ \indicator_{\{T_{xy}\geq\varepsilon\}} \bigl| R_\lambda^U f(B^x_\varepsilon) - R_\lambda^U f(B^y_\varepsilon) \bigr| \Bigr]\\
 \leq 2 \| R_\lambda^U f\|_{\infty} P_{x,y}[T_{xy}\geq\varepsilon]
	\leq 2 \lambda^{-1} \|f\|_{\infty} P_{x,y}[T_{xy}\geq\varepsilon],
\end{multline}
where we used the trivial bounds
$0\leq\indicator_{\{T^x_U>\varepsilon\}} e^{-\lambda F^x_\varepsilon}\leq 1$ and
$\| R_\lambda^U f \|_\infty\leq \lambda^{-1} \|f\|_\infty$.
On the other hand, the second term in \eqref{eq:decompRes_RUepsilon2}
is less than or equal to
\begin{align}
 &\lambda^{-1} \|f\|_\infty \, E_{x,y} \Bigl[ \bigl|\indicator_{\{T^x_U>\varepsilon\}}-  \indicator_{\{T^y_U>\varepsilon\}}\bigr| e^{-\lambda F_\varepsilon^x}+ \indicator_{\{T^y_U>\varepsilon\}} \bigl|  e^{-\lambda F_\varepsilon^x}- e^{-\lambda F_\varepsilon^y}\bigr| \Bigr] \nonumber\\
 &\mspace{30mu}\leq\lambda^{-1} \|f\|_\infty \, E_{x,y} \Bigl[ \bigl|\bigl(1-\indicator_{\{T^x_U\leq\varepsilon\}}\bigr)-\bigl(1-\indicator_{\{T^y_U\leq\varepsilon\}}\bigr)\bigr| + \bigl|  e^{-\lambda F_\varepsilon^x}- e^{-\lambda F_\varepsilon^y}\bigr| \Bigr] \nonumber\\
 &\mspace{30mu}\leq\lambda^{-1} \|f\|_\infty\Bigl( P_{x,y}[T^x_U\leq\varepsilon]+P_{x,y}[T^y_U\leq\varepsilon]+E_{x,y}\Bigr[ \bigl| e^{-\lambda F_\varepsilon^x}- e^{-\lambda F_\varepsilon^y}\bigr|\Bigr] \Bigr).
 \label{eq:decompRes_RUepsilon4}
\end{align}
Noting $P_{x,y}[T^x_U\leq\varepsilon]=P_{x}[T_U\leq\varepsilon]$
and $P_{x,y}[T^y_U\leq\varepsilon]=P_{y}[T_U\leq\varepsilon]$,
from \eqref{eq:decompRes}, \eqref{eq:decompRes_Nepsilon},
\eqref{eq:decompRes_RUepsilon2}, \eqref{eq:decompRes_RUepsilon3} and
\eqref{eq:decompRes_RUepsilon4} we get
\begin{align}
 &\bigl|R_\lambda^U f(x)-R_\lambda^U f(y)\bigr| \nonumber\\
 &\mspace{30mu}\leq\|f\|_{\infty}\bigl(E_x[F_{\varepsilon}]+E_y[F_{\varepsilon}]\bigr)
	+\lambda^{-1}\|f\|_{\infty}\bigl(P_{x}[T_U\leq\varepsilon]+P_{y}[T_U\leq\varepsilon]\bigr) \nonumber \\
 &\mspace{80mu}+\lambda^{-1}\|f\|_{\infty}\Bigl(2P_{x,y}[T_{xy}\geq\varepsilon]+E_{x,y}\Bigr[ \bigl| e^{-\lambda F_\varepsilon^x}- e^{-\lambda F_\varepsilon^y}\bigr|\Bigr]\Bigr).
 \label{eq:strFellerDifference}
\end{align}

Finally, let $x\in U$ and choose $r_x>0$ so that $B(x,2r_x)\subset U$.
Then for any $y\in B(x,r_x)$, $T_{B(y,r_x)}\leq T_U$
by $B(y,r_x)\subset B(x,2r_x)\subset U$ and hence
\begin{align}\label{eq:strFellerExitU}
 P_{y}[T_U\leq\varepsilon]\leq P_{y}[T_{B(y,r_x)}\leq\varepsilon]
	\leq 2\exp\bigl(-r_x^2/(4\varepsilon)\bigr)
\end{align}
%(see e.g.\ \cite[Proposition~I.4.8]{Ba95} for the latter inequality).
(see e.g.\ \cite[Proposition~2.6.19]{KS91} for the latter inequality). Now we can
easily conclude $\limsup_{y\to x}\bigl|R_\lambda^U f(x)-R_\lambda^U f(y)\bigr|=0$
by taking the supremum in $y\in B(x,r_x)$ of the second line of
\eqref{eq:strFellerDifference}, using \eqref{eq:coupling} and
\eqref{eq:strFellerDiffF} to let $y\to x$ and then using \eqref{eq:F_Tsmall}
and \eqref{eq:strFellerExitU} to let $\varepsilon\downarrow 0$.
Thus $R_\lambda^U f$ is continuous on $U$.
%\begin{align*}
% \limsup_{y\to x}\bigl|R_\lambda^U f(x)-R_\lambda^U f(y)\bigr|
%	\leq 2\|f\|_{\infty}\Bigl(\sup_{y\in B(x,r_x)}E_y[F_{\varepsilon}]
%		+2\lambda^{-1}\exp\Bigl(-\frac{r_x^2}{4\varepsilon}\Bigr)\Bigr)
%	\xrightarrow{\varepsilon\downarrow 0}0.
%\end{align*}
\end{proof}

\section{Continuity and upper bounds of the heat kernels} \label{sec:cont_ub}
Throughout Sections~\ref{sec:cont_ub} and \ref{sec:lb} we fix any environment
$\om \in \Omega$ such that all the conclusions of
Proposition~\ref{prop:pcaf} i), iv), Lemma~\ref{lem:vol_dec},
Propositions~\ref{prop:tailtau1}, \ref{prop:strFeller} and \ref{prop:corrF_M} hold.

The purpose of this section is to prove Theorem~\ref{thm:dir_hk} below on
the continuity of the heat kernels as well as Theorem~\ref{thm:on-diag_glob}.
Recall that $\mathcal{F}$ equipped with the norm
$\|f\|^2_{\mathcal{F}}:=\mathcal{E}(f,f)+\|f\|^2_{L^2(\bbR^2,M)}$
is a Hilbert space. For any open set $U \subset \bbR^2$, we define
$\mathcal{F}_U$ to be the closure in $(\mathcal{F}, \|\cdot\|_{\mathcal{F}})$
of the set of all functions in $\mathcal{F}$
whose $M$-essential supports in $\bbR^2$ are compact subsets of $U$.
It is well known that $(\mathcal{E}, \mathcal{F}_U)$ is the Dirichlet form
associated with the killed Liouville Brownian motion $\mathcal{B}^U$ and that
it is regular on $L^2(U,M)$ (see e.g.\ \cite[Theorems~4.4.2 and 4.4.3]{FOT11}).
The associated non-positive self-adjoint operator on $L^2(U,M)$ is denoted
by $\mathcal{L}_U$, its domain by $\mathcal{D}(\mathcal{L}_U)$, and
the associated semigroup and resolvent operators by $(T_t^U)_{t>0}$
and $(G^U_{\lambda})_{\lambda >0}$, respectively.

\begin{theorem} \label{thm:dir_hk}
%Almost surely w.r.t.\ $\bbP$, 
For any non-empty open set $U\subset \bbR^2$ the following hold:

\begin{enumerate}
 \item[i)] There exists a (unique) jointly continuous function
	$p^U=p^U_t(x,y):(0,\infty)\times U \times U \rightarrow [0,\infty)$
	such that for all $(t,x)\in (0,\infty) \times U$,
	$P_x[\mathcal{B}^U_t \in dy]=p^U_t(x,y) \, M(dy)$,
	which we refer to as the \emph{Dirichlet Liouville heat kernel} on $U$.
\item[ii)] The semigroup operator $P_t^U$ is strong Feller, i.e.\ it maps Borel
	measurable bounded functions on $U$ to continuous bounded functions on $U$.
\item[iii)] If $U$ is connected, then $p^U_t(x,y)\in (0,\infty)$ for any
	$(t,x,y)\in(0,\infty)\times U \times U$, and in particular the Dirichlet
	form $(\mathcal{E}, \mathcal{F}_U)$ of $\mathcal{B}^U$ is irreducible.
\end{enumerate}
\end{theorem}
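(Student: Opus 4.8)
\emph{Strategy.} The plan is to prove i)--iii) first for \emph{bounded} open $U$, where the Green operator $G_{U}$ is compact and the heat kernel can be built from an eigenfunction expansion, and then to pass to arbitrary $U$ by exhaustion, controlling the limit by the localized upper bound coming from \cite{GK14}. So assume first that $U$ is bounded. By \eqref{eq:defG_U} the Green operator $G_{U}$ of $\mathcal{B}^{U}$ is the integral operator on $L^{2}(U,M)$ with kernel $g_{U}$, which by \eqref{eq:killed_green} is bounded and continuous off the diagonal with only a logarithmic singularity on it; Lemma~\ref{lem:vol_dec} (giving $M(B(x,r))\leq C_{5}r^{\alpha_{2}-\varepsilon}$ with $\alpha_{2}>0$, uniformly for $x$ in a bounded set) then yields $\iint_{U\times U}g_{U}(x,y)^{2}\,M(dx)\,M(dy)<\infty$, so $G_{U}$ is a self-adjoint, positivity-preserving Hilbert--Schmidt operator. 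Hence $-\mathcal{L}_{U}=G_{U}^{-1}$ has purely discrete spectrum $0<\lambda_{1}\leq\lambda_{2}\leq\cdots\to\infty$ with $\sum_{n}\lambda_{n}^{-2}<\infty$, and there is an orthonormal basis $(\varphi_{n})_{n\geq1}$ of $L^{2}(U,M)$ with $\mathcal{L}_{U}\varphi_{n}=-\lambda_{n}\varphi_{n}$; in particular $\sum_{n}e^{-\lambda_{n}t}(\lambda+\lambda_{n})^{4}<\infty$ for every $t,\lambda>0$. Fix $\lambda>0$. The kernel $g_{\lambda}^{U}$ of $R_{\lambda}^{U}$ is dominated by $g_{U}$ (since $g_{U}-g_{\lambda}^{U}$ is the kernel of the positivity-preserving $\int_{0}^{\infty}(1-e^{-\lambda t})T_{t}^{U}\,dt$), so $\sup_{x\in U}\|g_{\lambda}^{U}(x,\cdot)\|_{L^{2}(U,M)}<\infty$ and $(R_{\lambda}^{U})^{2}$ has a bounded kernel $g_{\lambda}^{(2)}$ with $\sup_{x\in U}\|g_{\lambda}^{(2)}(x,\cdot)\|_{L^{2}(U,M)}<\infty$ (using $M(U)<\infty$). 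Writing $\varphi_{n}=(\lambda+\lambda_{n})^{2}R_{\lambda}^{U}\bigl(R_{\lambda}^{U}\varphi_{n}\bigr)$ and applying Proposition~\ref{prop:strFeller} to the bounded Borel function $R_{\lambda}^{U}\varphi_{n}$, we see each $\varphi_{n}$ admits a continuous version, which we henceforth use, and $|\varphi_{n}(x)|\leq C(\lambda+\lambda_{n})^{2}$ for all $x\in U$, $n\geq1$, for some constant $C$.

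\emph{Bounded $U$: construction and i), ii).} Set $p_{t}^{U}(x,y):=\sum_{n\geq1}e^{-\lambda_{n}t}\varphi_{n}(x)\varphi_{n}(y)$. For fixed $t>0$ the tail of this series is, by the Cauchy--Schwarz inequality, at most $\bigl(\sum_{n>N}e^{-\lambda_{n}t}\varphi_{n}(x)^{2}\bigr)^{1/2}\bigl(\sum_{n>N}e^{-\lambda_{n}t}\varphi_{n}(y)^{2}\bigr)^{1/2}\leq C^{2}\sum_{n>N}e^{-\lambda_{n}t}(\lambda+\lambda_{n})^{4}$, which tends to $0$ uniformly on $U\times U$ as $N\to\infty$; hence $p_{t}^{U}$ is well defined, jointly continuous and bounded. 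The $L^{2}$-semigroup of $(\mathcal{E},\mathcal{F}_{U})$ is $T_{t}^{U}=\sum_{n}e^{-\lambda_{n}t}\langle\cdot,\varphi_{n}\rangle\varphi_{n}$, and the same estimates give that its canonical continuous version satisfies $T_{t}^{U}f(x)=\int_{U}p_{t}^{U}(x,y)f(y)\,M(dy)$ for every $x\in U$ and $f\in L^{2}(U,M)$. Since $M$ charges no polar set, $T_{t}^{U}$ agrees with the transition semigroup $P_{t}^{U}$ of $\mathcal{B}^{U}$ quasi-everywhere; combining this with the strong Feller property of $R_{\lambda}^{U}$ to pin down both sides at every point yields $P_{x}[\mathcal{B}_{t}^{U}\in dy]=p_{t}^{U}(x,y)\,M(dy)$ for all $(t,x)\in(0,\infty)\times U$, and uniqueness of the continuous version is immediate since $M$ has full support; this proves i). Part ii) follows from $P_{t}^{U}f(x)=\int_{U}p_{t}^{U}(x,y)f(y)\,M(dy)$, the joint continuity and boundedness of $p_{t}^{U}$, the finiteness of $M(U)$ and dominated convergence.

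\emph{Positivity for connected $U$, and general $U$.} Let $U$ be bounded and connected. For $x\in U$ and $t>0$, $p_{t}^{U}(x,\cdot)$ is not identically $0$ and has zero set with empty interior: for any ball $B(z_{0},r)\subset U$, connectedness of $U$ and full support of $M$ let a Brownian path from $x$ reach $B(z_{0},r)$ without leaving $U$ before the clock $F$ reaches $t$, so $\int_{B(z_{0},r)}p_{t}^{U}(x,z)\,M(dz)=P_{x}[\mathcal{B}_{t}^{U}\in B(z_{0},r)]>0$. If now $p_{t}^{U}(x,y)=0$ for some $x,y,t$, then $0=\int_{U}p_{t/2}^{U}(x,z)p_{t/2}^{U}(z,y)\,M(dz)$ with nonnegative integrand forces, by continuity and full support, $U=A\cup B$ with $A=\{p_{t/2}^{U}(x,\cdot)=0\}$ and $B=\{p_{t/2}^{U}(\cdot,y)=0\}$ closed and both with empty interior, so $U\setminus A$ and $U\setminus B$ are dense open subsets of the connected Baire space $U$ with empty intersection --- a contradiction. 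Hence $p_{t}^{U}>0$ on $(0,\infty)\times U\times U$, and irreducibility of $(\mathcal{E},\mathcal{F}_{U})$ follows. Finally, for arbitrary nonempty open $U$ (connected when treating iii)), exhaust $U$ by bounded open $U_{k}\uparrow U$ (each connected when relevant). By monotonicity of killed transition densities $p_{t}^{U_{k}}\uparrow p_{t}^{U}$, and each $p_{t}^{U_{k}}$ is jointly continuous by the bounded case. Lemma~\ref{lem:vol_dec} and Proposition~\ref{prop:tailtau1}, fed into \cite[Theorem~7.2 and the ensuing heat-kernel upper bounds]{GK14}, yield the locally uniform sub-Gaussian bound \eqref{eq:on-diag_glob} on each $U_{k}$ \emph{uniformly in $k$} together with a matching local oscillation estimate; consequently $\{p_{t}^{U_{k}}\}_{k}$ is locally uniformly bounded and equicontinuous, so $p_{t}^{U_{k}}\to p_{t}^{U}$ locally uniformly on $(0,\infty)\times U\times U$, giving joint continuity, and passing to the limit in $P_{x}[\mathcal{B}_{t}^{U_{k}}\in dy]=p_{t}^{U_{k}}(x,y)\,M(dy)$ gives i); the same upper bound, whose off-diagonal part is $M$-integrable locally uniformly, supplies the domination for ii) on unbounded $U$, and iii) is inherited from $p_{t}^{U}\geq p_{t}^{U_{k}}>0$.

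\emph{Main obstacle.} The genuinely hard part is the passage to unbounded $U$: the Liouville measure is far from volume doubling, so classical heat-kernel machinery does not apply, and one must instead invoke the localized upper-bound results of \cite{GK14} (a multiple Dynkin--Hunt type argument), verifying that their hypotheses hold uniformly over the exhaustion $\{U_{k}\}$ --- this is also exactly what delivers Theorem~\ref{thm:on-diag_glob}.
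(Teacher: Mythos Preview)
Your treatment of the bounded case is essentially correct and in fact takes a slightly different route from the paper: instead of proving a Faber--Krahn inequality and deducing ultracontractivity (which the paper needs in order to invoke \cite[Theorem~2.1.4]{Da89}), you bound the eigenfunctions directly via $|\varphi_{n}(x)|=(\lambda+\lambda_{n})^{2}|(R_{\lambda}^{U})^{2}\varphi_{n}(x)|\leq C(\lambda+\lambda_{n})^{2}$ and use only the Hilbert--Schmidt information $\sum_{n}\lambda_{n}^{-2}<\infty$ to get uniform convergence of the eigenfunction series. That shortcut is legitimate. Your identification step (``combining with the strong Feller property of $R_{\lambda}^{U}$ to pin down both sides at every point'') is, however, too compressed: what is actually needed is the paper's Steps~2--4, namely absolute continuity of $R_{\lambda}^{U}$ with respect to $M$, the Laplace-transform identity \eqref{eq:ident_laplace} (obtained by expanding $R_{\lambda}^{U}(P_{\varepsilon}^{U}f)$ in eigenfunctions), uniqueness of Laplace transforms, and a monotone-class argument. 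Your positivity argument via Chapman--Kolmogorov and Baire category is a standard alternative to the paper's citation of \cite[Proposition~A.3~(2)]{Ka10}; the only point needing care is the claim $P_{x}[\mathcal{B}_{t/2}^{U}\in B(z_{0},r)]>0$, which requires not just reaching $B(z_{0},r)$ but being there \emph{at} time $t/2$ --- this follows once you observe that the LBM stays inside the ball for a positive $F$-time with positive probability after arriving.

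There is, however, a genuine gap in your passage to unbounded $U$. You assert that \cite{GK14} yields, in addition to the sub-Gaussian upper bound, ``a matching local oscillation estimate'' making $\{p_{t}^{U_{k}}\}_{k}$ equicontinuous. It does not: \cite[Theorem~1.1]{GK14} gives only the localized upper bound (Lemma~\ref{lem:off_diag_up}), and a monotone bounded sequence of continuous functions need not have a continuous limit. The paper closes this gap differently. It uses the strong Markov property to write, for $k<l$,
\[
p_{t}^{B(l)}(x,y)-p_{t}^{B(k)}(x,y)
=\lim_{r\downarrow 0}\frac{1}{M(B(y,r))}\,E_{x}\bigl[\indicator_{\{\tau_{B(k)}<t\}}P^{B(l)}_{t-\tau_{B(k)}}\indicator_{B(y,r)}(\mathcal{B}_{\tau_{B(k)}})\bigr],
\]
and then the uniform-in-$l$ off-diagonal bound on $p_{s}^{B(l)}(z,y)$ for $z\in\partial B(k)$ and $y$ in a fixed bounded set (from Lemma~\ref{lem:off_diag_up} and Remark~\ref{rem:hk_bounded}) gives
\[
0\leq p_{t}^{B(l)}(x,y)-p_{t}^{B(k)}(x,y)\leq C\,P_{x}[\tau_{B(k)}<t].
\]
The crucial additional ingredient, which you do not mention, is Lemma~\ref{lem:tail_tauUn}: $P_{x}[\tau_{B(k)}<t]\to 0$ locally uniformly in $(t,x)$ (proved by a Harnack-type comparison of exit distributions). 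This yields local uniform Cauchyness of $p_{t}^{B(k)}$, hence continuity of the limit, and then the analogous bound $0\leq p_{t}^{U\cap B(l)}-p_{t}^{U\cap B(k)}\leq p_{t}^{B(l)}-p_{t}^{B(k)}$ transfers the result to general $U$. This Dynkin--Hunt/strong-Markov comparison is the mechanism, not equicontinuity.
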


See \cite[Section~1.6, p.\ 55]{FOT11} for the definition of the irreducibility of
a symmetric Dirichlet form and \cite[Theorem~4.7.1 (i) and Exercise~4.7.1]{FOT11}
for its probabilistic consequences.

From now on we will write $p_t(\cdot,\cdot)$ instead of
$p^{\bbR^2}_t(\cdot,\cdot)$ and call it the (global)
\emph{Liouville heat kernel}. Note that Theorem~\ref{thm:cont_hk}
follows directly from Theorem~\ref{thm:dir_hk} by choosing $U=\bbR^2$.

\subsection{The heat kernel on bounded open sets} \label{sec:Ubounded}

In this subsection we will prove Theorem~\ref{thm:dir_hk} for a fixed non-empty
bounded open set $U\subset\bbR^2$. The case of unbounded open sets will be
treated later in Subsection~\ref{sec:Uunbounded}. We denote by $\|f\|_p$
the $L^p(U,M)$-norm for $p\geq 1$ and by $\langle \cdot, \cdot\rangle$ the
$L^2(U,M)$-inner product. Let $R\geq 1$ be such that $U\subset B(R)$.

% \begin{lemma} \label{lem:young}
% For any real $a,b\geq 0$,
% \begin{align*}
%  ab\leq a \log(1+a)+e^b.
% \end{align*} 
% \end{lemma}
% \begin{proof}
%  The proof is based on the following version of the Young inequality:
%For any continuous and increasing $f: [0,\infty)\rightarrow [0,\infty)$,
% \begin{align*}
%  ab \leq \int_0^a f(t) \, dt + \int_0^b f^{-1}(t) \, dt.
% \end{align*}
% Choosing $f(t)=\log(1+t)$ with $f^{-1}(t)=e^t-1$ we get
% \begin{align*}
%  ab&\leq \int_0^a \log(1+t) \, dt + \int_0^b (e^t-1) \, dt = (a+1) \log (a+1) -a +e^b -1 -b \\
% & \leq a \log(1+a) +e^b+ \log(1+a)-a,
% \end{align*}
% which proves the lemma since $\log(1+a)-a \leq 0$ for all $a\geq 0$.
% \end{proof}
% 

\begin{prop}[Faber-Krahn-type inequality] \label{prop:FK}
%Almost surely w.r.t.\ $\bbP$,
The spectrum of $-\mathcal{L}_U$ is discrete, and for its smallest eigenvalue
$\lambda_1(U)$ there exists $C_9=C_9(X,\gamma,R)>0$ such that
\begin{align} \label{eq:FK}
 \lambda_{1}(U) \geq  \frac{C_9}{M(U) \log\bigl(2+\frac{1}{M(U)}\bigr)}.
\end{align}
\end{prop}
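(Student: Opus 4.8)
The plan is to establish the discreteness of the spectrum and the Faber--Krahn-type bound \eqref{eq:FK} by combining a Nash-type inequality for the Dirichlet form $(\mathcal{E},\mathcal{F}_U)$ with the volume upper bound from Lemma~\ref{lem:vol_dec}. First I would recall that since $U\subset B(R)$ is bounded, $M(U)<\infty$ and $1\in L^2(U,M)$, so a Nash inequality plus finiteness of the measure would force the semigroup $(T^U_t)_{t>0}$ to be ultracontractive and, being the semigroup of a form with compact resolvent, to have purely discrete spectrum; thus the discreteness claim reduces to proving a suitable functional inequality. The natural route is: for $f\in\mathcal{F}_U$, extend $f$ by $0$ outside $U$ to get $f\in H^1_e(\bbR^2)$, apply the classical two-dimensional Sobolev/Nash inequality for the \emph{Lebesgue} Dirichlet form $\mathcal{E}(f,f)=\tfrac12\int|\nabla f|^2\,dx$, and then convert the Lebesgue $L^p$-norms appearing there into $L^2(U,M)$-norms using Lemma~\ref{lem:vol_dec}. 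Concretely, in $\bbR^2$ one has $\|f\|_{L^\infty(dx)}$ or $\|f\|_{L^p(dx)}$ controlled by $\mathcal{E}(f,f)$ together with $\|f\|_{L^2(dx)}$ via a logarithmic Sobolev / Nash inequality with a log correction, which is precisely the source of the $\log(2+1/M(U))$ factor in \eqref{eq:FK}.

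The key steps, in order, are as follows. \textbf{Step 1.} State and use the Euclidean Nash-type inequality in dimension $2$: there is a universal $c>0$ such that for all $f\in H^1_e(\bbR^2)$ with compact support, $\|f\|_{L^2(dx)}^2\log\bigl(\|f\|_{L^2(dx)}^2/\|f\|_{L^1(dx)}^2\bigr)\leq c\,\bigl(\mathcal{E}(f,f)+\|f\|_{L^2(dx)}^2\bigr)$ (a log-Sobolev inequality for the Gaussian-free-heat semigroup; equivalently, the standard fact that $e^{t\Delta/2}$ is bounded from $L^1$ to $L^\infty$ with norm $ct^{-1}$ in $\bbR^2$). \textbf{Step 2.} Translate to $M$: for $f\in\mathcal{F}_U$ supported in $U$, use the pointwise bound $M(B(x,r))\geq C_4 r^{\alpha_1+\varepsilon}$ from Lemma~\ref{lem:vol_dec} — more precisely its consequence that $dx$ restricted to $U$ is absolutely continuous with respect to $M$ only in a weak averaged sense, so instead I would go the other direction: bound $\|f\|_{L^1(dx)}$ and $\|f\|_{L^2(dx)}$ on $U$ in terms of $\|f\|_{L^1(U,M)}$, $\|f\|_{L^2(U,M)}$ and $M(U)$ via Cauchy--Schwarz and the \emph{upper} volume bound, which gives comparability up to powers of $M(U)$ and hence up to the stated logarithmic factor after taking logs. \textbf{Step 3.} Feed the resulting Nash-type inequality on $L^2(U,M)$ into the standard equivalence (Carlen--Kusuoka--Stroock / Coulhon) between Nash inequalities and on-diagonal heat-kernel / eigenvalue bounds to conclude both that the spectrum of $-\mathcal{L}_U$ is discrete and that $\lambda_1(U)\geq C_9\bigl(M(U)\log(2+1/M(U))\bigr)^{-1}$; indeed $\lambda_1(U)$ is the bottom of the spectrum and the Nash inequality applied to an $L^2(U,M)$-normalised eigenfunction $\phi_1$, using $\|\phi_1\|_{L^1(U,M)}\leq M(U)^{1/2}$, yields exactly this lower bound.

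The main obstacle I anticipate is \textbf{Step 2}: converting between the Lebesgue measure $dx$ and the Liouville measure $M$ cleanly, since $M$ is singular with respect to $dx$ and Lemma~\ref{lem:vol_dec} only gives volume bounds on balls, not a density comparison. One cannot hope to dominate $dx\restriction_U$ by $M$ or vice versa; the correct move is to avoid needing a density altogether by choosing the \emph{right} pair of Lebesgue norms in the Euclidean inequality — e.g. work with $\|f\|_{L^2(dx)}$ and $\|f\|_{L^2(U,M)}$ only through the trivial inequalities $\|f\|_{L^2(U,M)}\le \|f\|_{L^\infty}M(U)^{1/2}$ and the $L^1(dx)\to L^\infty$ smoothing of the free heat semigroup, so that the singular measure enters only through the scalar $M(U)$. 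Making this juggling of function-space norms precise, and tracking that the log-correction comes out with the exact argument $2+1/M(U)$ rather than something cruder, is where the real care is needed; everything else is a routine application of the Nash-inequality/spectral-gap dictionary together with the regularity of $(\mathcal{E},\mathcal{F}_U)$ on $L^2(U,M)$ established earlier.
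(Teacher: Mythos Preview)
Your Step~2 obstacle is not just the hard part --- it is fatal for the whole approach. Since $M$ is singular with respect to Lebesgue measure, no $L^{p}(dx)$-norm of $f$ can be controlled by any $L^{q}(M)$-norm of $f$ (or vice versa), and the Euclidean Nash/log-Sobolev inequality you invoke in Step~1 unavoidably features $\|f\|_{L^{1}(dx)}$ or $\|f\|_{L^{2}(dx)}$ on one side. Your suggested workaround via $\|f\|_{L^{2}(U,M)}\leq\|f\|_{\infty}M(U)^{1/2}$ together with $L^{1}(dx)\to L^{\infty}$ smoothing still leaves $\|f\|_{L^{1}(dx)}$ to be bounded, and that norm is invisible to the data $(\mathcal{E}(f,f),\|f\|_{L^{p}(M)})$. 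In two dimensions $\mathcal{E}(f,f)$ alone does not even control $\|f\|_{\infty}$, so there is no way to close the loop.

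The paper's proof bypasses this issue entirely by never invoking a Euclidean functional inequality. The key observation is the time-change identity \eqref{eq:defG_U}: the inverse $(-\mathcal{L}_U)^{-1}=G_U$ acts on $L^{2}(U,M)$ via the \emph{Euclidean} Green kernel $g_U(x,y)=\frac{1}{\pi}\log\frac{1}{|x-y|}+\text{bounded}$. Discreteness of the spectrum then follows because $g_U\in L^{2}(U\times U,M\times M)$ (checked using the upper volume bound of Lemma~\ref{lem:vol_dec}), so $G_U$ is Hilbert--Schmidt, hence compact. For the eigenvalue bound one writes $\lambda_{1}(U)^{-1}=\sup_{\|f\|_{2}=1,\,f\geq 0}\langle G_U f,f\rangle$ and estimates the double integral
\[
\int_U\int_U g_U(x,y)\,f(x)f(y)\,M(dx)\,M(dy)
\]
directly via the elementary Young/Orlicz inequality $ab\leq e^{b}+a\log(1+a)$ with $a=f(x)f(y)/\nu$ and $b=\nu g_{B(R+1)}(x,y)$ for a suitable $\nu>0$. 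The $e^{b}$ term produces $c|x-y|^{-\nu/\pi}$, which integrates against $M\times M$ to $CM(U)$ thanks to Lemma~\ref{lem:vol_dec}; the $a\log(1+a)$ term is handled by Jensen's inequality for the convex function $s\mapsto (I^{-1}(s))^{2}$ with $I(s)=s\log(1+s)$, yielding the factor $M(U)\log(2+1/M(U))$. Note also that in the paper the logical order is the reverse of yours: Faber--Krahn (this proposition) is proved first and then used to derive the Nash-type inequality and ultracontractivity in Proposition~\ref{prop:on_diag_up}, not the other way around.
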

\begin{proof}
First, it is elementary to verify that $\sup_{x\in U}\|g_U(x,\cdot)\|_2 <\infty$
by \eqref{eq:killed_green} and a calculation similar to \eqref{eq:Mnlog} based
on Lemma~\ref{lem:vol_dec}, so that $g_U \in L^2(U\times U, M\times M)$,
$G_U f(x)=\bigl\langle g_U(x,\cdot),f\bigr\rangle\in\bbR$ for $x\in U$ for any
$f\in L^2(U,M)$, and $G_U$ defines a bounded linear operator on $L^{2}(U,M)$
which is Hilbert-Schmidt and hence (see e.g.\ \cite[Theorem~4.2.16]{Da07})
compact. Then in view of \cite[(1.5.3) and Theorem~4.2.3 (ii)]{FOT11}
the Dirichlet form $(\mathcal{E}, \mathcal{F}_U)$ of $\mathcal{B}^U$ is transient
in the sense of \cite[(1.5.4)]{FOT11}, or equivalently in the sense of
\cite[(1.5.6)]{FOT11} by \cite[Theorem~1.5.1]{FOT11}, which implies that
$\bigl\{u\in\mathcal{F}_U:\,\mathcal{E}(u,u)=0\bigr\}=\{0\}$,
namely $\mathcal{L}_U$ is injective. Now by
\cite[Theorem~4.2.6, Theorem~1.5.4 (i) and Theorem~1.5.2 (iii)]{FOT11},
$G_U f\in\mathcal{D}(\mathcal{L}_U)$ and $-\mathcal{L}_U G_U f=f$
for any $f\in L^2(U,M)$, which together with
the injectivity of $\mathcal{L}_U$ yields $(-\mathcal{L}_U)^{-1}=G_U$.
In particular, $(-\mathcal{L}_U)^{-1}$ is compact, and therefore the
spectrum of $-\mathcal{L}_U$ is discrete by \cite[Corollary~4.2.3]{Da95}.

For the proof of \eqref{eq:FK}, note that by the spectral decomposition of
the compact self-adjoint operator $(-\mathcal{L}_U)^{-1}=G_U$
(see e.g.\ \cite[Theorem~4.2.2]{Da95}) and $g_U \geq 0$,
%we can express $\lambda_1(U)$ by the variational formula
\begin{align} \label{eq:lambda1_var}
 \lambda_1(U)^{-1}= \sup \bigl\{ \langle G_Uf , f \rangle:
	\, f\in L^{2}(U,M), \, f\geq 0, \, \|f \|_2=1 \bigr\}.
\end{align}
Let $f\in L^{2}(U,M)$ satisfy $f\geq 0$ and $\|f \|_2=1$.
Setting $\nu:=\pi \alpha_2/2=\frac{\pi}{4}(2-\gamma)^2$ and noting that
$g_U \leq g_{B(R+1)}$ by $U\subset B(R)\subset B(R+1)$, we have
\begin{align} \label{eq:decompGU}
 \langle G_U f , f \rangle\leq  \langle G_{B(R+1)}f , f \rangle
	&\leq \int_U \int_U \exp\bigl(\nu g_{B(R+1)}(x,y)\bigr) \, M(dy) \, M(dx) \\
 &\mspace{39mu}+\int_U \int_U  \frac{f(x) f(y)}{\nu} \log\Bigl(1+\frac{f(x)f(y)}{\nu} \Bigr) \, M(dy) \, M(dx), \nonumber
\end{align}
where we used the elementary inequality $ ab\leq a \log(1+a)+e^b$, valid for any
$a,b\in[0,\infty]$, with $a=\frac{f(x)f(y)}{\nu}$ and $b=\nu g_{B(R+1)}(x,y)$.
For the first integral in \eqref{eq:decompGU}, we have
\begin{align*}
 \int_U \int_U \exp\bigl(\nu g_{B(R+1)}(x,y)\bigr) \, M(dy) \, M(dx)
	\leq \int_U \int_U \frac{c}{ |x-y|^{\nu/\pi}} \, M(dy) \, M(dx)
\end{align*}
with $c=c(\gamma,R)>0$ by \eqref{eq:killed_green} and $U\subset B(R)$, and then
%setting $D_{n}(x):=B(x,2^{1-n}R) \setminus B(x,2^{-n}R)$ for $n\geq 0$ and
using Lemma~\ref{lem:vol_dec} with
$\varepsilon=\alpha_{2}/4\in(0,\alpha_2-\nu/\pi)$, we further obtain
\begin{align*}
 &\int_U \int_U \frac{1}{|x-y|^{\nu/\pi}} \, M(dy) \, M(dx)
	\leq \int_U \int_{B(x,2R)} \frac{1}{|x-y|^{\nu/\pi}} \, M(dy) \, M(dx) \\
 &\mspace{120mu}\leq \sum_{n=0}^\infty \int_U \int_{B(x,2^{1-n}R) \setminus B(x,2^{-n}R)} (2^{-n}R)^{-\nu/\pi} \, M(dy) \, M(dx)
	\leq C M(U)
\end{align*}
for some $C=C(X,\gamma,R)>0$.
On the other hand, setting $\bar M_U:=M(\cdot \cap U)/M(U)$,
we can write the second term in \eqref{eq:decompGU} as
\begin{align*}
 M(U)^2 \int_U \int_U  \frac{f(x) f(y)}{\nu} \log\Bigl(1+\frac{f(x) f(y)}{\nu} \Bigr)
	\, \bar M_U(dy) \, \bar M_U(dx).
\end{align*}
For the homeomorphisms $H,I:[0,\infty)\to[0,\infty)$ defined by
$H(s):=s^2$ and $I(s):=s \log(1+s)$, we easily see that the function
$H\circ I^{-1}$ convex, and we apply Jensen's inequality to get
\begin{align*}
 &H\circ I^{-1} \biggl( \int_U \int_U \frac{f(x) f(y)}{\nu} \log\Bigl(1+\frac{f(x) f(y)}{\nu} \Bigr) \, \bar M_U(dy) \, \bar M_U(dx)\biggr) \\
 &\mspace{200mu}\leq \int_U \int_U \Bigl(\frac{f(x) f(y)}{\nu} \Bigr)^2 \, \bar M_U(dy) \, \bar M_U(dx)
	= \frac{1}{\nu^2 M(U)^2},
\end{align*}
where we used $ \|f \|_2=1$. Hence
\begin{align*}
 &\int_U \int_U  \frac{f(x) f(y)}{\nu} \log\Bigl(1+\frac{f(x) f(y)}{\nu} \Bigr) \, \bar M_U(dy) \, \bar M_U(dx) \\
 &\mspace{200mu}\leq I\circ H^{-1} \Bigl(\frac{1}{\nu^2 M(U)^2}\Bigr)
	=\frac{1}{\nu M(U)} \log\Bigl( 1+ \frac{1}{\nu M(U)}\Bigr).
\end{align*}
Finally, we combine the above considerations to conclude that
\begin{align*}
 \langle G_Uf , f \rangle\leq C M(U)+ \frac{1}{\nu} M(U) \log\Bigl( 1+ \frac{1}{\nu M(U)}\Bigr) 
	\leq C M(U) \log\Bigl( 2+ \frac{1}{M(U)}\Bigr)
\end{align*}
for some constants $C$ large enough,
which together with \eqref{eq:lambda1_var} yields the claim.
\end{proof}

In the next proposition we derive from the above Faber-Krahn inequality a
Nash-type inequality and thereby an on-diagonal estimate on $(T^U_t)_{t>0}$
of the same form as stated for $p_t=p_t(x,y)$ in Theorem~\ref{thm:on-diag_glob}.
In particular, $(T^U_t)_{t>0}$ turns out to be ultracontractive,
i.e.\ $T^U_t\bigl(L^2(U,M)\bigr) \subset L^\infty(U,M)$ and
$T^U_t: L^2(U,M) \to L^\infty(U,M)$ is a bounded linear operator for all $t>0$.
Recall that for each $t>0$, $T^U_t$ is a self-adjoint Markovian operator on
$L^2(U,M)$ and hence canonically extends to a bounded linear operator on
$L^1(U,M)$ with operator norm at most $1$ (see e.g.\ \cite[(1.5.2)]{FOT11}).
For a bounded linear operator $A:L^{1}(U,M)\to L^{\infty}(U,M)$,
its operator norm will be denoted by $\| A \|_{L^1(U) \to L^\infty(U)}$.

\begin{prop} \label{prop:on_diag_up}
%Almost surely w.r.t.\ $\bbP$, 
There exists a constant $C_{10}=C_{10}(X,\gamma, R)>0$ such that
\begin{align} \label{eq:ondiag_dirhk}
 \bigl\| T_t^{U} \bigr\|_{L^1(U) \to L^\infty(U)} \leq C_{10} t^{-1} \log(t^{-1}),
	\qquad \forall t\in (0,\tfrac 1 2].
\end{align}
\end{prop}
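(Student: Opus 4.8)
The strategy is to upgrade the Faber--Krahn inequality \eqref{eq:FK} to a Nash-type functional inequality and then to run the classical Nash argument. First I would note that the proof of Proposition~\ref{prop:FK} uses only the inclusion $U\subset B(R)$, so that it yields, with one and the same constant $C_9$,
\begin{align*}
 \lambda_1(\Omega)\geq \frac{C_9}{M(\Omega)\log\bigl(2+\tfrac{1}{M(\Omega)}\bigr)}=:\Lambda\bigl(M(\Omega)\bigr)
	\qquad\text{for every non-empty open }\Omega\subset B(R),
\end{align*}
where $\Lambda:(0,\infty)\to(0,\infty)$ is strictly decreasing. Given $f\in\mathcal{F}_U$ with $f\ge 0$, $\|f\|_1<\infty$ and $\|f\|_2>0$, I would set $s:=\|f\|_2^2/(4\|f\|_1)$, $\Omega_s:=\{f>s\}$ and $f_s:=(f-s)_+$. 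Then $f_s\in\mathcal{F}_{\Omega_s}$ and $\mathcal{E}(f_s,f_s)\le\mathcal{E}(f,f)$ since $x\mapsto(x-s)_+$ is a normal contraction; moreover $M(\Omega_s)\le\|f\|_1/s=4\|f\|_1^2/\|f\|_2^2$ by Chebyshev's inequality, and since $(f-s)^2\ge f^2/4$ on $\{f>2s\}$ and $\int_{\{f\le 2s\}}f^2\,dM\le 2s\|f\|_1$, one gets $\|f_s\|_2^2\ge\tfrac14\bigl(\|f\|_2^2-2s\|f\|_1\bigr)=\tfrac18\|f\|_2^2$. Applying the Faber--Krahn inequality above to $\Omega_s$ and using the monotonicity of $\Lambda$ then gives the Nash-type inequality
\begin{align} \label{eq:NashFK}
 \mathcal{E}(f,f)\geq\lambda_1(\Omega_s)\,\|f_s\|_2^2
	\geq\frac{C_9}{32}\cdot\frac{\|f\|_2^4}{\|f\|_1^2\,\log\bigl(2+\|f\|_2^2/(4\|f\|_1^2)\bigr)}
\end{align}
for all $f\in\mathcal{F}_U$ with $f\ge 0$.

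Next I would run the Nash iteration. Fix $f\in L^1(U,M)\cap L^2(U,M)$ with $f\ge 0$ and $\|f\|_1=1$, and put $u(t):=\|T_t^U f\|_2^2$ and $a(t):=\|T_t^U f\|_1\in(0,1]$. Since $(T_t^U)_{t>0}$ is the Markovian semigroup of $(\mathcal{E},\mathcal{F}_U)$, $u$ is smooth on $(0,\infty)$ with $u'(t)=-2\mathcal{E}(T_t^U f,T_t^U f)$. Inserting $T_t^U f$ into \eqref{eq:NashFK} and using that $a\mapsto a^2\log(2+v/(4a^2))$ is nondecreasing on $(0,1]$ for every $v\ge0$ (so that $a(t)^2\log\bigl(2+u(t)/(4a(t)^2)\bigr)\le\log\bigl(2+u(t)/4\bigr)$), I obtain the differential inequality
\begin{align*}
 u'(t)\leq-\frac{C_9}{16}\cdot\frac{u(t)^2}{\log\bigl(2+u(t)/4\bigr)},\qquad t>0.
\end{align*}
With $\Phi(u):=\int_u^\infty v^{-2}\log(2+v/4)\,dv$, which is a decreasing bijection of $(0,\infty)$ onto $(0,\infty)$, this reads $\tfrac{d}{dt}\Phi(u(t))\ge\tfrac{C_9}{16}$, whence $\Phi(u(t))\ge\tfrac{C_9}{16}t$ and $u(t)\le\Phi^{-1}\bigl(\tfrac{C_9}{16}t\bigr)$. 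Since $\Phi(w)\le w^{-1}(1+\log w)$ for $w$ large, a direct computation (treating first small $t$ and then extending to $(0,\tfrac12]$ by monotonicity) shows that there is $K=K(C_9)>0$ with $\Phi^{-1}\bigl(\tfrac{C_9}{16}t\bigr)\le K\,t^{-1}\log(t^{-1})$ for all $t\in(0,\tfrac12]$; hence $\|T_t^U\|_{L^1(U)\to L^2(U)}^2\le K\,t^{-1}\log(t^{-1})$ for $t\in(0,\tfrac12]$, the restriction $f\ge0$ being harmless since $T_t^U$ is positivity preserving and $L^1(U,M)\cap L^2(U,M)$ is dense in $L^1(U,M)$.

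Finally, using the semigroup property $T_t^U=T_{t/2}^U T_{t/2}^U$ and the self-adjointness of $T_{t/2}^U$ on $L^2(U,M)$, which gives $\|T_{t/2}^U\|_{L^2(U)\to L^\infty(U)}=\|T_{t/2}^U\|_{L^1(U)\to L^2(U)}$, I would conclude
\begin{align*}
 \bigl\|T_t^U\bigr\|_{L^1(U)\to L^\infty(U)}\leq\bigl\|T_{t/2}^U\bigr\|_{L^1(U)\to L^2(U)}^2
	\leq K\,(t/2)^{-1}\log\bigl((t/2)^{-1}\bigr)\leq C_{10}\,t^{-1}\log(t^{-1})
\end{align*}
for all $t\in(0,\tfrac12]$, where $C_{10}=C_{10}(X,\gamma,R)$ since $K$ depends only on $C_9=C_9(X,\gamma,R)$; this is \eqref{eq:ondiag_dirhk}. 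I expect the main technical point to be the passage from \eqref{eq:FK} to \eqref{eq:NashFK}: the truncation argument, the care required for the quasi-open level sets $\{f>s\}$, and the observation that \eqref{eq:FK} holds with a single constant for all open subsets of $B(R)$. The Nash iteration and the concluding duality argument are routine, the only mildly delicate point there being to track the logarithmic factor faithfully through the ordinary differential inequality.
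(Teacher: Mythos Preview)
Your proposal is correct and follows essentially the same route as the paper: Faber--Krahn $\Rightarrow$ Nash-type inequality $\Rightarrow$ differential inequality for $t\mapsto\|T_t^U f\|_2^2$ $\Rightarrow$ the bound \eqref{eq:ondiag_dirhk}. The only cosmetic differences are that the paper outsources the two main steps to \cite[Proposition~10.3]{BCLS95} and \cite[Proposition~II.1]{Co96} (the latter giving the $L^1\to L^\infty$ bound directly, bypassing your duality step), and it circumvents the quasi-open level-set issue you flag by first proving \eqref{eq:NashFK} in the form $\|f\|_2^2\le C_9^{-1}\psi(M(\supp[f]))\,\mathcal{E}(f,f)$ via an approximation of $\supp[f]$ by open neighbourhoods.
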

\begin{proof}
Since
$\bigl\| T_t^{U} \bigr\|_{L^1(U) \to L^\infty(U)} \leq \bigl\| T_t^{B(R)} \bigr\|_{L^1(B(R)) \to L^\infty(B(R))}$
by $U\subset B(R)$, it is enough to show \eqref{eq:ondiag_dirhk} for $\bigl(T_t^{B(R)}\bigr)_{t>0}$.
Recall that for any non-empty open subset $V$ of $B(R)$, the smallest eigenvalue
$\lambda_1(V)$ of $-\mathcal{L}_{V}$ admits the variational expression
\begin{align*} 
 \lambda_1(V) = \inf \biggl\{ \frac{ \mathcal{E}(f,f)}{\|f\|_2^2}:
	\, f \in \mathcal{F}_V, f \not= 0 \biggr\}
\end{align*}
(see e.g.\ \cite[Theorems~4.5.1 and 4.5.3]{Da95}), so that we can rewrite
the Faber-Krahn inequality of Proposition~\ref{prop:FK} for $V$ as
\begin{align} \label{eq:fk_new}
\|f\|_2^2 \leq C_{9}^{-1} \psi\bigl(M(V)\bigr) \mathcal{E}(f,f), \qquad \forall f\in \mathcal{F}_V,
\end{align}
where $\psi(s):=s \log(2+s^{-1})$ ($\psi(0):=0$).
Next we will verify that
\begin{align} \label{eq:fk_supp}
 \|f\|_2^2 \leq C_{9}^{-1} \psi\bigl(M(\supp[f])\bigr) \mathcal{E}(f,f),
	\qquad \forall f\in \mathcal{F}_{B(R)},
\end{align}
where $\supp[f]:=\supp_{B(R)}[f]$ denotes the $M$-essential support of $f$ in $B(R)$.
First, for $f\in \mathcal{F}_{B(R)}$ with $\supp[f]$ compact, \eqref{eq:fk_supp}
follows by choosing a decreasing sequence $(V_n)_{n\geq 1}$ of open subsets of
$B(R)$ with $\bigcap_{n\geq 1} V_n = \supp[f]$, applying \eqref{eq:fk_new} with
$V=V_n$ and letting $n\to \infty$. Next, for general $f\in \mathcal{F}_{B(R)}$,
as $|f|\in \mathcal{F}_{B(R)}$ and $\mathcal{E}(|f|,|f|)\leq \mathcal{E}(f,f)$
we may assume $f\geq 0$. Let $(f_n)_{n\geq 1} \subset\mathcal{F}_{B(R)}$ be a
sequence with $\supp[f_n]$ compact and $\lim_{n\to\infty}\|f_n-f\|_{\mathcal{F}}=0$,
where by $f\geq 0$ and \cite[Theorem~1.4.2 (v)]{FOT11} we may assume that
$f_n \geq 0$ for all $n$. Then since $f\wedge f_n \in  \mathcal{F}_{B(R)}$,
$\supp[f\wedge f_n]$ is a compact subset of $\supp[f]$ and
\begin{align*}
 \| f- f\wedge f_n \|_{\mathcal{F}}=\| (f-f_n)^+ \|_{\mathcal{F}}
	\leq \|f-f_n \|_{\mathcal{F}}\xrightarrow{n\to\infty} 0,
\end{align*}
we conclude \eqref{eq:fk_supp} for all $f\in \mathcal{F}_{B(R)}$
by letting $n\to\infty$ in \eqref{eq:fk_supp} for $f\wedge f_n$. 

Now, since $\psi:[0,\infty)\to[0,\infty)$ is strictly increasing,
\cite[Proposition~10.3]{BCLS95} and \eqref{eq:fk_supp} together imply that
\begin{align*}
 \|f\|_2^2 \leq 8C_{9}^{-1} \psi\bigl(4/\|f\|^2_2\bigr) \mathcal{E}(f,f)
	\qquad \text{for all $f\in \mathcal{F}_{B(R)}$ with $0< \|f\|_1\leq 1$.}
\end{align*}
In particular, for such $f$ we have $\theta(\|f\|_2^2)\leq \mathcal{E}(f,f)$
with $\theta(s):=\frac{1}{32}C_{9}s^2 /\log(2+s/4)$, and then by
\cite[Proposition~II.1]{Co96} we obtain
\begin{align} \label{eq:ondiag_coulhon}
 \bigl\| T_t^{B(R)} \bigr\|_{L^1(B(R)) \to L^\infty(B(R))} \leq m(t), \qquad \forall t> 0,
\end{align}
for the unique differentiable function $m:(0,\infty)\to(0,\infty)$ satisfying
\begin{align} \label{eq:ode_m}
m'(t)=-\theta\bigl(m(t)\bigr), \qquad \lim_{t \downarrow 0}m(t)=\infty.
\end{align}
It is immediate that $m=\Phi^{-1}$, where $\Phi:(0,\infty)\to(0,\infty)$ is a
decreasing diffeomorphism defined by $\Phi(s):=\int_s^\infty \theta(u)^{-1}\,du$,
and furthermore for all $s\in (0,\infty)$,
\begin{align*}
 \Phi(s)=\int_s^\infty \frac{32C_{9}^{-1}}{u^2} \log(2+u/4)\,du
	\leq \frac{80C_{9}^{-1}}{s}\log(2+s/4)=:\Psi(s),
\end{align*}
which means that $\Psi^{-1}(t)\geq \Phi^{-1}(t)$ for all $t\in (0, \infty)$
since $\Psi:(0,\infty)\to(0,\infty)$ is also a decreasing diffeomorphism.
Finally, for all $t\in(0,\frac{1}{2}]$ we easily see that
$\Psi\bigl(t^{-1} \log(t^{-1})\bigr) \leq Ct$ and hence that
\begin{align*}
 m(Ct)= \Phi^{-1}(Ct) \leq \Psi^{-1}(Ct)\leq t^{-1} \log(t^{-1}),
\end{align*}
and the claim then follows from \eqref{eq:ondiag_coulhon}.
\end{proof}

Now we prove Theorem~\ref{thm:dir_hk} for bounded open sets $U$. Given the
ultracontractivity of $(T^U_t)_{t>0}$ in Proposition~\ref{prop:on_diag_up}
and the strong Feller property in Proposition~\ref{prop:strFeller},
a general result in \cite{Da89} provides the existence of a continuous kernel
$p^U=p^U_t(x,y)$ for $(T^U_t)_{t>0}$, but we still have to identify
this kernel as the transition density of $\mathcal{B}^U$.

\begin{proof}[Proof of Theorem~\ref{thm:dir_hk} for bounded $U$]
We divide the proof of i) into several steps.

\emph{Step 1:} In the first step we show the existence of a jointly continuous
integral kernel $p^U=p^U_t(x,y)$ for $(T^U_t)_{t>0}$.
%By Proposition~\ref{prop:on_diag_up} and \cite[Theorem 2.2.7]{Da07},
%$T^U_t$ admits an integral kernel $p^{U}_{t}\in L^{\infty}(U\times U,M\times M)$
%for all $t>0$, which together with $M(U)<\infty$ immediately implies that
%$T^U_{t}:L^2(U,M)\to L^2(U,M)$ is a Hilbert-Schmidt operator and hence compact.
%In particular,
Being discrete by Proposition~\ref{prop:FK}, the spectrum of $-\mathcal{L}_U$ takes the
form of an unbounded non-decreasing sequence $(\lambda_n)_{n\geq 1}\subset[0,\infty)$
of eigenvalues repeated according to multiplicity, and there exists a complete
orthonormal system $(\varphi_n)_{n\geq 1}\subset \mathcal{D}(\mathcal{L}_U)$ of
$L^2(U,M)$ such that $-\mathcal{L}_U \varphi_n=\lambda_n \varphi_n$ for any
$n\geq 1$ (see e.g.\ \cite[Corollary~4.2.3]{Da95}).
Then $\varphi_{n}=e^{\lambda_{n}}T^{U}_{1}\varphi_{n}\in L^{\infty}(U,M)$
by Proposition~\ref{prop:on_diag_up}, so that we may choose a bounded Borel
measurable version of $\varphi_{n}$ for each $n$. Further, since
$R^U_{\lambda}\varphi_n$ is continuous on $U$ for any $\lambda>0$
by Proposition~\ref{prop:strFeller} and
\begin{align} \label{eq:res_phi}
R^U_\lambda \varphi_n=G^U_\lambda\varphi_n
	=(\lambda +\lambda_n)^{-1}\varphi_n \qquad \text{$M$-a.e.\ on $U$}
\end{align}
by \cite[Theorem 4.2.3 (ii)]{FOT11}, there exists a continuous version of
$\varphi_n$, which is unique, bounded, and still denoted by $\varphi_n$.
Then by \cite[Theorem~2.1.4]{Da89}, the series
\begin{align} \label{eq:pU_eig}
 p^U_t(x,y):=\sum_{n=1}^\infty e^{-\lambda_n t} \varphi_n(x) \varphi_n(y)
\end{align}
absolutely converges uniformly on $[\varepsilon, \infty) \times U \times U$
for any $\varepsilon>0$, from which the joint continuity of $p^U=p^U_t(x,y)$
follows, and \eqref{eq:pU_eig} defines an integral kernel for $(T^U_t)_{t>0}$,
namely for each $t>0$ and $f\in L^2(U,M)$,
\begin{align}\label{eq:pU_integral_kernel}
 T^U_t f(x) =\int_U p^U_t(x,y) f(y) \, M(dy) \qquad \text{for $M$-a.e.\ $x\in U$.}
\end{align}
Note that the boundedness of $\varphi_n$ together with the uniform
convergence of \eqref{eq:pU_eig} implies the boundedness of $p^U_t(x,y)$
on $[\varepsilon, \infty) \times U \times U$ for each $\varepsilon>0$,
and also that $p^U_t(x,y)\geq 0$ by a monotone class argument based on 
\eqref{eq:pU_integral_kernel} and the fact that
$T^U_t f\geq 0$ $M$-a.e.\ for any $f\in L^2(U,M)$ with $f\geq 0$.

\emph{Step 2:} In this step we show that $R^U_\lambda$ is absolutely continuous
with respect to the Liouville measure $M$ for any $\lambda>0$.
%i.e.\ we establish the existence of the resolvent kernel.
Let $A$ be a Borel subset of $U$ with $M(A)=0$. Then
$R^U_\lambda \indicator_A$ is continuous on $U$ by Proposition~\ref{prop:strFeller},
and we also have $R^U_\lambda \indicator_A=G^U_\lambda \indicator_A=0$
$M$-a.e.\ on $U$ by $\indicator_A=0$ $M$-a.e.
Since $M$ has full support, it follows that $R^U_\lambda \indicator_A$
is a continuous function on $U$ which is equal to $0$
on a dense subset of $U$ and hence it is identically zero on $U$,
proving the absolute continuity of $R^U_\lambda$.
%Thus for each $x\in U$, there exists a Borel function $r^U_\lambda(x, \cdot):U\to[0,\infty]$
%(a resolvent kernel) such that for all bounded Borel functions $f:U\to\bbR$,
%\begin{align*}
% R^U_\lambda f(x)=\int_{U} r^U_\lambda(x,y) f(y) \, M(dy).
%\end{align*}
%Moreover, since for all non-negative bounded Borel functions $f$,
%\begin{align*}
%  \int_{U}  r_\lambda^U(x,y) f(y) \, M(dy)&=E_x\Bigl[ \int_0^{\tau_U} e^{-\lambda t} f(\mathcal{B}_t) \, dt \Bigr]
%	\leq E_x\Bigl[ \int_0^{T_U} f(B_t) \, dt \Bigr]\\
% &=\int_U g_U(x,y) f(y)  \, M(dy),
%\end{align*}
%we get that $r^U_\lambda(x,y)\leq g_U(x,y)$ for $M$-a.e.\ $x,y\in U$.
%In particular, $r^U_\lambda(x,\cdot)\in L^2(U,M)$.

\emph{Step 3:} Next we will show that for \emph{any} $x\in U$,
\begin{align} \label{eq:ident_laplace}
 \int_0^\infty e^{-\lambda t} \Bigl( \int_U p_t^U(x,y) f(y) \, M(dy) \Bigr) \, dt
	= \int_0^\infty e^{-\lambda t} E_x\bigl[f(\mathcal{B}^U_t)\bigr] \, dt,
\end{align}
for all $\lambda >0$ and all bounded Borel functions $f:U\to[0,\infty)$.
Recall that $P^U_t f(x)= E_x\bigl[f(\mathcal{B}^U_t)\bigr]$ denotes the
transition semigroup of $\mathcal{B}^U$. Then for any $\varepsilon >0$,
since $P^U_\varepsilon f=T^U_\varepsilon f$ $M$-a.e., by
the absolute continuity of $R^U_\lambda$ with respect to $M$ we have
\begin{align*}
 \int_{\varepsilon}^\infty e^{-\lambda t} P^U_t f(x) \, dt
	=e^{-\lambda \varepsilon} R^U_\lambda (P_\varepsilon^Uf)(x)
	&=e^{-\lambda \varepsilon} R^U_\lambda (T_\varepsilon^Uf)(x) \\
 =e^{-\lambda \varepsilon} R^U_\lambda \biggl( \sum_{n=1}^\infty e^{-\lambda_n \varepsilon} \langle \varphi_n, f\rangle \varphi_n \biggr)(x)
	&=\sum_{n=1}^\infty e^{-(\lambda+\lambda_n) \varepsilon} \frac{1}{\lambda+\lambda_n} \langle \varphi_n, f\rangle \varphi_n(x),
\end{align*} 
where we also used \eqref{eq:res_phi} and the uniform convergence of
the series in \eqref{eq:pU_eig}. Setting
$a^\varepsilon_n:=e^{-(\lambda+\lambda_n) \varepsilon} \frac{1}{\lambda+\lambda_n}
	=\int_\varepsilon^\infty e^{-(\lambda+\lambda_n)t} \, dt$
and applying dominated convergence again on the basis of the uniform convergence
of \eqref{eq:pU_eig} on $[\varepsilon,\infty)\times U\times U$, we further get
\begin{align*}
 \int_{\varepsilon}^\infty e^{-\lambda t} P^U_tf(x) \, dt
	=\sum_{n=1}^\infty a^\varepsilon_n \varphi_n(x) \langle \varphi_n, f\rangle
	&=\lim_{N\to \infty} \sum_{n=1}^N \int_\varepsilon^\infty e^{-(\lambda+\lambda_n)t} \varphi_n(x) \langle \varphi_n, f\rangle \, dt  \\
 =\int_\varepsilon^\infty \biggl( \sum_{n=1}^\infty e^{-\lambda_nt} \varphi_n(x) \langle \varphi_n, f\rangle \biggr) e^{-\lambda t}\, dt   
	&=\int_\varepsilon^\infty \Bigl( \int_U p^U_t(x,y) f(y) \, M(dy) \Bigr) e^{-\lambda t} \, dt,
\end{align*}
% By Step~2 we have $r_\lambda^U(x,\cdot)\in L^2(U,M)$ for all $x\in U$ and
% \begin{align*}
%  r_\lambda^U(x,\cdot)=\sum_{n=1}^\infty \langle  r_\lambda^U(x,\cdot), \varphi_n \rangle_{L^2(U,M)} \, \varphi_n= \sum_{n=1}^\infty \frac{\varphi_n(x)}{\lambda+\lambda_n} \varphi_n.
% \end{align*}
% Hence, using that $(a_n^\varepsilon)_n \to (\varphi_n(x)/(\lambda+\lambda_n))_n$ in $\ell^2$,
% \begin{align*}
%  \lim_{\varepsilon \downarrow 0}  \sum_{n=1}^\infty a_n^\varepsilon \, \varphi_n(x) \langle \varphi_n, f\rangle_{L^2(U,M)}&= \sum_{n=1}^\infty  \frac{\varphi_n(x)}{\lambda+\lambda_n} \langle \varphi_n, f\rangle_{L^2(U,M)} \\
% &=\int_U r_\lambda^U(x,y) f(y) \, M(dy),
% \end{align*}
and we obtain \eqref{eq:ident_laplace} by using montone convergence to let $\varepsilon \downarrow 0$.

\emph{Step 4:} Finally, we now prove that
$P_x[\mathcal{B}^U_t \in dy]=p^U_t(x,y) \, M(dy)$ for all $(t,x)\in (0,\infty) \times U$.
Let $x\in U$. Applying to \eqref{eq:ident_laplace} the uniqueness of Laplace
transforms for positive measures on $[0,\infty)$
(see e.g.\ \cite[Section~XIII.1, Theorem~1a]{Fe71}),
we get for all bounded Borel functions $f:U\to[0,\infty)$,
\begin{align} \label{eq:ident_density}
 \int_U p^U_t(x,y) f(y) \, M(dy)=E_x\bigl[f(\mathcal{B}_t^U)\bigr]
	\qquad \text{for \emph{$dt$-a.e.}\ $t\in (0,\infty)$.}
\end{align}
If in addition $f$ is continuous, then we easily see from dominated convergence
using the continuity and boundedness of $p^U$ established in Step~1 that
\eqref{eq:ident_density} holds for all $t>0$.
Finally a monotone class argument gives the claim, proving i).

For ii), the claim is immediate from dominated convergence
in view of the continuity and boundedness of $p^U_t$ for each $t>0$
and the fact that $M(U)<\infty$.
Finally, iii) follows by \cite[Proposition~A.3 (2)]{Ka10}.
\end{proof}

\subsection{The heat kernel on unbounded open sets} \label{sec:Uunbounded}
The proof of Theorem~\ref{thm:dir_hk} for unbounded $U$ is based on the following
lemma, which essentially contains Theorem~\ref{thm:on-diag_glob} already.

\begin{lemma}\label{lem:off_diag_up}
%Almost surely w.r.t.\ $\bbP$, 
For any $\beta> \alpha_1$ and any $R\geq 1$ there exist
$C_i=C_i(X,\gamma,R, \beta)>0$, $i=11,12$,
such that for any non-empty bounded open subset $U$ of $\bbR^2$,
\begin{align*}
 p^U_t(x,y)=p^U_t(y,x) \leq C_{11} t^{-1} \log( t^{-1})
	\exp\biggl(-C_{12} \Bigl(\frac{|x-y|^\beta \wedge 1} t \Bigr)^{\frac{1}{\beta -1}}\biggr) 
\end{align*}
for all $t\in (0, \tfrac 1 2]$, $x \in \bbR^2$ and $y \in B(R)$, where we extend
$p^U=p^U_t(x,y)$ to a function on $(0,\infty)\times \bbR^2 \times \bbR^2$
by setting $p^U_t(x,y):=0$ for $t>0$ and $(x,y)\in (U\times U)^c$.
\end{lemma}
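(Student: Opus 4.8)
The plan is to derive the off-diagonal decay from the on-diagonal bound of Proposition~\ref{prop:on_diag_up} together with the exit-time tail estimate of Proposition~\ref{prop:tailtau1}, combined through a two-point Chapman--Kolmogorov decomposition of $p^U_{2t}$; one cannot instead use the classical Davies perturbation method, because the intrinsic metric of the Liouville Dirichlet form is identically zero, so all of the decay must be extracted from the exit-time estimates. First I would record the symmetry $p^U_t(x,y)=p^U_t(y,x)$, which is immediate from the self-adjointness of $T^U_t$ on $L^2(U,M)$ and the uniqueness of the jointly continuous density in Theorem~\ref{thm:dir_hk}~i). For the quantitative statement, since by Theorem~\ref{thm:dir_hk}~i) each $p^U_t$ is bounded on $U\times U$ with $\sup_{a,b\in U}p^U_t(a,b)=\|T^U_t\|_{L^1(U)\to L^\infty(U)}$, Proposition~\ref{prop:on_diag_up} gives $p^U_t(a,b)\le C_{10}t^{-1}\log(t^{-1})$ for all $a,b$ and $t\in(0,\tfrac12]$. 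Writing $\rho:=|x-y|$, when $(\rho^{\beta}\wedge1)/t\le1$ the exponential in the asserted bound is bounded below by $e^{-C_{12}}$, so the statement follows from this on-diagonal bound once $C_{11}\ge e^{C_{12}}C_{10}$; hence I may assume $t<\rho^{\beta}\wedge1$, in particular $t^{1/\beta}<\rho\wedge1$.

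The core step is the decomposition. I would set $r:=\tfrac12(\rho\wedge1)\in(0,\tfrac12]$, split $U$ according to whether $|w-x|<\rho/2$, use $\{|w-x|<\rho/2\}\subset\{|w-y|>r\}$ and $\{w\in U:|w-x|\ge\rho/2\}\subset\{|w-x|\ge r\}$, bound in each piece the factor based at $x$ (respectively at $y$) by $\|T^U_t\|_{L^1(U)\to L^\infty(U)}$, and recognise the surviving integrals as exit probabilities, e.g.\ $\int_{\{|w-y|>r\}}p^U_t(y,w)\,M(dw)=P_y[\mathcal B^U_t\notin B(y,r)]\le P_y[\tau_{B(y,r)}\le t]$. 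This yields
\begin{align*}
 p^U_{2t}(x,y)=\int_U p^U_t(x,w)\,p^U_t(w,y)\,M(dw)
 \le \|T^U_t\|_{L^1(U)\to L^\infty(U)}\bigl(P_x[\tau_{B(x,r)}\le t]+P_y[\tau_{B(y,r)}\le t]\bigr).
\end{align*}
Inserting Proposition~\ref{prop:on_diag_up} for the prefactor and Proposition~\ref{prop:tailtau1} for the two tails (legitimate for the $y$-tail since $y\in B(R)$ and $r\le1$, and for the $x$-tail as long as $x$ lies in a fixed ball, which is automatic when $\rho\le2$, where $x\in B(R+2)$), then substituting $r^{\beta}=2^{-\beta}(\rho\wedge1)^{\beta}$ and absorbing constants, would give the claim for $\rho\le2$ and $2t\le\tfrac12$, with the leftover range $t\in(\tfrac14,\tfrac12]$ recovered by monotonicity in $t$. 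For $\rho>2$, where $\rho^{\beta}\wedge1=1$ and the target factor is $\exp(-C_{12}t^{-1/(\beta-1)})$, I would instead decompose the path started at $y$ at its first exit from $B(y,1)$ --- a ball centred at a point of $B(R)$, hence with $R$-controlled tail constants --- and iterate this through concentric shells; this is precisely the type of implication ``on-diagonal bound plus exit-time tail $\Rightarrow$ off-diagonal bound'' that \cite{GK14} provides, and which notably does not require volume doubling (volume doubling failing for $M$ by \cite[Remark~A.2]{BGRV14}).

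The hard part will be uniformity: the constants $C_{11},C_{12}$ are required to depend on $R$ only, hence to be insensitive to how large $U$ is and how far $x$ lies from $B(R)$, whereas the naive prefactor $\|T^U_t\|_{L^1(U)\to L^\infty(U)}$ and the exit-time constants for balls centred far out both blow up. The way through will be to let the exponential factor absorb this: one must always anchor the decomposition at $B(R)$-centred balls, whose tail estimates have $R$-controlled constants, and dominate whatever contributions come from regions far from $B(R)$ by the exponentially small weights they themselves produce. Organising this bookkeeping so that it closes --- and handling the $\varepsilon\downarrow0$ localisation needed to pass from integrated bounds $\int_A p^U_t(x,w)\,M(dw)\le\cdots$ to the pointwise bound on $p^U_t(x,y)$ --- is the technical heart of the proof and the place where the result of \cite{GK14} is genuinely used.
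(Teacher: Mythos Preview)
Your proposal is correct and matches the paper's approach: the paper proves this lemma in one line by invoking \cite[Theorem~1.1]{GK14} with precisely the two inputs you identify --- the on-diagonal bound of Proposition~\ref{prop:on_diag_up} (applied to $p^{B(R)}$ for each $R\geq 1$) and the exit-time tails of Proposition~\ref{prop:tailtau1}. Your Chapman--Kolmogorov sketch is exactly the mechanism underlying that cited theorem, and you correctly locate the uniformity-in-$U$ and uniformity-in-$x$ issues as the technical content being outsourced to \cite{GK14}.
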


\begin{proof}
Since for every $R\geq 1$ we have $p^{B(R)}_{t}\leq C_{10}t^{-1}\log(t^{-1})$
for any $t\in(0,\frac{1}{2}]$ for $C_{10}=C_{10}(X,\gamma,R)>0$ by
Proposition~\ref{prop:on_diag_up} and the continuity of $p^{B(R)}_{t}$,
given the exit time estimates in Proposition~\ref{prop:tailtau1},
the result follows from \cite[Theorem~1.1]{GK14}.
\end{proof}

\begin{remark} \label{rem:hk_bounded}
The constants appearing in the upper bound in Lemma~\ref{lem:off_diag_up}
do not depend on the set $U$. Therefore, for any $R\geq 1$ there exists
$C_{13}=C_{13}(X,\gamma, R)>0$, also not depending on $U$, such that
$p^U_{1/2}(x,y)\leq C_{13}$ for all $x\in \bbR^2$ and $y\in B(R)$.
In particular, by the semigroup property we have for all
$t\in (\frac 1 2, \infty)$ and such $x$ and $y$,
\begin{align*}
 p^U_t(x,y)=\int_{\bbR^2} p^U_{t- 1/2}(x,z) p^U_{1/2}(z,y) \, M(dz)
	\leq C_{13} \int_{\bbR^2} p^U_{t- 1/2}(x,z) \, M(dz) \leq C_{13}.
\end{align*}
\end{remark}

\begin{lemma} \label{lem:tail_tauUn}
%Almost surely w.r.t.\ $\bbP$, 
For any increasing sequence $(U_n)_{n\geq 1}$ of open subsets of $\bbR^2$
satisfying $\bigcup_{n\geq 1} U_n = \bbR^2$,
\begin{align*}
 \lim_{n\to \infty} P_x[\tau_{U_n}<t]=0
\end{align*}
uniformly in $(t,x)$ over each compact subset of $[0,\infty)\times \bbR^2$.
\end{lemma}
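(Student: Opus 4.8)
The plan is to reduce the statement to a single uniform estimate on the exit times from Euclidean balls, and then to obtain that estimate from the strong Feller property already established together with Dini's theorem.

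First I would observe that it suffices to prove, for each $T\geq 0$ and $R\geq 1$,
\begin{align*}
 \lim_{M\to\infty}\sup_{x\in B(R)}P_x\bigl[\tau_{B(M)}\leq T\bigr]=0.
\end{align*}
Indeed, given a compact $K\subset[0,\infty)\times\bbR^2$, choose $T\geq 0$ and $R\geq 1$ with $K\subset[0,T]\times B(R)$. For any $M\geq 1$, since $\overline{B(M)}$ is compact and $(U_n)_{n\geq 1}$ is increasing with $\bigcup_{n}U_n=\bbR^2$, there is $n_0$ with $\overline{B(M)}\subset U_n$ for all $n\geq n_0$; then $T_{U_n}\geq T_{B(M)}$, hence $\tau_{U_n}=F_{T_{U_n}}\geq F_{T_{B(M)}}=\tau_{B(M)}$, so that $P_x[\tau_{U_n}<t]\leq P_x[\tau_{B(M)}\leq T]$ for all $(t,x)\in K$ and $n\geq n_0$. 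Letting first $n\to\infty$ and then $M\to\infty$ then yields the claim.

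Next I would prove the displayed limit by a monotonicity (Dini) argument. For $M\geq R+1$ put $f_M:=P_\cdot[\tau_{B(M)}\leq T]=1-P^{B(M)}_T\indicator_{B(M)}$ on $B(M)$. Since $\tau_{B(M)}$ is nondecreasing in $M$, the family $(f_M)_{M\geq R+1}$ is nonincreasing; and since $T_{B(M)}\uparrow\infty$ $P_x$-a.s.\ (Brownian paths are bounded on compact time intervals) and $F$ is a homeomorphism of $[0,\infty)$ by Proposition~\ref{prop:pcaf} ii), we have $\tau_{B(M)}=F_{T_{B(M)}}\uparrow\infty$ $P_x$-a.s., whence $f_M(x)\downarrow 0$ for every $x$. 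Moreover each $f_M$ is continuous on $B(M)$, hence on the compact set $\overline{B(R)}\subset B(M)$, by the strong Feller property of $P^{B(M)}_T$ in Theorem~\ref{thm:dir_hk} ii), which has already been established for bounded open sets in Subsection~\ref{sec:Ubounded}. Dini's theorem applied on $\overline{B(R)}$ then gives $\sup_{x\in\overline{B(R)}}f_M(x)\to 0$, which is exactly the required limit.

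The only delicate point — and the reason a direct computation with the exit-time bounds of Section~\ref{sec:prelim} does not work — is the uniformity in $x$: the quenched constants in Proposition~\ref{prop:tailtau1} depend on $R$ and blow up as the starting region grows, so one cannot control $P_x[\tau_{B(M)}\leq T]$ by chaining exit times of unit balls along a path that leaves $B(R)$. Routing the uniformity through the continuity of $P^{B(M)}_T\indicator_{B(M)}$ and Dini's theorem sidesteps this entirely, and everything else in the argument is elementary.
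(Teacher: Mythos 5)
Your proof is correct, and it takes a genuinely different route from the paper's. The paper handles the uniformity in $x\in B(R)$ probabilistically: it applies the strong Markov property at $\tau_{B(2R)}$ and then uses the explicit Poisson kernel to dominate all exit distributions $\mu^x_{0,2R}$ by a constant multiple of the single measure $\mu_{0,2R}$ (a scale-invariant Harnack-type comparison), after which dominated convergence for $P_{\mu_{0,2R}}[\tau_{U_n}<T]$ finishes the proof; it never passes through the balls $B(M)$ or the killed semigroups. You instead first reduce to the monotone family of balls $B(M)$ by compactness of $\overline{B(M)}$, and then obtain uniformity over $\overline{B(R)}$ from Dini's theorem, using that $x\mapsto P^{B(M)}_T\indicator_{B(M)}(x)=P_x[\tau_{B(M)}>T]$ is continuous by the strong Feller property of the killed semigroup on bounded sets; that property is established in Subsection~\ref{sec:Ubounded} before this lemma is needed, so there is no circularity, and the pointwise convergence $\tau_{B(M)}=F_{T_{B(M)}}\uparrow\infty$ is justified exactly as you say via Proposition~\ref{prop:pcaf}. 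The trade-off is that the paper's argument is self-contained at the level of the strong Markov property plus an explicit harmonic-measure bound and does not need the heat-kernel continuity of Subsection~\ref{sec:Ubounded}, whereas yours is softer and avoids the Harnack-type comparison entirely; your closing remark about why a direct chaining of the quenched exit-time bounds of Section~\ref{sec:prelim} fails (the constants and the restriction $r\leq 1$ are tied to the starting region) is also accurate.
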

\begin{proof}
It suffices to prove the uniform convergence in $(t,x)$ over $[0,T]\times B(R)$
for any $T,R \in (0,\infty)$. By monotonicity we may assume $t=T$.
Then for any $x\in B(R)$ and $n\geq 1$ with $B(2R) \subset U_n$, noting that
$\tau_{B(2R)}\leq \tau_{U_n}=\tau_{B(2R)}+\tau_{U_n}\circ\vartheta_{\tau_{B(2R)}}$,
by the strong Markov property \cite[Theorem~A.1.21]{CF12} of $\mathcal{B}$ we obtain
\begin{align}
 P_x[ \tau_{U_n}<T ]&= P_x\bigl[ \tau_{B(2R)}+\tau_{U_n}\circ\vartheta_{\tau_{B(2R)}} < T \bigr]
	\leq P_x \bigl[ \tau_{U_n} \circ \vartheta_{\tau_{B(2R)}} < T \bigr] \nonumber \\
 &= E_x\bigl[ P_{\mathcal{B}_{\tau_{B(2R)}}}[ \tau_{U_n}< T ] \bigr]
	= P_{\mu^x_{0,2R}} [ \tau_{U_n}< T ],
 \label{eq:tail_tauUn}
\end{align}
where $\mu^x_{0,2R}:=P_x[\mathcal{B}_{\tau_{B(2R)}} \in \cdot] = P_x[B_{T_{B(2R)}} \in \cdot]$
as in the proof of Proposition~\ref{prop:negmom_tau} above.
Setting $\mu_{0,2R}:=\mu^0_{0,2R}$ and arguing precisely as there,
from an explicit formula for the exit distribution of a Brownian motion
%(see \cite[Theorem~II.1.17]{Ba95})
(see e.g.\ \cite[Exercise~4.2.24]{KS91})
we get $\mu^x_{0,2R}\leq c \mu_{0,2R}$
for some explicit $c>0$. Thus by \eqref{eq:tail_tauUn}, for any $n\geq 1$
with $B(2R) \subset U_n$ we obtain
$\sup_{x\in B(R)}P_x[ \tau_{U_n}<T ] \leq c P_{\mu_{0,2R}} [ \tau_{U_n}< T ]$,
which converges to $0$ as $n\to \infty$ by dominated convergence since the
trajectory of $\{\mathcal{B}_t\}_{t\in[0,T]}$ is bounded and hence contained
in $U_n$ for $n$ large enough, completing the proof.
\end{proof}

\begin{proof}[Proof of Theorem~\ref{thm:dir_hk} for unbounded $U$]
i) Let $R\geq 1$ and let $f: \bbR^2 \rightarrow [0,\infty)$ be bounded and
Borel measurable with $f|_{B(R)^c}=0$. Let $k,l\in\bbN$ satisfy $l>k\geq R+1$,
let $t>0$ and $x\in B(k)$. Noting that
$\tau_{B(k)}\leq\tau_{B(l)}=\tau_{B(k)}+\tau_{B(l)}\circ\vartheta_{\tau_{B(k)}}$
by $B(k) \subset B(l)$ and that
$E_x\bigl[\indicator_{\{\tau_{B(k)}=t\}}f(\mathcal{B}_{t})\bigr]=0$ by
$P_x[ \mathcal{B}_{\tau_{B(k)}}\in B(k)]=0$ and $f|_{B(k)^c}=0$, we see from
the strong Markov property \cite[Proposition~3.4]{GK14} of $\mathcal{B}$ that
\begin{align}\nonumber
 P^{B(l)}_t f(x)&=P^{B(k)}_tf(x)+ E_x\bigl[ \indicator_{\{\tau_{B(k)}< t < \tau_{B(l)}\}} f(\mathcal{B}_t) \bigr] \\
 &=P^{B(k)}_t f(x)+ E_x\bigl[ \indicator_{\{\tau_{B(k)}< t\}} P^{B(l)}_{t-\tau_{B(k)}}f(\mathcal{B}_{\tau_{B(k)}}) \bigr].
 \label{eq:hk_comp_SMarkov}
\end{align}
Recall that by Theorem~\ref{thm:dir_hk} i) for $U=B(l)$ proved in Subsection~\ref{sec:Ubounded},
\begin{align*}
 P^{B(l)}_{t-\tau_{B(k)}}f(\mathcal{B}_{\tau_{B(k)}})
	=\int_{B(R)} p^{B(l)}_{t-\tau_{B(k)}}(\mathcal{B}_{\tau_{B(k)}},y) f(y) \, M(dy).
\end{align*}
We have
\begin{align*}
\sup_{n\geq 1} \sup_{[\frac 1 2, \infty)\times \bbR^2 \times B(R)}
	p^{B(n)}_\cdot(\cdot,\cdot)\leq C_{13} < \infty
\end{align*}
by Remark~\ref{rem:hk_bounded} and
\begin{align*}
\sup_{n\geq 1} \sup_{(0,\frac  1 2)\times B(R+1)^c \times B(R)}
	p^{B(n)}_\cdot(\cdot,\cdot)\leq C_{14} < \infty
\end{align*}
for some $C_{14}=C_{14}(X,\gamma, R)>0$ by $\mathrm{dist}\bigl(B(R+1)^c, B(R)\bigr)\geq 1$
and Lemma~\ref{lem:off_diag_up}. We see therefore from \eqref{eq:hk_comp_SMarkov} that
\begin{align*}
 0\leq P^{B(l)}_t f(x)-P^{B(k)}_t f(x)
	\leq (C_{13}\vee C_{14}) P_x[\tau_{B(k)}< t] \int_{B(R)} f(y) \, M(dy),
\end{align*}
and since $f\geq 0$ with $f|_{B(R)^c}=0$ is arbitrary we obtain
\begin{align} \label{eq:hk_comp}
 0\leq p^{B(l)}_t(x,y)-p^{B(k)}_t(x,y)\leq (C_{13}\vee C_{14}) P_x[ \tau_{B(k)}<t ]
\end{align}
for all $t\in (0,\infty)$, $x\in B(k)$ and $y\in B(R)$ by virtue of the
continuity of $p^{B(l)}_{t}(x,\cdot)$ and $p^{B(k)}_t(x,\cdot)$ proved in
the last subsection. Thus it follows from \eqref{eq:hk_comp} and
Lemma~\ref{lem:tail_tauUn} that the limit
$p_t(x,y):=\lim_{n\to\infty}p^{B(n)}_t(x,y)\in[0,\infty)$ exists and is continuous
on $(0,\infty)\times \bbR^2 \times B(R)$. Since $R\geq 1$ is arbitrary and 
the relation $P_x[\mathcal{B}_t \in dy]=p_t(x,y) \, M(dy)$ can be obtained
from that for $\mathcal{B}^{B(n)}$ and $p^{B(n)}$ by monotone convergence,
statement i) follows  for the global heat kernel $p_t(x,y)$, i.e.\ for the case $U=\bbR^2$.
For a general unbounded open set $U\subset \bbR^2$, statement i) can be obtained
by similar arguments and the fact that for any $k,l\in \bbN$ with $k<l$,
\begin{align*} 
 0\leq p^{U\cap B(l)}_t(x,y)- p^{U\cap B(k)}_t(x,y)
	\leq p^{B(l)}_t(x,y)-p^{B(k)}_t(x,y), \qquad t>0,\,x,y\in \bbR^2.
\end{align*}
In order to see the latter inequality, notice that for
$(x,y)\in \bigl((U\cap B(k)) \times (U\cap B(k))\bigr)^c$ this inequality holds
trivially, and for $(x,y)\in (U\cap B(k)) \times (U\cap B(k))$,
\begin{align*}
 &p^{B(l)}_t(x,y)-p^{B(k)}_t(x,y)- p^{U\cap B(l)}_t(x,y)+ p^{U\cap B(k)}_t(x,y)\\
 &\mspace{200mu}=\lim_{r \downarrow 0}
	\frac{ P_x\bigl[ \mathcal{B}_t \in B(y,r), \, \tau_U \vee \tau_{B(k)} \leq t < \tau_{B(l)} \bigr]}{M\bigl((B(y,r)\bigr)}
	\geq 0
\end{align*}
by the continuity of the Dirichlet heat kernels on bounded open sets.

ii) Let $x\in U$ and $t,\varepsilon>0$. Since
\begin{align*}
 \int_{\bbR^2} p_t(x,y) \, M(dy)=P_x[\mathcal{B}_t\in \bbR^2]=1
\end{align*}
by $P_x[\lim_{s\to \infty} F_s=\infty]=1$, we can choose
$n\in \mathbb{N}$ such that $x\in B(n)$ and
\begin{align*}
 \int_{B(n)} p_t(x,y) \, M(dy) > 1-\varepsilon.
\end{align*}
Then by the continuity of $p_t$ there exists $r>0$ such that $B(x,r)\subset U$ and
\begin{align*}
 \int_{B(n)} p_t(z,y) \, M(dy) > 1-\varepsilon, \qquad \forall z\in B(x,r),
\end{align*}
and hence
\begin{align} \label{eq:strFeller_unbdd}
 \int_{U \setminus B(n)} p^U_t(z,y) \, M(dy) \leq \int_{B(n)^c} p_t(z,y) \, M(dy)< \varepsilon,
	\qquad \forall z\in B(x,r).
\end{align}
Now, for any bounded Borel function $f: U\rightarrow \bbR$ and $z\in B(x,r)$, writing
\begin{align*}
 P^U_t f(z)=\int_{U \setminus B(n)} p^U_t(z,y) f(y) \, M(dy)
	+ \int_{U \cap B(n)} p^U_t(z,y) f(y) \, M(dy)
\end{align*}
and applying \eqref{eq:strFeller_unbdd}, we obtain
\begin{align*}
 &\bigl| P^U_t f(x)- P^U_t f(z) \bigr| \\
 &\mspace{30mu}\leq   2 \|f\|_\infty \varepsilon + \biggl|\int_{U \cap B(n)} p^U_t(x,y) f(y) \, M(dy)
	- \int_{U \cap B(n)} p^U_t(z,y) f(y) \, M(dy) \biggr| \\
 &\mspace{30mu}\leq  (2 \|f\|_\infty+1) \varepsilon
\end{align*}
provided $|x-z|$ is sufficiently small, which proves the continuity of
$P^U_t f$ at $x$. In the last step we used the fact that, since $0\leq p^U_t \leq p_t$
on $B(x,r)\times (U\cap B(n))$ where $p_t$ is bounded and $p^U_t$ is continuous,
 the function $z\mapsto \int_{U \cap B(n)} p^U_t(z,y) f(y) \, M(dy)$
is continuous on $B(x,r)$ by dominated convergence.

iii) Since $U$ is connected, for any $x,y\in U$ there exists a connected bounded
open set $V\subset U$ with $x,y \in V$ and then by the corresponding result
for bounded open sets we have $p^U_t(x,y)\geq p^V_t(x,y)>0$ for any $t>0$.
\end{proof}

\begin{proof}[Proof of Theorem~\ref{thm:on-diag_glob}]
This is immediate from Lemma~\ref{lem:off_diag_up}
since, as shown in the above proof,
$p_t(x,y)=\lim_{n\to\infty}p^{B(n)}_t(x,y)$ for any $t>0$ and $x,y\in\bbR^2$.
\end{proof}

\section{On-diagonal lower bounds and spectral dimensions} \label{sec:lb}
In this section we prove the on-diagonal lower bound in Theorem~\ref{thm:on-diag_low}.
Indeed, we will show a more general result (Theorem~\ref{thm:on-diag_low_loc} below)
that also covers the Dirichlet Liouville heat kernels and thereby,
in combination with Theorem~\ref{thm:on-diag_glob},
enables us to identify the pointwise and global spectral dimensions as $2$.
Recall that we have fixed an environment $\om \in \Omega$
as declared at the beginning of Section~\ref{sec:cont_ub}.

\begin{theorem}\label{thm:on-diag_low_loc}
%Almost surely w.r.t.\ $\bbP$,
For $M$-a.e.\ $x\in \bbR^2$, for any $\eta>18$ and any open set $U\subset \bbR^2$
containing $x$ there exist $C_{15}=C_{15}(X,\gamma,|x|,\eta)>0$
and $t_0(x,U)=t_0(X,\gamma,\eta,x,U)\in (0,\frac{1}{2}]$ such that
\begin{align} \label{eq:hk_lb_loc}
 p_t^U(x,x)\geq C_{15} t^{-1} \bigl(\log(t^{-1})\bigr)^{-\eta},
	\qquad \forall t\in \bigl(0,t_0(x,U)\bigr].
\end{align}
\end{theorem}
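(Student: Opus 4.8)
The starting point is the Cauchy--Schwarz lower bound: whenever $B(x,r)\subset U$,
\begin{align*}
 p_{2t}^{U}(x,x)=\int_{U}p_{t}^{U}(x,y)^{2}\,M(dy)
 &\geq\frac{1}{M(B(x,r))}\Bigl(\int_{B(x,r)}p_{t}^{U}(x,y)\,M(dy)\Bigr)^{2}\\
 &\geq\frac{P_{x}[\tau_{B(x,r)}>t]^{2}}{M(B(x,r))},
\end{align*}
where the last step uses $\{\tau_{B(x,r)}>t\}\subset\{\mathcal{B}^{U}_{t}\in B(x,r)\}$. Since $t\mapsto p_{t}^{U}(x,x)=\bigl\|p_{t/2}^{U}(x,\cdot)\bigr\|_{L^{2}(U,M)}^{2}$ is non-increasing, \eqref{eq:hk_lb_loc} will follow once we produce, for all small $t$, a radius $r=r(x,t)$ with $B(x,r)\subset U$ such that
\begin{align*}
 M(B(x,r))\leq C\,t\,\bigl(\log(t^{-1})\bigr)^{\eta_{1}}
 \quad\text{and}\quad
 P_{x}[\tau_{B(x,r)}>t]\geq c\,\bigl(\log(t^{-1})\bigr)^{-\eta_{2}}
\end{align*}
for finite exponents that can be optimised so that $\eta_{1}+2\eta_{2}<\eta$ whenever $\eta>18$. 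Specialising to $U=\bbR^{2}$ then also yields Theorem~\ref{thm:on-diag_low}.

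The crux is the lower bound on $P_{x}[\tau_{B(x,r)}>t]$, for which I would combine the identity (see \eqref{eq:defG_U}, \eqref{eq:killed_green}) that the Green operator of $\mathcal{B}$ on a ball has the Euclidean Green kernel $g_{B(x,r)}(z,y)=\tfrac1\pi\log\tfrac1{|z-y|}+\Psi_{x,r}(z,y)$ as its density with respect to $M$, with the moment estimates for $M_{\gamma}$ from \cite{Ka85,RV10}. On the one hand, $E_{x}[\tau_{B(x,r)}]=\int_{B(x,r)}g_{B(x,r)}(x,y)\,M(dy)\geq\tfrac{\log2}{\pi}M(B(x,r/2))$ (using $g_{B(x,r)}(x,y)=\tfrac1\pi\log\tfrac{r}{|x-y|}$), and for $M$-a.e.\ $x$ this quantity is moreover comparable to $M(B(x,r))$ up to logarithmic factors; choosing $r=r(x,t)$ with $E_{x}[\tau_{B(x,r)}]\asymp t$ (possible since $r\mapsto E_{x}[\tau_{B(x,r)}]$ is a continuous strictly increasing bijection of $(0,\infty)$ onto $(0,\infty)$) then gives the first display with $M(B(x,r))\lesssim t\,(\log t^{-1})^{\eta_{1}}$. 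On the other hand, a lower bound on $P_{x}[\tau_{B(x,r)}>t]$ is obtained either from a second-moment / Paley--Zygmund argument based on $E_{x}[\tau_{B(x,r)}^{2}]=2\int_{B(x,r)}g_{B(x,r)}(x,y)E_{y}[\tau_{B(x,r)}]\,M(dy)$ together with the strong Markov inequality $P_{x}[\tau_{B(x,r)}>t]\geq\bigl(E_{x}[\tau_{B(x,r)}]-t\bigr)\big/\sup_{z\in B(x,r)}E_{z}[\tau_{B(x,r)}]$, or from \v{C}eby\v{s}ev's inequality $P_{x}[\tau_{B(x,r)}>t]\geq 1-t^{q}E_{x}[\tau_{B(x,r)}^{-q}]$ combined with a negative-moment bound. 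In either case one needs the estimate at the $M$-typical scale $r\asymp t^{1/(2+\gamma^{2}/2)}$ rather than at the worst-case scale $r\asymp t^{1/\alpha_{1}}$ produced by Proposition~\ref{prop:negmom_tau}, so the worst-case exit-time bounds of Section~\ref{sec:prelim} are by themselves not enough here.

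The heart of the matter — and the main obstacle — is therefore to upgrade the relevant quantities, namely the Green-kernel integrals $\int g_{B(x,r)}(x,y)\,M(dy)$ and $\sup_{z\in B(x,r)}\int g_{B(x,r)}(z,y)\,M(dy)$ (equivalently the negative moments $E_{x}[\tau_{B(x,r)}^{-q}]$), together with $M(B(x,r))$ and the ratios $M(B(x,2r))/M(B(x,r))$, to estimates valid for \emph{$M$-a.e.}\ $x$ with only logarithmic losses and along a sequence of scales $r_{k}\downarrow0$ with $r_{k+1}/r_{k}$ bounded below (so that the resulting bound at the times $t_{k}\asymp E_{x}[\tau_{B(x,r_{k})}]$ extends to all small $t$ by the monotonicity above). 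This is carried out by combining the annealed moment bounds for $M_{\gamma}$ and the annealed negative-moment bound \eqref{eq:negmom_annealed} for the exit times with Borel--Cantelli arguments performed \emph{with respect to the Liouville measure $M$} — equivalently, after the Cameron--Martin/Girsanov change of measure that turns $\mathbb{E}\int(\cdots)\,M(dx)$ into a spatial integral of tilted (``rooted-measure'') expectations — which is precisely why one must restrict to $M$-a.e.\ $x$ and allow $t_{0}(x,U)$ to depend on $x$: uniform, or even polylog-uniform, control of the ratios $M(B(x,2r))/M(B(x,r))$ is impossible, as recorded after \cite[Remark~A.2]{BGRV14}. The careful bookkeeping of the powers of $\log(t^{-1})$ accumulated from the logarithmic singularity of $g$, the supremum over the concentric ball, and the volume ratios is what pins down the threshold $\eta>18$, while the dependence of the constants on $U$ enters only through the requirement $B(x,r)\subset U$ and is absorbed into $t_{0}(x,U)$.
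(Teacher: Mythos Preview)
Your overall strategy is the paper's: Cauchy--Schwarz gives
$p_{2t}^{U}(x,x)\geq P_{x}[\tau_{B(x,r)}>t]^{2}/M(B(x,r))$, and the survival
probability is bounded from below via the ``strong Markov inequality''
$P_{x}[\tau_{B(x,r)}>t]\geq\bigl(E_{x}[\tau_{B(x,r)}]-t\bigr)\big/\sup_{z\in B(x,r)}E_{z}[\tau_{B(x,r)}]$,
together with $E_{x}[\tau_{B(x,r)}]\geq c\,M(B(x,r/2))$ and
$\sup_{z\in B(x,r)}E_{z}[\tau_{B(x,r)}]\leq C\,M(B(x,3r))\log(r^{-1})$
coming from the explicit Green kernel and the quenched volume upper bound
(Lemma~\ref{lem:vol_dec}). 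All of this holds for \emph{every} $x$, so neither
the Paley--Zygmund second-moment nor the negative-moment option you mention
is needed.

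Where you overshoot is the mechanism for the $M$-a.e.\ control of the
volume ratios. You propose annealed GMC moment bounds plus a Girsanov/rooted-measure
tilt to run Borel--Cantelli under $M$. The paper instead uses a completely
soft, measure-theoretic fact (Proposition~\ref{prop:gen_volasymp}): for \emph{any}
Borel measure $\mu$ on $\bbR^{2}$ with $\mu(B(x,r))\in(0,\infty)$, and any
$\kappa>2$, one has $\mu(B(x,2r))\leq 8\bigl(\log(r^{-1})\bigr)^{\kappa}\mu(B(x,r))$
for all small $r$, for $\mu$-a.e.\ $x$. The proof is a one-page \v{C}eby\v{s}ev
plus Borel--Cantelli argument based on the elementary bound
$\int \mu(B(y,2r))/\mu(B(y,r))\,\mu(dy)\leq c\,\mu(B(x_{0},4))$ obtained by
covering with a dyadic grid; no multiplicative-chaos input and no change of
measure are used. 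This is the \emph{only} place the restriction to $M$-a.e.\ $x$
enters, and it explains the dependence of $t_{0}(x,U)$ on $x$.

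The threshold $\eta>18$ then drops out mechanically: with $\kappa>2$ in
Proposition~\ref{prop:gen_volasymp}, the factor
$\frac{M(B(x,r/4))}{M(B(x,r))}\Bigl(\frac{M(B(x,r/2))}{M(B(x,3r))}\Bigr)^{2}$
appearing after choosing $r$ so that $t\asymp M(B(x,r/2))$ costs
$2\kappa+2\cdot 3\kappa=8\kappa$ logarithmic powers, and the explicit
$(\log r^{-1})^{-2}$ from the exit-time ratio adds $2$, giving
$\eta>8\cdot 2+2=18$. Your rooted-measure route could presumably be made to
work as well, but it would be considerably heavier machinery to reach the
same point.
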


In particular, Theorem~\ref{thm:on-diag_low_loc} immediately implies
Theorem~\ref{thm:on-diag_low} by choosing $U=\bbR^2$. Furthermore we can
deduce the following result on pointwise spectral dimension.

\begin{corro} \label{thm:spctr_dim_loc}
%Almost surely w.r.t.\ $\bbP$, 
For $M$-a.e.\ $x\in\bbR^2$, for any open set $U\subset \bbR^2$ containing $x$,
\begin{align} \label{eq:spr_dim}
 \lim_{t\downarrow 0} \frac{2 \log p^U_t(x,x)}{-\log t}=2.
\end{align}
\end{corro}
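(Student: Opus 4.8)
The plan is to sandwich $p^U_t(x,x)$ between the two on-diagonal estimates already established and then take logarithms. First I would fix $x\in\bbR^2$ in the $M$-full-measure set furnished by Theorem~\ref{thm:on-diag_low_loc} (note this set does not depend on $U$), fix an open set $U\ni x$, and fix any $\eta>18$. Theorem~\ref{thm:on-diag_low_loc} then provides $C_{15}>0$ and $t_0(x,U)\in(0,\tfrac12]$ with $p^U_t(x,x)\ge C_{15}\,t^{-1}\bigl(\log(t^{-1})\bigr)^{-\eta}$ for all $t\in(0,t_0(x,U)]$. For the matching upper bound I would first invoke the domain monotonicity $p^U_t\le p_t$ on $U\times U$ (which holds since $\tau_U\le\tau_{\bbR^2}$ and both kernels are jointly continuous, as already used in Subsection~\ref{sec:Uunbounded}), and then apply Theorem~\ref{thm:on-diag_glob} with any $\beta>\tfrac12(\gamma+2)^2$ and the bounded set $B(|x|+1)$, evaluated at $y=x$: since $|x-y|=0$ makes the exponential factor equal to $1$, this yields $C_1>0$ with $p^U_t(x,x)\le p_t(x,x)\le C_1\,t^{-1}\log(t^{-1})$ for all $t\in(0,\tfrac12]$.

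Combining the two bounds, for every $t\in(0,t_0(x,U)]$ one has
\[
 \log C_{15}-\eta\log\log(t^{-1})\ \le\ \log\bigl(t\,p^U_t(x,x)\bigr)\ \le\ \log C_1+\log\log(t^{-1}).
\]
Dividing by $-\log t>0$ and adding $1$ yields
\[
 1+\frac{\log C_{15}-\eta\log\log(t^{-1})}{-\log t}\ \le\ \frac{\log p^U_t(x,x)}{-\log t}\ \le\ 1+\frac{\log C_1+\log\log(t^{-1})}{-\log t},
\]
and since $\log\log(t^{-1})=o(\log(t^{-1}))$ and the constant terms are also $o(\log(t^{-1}))$ as $t\downarrow 0$, the squeeze theorem gives $\lim_{t\downarrow0}\frac{\log p^U_t(x,x)}{-\log t}=1$; multiplying by $2$ is exactly \eqref{eq:spr_dim}.

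I do not anticipate any real obstacle here: all the substance is carried by Theorems~\ref{thm:on-diag_glob} and \ref{thm:on-diag_low_loc}, and the only point deserving a moment's attention is that the logarithmic corrections $\log(t^{-1})$ and $\bigl(\log(t^{-1})\bigr)^{-\eta}$ are negligible on the scale $-\log t$, so that both of these polynomial-in-$\log(t^{-1})$ factors wash out in the limit after normalising by $-\log t$.
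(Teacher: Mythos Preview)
Your proposal is correct and follows exactly the same route as the paper's own proof: the paper simply states that the result is immediate from Theorem~\ref{thm:on-diag_low_loc}, the on-diagonal part of Theorem~\ref{thm:on-diag_glob}, and the inequality $p^U_t(x,x)\le p_t(x,x)$, and you have spelled out precisely that sandwich argument together with the routine observation that the logarithmic corrections vanish after dividing by $-\log t$.
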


\begin{proof}
This is immediate from the lower bound in Theorem~\ref{thm:on-diag_low_loc}
and the on-diagonal part of the upper bound in Theorem~\ref{thm:on-diag_glob}
together with $p^U_t(x,x)\leq p_t(x,x)$.
\end{proof}

The proof of Theorem~\ref{thm:on-diag_low_loc} is given in
Subsection~\ref{sec:proof_on-diag_low}, and then the application to the
identification of the global spectral dimension is presented in
Subsection~\ref{sec:spctr_dim}.

\subsection{Proof of Theorem~\ref{thm:on-diag_low_loc}} \label{sec:proof_on-diag_low}

In order to show Theorem~\ref{thm:on-diag_low_loc} we need further moment
and tail estimates on the exit times from balls. First, we recall the
representation of the expected exit time in terms of the Green kernel.

\begin{lemma} \label{lem:mom_tau_Green}
%Almost surely w.r.t\ $\bbP$, 
For any non-empty open set $U\subset \bbR^2$ and any $x\in U$,
\begin{align*}
 E_x[\tau_U]=\int_U g_U(x,y) \, M(dy).
\end{align*}
\end{lemma}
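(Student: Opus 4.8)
The statement $E_x[\tau_U]=\int_U g_U(x,y)\,M(dy)$ is the standard Green-kernel representation of the expected exit time, and the plan is to reduce it to the already-established identity \eqref{eq:defG_U} for the Green operator $G_U$ of the killed LBM. Recall from \eqref{eq:defG_U} that for any non-negative Borel function $f:U\to[0,\infty]$ and any $x\in\bbR^2$ one has
\begin{align*}
 G_U f(x)=E_x\Bigl[\int_0^{\tau_U} f(\mathcal{B}_t)\,dt\Bigr]=\int_U g_U(x,y) f(y)\,M(dy).
\end{align*}
So the first step is simply to apply this with $f\equiv\indicator_U$ (equivalently $f\equiv 1$ on $U$), which immediately gives
\begin{align*}
 E_x[\tau_U]=E_x\Bigl[\int_0^{\tau_U}1\,dt\Bigr]=G_U\indicator_U(x)=\int_U g_U(x,y)\,M(dy),
\end{align*}
valid for all $x\in U$ (and trivially for $x\notin U$, where both sides vanish since $\tau_U=0$).

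The only subtlety is that \eqref{eq:defG_U} was stated in the excerpt under the hypothesis that $U$ is \emph{bounded}, whereas Lemma~\ref{lem:mom_tau_Green} is asserted for arbitrary non-empty open $U$. To handle a general open $U$, I would exhaust it from inside by bounded open sets: fix $x\in U$ and take an increasing sequence $(U_n)_{n\ge 1}$ of bounded open sets with $x\in U_1$ and $\bigcup_{n\ge 1}U_n=U$ (for instance $U_n:=U\cap B(n)$). Since $F$ is a homeomorphism of $[0,\infty)$ with $\lim_{t\to\infty}F_t=\infty$ (Theorem~\ref{thm:GRV13} ii)) and $\tau_{U_n}=F_{T_{U_n}}$, and since $T_{U_n}\uparrow T_U$ pointwise as $n\to\infty$ (the Brownian exit times of an increasing exhaustion increase to the exit time of the union), we get $\tau_{U_n}\uparrow\tau_U$ pointwise $P_x$-a.s. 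Then by the monotone convergence theorem $E_x[\tau_{U_n}]\uparrow E_x[\tau_U]$. On the other hand, applying \eqref{eq:defG_U} to each bounded $U_n$ gives $E_x[\tau_{U_n}]=\int_{U_n}g_{U_n}(x,y)\,M(dy)$; since the heat kernels $q^{U_n}_t(x,y)$ increase to $q^U_t(x,y)$ as $n\to\infty$ (monotonicity of Dirichlet heat kernels under domain enlargement), another application of monotone convergence — in the $t$-integral defining $g_{U_n}$ via \eqref{eq:killed_green_U} and then in the $M(dy)$-integral — yields $\int_{U_n}g_{U_n}(x,y)\,M(dy)\uparrow\int_U g_U(x,y)\,M(dy)$. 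Combining the two limits gives the claimed identity for general open $U$.

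I do not expect any serious obstacle here: this is essentially a one-line consequence of \eqref{eq:defG_U}, with a routine monotone-exhaustion argument to upgrade from bounded to arbitrary open sets; no finiteness of $E_x[\tau_U]$ need be assumed, since the identity holds in $[0,\infty]$. The only point requiring a small amount of care is the monotonicity of the Dirichlet heat kernels $q^{U_n}$ (and hence of $g_{U_n}$) under enlargement of the domain, which is standard and can be taken from the probabilistic representation $q^{U_n}_t(x,y)$ being the density of $B$ killed on exiting $U_n$, together with $T_{U_n}\uparrow T_U$.
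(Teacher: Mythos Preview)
Your proof is correct. The paper's own proof is more direct: it simply cites Proposition~\ref{prop:corrF_M}, which already holds for \emph{arbitrary} non-empty open $U\subset\bbR^2$, and applies it with $\eta\equiv 1$ and $f\equiv 1$ to obtain
\[
E_x[\tau_U]=E_x[F_{T_U}]=E_x\Bigl[\int_0^{T_U}dF_t\Bigr]
=\int_0^\infty\int_U q^U_t(x,y)\,M(dy)\,dt
=\int_U g_U(x,y)\,M(dy).
\]
You instead invoked \eqref{eq:defG_U}, which is stated only for bounded $U$ (though it too is justified via Proposition~\ref{prop:corrF_M}), and then supplied a monotone exhaustion argument to reach general $U$. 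That extra step is perfectly valid but unnecessary once one uses Proposition~\ref{prop:corrF_M} directly.
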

\begin{proof}
This follows immediately from Proposition~\ref{prop:corrF_M}.
\end{proof}

\begin{lemma} \label{lem:mom_tau_up}
For any  $R\geq 1$ %, $\bbP$-a.s.,
there exist $c_{1}=c_{1}(\gamma)>0$ and $C_{16}=C_{16}(X,\gamma, R)>0$ such that
\begin{align*}
 E_x[\tau_{B(x,r)}] \leq M\bigl(B(x,r)\bigr)\bigl(C_{16}+c_{1}\log(r^{-1})\bigr),
	\qquad \forall x\in B(R), \, r\in (0,1].
\end{align*}
\end{lemma}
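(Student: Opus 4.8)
The plan is to evaluate $E_x[\tau_{B(x,r)}]$ through the Green‑kernel representation of Lemma~\ref{lem:mom_tau_Green} and then to estimate the resulting integral by splitting $B(x,r)$ into an ``outer'' part, on which the integrand is bounded by a constant multiple of $\log(r^{-1})$, and an ``inner'' ball of radius $r^{1+\theta}$, whose $M$‑volume turns out to be so small, by Lemma~\ref{lem:vol_dec}, that its contribution can be absorbed into $C_{16}M\bigl(B(x,r)\bigr)$ \emph{without any logarithmic factor}. Concretely, by Lemma~\ref{lem:mom_tau_Green},
\[
 E_x[\tau_{B(x,r)}]=\int_{B(x,r)} g_{B(x,r)}(x,y)\,M(dy),
\]
and by the classical explicit expression $g_{B(x,r)}(x,y)=\tfrac1\pi\log\tfrac{r}{|x-y|}$ for the Green function of a disc with pole at its centre (which also follows from \eqref{eq:killed_green} together with the scaling invariance $g_{B(x,r)}(x,y)=g_{B(0,1)}\bigl(0,(y-x)/r\bigr)$ of the planar Euclidean Green kernel) one has, for $y\in B(x,r)$, that $g_{B(x,r)}(x,y)=\tfrac1\pi\log\tfrac{r}{|x-y|}$.

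Fix now $\theta=\theta(\gamma)>0$ large enough that $(1+\theta)\tfrac{\alpha_2}{2}\ge\alpha_1+1$; this is possible because $\alpha_2>0$. For $y\in B(x,r)$ with $r\in(0,1]$, writing $\log\tfrac{r}{|x-y|}=\theta\log(r^{-1})+\log\tfrac{r^{1+\theta}}{|x-y|}$ and using $\log\tfrac{r^{1+\theta}}{|x-y|}\le\indicator_{B(x,r^{1+\theta})}(y)\log\tfrac{r^{1+\theta}}{|x-y|}$ together with $B(x,r^{1+\theta})\subset B(x,r)$, one obtains
\[
 \pi\,E_x[\tau_{B(x,r)}]\le\theta\log(r^{-1})\,M\bigl(B(x,r)\bigr)+\int_{B(x,r^{1+\theta})}\log\frac{r^{1+\theta}}{|x-y|}\,M(dy).
\]
The last integral is handled by a dyadic decomposition of $B(x,r^{1+\theta})$ into the annuli $B(x,2^{-k}r^{1+\theta})\setminus B(x,2^{-k-1}r^{1+\theta})$, $k\ge0$, on which $\log\tfrac{r^{1+\theta}}{|x-y|}\le(k+1)\log 2$; since $2^{-k}r^{1+\theta}\le1$ and $x\in B(R)$, the volume upper bound in Lemma~\ref{lem:vol_dec} (applied with $\varepsilon=\alpha_2/2$) gives $M\bigl(B(x,2^{-k}r^{1+\theta})\bigr)\le C_5\,(2^{-k}r^{1+\theta})^{\alpha_2/2}$, and summing the convergent series $\sum_{k\ge0}(k+1)2^{-k\alpha_2/2}$ yields
\[
 \int_{B(x,r^{1+\theta})}\log\frac{r^{1+\theta}}{|x-y|}\,M(dy)\le C\,r^{(1+\theta)\alpha_2/2}
\]
for some $C=C(X,\gamma,R)>0$.

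To conclude, by the choice of $\theta$ and $r\le1$ we have $r^{(1+\theta)\alpha_2/2}\le r^{\alpha_1+1}$, and the volume lower bound in Lemma~\ref{lem:vol_dec} (applied with $\varepsilon=1$) gives $r^{\alpha_1+1}\le C_4^{-1}M\bigl(B(x,r)\bigr)$; hence the inner integral is at most $C C_4^{-1}M\bigl(B(x,r)\bigr)$. Combining the displays above,
\[
 E_x[\tau_{B(x,r)}]\le M\bigl(B(x,r)\bigr)\Bigl(\tfrac{C}{\pi C_4}+\tfrac{\theta}{\pi}\log(r^{-1})\Bigr),
\]
which is the assertion with $c_1:=\theta/\pi$ (depending only on $\gamma$, since $\theta=\theta(\gamma)$) and $C_{16}:=C/(\pi C_4)$ (depending on $X,\gamma,R$). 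The one point deserving care is precisely that $c_1$ must not involve the random volume constants; this is why the split is performed at the radius $r^{1+\theta}$ rather than at a fixed multiple of $r$, so that the leftover inner integral carries no logarithm and can be bounded by $M\bigl(B(x,r)\bigr)$ via the two‑sided estimates of Lemma~\ref{lem:vol_dec}.
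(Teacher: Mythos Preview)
Your proof is correct and follows essentially the same strategy as the paper: both split the Green integral at distance roughly $r^{1+\theta}$ from the pole (equivalently, the paper splits the dyadic sum at $n\sim c\log(r^{-1})$), bound the outer part by $c_{1}\log(r^{-1})M\bigl(B(x,r)\bigr)$, and control the inner part via the volume upper and lower bounds of Lemma~\ref{lem:vol_dec}. The only cosmetic difference is that you work directly with the exact disc Green function $g_{B(x,r)}(x,y)=\tfrac1\pi\log\tfrac{r}{|x-y|}$, whereas the paper first dominates by $g_{B(R+2)}$ and invokes \eqref{eq:killed_green}; your choice is slightly cleaner but does not change the argument.
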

\begin{proof}
Since $g_{B(x,r)}\leq g_{B(R+2)}$ by $B(x,r)\subset B(R+2)$,
we see from Lemma~\ref{lem:mom_tau_Green}, \eqref{eq:killed_green}
and $B(x,r)\subset B(R+1)$ that
\begin{align*}
 E_x[\tau_{B(x,r)}] =\int_{B(x,r)} g_{B(x,r)}(x,y) \, M(dy)
	&\leq \int_{B(x,r)} g_{B(R+2)}(x,y) \, M(dy)\\
 &\leq \int_{B(x,r)} \Bigl( \frac 1 \pi \log \frac 1 {|x-y|} + c \Bigr)\,M(dy)
\end{align*}
with $c=c(R)>0$.
Setting $D_n(x):=B(x,2^{1-n}r) \setminus B(x,2^{-n}r)$ for $n\geq 1$ and
noting that $M(\{x\})=0$ by Lemma~\ref{lem:vol_dec}, we further obtain
\begin{align} \label{eq:mom_tau_up}
 E_x[\tau_{B(x,r)}] \leq c M\bigl(B(x,r)\bigr)
	+\frac{1}{\pi}\sum_{n=1}^\infty \bigl(n+\log(r^{-1})\bigr) M\bigl(D_n(x)\bigr).
\end{align}
On the other hand, Lemma~\ref{lem:vol_dec} implies that for $\varepsilon:=\alpha_{2}/2$,
\begin{align*}
 \bigl(n+\log(r^{-1})\bigr) \frac{M\bigl(B(x,2^{1-n}r)\bigr)}{M\bigl(B(x,r)\bigr)}
	\leq C n 2^{-n(\alpha_2-\varepsilon)} r^{\alpha_2-\alpha_1-2\varepsilon}
	\leq C 2^{-n\alpha_{2}/4}
\end{align*}
provided $n\geq c \log(r^{-1})$ with $c=c(\gamma)>0$, which together with
\eqref{eq:mom_tau_up} yields
\begin{align*}
 E_x[\tau_{B(x,r)}]&\leq cM\bigl(B(x,r)\bigr) +C \sum\nolimits_{n\geq c \log(r^{-1})} 2^{-n\alpha_{2}/4} M\bigl(B(x,r)\bigr) \\
 &\mspace{160mu}+ cM\Bigl(\bigcup\nolimits_{1\leq n < c \log(r^{-1})}D_n(x)\Bigr) \log(r^{-1})\\
 &\leq M\bigl(B(x,r)\bigr)\bigl(C+c\log(r^{-1})\bigr),
\end{align*}
completing the proof.
\end{proof}

\begin{lemma} \label{lem:mom_tau_low}
There exists a constant $c_2>0$ such that %$\bbP$-a.s. 
\begin{align*}
 E_x[\tau_{B(x,r)}] \geq c_2 M\bigl(B(x,r/2)\bigr),
	\qquad \forall x\in \bbR^2, \, r>0.
\end{align*}
\end{lemma}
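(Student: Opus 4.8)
The plan is to combine the Green-kernel representation of the expected exit time from Lemma~\ref{lem:mom_tau_Green} with the explicit form of the Euclidean Green kernel over a ball. By Lemma~\ref{lem:mom_tau_Green}, $E_x[\tau_{B(x,r)}]=\int_{B(x,r)}g_{B(x,r)}(x,y)\,M(dy)$, and since the killing region $B(x,r)$ is a ball \emph{centred at the base point $x$ itself}, the harmonic correction in \eqref{eq:killed_green} is constant. Indeed, $y\mapsto\frac{1}{\pi}\log\frac{1}{|x-y|}$ is harmonic on $B(x,r)\setminus\{x\}$ and takes the constant value $\frac{1}{\pi}\log\frac{1}{r}$ on $\partial B(x,r)$, so uniqueness of the harmonic correction gives $g_{B(x,r)}(x,y)=\frac{1}{\pi}\log\frac{r}{|x-y|}$ for every $y\in B(x,r)$. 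Equivalently, one can obtain this from the Brownian scaling identity $g_{B(x,r)}(x,y)=g_{B(0,1)}\bigl(0,(y-x)/r\bigr)$, which follows from the scaling $q^{B(x,r)}_t(x+u,x+rv)=r^{-2}q^{B(0,1)}_{t/r^2}(u/r,v)$ of the Dirichlet heat kernel and the definition \eqref{eq:killed_green_U}.

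Next I would discard everything outside $B(x,r/2)$ using $g_{B(x,r)}\geq 0$, and bound the integrand from below on the smaller ball: for $y\in B(x,r/2)$ one has $|x-y|<r/2$, hence $g_{B(x,r)}(x,y)=\frac{1}{\pi}\log\frac{r}{|x-y|}>\frac{1}{\pi}\log 2$. Therefore $E_x[\tau_{B(x,r)}]\geq\int_{B(x,r/2)}g_{B(x,r)}(x,y)\,M(dy)\geq\frac{\log 2}{\pi}\,M\bigl(B(x,r/2)\bigr)$, which is the claim with $c_2:=\frac{\log 2}{\pi}$.

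I do not expect any real obstacle: the argument is entirely deterministic (the constant $c_2$ depends on neither $X$ nor $\gamma$) and needs no Borel--Cantelli input or volume estimate. The only points that require a little care are (i) recording the precise normalisation of the Green kernel implicit in \eqref{eq:killed_green}--\eqref{eq:killed_green_U}, namely the factor $\frac{1}{\pi}$ stemming from the generator $\frac12\Delta$ of $B$, so that the explicit identity $g_{B(x,r)}(x,y)=\frac{1}{\pi}\log\frac{r}{|x-y|}$ is consistent with the conventions already fixed; and (ii) the harmless fact that the integrand is $+\infty$ at $y=x$, which does not affect the bound since $g_{B(x,r)}(x,\cdot)\geq\frac{1}{\pi}\log 2$ holds on all of $B(x,r/2)$ irrespective of whether $M$ charges $\{x\}$.
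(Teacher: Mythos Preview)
Your proof is correct and takes essentially the same approach as the paper: both invoke Lemma~\ref{lem:mom_tau_Green}, restrict the integral to $B(x,r/2)$, and bound $g_{B(x,r)}(x,\cdot)$ from below there. The only cosmetic difference is that the paper appeals to the scaling identity $g_{B(x,r)}(x,y)=g_{B(0,1)}\bigl(0,r^{-1}(y-x)\bigr)$ and sets $c_2:=\inf_{z\in B(0,1/2)}g_{B(0,1)}(0,z)$ without evaluating it, whereas you compute the explicit formula $g_{B(x,r)}(x,y)=\frac{1}{\pi}\log\frac{r}{|x-y|}$ and thus obtain the same constant $c_2=\frac{\log 2}{\pi}$ in closed form.
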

\begin{proof}
By using Lemma~\ref{lem:mom_tau_Green} and the translation and scale
invariance of the Green kernel
\begin{align*}
 g_{B(x,r)}(x,y)=g_{B(0,r)}(0,y-x)=g_{B(0,\lambda r)}\bigl(0,\lambda (y-x)\bigr)
\end{align*}
for $x,y\in \bbR^2$, $r>0$ and $\lambda>0$
(see e.g.\ \cite[Example~1.5.1]{FOT11}), we obtain
\begin{align*}
 E_x[\tau_{B(x,r)}] &=\int_{B(x,r)} g_{B(x,r)}(x,y) \, M(dy) \\
	&\geq\int_{B(x,r/2)} g_{B(0,1)}\bigl(0,r^{-1}(y-x)\bigr) \, M(dy) 
 \geq c_2 M\bigl(B(x,r/2)\bigr)
\end{align*}
with $c_2:=\inf_{y\in B(0,1/2)}g_{B(0,1)}(0,y)>0$, which is the claim.
\end{proof}

\begin{prop} \label{prop:tailtau2}
For any $R\geq 1$ %,$\bbP$-a.s., 
there exists  $C_{17}=C_{17}(X, \gamma, R)>0$ such that 
\begin{align*}
 P_x[\tau_{B(x,r)}\leq t]
	\leq 1-C_{17} \frac{M\bigl(B(x,r/2)\bigr)}{M\bigl(B(x,3r)\bigr)\log(r^{-1})}
\end{align*}
for all $x\in B(R)$, $ r\in (0,\tfrac 1 2]$ and $0<t\leq \tfrac 1 2 E_x[\tau_{B(x,r)}]$.
\end{prop}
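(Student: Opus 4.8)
The plan is to obtain the bound by estimating the survival probability $P_x[\tau_{B(x,r)}>t]$ from below, combining the Markov property of $\mathcal{B}$ at the deterministic time $t$ with the two–sided control of expected exit times furnished by Lemmas~\ref{lem:mom_tau_up} and \ref{lem:mom_tau_low}. Abbreviate $\tau:=\tau_{B(x,r)}$. On $\{\tau>t\}$ we have $\mathcal{B}_s\in B(x,r)$ for all $s\le t$, so $\mathcal{B}_t\in B(x,r)$ and $\tau=t+\tau\circ\vartheta_t$ (exactly as in the additivity arguments used in Proposition~\ref{prop:negmom_tau} and Lemma~\ref{lem:tail_tauUn}). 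Splitting according to whether $\tau\le t$ or $\tau>t$ and applying the Markov property \cite[Theorem~A.1.21]{CF12} of $\mathcal{B}$ at time $t$ (noting that $\{\tau>t\}$ is $\mathcal{F}_t$-measurable), I would derive
\[
E_x[\tau]=E_x\bigl[\indicator_{\{\tau\le t\}}\tau\bigr]+E_x\bigl[\indicator_{\{\tau>t\}}(t+\tau\circ\vartheta_t)\bigr]
\le t+E_x\bigl[\indicator_{\{\tau>t\}}\,E_{\mathcal{B}_t}[\tau]\bigr]
\le t+P_x[\tau>t]\sup_{y\in B(x,r)}E_y[\tau].
\]

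Next I would bound $\sup_{y\in B(x,r)}E_y[\tau]$ by enlarging balls: for $y\in B(x,r)$ one has $B(x,r)\subset B(y,2r)\subset B(x,3r)$, hence $\tau\le\tau_{B(y,2r)}$ and $E_y[\tau]\le E_y[\tau_{B(y,2r)}]$. Since $x\in B(R)$ and $r\le\frac12$ give $y\in B(R+1)$, while $2r\le1$, Lemma~\ref{lem:mom_tau_up} (applied with $R+1$ in place of $R$) yields $E_y[\tau_{B(y,2r)}]\le M\bigl(B(y,2r)\bigr)\bigl(C_{16}+c_{1}\log((2r)^{-1})\bigr)\le M\bigl(B(x,3r)\bigr)\bigl(C_{16}+c_{1}\log(r^{-1})\bigr)$. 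As $\log(r^{-1})\ge\log2$ for $r\le\frac12$, the additive term is absorbed into a multiple of $\log(r^{-1})$, so
\[
\sup_{y\in B(x,r)}E_y[\tau]\le C'\,M\bigl(B(x,3r)\bigr)\log(r^{-1})
\]
for some $C'=C'(X,\gamma,R)>0$. Substituting this into the previous display and using the hypothesis $t\le\frac12 E_x[\tau]$ (with $E_x[\tau]<\infty$ by Lemma~\ref{lem:mom_tau_up}; the assertion is vacuous if $E_x[\tau]=0$), I obtain $\frac12 E_x[\tau]\le P_x[\tau>t]\,C'\,M(B(x,3r))\log(r^{-1})$, that is
\[
P_x[\tau>t]\ge\frac{E_x[\tau]}{2C'\,M\bigl(B(x,3r)\bigr)\log(r^{-1})}.
\]
Finally Lemma~\ref{lem:mom_tau_low} gives $E_x[\tau]\ge c_2 M\bigl(B(x,r/2)\bigr)$, so with $C_{17}:=c_2/(2C')$ we get $P_x[\tau>t]\ge C_{17}\,M(B(x,r/2))\big/\bigl(M(B(x,3r))\log(r^{-1})\bigr)$, and the claim follows from $P_x[\tau\le t]=1-P_x[\tau>t]$.

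There is no serious obstacle here: the argument is the standard survival-probability estimate, and the only points requiring care are bookkeeping ones — keeping all centers inside $B(R+1)$ and all radii $\le1$ throughout the chain $B(x,r)\subset B(y,2r)\subset B(x,3r)$ so that Lemma~\ref{lem:mom_tau_up} applies with constants depending only on $X,\gamma,R$, and using $r\le\frac12$ (hence $\log(r^{-1})\ge\log2>0$) to convert the additive logarithmic constant of Lemma~\ref{lem:mom_tau_up} into a multiplicative $\log(r^{-1})$, which is exactly the form of the factor appearing in the statement.
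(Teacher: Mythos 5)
Your argument is correct and follows essentially the same route as the paper: the same Markov-property identity $E_x[\tau]\le t+P_x[\tau>t]\sup_{y\in B(x,r)}E_y[\tau]$, the same chain $B(x,r)\subset B(y,2r)\subset B(x,3r)$ to invoke Lemma~\ref{lem:mom_tau_up}, and Lemma~\ref{lem:mom_tau_low} to finish. The bookkeeping points you flag (working in $B(R+1)$, keeping radii at most $1$, absorbing the additive constant into $\log(r^{-1})$ via $r\le\frac12$) are exactly the ones handled implicitly in the paper's proof.
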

\begin{proof}
For any $t>0$, by the obvious relation
$\tau_{B(x,r)}\leq t+\indicator_{\{\tau_{B(x,r)}>t\}}(\tau_{B(x,r)}-t)
	=t+\indicator_{\{\tau_{B(x,r)}>t\}}(\tau_{B(x,r)}\circ\vartheta_{t})$
and the Markov property \cite[Theorem~A.1.21]{CF12} of $\mathcal{B}$,
\begin{align*}
 E_x[\tau_{B(x,r)}] &\leq t +E_x \bigl[ \indicator_{\{\tau_{B(x,r)}>t\}}(\tau_{B(x,r)}\circ\vartheta_{t}) \bigr]
	=t +E_x \bigl[ \indicator_{\{\tau_{B(x,r)}>t\}} E_{\mathcal{B}_t}[\tau_{B(x,r)}] \bigr] \\
 &\leq  t + P_x[\tau_{B(x,r)} >t] \sup_{y\in B(x,r)} E_y[\tau_{B(x,r)}],
\end{align*}
which implies that for $0<t\leq \tfrac 1 2 E_x[\tau_{B(x,r)}]$,
\begin{align} \label{eq:tailtau2}
 P_x[\tau_{B(x,r)} \leq t]\leq 1+ \frac{t-E_x[\tau_{B(x,r)}] }{\sup_{y\in B(x,r)} E_y[\tau_{B(x,r)}]}
	\leq 1- \frac{\tfrac 1 2 E_x[\tau_{B(x,r)}] }{ \sup_{y\in B(x,r)} E_y[\tau_{B(x,r)}]}.
\end{align}
Then since $B(x,r) \subset B(y,2r) \subset B(x,3r)$ and hence
$\tau_{B(x,r)}\leq\tau_{B(y,2r)}$ for any $y\in B(x,r)$,
from Lemma~\ref{lem:mom_tau_up} we obtain
\begin{align*}
 \sup_{y\in B(x,r)} E_y[\tau_{B(x,r)}] \leq \sup_{y\in B(x,r)} E_y[\tau_{B(y,2r)}]
	&\leq C \sup_{y\in B(x,r)} M\bigl(B(y,2r)\bigr)  \log(r^{-1}) \\
 &\leq  C M\bigl(B(x,3r)\bigr) \log(r^{-1}),
\end{align*}
and the claim follows by applying this estimate and
Lemma~\ref{lem:mom_tau_low} to \eqref{eq:tailtau2}.
\end{proof}

We are now in the position to show an on-diagonal lower bound on
the Dirichlet Liouville heat kernels.
\begin{prop} \label{prop:lowbound_dirHK}
For any $R\geq 1$ %, $\bbP$-a.s., 
there exists $C_{18}=C_{18}(X, \gamma, R)>0$ such that 
\begin{align*}
 p^{B(x,r)}_t(x,x)
	\geq \frac{C_{18}}{M\bigl(B(x,r)\bigr)}
		\biggl( \frac{M\bigl(B(x,r/2)\bigr)}{M\bigl(B(x,3r)\bigr) \log(r^{-1})} \biggr)^2
\end{align*}
for all $x \in B(R)$, $r\in (0,\frac 1 2]$ and  $0 <t\leq c_2 M\bigl(B(x,r/2)\bigr)$
(with $c_2$ as in Lemma~\ref{lem:mom_tau_low}).
\end{prop}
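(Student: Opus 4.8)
The plan is to combine the Chapman--Kolmogorov identity at equal times with the Cauchy--Schwarz inequality, and then feed in the exit time tail bound from Proposition~\ref{prop:tailtau2}. Concretely, I would first record the elementary near-diagonal lower bound
\[
 p^{U}_{2s}(x,x)\geq \frac{\bigl(P_x[\tau_U>s]\bigr)^2}{M(U)},\qquad s>0,
\]
valid for any non-empty bounded open $U\subset\bbR^2$ and any $x\in U$. To justify it, note that by Theorem~\ref{thm:dir_hk} i) the kernel $p^U$ is jointly continuous and satisfies $P_x[\mathcal{B}^U_t\in dy]=p^U_t(x,y)\,M(dy)$, so the Markov property of $\mathcal{B}^U$ yields $p^U_{2s}(x,y)=\int_U p^U_s(x,z)p^U_s(z,y)\,M(dz)$ for $M$-a.e.\ $y\in U$, hence for every $y\in U$ by continuity of both sides; taking $y=x$ and using the $M$-symmetry $p^U_s(z,x)=p^U_s(x,z)$ gives $p^U_{2s}(x,x)=\int_U \bigl(p^U_s(x,z)\bigr)^2\,M(dz)$. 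Since $\int_U p^U_s(x,z)\,M(dz)=P_x[\mathcal{B}^U_s\in U]=P_x[\tau_U>s]$, the Cauchy--Schwarz inequality (against the probability measure $M(\cdot\cap U)/M(U)$) gives the displayed bound.

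Next I would specialise to $U=B(x,r)$ with $x\in B(R)$ and $r\in(0,\tfrac12]$, and run the above at time $s:=t/2$. For $0<t\leq c_2 M\bigl(B(x,r/2)\bigr)$ we have $s=t/2\leq \tfrac12 c_2 M\bigl(B(x,r/2)\bigr)\leq \tfrac12 E_x[\tau_{B(x,r)}]$ by Lemma~\ref{lem:mom_tau_low}, so Proposition~\ref{prop:tailtau2} is applicable at time $s$ and yields
\[
 P_x[\tau_{B(x,r)}>s]=1-P_x[\tau_{B(x,r)}\leq s]\geq C_{17}\,\frac{M\bigl(B(x,r/2)\bigr)}{M\bigl(B(x,3r)\bigr)\log(r^{-1})}.
\]
Plugging this together with $M(U)=M\bigl(B(x,r)\bigr)$ into the bound of the first paragraph gives
\[
 p^{B(x,r)}_{t}(x,x)=p^{B(x,r)}_{2s}(x,x)\geq \frac{1}{M\bigl(B(x,r)\bigr)}\biggl(C_{17}\,\frac{M\bigl(B(x,r/2)\bigr)}{M\bigl(B(x,3r)\bigr)\log(r^{-1})}\biggr)^2,
\]
and the assertion follows with $C_{18}:=C_{17}^2$. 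Positivity of all the measures involved is guaranteed by Lemma~\ref{lem:vol_dec}.

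The only point requiring any care, and thus the main (minor) obstacle, is the bookkeeping on the admissible range of $t$: Proposition~\ref{prop:tailtau2} is stated for $t\leq \tfrac12 E_x[\tau_{B(x,r)}]$, whereas the proposition asks for the conclusion up to $t\leq c_2 M\bigl(B(x,r/2)\bigr)$, which is exactly $E_x[\tau_{B(x,r)}]$ up to the factor $c_2$ of the lower bound in Lemma~\ref{lem:mom_tau_low}. Running the Cauchy--Schwarz step at time $s=t/2$ rather than $t$ absorbs the missing factor of $2$ and makes the two ranges compatible. Everything else is a routine combination of results already established above.
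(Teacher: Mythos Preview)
Your proof is correct and follows exactly the paper's approach: the Cauchy--Schwarz step combined with the semigroup identity $p^{U}_{2s}(x,x)=\int_U \bigl(p^U_s(x,z)\bigr)^2\,M(dz)$, fed by the tail bound from Proposition~\ref{prop:tailtau2} and Lemma~\ref{lem:mom_tau_low}. The only cosmetic difference is that the paper phrases the half-time trick by starting from $t\leq\tfrac12 c_2 M\bigl(B(x,r/2)\bigr)$ and concluding for $2t$, whereas you set $s=t/2$; the resulting constant $C_{18}=C_{17}^{2}$ is the same.
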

\begin{proof}
Let $0<t\leq\frac{1}{2} c_2 M\bigl(B(x,r/2)\bigr)$. Since
$\frac{1}{2}E_x[\tau_{B(x,r)}] \geq \frac{1}{2}c_2 M\bigl(B(x,r/2)\bigr)\geq t$
by Lemma~\ref{lem:mom_tau_low}, we see from Proposition~\ref{prop:tailtau2},
the Cauchy-Schwarz inequality and the symmetry and semigroup property
of the Dirichlet heat kernel $p^{B(x,r)}$ that
\begin{align*}
 &\biggl(C_{17} \frac{M\bigl(B(x,r/2)\bigr)}{M\bigl(B(x,3r)\bigr)\log(r^{-1})}\biggr)^{2}
	\leq P_x[\tau_{B(x,r)}>t]^2 \\
 &\mspace{60mu} = P_x[\mathcal{B}_t\in B(x,r), \,\tau_{B(x,r)}>t]^2
	= \Bigl( \int_{B(x,r)} p^{B(x,r)}_t (x,y) \, M(dy) \Bigr)^2 \\
 &\mspace{60mu} \leq M\bigl(B(x,r)\bigr) \int_{B(x,r)} \bigl(p^{B(x,r)}_{t}(x,y)\bigr)^2 \, M(dy)
	= M\bigl(B(x,r)\bigr) p_{2t}^{B(x,r)}(x,x),
\end{align*}
which gives the result.
\end{proof}

\begin{corro} \label{cor:spctr_dim}
Let $c_3>0$, $x\in \bbR^2$, $\eta>18$ and set $\kappa:=\frac{1}{8}(\eta-2)$.
If $r_0\in(0,\frac{1}{2}]$ and
\begin{align} \label{cond:vol_growth}
 M\bigl(B(x,2r)\bigr) \leq c_3 \bigl(\log(r^{-1})\bigr)^{\kappa} M\bigl(B(x,r)\bigr),
	\qquad \forall r \in (0, r_0],
\end{align}
then for any open set $U\subset \bbR^2$ containing $x$ there exist
$C_{15}=C_{15}(X,\gamma,|x|,\eta,c_3)>0$ and
$t_0(x,U)=t_0(X,\gamma,x,U,r_0)\in (0,\frac{1}{2}]$
such that \eqref{eq:hk_lb_loc} holds.
\end{corro}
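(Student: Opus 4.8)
The plan is to deduce \eqref{eq:hk_lb_loc} from Proposition~\ref{prop:lowbound_dirHK} by feeding it, for each sufficiently small $t>0$, a radius $r=r(t)$ chosen so that the admissibility constraint $0<t\le c_{2}M(B(x,r/2))$ holds and so that $M(B(x,r/2))$ is comparable to $t$ up to a power of $\log(t^{-1})$. Throughout, put $R:=\lceil|x|\rceil+1$, so that $x\in B(R)$ and Proposition~\ref{prop:lowbound_dirHK}, Lemma~\ref{lem:vol_dec} and Lemma~\ref{lem:mom_tau_low} all apply with this $R$ and the constant $c_{2}$. Since $P_x[\mathcal{B}^{V}_t\in A]\le P_x[\mathcal{B}^{U}_t\in A]$ for any $V\subset U$ and any Borel $A\subset V$, the joint continuity of the Dirichlet heat kernels (Theorem~\ref{thm:dir_hk}) gives the domain monotonicity $p^{U}_t(x,x)\ge p^{B(x,r)}_t(x,x)$ whenever $B(x,r)\subset U$; hence it suffices to bound $p^{B(x,r(t))}_t(x,x)$ from below.

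To choose $r(t)$, I would set $\sigma(t):=\inf\{s>0:\,M(B(x,s))\ge t/c_{2}\}$ and $r(t):=4\sigma(t)$. By the upper bound in Lemma~\ref{lem:vol_dec} and the full support of $M$, $M(B(x,s))\to 0$ as $s\downarrow 0$ while $M(B(x,s))>0$ for every $s>0$, so $\sigma(t)\in(0,\infty)$ and $\sigma(t)\downarrow 0$ as $t\downarrow 0$. As $s\mapsto M(B(x,s))$ is non-decreasing and left-continuous (open balls increase to open balls), we get $M(B(x,\sigma(t)))\le t/c_{2}$, and $M(B(x,2\sigma(t)))\ge t/c_{2}$ by monotonicity since $2\sigma(t)>\sigma(t)$; applying the hypothesis \eqref{cond:vol_growth} once (legitimate once $\sigma(t)\le r_{0}$, i.e.\ for $t$ small) then yields
\[
 \frac{t}{c_{2}}\;\le\;M\bigl(B(x,r(t)/2)\bigr)\;\le\;\frac{c_{3}}{c_{2}}\bigl(\log\sigma(t)^{-1}\bigr)^{\kappa}\,t .
\]
In particular $t\le c_{2}M(B(x,r(t)/2))$, so Proposition~\ref{prop:lowbound_dirHK} applies with $r=r(t)$, provided also $r(t)\le\tfrac12$ and $B(x,r(t))\subset U$, both of which hold for $t$ small.

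Next I would substitute $r=r(t)$ into Proposition~\ref{prop:lowbound_dirHK}. Writing $L:=\log(r(t)^{-1})$ and iterating \eqref{cond:vol_growth} a bounded number of times (once to bound $M(B(x,r(t)))$, and three times to bound $M(B(x,3r(t)))\le M(B(x,4r(t)))$), all in terms of $M(B(x,r(t)/2))$ and using that the nearby logarithms $\log((2^{\pm j}r(t))^{-1})$ are each comparable to $L$ for $t$ small, one obtains $M(B(x,r(t)))\le c\,L^{\kappa}M(B(x,r(t)/2))$ and $M(B(x,3r(t)))\le c\,L^{3\kappa}M(B(x,r(t)/2))$. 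Plugging these and the upper bound on $M(B(x,r(t)/2))$ from the previous display into Proposition~\ref{prop:lowbound_dirHK} collapses the estimate to
\[
 p^{B(x,r(t))}_t(x,x)\;\ge\;\frac{C_{18}}{M\bigl(B(x,r(t))\bigr)}\Bigl(\frac{M(B(x,r(t)/2))}{M(B(x,3r(t)))\,L}\Bigr)^{2}\;\ge\;\frac{c'}{t\,L^{8\kappa+2}},
\]
and since $\kappa=\tfrac18(\eta-2)$ we have $8\kappa+2=\eta$, whence $p^{U}_t(x,x)\ge c'\,t^{-1}\bigl(\log r(t)^{-1}\bigr)^{-\eta}$.

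Finally I would replace $\log r(t)^{-1}$ by $\log(t^{-1})$: the upper bound in Lemma~\ref{lem:vol_dec} with $\varepsilon=\alpha_{2}/2$ gives $t\le c_{2}M(B(x,r(t)/2))\le c\,C_{5}(r(t)/2)^{\alpha_{2}/2}$, hence $\log r(t)^{-1}\le c\log(t^{-1})$ once $\log(t^{-1})$ is large, turning the previous bound into \eqref{eq:hk_lb_loc} with a constant $C_{15}$ depending only on $X,\gamma,|x|,\eta,c_{3}$, valid for all $t\in(0,t_{0}(x,U)]$, where $t_{0}\in(0,\tfrac12]$ is taken small enough for all the ``$t$ small'' requirements above (so that it depends on $X,\gamma,\eta,x,U$ and $r_{0}$, as asserted). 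The one genuinely delicate point is the choice of $r(t)$: the lower bound $M(B(x,r(t)/2))\ge t/c_{2}$ is immediate from the definition of $\sigma(t)$, but the matching upper bound $M(B(x,r(t)/2))\le c(\log t^{-1})^{\kappa}t$ crucially uses \eqref{cond:vol_growth} to control the a priori unbounded ratio $M(B(x,2\sigma(t)))/M(B(x,\sigma(t)))$ at the relevant scale; everything else is bookkeeping of logarithmic losses, arranged so that the total exponent is exactly $\eta$ by virtue of $\kappa=\tfrac18(\eta-2)$.
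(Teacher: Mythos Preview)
Your argument is correct and follows essentially the same route as the paper: choose a radius $r(t)$ so that $t$ is comparable to the measure of the ball of radius $r(t)/2$, apply Proposition~\ref{prop:lowbound_dirHK}, and use the doubling hypothesis \eqref{cond:vol_growth} a total of eight times to reduce all volume ratios to powers of $\log(r(t)^{-1})$, yielding the exponent $8\kappa+2=\eta$. The only cosmetic difference is that the paper selects $r(t)$ along the dyadic scale $2^{1-n}r_{1}$ (so that $c_{2}M(B(x,r/4))<t\le c_{2}M(B(x,r/2))$) and absorbs the factor $t$ via the lower inequality, whereas you define $r(t)=4\sigma(t)$ through the hitting level $\sigma(t)$ and absorb $t$ via the upper inequality $M(B(x,r(t)/2))\le c\,L^{\kappa}t$; the bookkeeping of logarithmic losses is the same in both cases.
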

\begin{proof}
Let $U$ be an open subset of $\bbR^2$ with $x\in U$ and let
$r_1=r_1(x,U,r_0)\in(0,r_0/2]$ be such that $B(x,r_1)\subset U$. Also,
noting that $\lim_{r\downarrow 0}M\bigl(B(x,r)\bigr)=M(\{x\})=0$ by Lemma~\ref{lem:vol_dec},
for $0<t\leq c_2 M\bigl(B(x,r_{1}/2)\bigr)$ let $n=n(t)\geq 1$ be such that
\begin{align*}
c_2 M\bigl(B(x,2^{-n-1}r_1)\bigr)< t\leq c_2 M\bigl(B(x,2^{-n}r_1)\bigr)
\end{align*}
and set $r=r(t):=2^{1-n}r_1$. Then by Proposition~\ref{prop:lowbound_dirHK},
\begin{align} \label{eq:est_t_p}
 t p^U_t(x,x)&\geq t p^{B(x,r_1)}_t(x,x)\geq t p^{B(x,r)}_t(x,x) \nonumber \\
 &\geq  C \frac{M\bigl(B(x,r/4)\bigr)}{M\bigl(B(x,r)\bigr)} \biggl( \frac{M\bigl(B(x,r/2)\bigr)}{M\bigl(B(x,3r)\bigr)} \biggr)^2
	\bigl(\log(r^{-1})\bigr)^{-2}.
\end{align}
On the other hand, we see from \eqref{cond:vol_growth} that
\begin{align*}
 \frac{M\bigl(B(x,r/4)\bigr)}{M\bigl(B(x,r)\bigr)}
	= \frac{M\bigl(B(x,r/4)\bigr)}{M\bigl(B(x,r/2)\bigr)} \frac{M\bigl(B(x,r/2)\bigr)}{M\bigl(B(x,r)\bigr)}
	\geq c \bigl(\log(r^{-1})\bigr)^{-2\kappa}
\end{align*}
and
\begin{align*}
 \frac{M\bigl(B(x,r/2)\bigr)}{M\bigl(B(x,3r)\bigr)}
	\geq \frac{M\bigl(B(x,r/2)\bigr)}{M\bigl(B(x,4r)\bigr)}
	\geq c \bigl(\log(r^{-1})\bigr)^{-3\kappa}
\end{align*}
with $c=c(c_3,\eta)>0$. Now \eqref{eq:hk_lb_loc} follows by combining these
esimates with \eqref{eq:est_t_p} and noting that
$c\log(t^{-1})\leq\log(r^{-1})=\log\bigl(r(t)^{-1}\bigr)\leq c'\log(t^{-1})$
with $c=c(\gamma)>0$ and $c'=c'(\gamma)>0$ provided $t\leq t_{0}'$ for some
$t_{0}'=t_{0}'(X,\gamma,|x|)\in(0,\frac{1}{2}]$ by Lemma~\ref{lem:vol_dec}.
% Thus,
% \begin{align*}
% \frac{\log p_t^U(x,x)}{-\log t} \geq &\frac{C-2 \log \log \tfrac 1 r}{-\log t}+ 2 \, \frac{\log M\big(B(x,r/2)\big)-\log M\big(B(x,3r)\big) }{-\log t}
% \\
% &+\frac{\log M\big(B(x,r)\big)}{\log t}.
% \end{align*}
% Since $r(t)\asymp \log t$, the first term on the right hand side converges to zero as as $t\downarrow 0$. Moreover, we obtain for the second term
% \begin{align*}
% &\frac{ \log M\big(B(x,r/2)\big) -\log M\big(B(x,3r)\big) }{-\log t}  \geq \frac{ \log M\big(B(x,4r)\big) - \log M\big(B(x,r/2)\big) }{\log t} \\
% &\mspace{36mu} = \sum_{k=0}^{2} \frac{ \log M\big(B(x,2^k r)\big) - \log M\big(B(x,2^{k-1}r)\big) }{\log t},
% \end{align*}
% and each summand converges to $0$ as $r\downarrow 0$ by \eqref{cond:vol_growth}. Finally, the last term can be estimated as follows:
% \begin{align*}
% & \frac{\log M\big(B(x,r)\big)}{\log t}\geq \frac{\log M\big(B(x,r)\big)}{\log (c_2\,M\big(B(x,r/4)\big)}
% \\
%  &\mspace{36mu} \geq \frac{\log M\big(B(x,r/2)\big)-\log M\big(B(x,r/4)\big)}{\log c_2 + \log M\big(B(x,r/4)\big)} + \frac{\log M\big(B(x,r/4)\big)}{\log c_2 + \log M\big(B(x,r/4)\big)},
% \end{align*}
% which converges to $1$ as $r\downarrow 0$ again by \eqref{cond:vol_growth}, and the proof is complete.
\end{proof}

Now Theorem~\ref{thm:on-diag_low_loc} follows by Lemma~\ref{lem:vol_dec},
Corollary~\ref{cor:spctr_dim} and the following result.

\begin{prop} \label{prop:gen_volasymp}
%  Let $(\mathcal{X}, d)$ be a doubling metric space, i.e.\ a metric space with
% \begin{align*}
%  \sup_{x\in \mathcal{X}, r>0} \inf \Big\{ \# A \, \Big| \, A \subset \mathcal{X} , \, B(x,2r) \subset \bigcup\nolimits_{y \in A} B(y,r) \Big\}< \infty,
% \end{align*}
% and 
Let $\mu$ be a Borel measure on $\bbR^2$ satisfying $\mu\bigl(B(x,r)\bigr) \in (0,\infty)$
for all $x \in \bbR^2$ and $r>0$. Then for $\mu$-a.e.\ $x\in \bbR^2$, for any
$\kappa>2$ there exists $r_0(x)=r_0(\mu,\kappa,x)\in(0,\frac 1 2]$ such that
\begin{align} \label{eq:gen_volasymp}
 \mu\bigl(B(x,2r)\bigr) \leq 8 \bigl(\log(r^{-1})\bigr)^\kappa \mu\bigl(B(x,r)\bigr),
	\qquad \forall r \in (0, r_0(x)].
 \end{align}
\end{prop}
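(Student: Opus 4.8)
The plan is to prove \eqref{eq:gen_volasymp} first for a fixed exponent $\kappa>2$ and on a fixed ball $B(N)$, $N\in\bbN$, and then to let $N\to\infty$ and run through a sequence $\kappa_j\downarrow 2$. The core of the argument is a one-scale volume-ratio estimate: there is a constant $C(N)<\infty$, depending only on $N$ and $\mu\bigl(B(N+1)\bigr)$, such that for every integer $k\geq 1$ and every $b>0$,
\[
 \mu\bigl(A_k(b)\bigr)\leq \frac{C(N)}{b},\qquad
 A_k(b):=\Bigl\{x\in B(N):\mu\bigl(B(x,2^{-k+1})\bigr)>b\,\mu\bigl(B(x,2^{-k})\bigr)\Bigr\}.
\]
(The sets $A_k(b)$ are Borel, since $x\mapsto\mu\bigl(B(x,\rho)\bigr)$ is lower semicontinuous for each $\rho>0$.) To prove the estimate I would pick a maximal subset $S_k\subset A_k(b)$ whose points are pairwise at distance $\geq 2^{-k}$; it is finite because $B(N)$ is bounded, and by maximality $A_k(b)\subset\bigcup_{x\in S_k}B(x,2^{-k})$. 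For each $x\in S_k$ the defining inequality gives $\mu\bigl(B(x,2^{-k})\bigr)<b^{-1}\mu\bigl(B(x,2^{-k+1})\bigr)$, while a packing bound in $\bbR^2$ shows that the doubled balls $\{B(x,2^{-k+1})\}_{x\in S_k}$ cover no point more than $25$ times (their centres being $2^{-k}$-separated) and are all contained in $B(N+1)$ when $k\geq 1$. Hence $\mu\bigl(A_k(b)\bigr)\leq b^{-1}\sum_{x\in S_k}\mu\bigl(B(x,2^{-k+1})\bigr)\leq 25\,b^{-1}\mu\bigl(B(N+1)\bigr)$, as claimed.

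Next, fixing $\kappa>2$ and $N\in\bbN$, for $k\geq 1$ I would set
\[
 E_k:=\Bigl\{x\in B(N):\ \mu\bigl(B(x,2r)\bigr)>8\bigl(\log(r^{-1})\bigr)^{\kappa}\,\mu\bigl(B(x,r)\bigr)\ \text{for some } r\in(2^{-k-1},2^{-k}]\Bigr\}.
\]
If $x\in E_k$ then, using $2r\leq 2^{-k+1}$, $\log(r^{-1})\geq k\log 2$ and $r\geq 2^{-k-1}$, one finds $\mu\bigl(B(x,2^{-k+1})\bigr)>8(k\log 2)^{\kappa}\,\mu\bigl(B(x,2^{-k-1})\bigr)$, so at least one of the consecutive dyadic ratios $\mu\bigl(B(x,2^{-k+1})\bigr)/\mu\bigl(B(x,2^{-k})\bigr)$ and $\mu\bigl(B(x,2^{-k})\bigr)/\mu\bigl(B(x,2^{-k-1})\bigr)$ exceeds $b_k:=\sqrt{8}\,(k\log 2)^{\kappa/2}$; consequently $E_k\subset A_k(b_k)\cup A_{k+1}(b_k)=:\widetilde E_k$, a Borel set with $\mu\bigl(\widetilde E_k\bigr)\leq 2C(N)/b_k$. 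Since $\kappa/2>1$, the series $\sum_{k\geq 1}\mu\bigl(\widetilde E_k\bigr)$ converges, so by the Borel--Cantelli lemma applied to the finite measure $\mu(\,\cdot\,\cap B(N))$, for $\mu$-a.e.\ $x\in B(N)$ there is $k_0(x)\geq 2$ with $x\notin\widetilde E_k$, hence $x\notin E_k$, for all $k\geq k_0(x)$. Taking $r_0(x):=2^{-k_0(x)}\in(0,\tfrac14]$ and noting that every $r\in(0,r_0(x)]$ lies in $(2^{-k-1},2^{-k}]$ for some $k\geq k_0(x)$, the statement that $x\notin E_k$ is precisely \eqref{eq:gen_volasymp} for that $r$.

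Finally, the exceptional $\mu$-null set above depends on $N$ and $\kappa$; taking the union over $N\in\bbN$ gives, for each fixed $\kappa>2$, a $\mu$-null set $Z^{\kappa}\subset\bbR^2$ off which \eqref{eq:gen_volasymp} holds with some $r_0(x)\in(0,\tfrac14]$. Setting $Z:=\bigcup_{j\in\bbN}Z^{\,2+1/j}$, which is still $\mu$-null, one obtains the full claim: for $x\notin Z$ and any $\kappa>2$, pick $j$ with $2+1/j\leq\kappa$ and use the radius $r_0(x):=r_0\bigl(x,2+1/j\bigr)\leq\tfrac14$; since $\log(r^{-1})\geq\log 4>1$ for $r\leq r_0(x)$, one has $\bigl(\log(r^{-1})\bigr)^{2+1/j}\leq\bigl(\log(r^{-1})\bigr)^{\kappa}$, so \eqref{eq:gen_volasymp} for the exponent $2+1/j$ upgrades to the one for $\kappa$. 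The only genuine subtlety is that passing from a single dyadic radius to the whole continuum of radii forces the two-scale product $b_k^2$, hence the exponent $\kappa/2$ in the Borel--Cantelli series; this is exactly why $\kappa>2$ is needed, the remainder being a routine covering-and-packing computation.
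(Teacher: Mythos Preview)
Your proof is correct and follows essentially the same strategy as the paper: show that the $\mu$-measure of the set where a single dyadic doubling ratio exceeds $b$ is $O(1/b)$, then choose $b_k\sim k^{\kappa/2}$ and apply Borel--Cantelli, the need to bridge two dyadic scales being exactly what forces $\kappa>2$. The only technical difference is that the paper obtains the $O(1/b)$ bound by covering with a fixed dyadic grid and bounding $\int_{B(x_0,1)}\frac{\mu(B(y,r_{n-1}))}{\mu(B(y,r_n))}\,\mu(dy)$ uniformly before invoking \v{C}eby\v{s}ev, whereas you argue directly via a maximal $2^{-k}$-separated set and bounded overlap; these are interchangeable routes to the same estimate.
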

\begin{proof}
Since \eqref{eq:gen_volasymp} is weaker for larger $\kappa$, it suffices to show
\eqref{eq:gen_volasymp} for $\mu$-a.e.\ $x\in \bbR^2$ for each $\kappa\in(2,\frac{5}{2}]$.
Fix an arbitrary $x_0 \in \bbR^2$. Set $r_k:=2^{-k}$ for $k\in \bbZ$,
$\mu_{x_0}:=\mu\bigl(\cdot \cap B(x_0,1)\bigr)$ and, for $n\in \bbN$,
\begin{align*}
 A_n&:=\bigl\{ x\in B(x_0,1): \, \mu\bigl(B(x,r_{n-1})\bigr) \geq n^{\kappa/2} \mu\bigl(B(x,r_{n})\bigr) \bigr\}, \\
 \Xi_n&:= \bigl\{x_0+\bigl(\tfrac k {2^n}, \tfrac l {2^n}\bigr) :\, k,l\in \bbZ,\,|k|,|l|\leq 2^n \bigr\}.
\end{align*}
Then since $B(x_0,1)\subset \bigcup_{x\in \Xi_{n+1}} B(x,r_{n+1})$ and furthermore
$B(x,r_{n+1})\subset B(y,r_{n})\subset B(y,r_{n-1})\subset B(x,r_{n-2})$
for $x\in \Xi_{n+1}$ and $y\in B(x,r_{n+1})$, we have
\begin{align*}
 \int_{B(x_0,1)} \frac{\mu\bigl(B(y,r_{n-1})\bigr)}{\mu\bigl(B(y,r_{n})\bigr)} \, \mu_{x_0}(dy)
	&\leq \sum_{x\in \Xi_{n+1}} \int_{B(x,r_{n+1})} \frac{\mu\bigl(B(y,r_{n-1})\bigr)}{\mu\bigl(B(y,r_{n})\bigr)} \, \mu(dy) \\
 &\leq \sum_{x\in \Xi_{n+1}} \int_{B(x,r_{n+1})} \frac{\mu\bigl(B(x, r_{n-2})\bigr)}{\mu\bigl(B(x,r_{n+1})\bigr)} \, \mu(dy) \\
 &= \int_{\bbR^2} \sum_{x\in \Xi_{n+1}} \indicator_{B(x,r_{n-2})}(y) \, \mu(dy)
	\leq c \mu\bigl(B(x_0,4)\bigr)
\end{align*}
for some $c>0$. By \v{C}eby\v{s}ev's inequality this implies
$\mu_{x_0}(A_n)\leq c \mu\bigl(B(x_0,4)\bigr) n^{-\kappa/2}$,
hence $\sum_{n=1}^{\infty} \mu_{x_0}(A_n)<\infty$, and therefore
by the Borel-Cantelli lemma for $\mu$-a.e.\ $x\in B(x_0,1)$ there exists
$n_0(x)=n_0(\mu,\kappa,x)\in \bbN$ such that
\begin{align} \label{eq:muvol}
 \mu\bigl(B(x,r_{n-1})\bigr)\leq n^{\kappa/2} \mu\bigl(B(x,r_{n})\bigr),
	\qquad \forall n\geq n_0(x).
\end{align}
Now let $x\in B(x_0,1)$ satisfy \eqref{eq:muvol}, let $r\in (0,r_{n_0(x)}]$
and let $n\geq n_0(x)$ be such that $r_{n+1}<r\leq r_{n}$.
Then by applying \eqref{eq:muvol} twice,
\begin{align*}
 \mu\bigl(B(x,2r)\bigr)\leq \mu\bigl(B(x,r_{n-1})\bigr)
	\leq n^{\kappa/2} (n+1)^{\kappa/2} \mu\bigl(B(x,r_{n+1})\bigr)
	\leq 2^{3/2} n^{\kappa} \mu\bigl(B(x,r)\bigr)
\end{align*}
with $ n\leq\frac{1}{\log 2}\log(r^{-1})$.
Finally, since $x_0$ is arbitrary, the claim follows.
\end{proof}

\subsection{Global spectral dimension} \label{sec:spctr_dim}
Let $U \subset \bbR^2$ be non-empty, open and bounded.
As in Section~\ref{sec:Ubounded} above, let $\bigl(\lambda_n(U)\bigr)_{n\geq 1}$
be the eigenvalues of $-\mathcal{L}_U$ written in increasing order and
repeated according to multiplicity, and define
\begin{align*}
 Z_U(t):=\int_U p^U_t(x,x) \, M(dx)=\sum_{n=1}^\infty e^{-\lambda_n(U) t},
	\qquad t>0.
\end{align*}
Then we obtain the following estimates of $Z_{U}(t)$ from
Theorems~\ref{thm:on-diag_glob} and \ref{thm:on-diag_low_loc} and
conclude in particular that the global spectral dimension is $2$.

\begin{corro} \label{cor:trace}
%Almost surely w.r.t.\ $\bbP$, 
Let $R\geq 1$ and let $U\subset B(R)$ be a non-empty open subset of $\mathbb{R}^{2}$.
Then for any $\eta>18$ there exist $C_{19}=C_{19}(X,\gamma,R)>0$,
$C_{20}=C_{20}(X,\gamma,R,\eta)>0$ and
$t_{1}(U)=t_{1}(X,\gamma,\eta,U)\in(0,\frac{1}{2}]$ such that
\begin{align} \label{eq:trace_upper}
 Z_{U}(t)&\leq C_{19}M(U)t^{-1}\log(t^{-1}), \qquad\qquad \forall t\in (0,\tfrac{1}{2}],\\
 Z_{U}(t)&\geq C_{20}M(U)t^{-1}\bigl(\log(t^{-1})\bigr)^{-\eta}, \qquad \forall t\in (0,t_{1}(U)].
\label{eq:trace_lower}
\end{align}
In particular,
\begin{align} \label{eq:glob_spctr_dim}
 \lim_{t \downarrow 0} \frac{2\log Z_U(t)}{-\log t}=2.
\end{align}
\end{corro}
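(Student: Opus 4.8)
The plan is to derive \eqref{eq:trace_upper} and \eqref{eq:trace_lower} by integrating over $U$ the on-diagonal upper and lower bounds on $p^{U}_{t}(x,x)$ that have already been established, and then to read \eqref{eq:glob_spctr_dim} off from these two estimates. Throughout one uses the identity $Z_{U}(t)=\int_{U}p^{U}_{t}(x,x)\,M(dx)$, which is immediate from the eigenfunction expansion \eqref{eq:pU_eig} by monotone convergence together with $\|\varphi_{n}\|_{2}=1$ (and which also shows $Z_{U}(t)<\infty$ for $t\in(0,\tfrac12]$, given the upper bound below). For \eqref{eq:trace_upper}: since $U\subset B(R)$ we have $p^{U}_{t}\leq p^{B(R)}_{t}$, and the jointly continuous kernel $p^{B(R)}$ satisfies $p^{B(R)}_{t}(x,x)\leq\bigl\|T^{B(R)}_{t}\bigr\|_{L^{1}(B(R))\to L^{\infty}(B(R))}\leq C_{10}t^{-1}\log(t^{-1})$ for $t\in(0,\tfrac12]$ by Proposition~\ref{prop:on_diag_up}; integrating in $x$ over $U$ against $M$ then gives \eqref{eq:trace_upper} with $C_{19}:=C_{10}=C_{10}(X,\gamma,R)$.

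For \eqref{eq:trace_lower} I would fix $\eta>18$ and argue as follows. By Theorem~\ref{thm:on-diag_low_loc} there is a Borel set $G\subset\bbR^{2}$ with $M(\bbR^{2}\setminus G)=0$ such that for every $x\in G$ one has $p^{U}_{t}(x,x)\geq C_{15}\,t^{-1}(\log(t^{-1}))^{-\eta}$ for all $t\in(0,t_{0}(x,U)]$; moreover, by inspecting the proof of Theorem~\ref{thm:on-diag_low_loc} (through Corollary~\ref{cor:spctr_dim} and Proposition~\ref{prop:lowbound_dirHK}, whose constant $C_{18}$ is valid for \emph{all} $x\in B(R)$ with $C_{18}=C_{18}(X,\gamma,R)$) one checks that $C_{15}$ can be chosen to be one and the same for all $x\in B(R)$, so that it may be written $C_{15}=C_{15}(X,\gamma,R,\eta)$. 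The only ingredient still depending on $x$ is the threshold $t_{0}(x,U)$, and this I would remove by continuity of measure: put $A:=G\cap U$ (so $M(U\setminus A)=0$) and $A_{m}:=\{x\in A:\,t_{0}(x,U)\geq\tfrac1m\}$ for $m\in\bbN$; then $A_{m}\uparrow A$, hence $M(A_{m})\uparrow M(U)$, so there is $m$ with $M(A_{m})\geq\tfrac12 M(U)$, and we set $t_{1}(U):=\tfrac1m$. Since $p^{U}_{t}\geq 0$, for $t\in(0,t_{1}(U)]$ we get
\begin{align*}
 Z_{U}(t)\geq\int_{A_{m}}p^{U}_{t}(x,x)\,M(dx)
 \geq C_{15}\,t^{-1}\bigl(\log(t^{-1})\bigr)^{-\eta}\,M(A_{m})
 \geq \tfrac12 C_{15}\,M(U)\,t^{-1}\bigl(\log(t^{-1})\bigr)^{-\eta},
\end{align*}
which is \eqref{eq:trace_lower} with $C_{20}:=\tfrac12 C_{15}$; alternatively, one may combine the pointwise lower bound with Fatou's lemma, using the $x$-uniform value of $C_{15}$, to reach the same conclusion.

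Finally, \eqref{eq:glob_spctr_dim} follows by taking logarithms in \eqref{eq:trace_upper} and \eqref{eq:trace_lower}: for $t\in(0,t_{1}(U)]$,
\begin{align*}
 \log\bigl(C_{20}M(U)\bigr)+\log(t^{-1})-\eta\log\log(t^{-1})
 \leq\log Z_{U}(t)
 \leq\log\bigl(C_{19}M(U)\bigr)+\log(t^{-1})+\log\log(t^{-1}),
\end{align*}
so dividing by $-\log t=\log(t^{-1})\to\infty$ and letting $t\downarrow 0$ yields $\lim_{t\downarrow 0}\frac{2\log Z_{U}(t)}{-\log t}=2$, since the constant and the $\log\log(t^{-1})$ terms are $o(\log(t^{-1}))$.

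The main obstacle is entirely the non-uniformity in $x$ built into Theorem~\ref{thm:on-diag_low_loc}: the threshold $t_{0}(x,U)$ depends on $x$, and a priori so does the lower constant $C_{15}$. Confirming from the proof of Corollary~\ref{cor:spctr_dim} that $C_{15}$ may be taken uniform over $x\in B(R)$, and then eliminating the dependence of $t_{0}(x,U)$ on $x$ by the soft measure-theoretic step above, is the only genuine point; the rest is routine integration and elementary asymptotics.
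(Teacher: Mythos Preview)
Your argument is essentially the same as the paper's: integrate the on-diagonal upper bound for \eqref{eq:trace_upper}, and for \eqref{eq:trace_lower} use the $x$-uniform constant $C_{15}$ together with a measure-exhaustion step to find a single threshold $t_{1}$ valid on a set of $M$-measure at least $\tfrac12 M(U)$. The one technical point you gloss over is the measurability of the sets $A_{m}=\{x:t_{0}(x,U)\geq\tfrac1m\}$: Theorem~\ref{thm:on-diag_low_loc} only asserts the \emph{existence} of some $t_{0}(x,U)>0$, not that $x\mapsto t_{0}(x,U)$ is measurable. The paper handles this cleanly by replacing $t_{0}(x,U)$ with the explicit function $t_{U}(x):=\inf\{t\in(0,\tfrac12]:t(\log t^{-1})^{\eta}p^{U}_{t}(x,x)<C_{15}\}$, which is upper semi-continuous (hence Borel) by the joint continuity of $p^{U}$; your argument goes through verbatim once you make this substitution.
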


\begin{proof}
\eqref{eq:glob_spctr_dim} is a direct consequence of \eqref{eq:trace_upper} and
\eqref{eq:trace_lower}, and \eqref{eq:trace_upper} is immediate from the inequality
$p^{U}_{t}(x,x)\leq p_{t}(x,x)$ and the on-diagonal part of the upper bound in
Theorem~\ref{thm:on-diag_glob}. Thus it remains to verify \eqref{eq:trace_lower}.
We may assume that $R=R(U):=\sup_{x\in U}|x|$. Let $\eta>18$, let
$C_{15}=C_{15}(X,\gamma,R,\eta)>0$ be as in Theorem~\ref{thm:on-diag_low_loc}
and define an upper semi-continuous function $t_{U}:U\to[0,\frac{1}{2}]$ by
\begin{align}\label{eq:tU-dfn}
 t_{U}(x):=\inf\bigl\{t\in(0,\tfrac{1}{2}]:\,
	t\bigl(\log(t^{-1})\bigr)^{\eta}p^{U}_{t}(x,x)<C_{15}\bigr\}
	\qquad(\inf\emptyset:=\tfrac{1}{2}).
\end{align}
Then $t_{U}(x)>0$ for $M$-a.e.\ $x\in U$ by Theorem~\ref{thm:on-diag_low_loc}
and therefore there exists $t_{1}=t_{1}(X,\gamma,\eta,U)\in(0,\frac{1}{2}]$
such that $M\bigl(t_{U}^{-1}([t_{1},\frac{1}{2}])\bigr)\geq\frac{1}{2}M(U)$.
Now for each $t\in(0,t_{1}]$,
$p^{U}_{t}(x,x)\geq C_{15}t^{-1}\bigl(\log(t^{-1})\bigr)^{-\eta}$
for any $x\in t_{U}^{-1}([t_{1},\frac{1}{2}])$ by $t\leq t_{1}\leq t_{U}(x)$
and \eqref{eq:tU-dfn}, and hence
\begin{align*}
 Z_{U}(t)\geq\int_{t_{U}^{-1}([t_{1},\frac{1}{2}])}p^{U}_{t}(x,x)\,M(dx)
	&\geq C_{15}t^{-1}\bigl(\log(t^{-1})\bigr)^{-\eta}M\bigl(t_{U}^{-1}([t_{1},\tfrac{1}{2}])\bigr)\\
 &\geq\tfrac{1}{2}C_{15}M(U)t^{-1}\bigl(\log(t^{-1})\bigr)^{-\eta},
\end{align*}
proving \eqref{eq:trace_lower}.
\end{proof}

\begin{remark}
It is unknown to the authors whether the eigenvalue counting function
$N_U(\lambda):=\# \{n\in \bbN:\, \lambda_n(U) \leq \lambda \}$ satisfies the
counterparts of \eqref{eq:trace_upper}, \eqref{eq:trace_lower} and \eqref{eq:glob_spctr_dim}.
\end{remark}

\appendix

\section{Proof of Proposition~\ref{prop:pcaf}} \label{app:pcaf}

The proof will be based on Lemma~\ref{lem:Fn_Tsmall} and
the following result proved in \cite{GRV13}.

\begin{theorem} \label{thm:constrF}
For each $x\in \bbR^2$,  $\bbP\times P_x$-a.s.\ the following hold:
\begin{enumerate}
 \item[i)] For all $t \geq 0$, $F_t:=\lim_{n\to \infty} F^n_t$ exists in $\bbR$.
 \item[ii)] The mapping $[0,\infty)\ni t\mapsto F_t\in [0,\infty)$ is continuous,
	strictly increasing and satisfies $F_0=0$ and $\lim_{t\to \infty} F_t=\infty$.
\end{enumerate}
\end{theorem}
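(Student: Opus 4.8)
The plan is to build $F$ from the approximants $F^n$ by exploiting the martingale structure in $n$ and then to recover the pathwise properties from suitable moment and equicontinuity estimates.

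\emph{Existence of the limit.} First I would observe that, for each fixed $t\ge 0$, the sequence $(F^n_t)_{n\ge 1}$ is a non-negative martingale with respect to $\sigma(Y_1,\dots,Y_n)\vee\mathcal G^0_\infty$: since $Y_{n+1}$ is independent of $(Y_1,\dots,Y_n,B)$ and $\bbE[Y_{n+1}(z)^2]=g^{(m)}_{n+1}(z,z)$ does not depend on $z$, one has $\bbE[e^{\gamma Y_{n+1}(z)-\frac{\gamma^2}{2}\bbE[Y_{n+1}(z)^2]}]=1$ for all $z$, and Fubini applied to the defining integral of $F^{n+1}_t$ gives $\bbE[F^{n+1}_t\mid Y_1,\dots,Y_n,B]=F^n_t$; in particular $\bbE[F^n_t]=t$ for all $n$. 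By the martingale convergence theorem there is a $[0,\infty)$-valued limit $F_t:=\lim_n F^n_t$, $\bbP\times P_x$-a.s., for each fixed $t$. Applying this along $t\in\bbQ_{\ge 0}$ and using that every $F^n_\cdot$ is non-decreasing, I would set $F_t$, for all $t$, to be the right-continuous non-decreasing envelope of $(F_q)_{q\in\bbQ}$; sandwiching $F^n_q\le F^n_t\le F^n_{q'}$ for rationals $q<t<q'$ then shows $F^n_t\to F_t$ at every continuity point of $t\mapsto F_t$, hence at all $t$ once continuity of $F$ is established.

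\emph{Non-degeneracy via moments.} To exclude the trivial solution $F\equiv 0$ I would upgrade the convergence to $L^1$, giving $\bbE[F_t]=\lim_n\bbE[F^n_t]=t$; this requires uniform integrability of $(F^n_t)_n$, which I would derive from an $L^p$ bound for some $p\in(1,\tfrac{4}{\gamma^2})$. For orientation, in the range $\gamma<\sqrt2$ one can take $p=2$: conditioning on $B$, $\bbE[(F^n_t)^2\mid B]=\int_0^t\!\int_0^t e^{\gamma^2\sum_{k\le n}g^{(m)}_k(B_s,B_u)}\,ds\,du\le\int_0^t\!\int_0^t e^{\gamma^2 g^{(m)}(B_s,B_u)}\,ds\,du$, and since $g^{(m)}(x,y)=\log\tfrac1{|x-y|}+O(1)$ this reduces to the finiteness of $\bbE_x\!\int_0^t\!\int_0^t|B_s-B_u|^{-\gamma^2}\,ds\,du$, which holds because $\bbE_x|B_s-B_u|^{-\gamma^2}\le c\,|s-u|^{-\gamma^2/2}$ is locally integrable for $\gamma^2<2$. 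For the full range $\gamma\in(0,2)$ I would instead invoke the $L^p$-moment estimates for Gaussian multiplicative chaos of \cite{Ka85,RV10} — equivalently the exit-time moment bounds of \cite{GRV13} underlying \eqref{eq:negmom_annealed} — together with standard estimates on the $2$-dimensional Brownian occupation measure.

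\emph{Continuity (the main obstacle), strict monotonicity, and $F_t\to\infty$.} The delicate point is continuity of $t\mapsto F_t$, which also promotes the convergence in the first step to all $t$. I would prove it by showing that $F^n\to F$ uniformly on $[0,T]$ in $P_x$-probability: combining the Markov property of $B$ with an equicontinuity-in-expectation estimate of the type $\lim_{\delta\downarrow 0}\sup_n\sup_{x\in B(R)}E_x[F^n_\delta]=0$ (Lemma~\ref{lem:Fn_Tsmall}) and $n$-uniform moment control of the increments $F^n_t-F^n_s$ via a Garsia--Rodemich--Rumsey/chaining argument yields tightness of $(F^n)_n$ in $C([0,T],\bbR)$, and then uniqueness of the limit on rationals gives uniform convergence; a uniform limit of continuous functions is continuous. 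Strict monotonicity, $F_s<F_t$ for all $0\le s<t$, then follows because, conditionally on $B$, $F_t-F_s=\lim_n(F^n_t-F^n_s)$ is the a.s.\ limit of a non-negative martingale whose reference measure is the Brownian occupation measure on $[s,t]$, and this chaos is non-degenerate for $\gamma<2$, so $F_t-F_s>0$ for all rational $s<t$ and hence, by monotonicity, for all $s<t$. Finally $\lim_{t\to\infty}F_t=\infty$ a.s.: $F$ being non-decreasing, $F_\infty$ exists in $[0,\infty]$, and neighbourhood-recurrence of $2$-dimensional $B$ forces infinitely many disjoint excursions into a fixed ball, during each of which a positive, identically distributed $F$-increment (with mean controlled by $\bbE[F_t]=t$ and the translation invariance of $X$) is accumulated, so $F_\infty=\infty$ by the second Borel--Cantelli lemma (or a renewal/ergodic argument applied to $F_{nT}=\sum_{k<n}F_T\circ\theta_{kT}$). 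I expect the $n$-uniform modulus-of-continuity control needed for the uniform convergence to be by far the hardest ingredient.
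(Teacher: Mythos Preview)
The paper does not actually prove this theorem: its entire proof is the sentence ``See \cite[Lemma~2.8 and Proof of Theorem~2.7]{GRV13}.'' So there is nothing to compare against within the paper itself; the relevant benchmark is the construction in \cite{GRV13}.

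Your outline is in fact the strategy of \cite{GRV13}: the martingale structure of $(F^n_t)_n$ in $n$ gives almost sure convergence for fixed $t$, uniform integrability via an $L^p$ bound for some $p\in(1,4/\gamma^2)$ upgrades this to $L^1$ (so $\bbE[F_t]=t$), and the pathwise properties are then recovered from non-degeneracy and regularity of the limiting chaos along the Brownian path. You also correctly flag the two genuinely hard points.

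Two comments on those points. First, the continuity step: a Garsia--Rodemich--Rumsey argument needs an $n$-uniform bound of the form $\bbE[(F^n_t-F^n_s)^p]\le C|t-s|^{1+\varepsilon}$, and the naive Jensen estimate blows up with $n$ since $\bbE[X_n(0)^2]\to\infty$; the correct bound comes from the multifractal moment scaling of Gaussian multiplicative chaos, not from pointwise exponential moments. Lemma~\ref{lem:Fn_Tsmall} alone is an equicontinuity-in-expectation statement under $P_x$ (for $\bbP$-a.e.\ environment) and does not by itself yield pathwise tightness in $C([0,T])$. Second, the strict monotonicity step: the assertion ``this chaos is non-degenerate for $\gamma<2$'' when the reference measure is the Brownian occupation measure is precisely the nontrivial input --- it is not a corollary of the Lebesgue-reference-measure theory, and establishing it (via dimension/energy estimates for the planar Brownian occupation measure) is one of the main technical contributions of \cite{GRV13}. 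With these caveats, your plan is sound and matches the cited reference.
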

\begin{proof}
See \cite[Lemma~2.8 and Proof of Theorem~2.7]{GRV13}.
\end{proof}

We start with a preparatory lemma.

\begin{lemma} \label{lem:liminfF}
$\bbP$-a.s., for all $x\in \bbR^2$,
\begin{align*}
 \lim_{t\downarrow 0}\liminf_{n\to\infty} F^n_t=0 \qquad \text{$P_x$-a.s.}
\end{align*}
\end{lemma}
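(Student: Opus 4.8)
The plan is to combine the uniform moment estimate of Lemma~\ref{lem:Fn_Tsmall} with Fatou's lemma, and to deal with the quantifier ``for all $x$'' by observing that Lemma~\ref{lem:Fn_Tsmall} already provides control that is uniform over each ball $B(R)$, so a single $\bbP$-null exceptional set suffices.

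First I would record the necessary measurability. Since each $Y_k$ is continuous in the space variable and $\mathcal{A}$-measurable, it is jointly $\mathcal{A}\otimes\mathcal{B}(\bbR^2)$-measurable, hence so is $X_n=\sum_{k=1}^{n}Y_k$; composing with the jointly measurable map $(\om',s)\mapsto B_s(\om')$ and integrating over $s\in[0,t]$ via Fubini's theorem shows that $(\om,\om')\mapsto F^n_t(\om,\om')$ is $\mathcal{A}\otimes\mathcal{G}^0_\infty$-measurable for every $t\geq 0$ and $n\geq 1$. Consequently $\Phi_t:=\liminf_{n\to\infty}F^n_t$ is $\mathcal{A}\otimes\mathcal{G}^0_\infty$-measurable. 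Moreover, since each $t\mapsto F^n_t(\om,\om')$ is non-decreasing, so is $t\mapsto\Phi_t(\om,\om')$, and therefore $\Phi_{0+}:=\lim_{t\downarrow 0}\Phi_t=\inf_{t>0}\Phi_t\in[0,\infty]$ is well defined pathwise and measurable, and $t\mapsto\sup_{x\in B(R)}E_x[\Phi_t]$ is non-decreasing for each $R>0$.

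Next, by Fatou's lemma (applicable as $F^n_t\geq 0$), for every $x\in\bbR^2$ and $t\geq 0$ we have $E_x[\Phi_t]\leq\liminf_{n\to\infty}E_x[F^n_t]\leq\sup_{n\geq 1}E_x[F^n_t]$. Fix now any $\om\in\Omega$ for which the conclusion of Lemma~\ref{lem:Fn_Tsmall} holds; such $\om$ form a set of full $\bbP$-measure, and for all these $\om$ and all $R>0$,
\[
 \sup_{x\in B(R)}E_x[\Phi_t]\ \leq\ \sup_{n\geq 1}\sup_{x\in B(R)}E_x[F^n_t]\ \xrightarrow[\,t\downarrow 0\,]{}\ 0 .
\]
Finally, given $x\in\bbR^2$, pick $R\geq 1$ with $x\in B(R)$; since $\Phi_{0+}\leq\Phi_t$ for every $t>0$, we get $E_x[\Phi_{0+}]\leq\inf_{t>0}\sup_{x'\in B(R)}E_{x'}[\Phi_t]=0$, whence $\Phi_{0+}=0$ $P_x$-a.s. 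As $x\in\bbR^2$ was arbitrary and the exceptional $\bbP$-null set does not depend on $x$, this yields the claim.

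I do not expect a serious obstacle. The only points needing a little care are the joint $\mathcal{A}\otimes\mathcal{G}^0_\infty$-measurability of $F^n_t$ in $(\om,\om')$ (so that $\Phi_t$ and $\Phi_{0+}$ are legitimate measurable random variables to which one may apply $E_x[\cdot]$), and the bookkeeping of quantifiers, namely that the single $\bbP$-null set arising from Lemma~\ref{lem:Fn_Tsmall} is enough to conclude the statement simultaneously for every starting point $x$ --- which works precisely because that lemma already supplies a bound uniform over $B(R)$.
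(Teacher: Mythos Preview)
Your proof is correct and follows essentially the same approach as the paper's: fix $\omega$ in the $\bbP$-full-measure set where Lemma~\ref{lem:Fn_Tsmall} holds, use the monotonicity of $t\mapsto F^n_t$ to pass to $\liminf_n$ and then let $t\downarrow 0$, apply Fatou's lemma to bound $E_x[\Phi_t]$, and conclude from Lemma~\ref{lem:Fn_Tsmall} that the nonnegative limit has zero $P_x$-expectation. Your additional remarks on joint measurability and on the uniformity in $x$ are correct and make explicit points that the paper leaves implicit.
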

\begin{proof}
Fix any environment $\om \in \Omega$ such that the conclusion of
Lemma~\ref{lem:Fn_Tsmall} holds, and let $x\in\bbR^2$. Then since $F^n_t$ for
$n\in\bbN$ are non-decreasing in $t$ and hence so is $\liminf_{n\to\infty} F^n_t$,
we see from Fatou's lemma and Lemma~\ref{lem:Fn_Tsmall} that
\begin{align*}
 0\leq E_x\Bigl[\lim_{t\downarrow 0}\liminf_{n\to\infty} F^n_t\Bigr]
	\leq \lim_{t\downarrow 0} E_x\Bigl[\liminf_{n\to\infty} F^n_t\Bigr]
	\leq \lim_{t\downarrow 0}\liminf_{n\to \infty} E_x[F^n_t]
	=0,
\end{align*}
which implies that $\lim_{t\downarrow 0}\liminf_{n\to\infty} F^n_t=0$ $P_x$-a.s.
\end{proof}

For each $t\geq 0$ we denote by $\Lambda_t$ the set of all
$(\om,\om')\in \Omega \times \Omega'$ such that:
\begin{enumerate}
 \item [i)] For all $u\in[t,\infty)$,
	$F_{t,u}(\om,\om'):=\lim_{n\to \infty}\bigl(F^n_{u}(\om,\om')-F^n_t(\om,\om')\bigr)$
	exists in $\bbR$.
 \item[ii)] The mapping $[t,\infty)\ni u\mapsto F_{t,u}(\om,\om')\in [0,\infty)$
	is continuous, strictly increasing and satisfies $F_{t,t}(\om,\om')=0$
	and $\lim_{u\to \infty} F_{t,u}(\om,\om')=\infty$.
\end{enumerate}
We also set $\Lambda_{t}^{\om}:=\{\om' \in \Omega':\, (\om,\om')\in \Lambda_t\}$
for $\om \in \Omega$. Note that $\Lambda_{t}^{\om}=\theta_t^{-1}(\Lambda_{0}^{\om})$
thanks to the fact that for all $n\in\bbN$ and $\om'\in\Omega'$,
\begin{align} \label{eq:Fn_additive}
 F^n_{s+t}(\om,\om')=F^n_t(\om,\om')+F^n_s\bigl(\om,\theta_t(\om')\bigr),
	\qquad \forall s,t\geq 0.
\end{align}
Furthermore we have $\Lambda_t \in \mathcal{A} \otimes \mathcal{G}^0_\infty$, since
$F^n_s$ is $\mathcal{A} \otimes \mathcal{G}^0_s$-measurable for any $n\in\bbN$
and $s\geq 0$ and $\Lambda_t$ is easily seen to be equal to
\begin{align*}
 \left\{ (\om,\om') \in \Omega\times\Omega':\,
	\begin{minipage}{290pt}
	$F_{t,s+t}(\om,\om'):=\lim_{n\to \infty}\bigl(F^n_{s+t}(\om,\om')-F^n_t(\om,\om')\bigr)$
	exists in $\bbR$ for all $s\in\mathbb{Q}\cap [0,\infty)$,
	$\mathbb{Q}\cap[0,N]\ni s\mapsto F_{t,s+t}(\om,\om')\in[0,\infty)$
	is uniformly continuous and strictly increasing for any $N\in\bbN$,
	$\lim_{\mathbb{Q}\ni s\to\infty}F_{t,s+t}(\om,\om')=\infty$
	\end{minipage}
	\right\}
\end{align*}
%all the defining properties of $\Lambda_t$ can be rephrased in terms of the
%values of $F^n_{s+t}-F^n_{t}$ and $F_{t,s+t}$ for $s\in\mathbb{Q}\cap[0,\infty)$
by virtue of the monotonicity of $F^n_s$ in $s$. Finally, recall that
$\bbP\times P_x[\Lambda_0]=1$ for all $x\in \bbR^2$ by Theorem~\ref{thm:constrF}.

\begin{lemma} \label{lem:lambda_t}
For $\bbP$-a.e.\ $\om\in\Omega$, $ P_x[\Lambda^\om_t]=1$
for all $t>0$ and $x\in \bbR^2$.
\end{lemma}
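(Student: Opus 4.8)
The plan is to reduce the statement, for each fixed environment $\om$, to the simple Markov property of the Brownian motion $B$ together with the identity $\Lambda^\om_t=\theta^{-1}_t(\Lambda^\om_0)$ recorded above, which holds for every $\om$ by virtue of \eqref{eq:Fn_additive}. Since $\Lambda_0\in\mathcal{A}\otimes\mathcal{G}^0_\infty$ we have $\Lambda^\om_0\in\mathcal{G}^0_\infty$ for each $\om$, so the Markov property at the deterministic time $t>0$ gives, for any $x\in\bbR^2$,
\[
 P_x[\Lambda^\om_t]=P_x\bigl[\theta^{-1}_t(\Lambda^\om_0)\bigr]=E_x\bigl[P_{B_t}[\Lambda^\om_0]\bigr]=\int_{\bbR^2}q_t(x,y)\,P_y[\Lambda^\om_0]\,dy.
\]
As $q_t(x,\cdot)\,dy$ is a probability measure mutually absolutely continuous with the Lebesgue measure, the right-hand side equals $1$ provided $P_y[\Lambda^\om_0]=1$ for Lebesgue-a.e.\ $y\in\bbR^2$. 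Hence it suffices to show that for $\bbP$-a.e.\ $\om$ one has $P_y[\Lambda^\om_0]=1$ for Lebesgue-a.e.\ $y$; for such $\om$ the above computation then holds for \emph{all} $t>0$ and \emph{all} $x$ simultaneously, which is exactly the claim. (The restriction $t>0$ is essential, since the law of $B_t$ under $P_x$ is the non-absolutely-continuous $\delta_x$ when $t=0$, and one cannot hope to control $P_x[\Lambda^\om_0]$ for every $x$ at once.)

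To prove this, I would first check that $\Phi:\Omega\times\bbR^2\to[0,1]$, $\Phi(\om,y):=P_y[\Lambda^\om_0]=\int_{\Omega'}\indicator_{\Lambda_0}(\om,\om')\,P_y(d\om')$, is $\mathcal{A}\otimes\mathcal{B}(\bbR^2)$-measurable. This is a routine monotone-class (Dynkin) argument: the family of $\Gamma\in\mathcal{A}\otimes\mathcal{G}^0_\infty$ for which $(\om,y)\mapsto\int_{\Omega'}\indicator_\Gamma(\om,\om')\,P_y(d\om')$ is measurable is a $\lambda$-system containing all rectangles $A\times A'$ with $A\in\mathcal{A}$ and $A'\in\mathcal{G}^0_\infty$, for which the function is $\indicator_A(\om)\,P_y[A']$ with $y\mapsto P_y[A']$ Borel by the standard measurability of the Brownian transition kernel.

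With measurability in hand, the fact $\bbP\times P_y[\Lambda_0]=1$ for every $y\in\bbR^2$ (from Theorem~\ref{thm:constrF}) says that the $\om$-section of the set $N:=\{(\om,y):\Phi(\om,y)<1\}\in\mathcal{A}\otimes\mathcal{B}(\bbR^2)$ is $\bbP$-null for each $y$; hence by Fubini's theorem $(\bbP\otimes\mathrm{Leb})(N)=0$, and applying Fubini again, for $\bbP$-a.e.\ $\om$ the $y$-section $\{y:\Phi(\om,y)<1\}$ is Lebesgue-null, i.e.\ $P_y[\Lambda^\om_0]=1$ for Lebesgue-a.e.\ $y$. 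Together with the first paragraph this finishes the proof. I do not anticipate a genuine obstacle here: the only technical point is the joint measurability of $\Phi$, which is standard, and everything else is Fubini/Tonelli plus the Markov property.
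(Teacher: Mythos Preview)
Your proof is correct and follows essentially the same route as the paper: reduce via $\Lambda^\om_t=\theta_t^{-1}(\Lambda^\om_0)$ and the Markov property to showing that $P_y[\Lambda^\om_0]=1$ for Lebesgue-a.e.\ $y$ for $\bbP$-a.e.\ $\om$, and deduce the latter from $\bbP\times P_y[\Lambda_0]=1$ via Fubini. The only cosmetic difference is that the paper integrates against the Gaussian probability measure $\mu(dy)=(2\pi)^{-1}e^{-|y|^2/2}\,dy$ rather than Lebesgue measure (and is less explicit about the joint measurability you carefully verify).
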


\begin{proof}
Let $\mu(dy):=(2\pi)^{-1}e^{-|y|^{2}/2}\,dy$. By Fubini's theorem, we have
$\bbE P_x[\Lambda_0^\om]=\bbP\times P_x[\Lambda_0]=1$ for all $x\in \bbR^2$
and then its $\mu(dx)$-integral results in $\bbE P_\mu[\Lambda_0^\om]=1$ with
$P_\mu[\cdot]:=\int_{\bbR^2} P_x[\cdot] \, \mu(dx)$. Thus
for $\bbP$-a.e.\ $\om\in\Omega$, $P_\mu[(\Lambda_0^\om)^c]=0$, namely
\begin{align} \label{eq:Py_zero}
 P_y[(\Lambda_0^\om)^c]=0 \qquad \text{for $dy$-a.e.\ $y\in \bbR^2$}.
\end{align}
Now for any such $\omega\in\Omega$ and for all $t>0$ and $x\in \bbR^2$, we have
\begin{align*}
 P_x[\Lambda_t^\om]=P_x[\theta_t^{-1}(\Lambda_0^\om)]
	=E_x[\indicator_{\Lambda_0^\om}\circ \theta_t]
	=E_x\bigl[ P_{B_t}[\Lambda_0^\om] \bigr]
	=\int_{\bbR^2} P_y[\Lambda_0^\om] q_t(x,y) \, dy=1
% &=\frac 1 {2\pi t} \int_{\bbR^2} P_y[\Lambda_0^\om] e^{-|y-x|^2/(2t)} \, dy=1
\end{align*}
by the Markov property of $B$ and \eqref{eq:Py_zero}, completing the proof.
\end{proof}

\begin{proof}[Proof of Proposition~\ref{prop:pcaf}]
Set $\mathbb{Q}_+ :=\mathbb{Q}\cap(0,\infty)$ and
\begin{align*}
 \Lambda:=\Bigl\{ (\om,\om')\in \Omega\times \Omega':\,
	\lim_{t\downarrow 0}\liminf_{n\to \infty} F^n_t(\om,\om')=0 \Bigr\}
	\cap \bigcap_{q\in \mathbb{Q}_+} \Lambda_{q}.
\end{align*}
Then clearly $\Lambda\in\mathcal{A} \otimes \mathcal{G}^0_\infty$, and
i) follows immediately from Lemmas~\ref{lem:liminfF} and \ref{lem:lambda_t}.

Let $(\om,\om')\in\Lambda$. Then for each $q\in \mathbb{Q}_+$,
$(\om,\om')\in \Lambda_q$, so that for all $t\in[q,\infty)$ the limit $F_{q,t}(\om,\om')$
exists in $\bbR$, $[q,\infty)\ni t\mapsto F_{q,t}(\om,\om')\in[0,\infty)$ is continuous
and strictly increasing and $\lim_{t\to \infty} F_{q,t}(\om,\om')=\infty$.
Thus for all $0<s\leq t$ the limit
\begin{align} \label{eq:FstFqtFqs}
 F_{s,t}(\om,\om'):=\lim_{n\to \infty}\bigl(F^n_{t}(\om,\om')-F^n_{s}(\om,\om')\bigr)
% &=\lim_{n\to \infty}\bigl(F^n_{t}(\om,\om')-F^n_{q}(\om,\om')-F^n_{s}(\om,\om')+F^n_{q}(\om,\om')\bigr)
	=F_{q,t}(\om,\om')-F_{q,s}(\om,\om')
\end{align}
exists in $\bbR$, where $q\in\mathbb{Q}\cap(0,s]$, and
$[s,\infty)\ni t\mapsto F_{s,t}(\om,\om')\in[0,\infty)$ is a strictly increasing
continuous function satisfying $\lim_{t\to\infty}F_{s,t}(\om,\om')=\infty$.
Moreover, for any $t>0$ and $0 < u\leq s\leq t$,
\begin{align*}
 0\leq F_{u,t}(\om,\om')-F_{s,t}(\om,\om')
	= \lim_{n\to \infty} \bigl( F^n_{s}(\om,\om') - F^n_{u}(\om,\om') \bigr)
 \leq \liminf_{n\to \infty}  F^n_{s}(\om,\om'), %\xrightarrow{s\downarrow 0}0,
\end{align*}
which tends to $0$ as $s\downarrow 0$ and thereby verifies Cauchy's convergence
criterion for $\bigl(F_{s,t}(\om,\om')\bigr)_{s\in(0,t]}$ as $s\downarrow 0$.
Hence the finite limit $F_t(\om,\om') :=\lim_{s\downarrow 0} F_{s,t}(\om,\om')$
exists, and then recalling \eqref{eq:FstFqtFqs}, we easily obtain
\begin{align} \label{eq:Ft_at_0}
 0\leq F_t(\om,\om') %=\lim_{s\downarrow 0} F_{s,t}(\om,\om')
	=\lim_{s\downarrow 0}\lim_{n\to\infty} \bigl( F^n_{t}(\om,\om') - F^n_{s}(\om,\om') \bigr)
	\leq \liminf_{n\to\infty} F^n_t(\om,\om')
	\xrightarrow{t\downarrow 0}0
\end{align}
and, for all $0<s\leq t$,
\begin{align} \label{eq:FtFsFst}
 F_t(\om,\om')-F_s(\om,\om')
	=\lim_{u\downarrow 0}\bigl(F_{u,t}(\om,\om')-F_{u,s}(\om,\om')\bigr)
	=F_{s,t}(\om,\om').
\end{align}
Now by \eqref{eq:Ft_at_0}, \eqref{eq:FtFsFst} and the properties of the
function $t\mapsto F_{s,t}(\om,\om')$ mentioned above after \eqref{eq:FstFqtFqs},
the mapping $[0,\infty)\ni t \mapsto F_t(\om,\om')\in[0,\infty)$
with $F_0(\om,\om'):=0$ is continuous, strictly
increasing and satisfies $\lim_{t\to\infty}F_t(\om,\om')=\infty$, proving ii).

Statement iii) is clear, so it remains to show iv). Let $\om \in \Omega$
satisfy the property in statement i). First, $F_0(\om,\cdot)=0$ is
$\mathcal{G}_0$-measurable, and for any $t > 0$, by i) we have
$\Lambda^\om \in \mathcal{G}_0\subset \mathcal{G}_t$, which together
with the $\mathcal{G}^0_t$-measurability of $F^n_s(\om,\cdot)$ for $n\in\bbN$
and $s\in[0,t]$ implies the $\mathcal{G}_t$-measurability of $F_t(\om,\cdot)$.
Next let $\om'\in\Lambda^\om$. \eqref{eq:Fn_additive} with $t=0$ results in
$F^n_s\bigl(\om,\theta_0(\om')\bigr)=F^n_s(\om,\om')$, $s\geq 0$, and then
by $(\om,\om')\in\Lambda$ we easily see $\theta_0(\om')\in\Lambda^\om$ and
$F_s(\om,\om')=F_0(\om,\om')+F_s\bigl(\om,\theta_0(\om')\bigr)$, $s\geq 0$.
For $t>0$, by \eqref{eq:Fn_additive}, $(\om,\om')\in\Lambda$,
\eqref{eq:FstFqtFqs} and \eqref{eq:FtFsFst} we have
\begin{align*}
 \liminf_{n\to \infty} F^n_s\bigl(\om,\theta_t(\om')\bigr)
	=\lim_{n\to \infty} \bigl(F^n_{s+t}(\om,\om') -F^n_t(\om,\om')\bigr)
	=F_{t,s+t}(\om,\om')\xrightarrow{s\downarrow 0}0
\end{align*}
and, for any $s\geq 0$ and $u\in[s,\infty)$,
\begin{align} \label{eq:F_additive}
 &F^n_u\bigl(\om,\theta_t(\om')\bigr) - F^n_s\bigl(\om,\theta_t(\om')\bigr)
%	=\bigl(F^n_{u+t}(\om,\om')-F^n_t(\om,\om')\bigr) - \bigl(F^n_{s+t}(\om,\om')-F^n_t(\om,\om')\bigr)
	=F^n_{u+t}(\om,\om')-F^n_{s+t}(\om,\om') \nonumber \\
 &\mspace{198mu}\xrightarrow{n\to\infty}F_{s+t,u+t}(\om,\om')
	=F_{u+t}(\om,\om')-F_{s+t}(\om,\om'),
\end{align}
where the limit is a strictly increasing continuous function of $u\in[s,\infty)$
tending to $\infty$ as $u\to\infty$, proving in particular
$\bigl(\om,\theta_t(\om')\bigr)\in\Lambda$, i.e.\ $\theta_t(\om')\in\Lambda^\om$.
Finally, for $t,u>0$ and $s\in(0,u]$, \eqref{eq:F_additive} shows
$F_{s,u}\bigl(\om,\theta_t(\om')\bigr)=F_{u+t}(\om,\om')-F_{s+t}(\om,\om')$,
and letting $s\downarrow 0$ yields
%$F_{u}\bigl(\om,\theta_t(\om')\bigr)=F_{u+t}(\om,\om')-F_{t}(\om,\om')$, i.e.\ %
$F_{u+t}(\om,\om')=F_{t}(\om,\om')+F_{u}\bigl(\om,\theta_t(\om')\bigr)$.
Therefore $\bigl(F_t(\om,\cdot)\bigr)_{t\geq 0}$ is a PCAF of $B$
in the strict sense with defining set $\Lambda^\om$.
\end{proof}

\section{The Revuz correspondence between $M$ and $F$} \label{app:revuz}

The purpose of this section is to give a proof of the following proposition,
which generalises Proposition~\ref{prop:revuz} to the LBM $\mathcal{B}^U$
killed upon exiting an open set $U\subset\bbR^2$.

\begin{prop} \label{prop:corrF_M}
$\bbP$-a.s., for any non-empty open set $U\subset \bbR^2$,
for all $x\in \bbR^2$ and all Borel measurable functions
$\eta: [0,\infty) \to [0,\infty]$ and $f: U \to [0,\infty]$,
\begin{align} \label{eq:FM}
 E_x\Bigl[ \int_0^{T_U} \eta(t) f(B_t) \, dF_t \Bigr]
	=\int_0^\infty \int_{U} \eta(t) f(y) q^U_t(x,y) \, M(dy)\, dt,
\end{align}
where $q^U_t(x,y)$ denotes the jointly continuous transition density of $B^U$
as in \eqref{eq:killed_green_U}.
\end{prop}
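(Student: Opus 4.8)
The plan is to deduce \eqref{eq:FM} from the corresponding identity for the regularised additive functionals $F^n$ of \eqref{eq:dfn_Fn}, where it follows at once from Tonelli's theorem, by letting $n\to\infty$. Since neither side of \eqref{eq:FM} refers to the approximating sequence, I would fix once and for all an environment $\om\in\Omega$ for which the conclusions of Proposition~\ref{prop:pcaf} and Theorem~\ref{thm:GRV13} hold and for which $M_n\to M$ vaguely on $\bbR^2$ (none of these events depends on $U$ or on $x$), and prove \eqref{eq:FM} for this $\om$, every non-empty open $U\subset\bbR^2$, every $x\in\bbR^2$ and every pair of Borel functions $\eta,f\geq 0$. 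For the regularised identity, writing $\phi_n(z):=\exp\bigl(\ga X_n(z)-\tfrac{\ga^2}{2}\bbE[X_n(z)^2]\bigr)$ so that $dF^n_t=\phi_n(B_t)\,dt$ by \eqref{eq:dfn_Fn} and $\phi_n(y)\,dy=M_n(dy)$ by \eqref{eq:dfn_Mn}, Tonelli's theorem and the defining relation $P_x[B_t\in dy,\,t<T_U]=q^U_t(x,y)\,dy$ of $q^U$ give
\begin{align*}
 E_x\Bigl[\int_0^{T_U}\eta(t)f(B_t)\,dF^n_t\Bigr]
	&=\int_0^\infty\eta(t)\,E_x\bigl[\indicator_{\{t<T_U\}}(f\phi_n)(B_t)\bigr]\,dt\\
	&=\int_0^\infty\int_U\eta(t)\,f(y)\,q^U_t(x,y)\,M_n(dy)\,dt.
\end{align*}

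Next I would pass to the limit for $\eta\in C_c\bigl((0,\infty),[0,\infty)\bigr)$ and $f\in C_c\bigl(U,[0,\infty)\bigr)$; fix $0<a<N$ with $\supp\eta\subset[a,N]$ and put $K:=\supp f\Subset U$. On the right-hand side of the displayed identity, the function $y\mapsto\int_a^N\eta(t)f(y)q^U_t(x,y)\,dt$ lies in $C_c(\bbR^2)$ (it is continuous, supported in $K$, and bounded since $q^U_t\le q_t\le(2\pi a)^{-1}$ for $t\geq a$), so the vague convergence $M_n\to M$ shows that the right-hand side converges to $\int_0^\infty\int_U\eta(t)f(y)q^U_t(x,y)\,M(dy)\,dt$. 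For the left-hand side, Theorem~\ref{thm:GRV13}~iii) lets me pass to a subsequence along which $F^n\to F$ $P_x$-a.s.\ uniformly on $[0,N]$; this implies weak convergence of the Lebesgue--Stieltjes measures $dF^n$ to $dF$ on $[0,N]$, and since $\mathrm{dist}(K,\partial U)>0$ forces $f(B_t)=0$ for $t$ in a left neighbourhood of $T_U$ (on $\{T_U<\infty\}$; otherwise the integrand is time-supported in the compact set $\supp\eta$), the integrand $t\mapsto\indicator_{[0,T_U)}(t)\eta(t)f(B_t)$ is bounded and continuous, whence $\int_0^{T_U}\eta(t)f(B_t)\,dF^n_t\to\int_0^{T_U}\eta(t)f(B_t)\,dF_t$ $P_x$-a.s.\ along the subsequence. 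Since these integrands are dominated by $\|\eta\|_\infty\|f\|_\infty(F^n_N-F^n_a)$ and $E_x[F^n_N-F^n_a]=\int_a^N\!\int q_t(x,y)\,M_n(dy)\,dt$ converges to $E_x[F_N-F_a]$ as $n\to\infty$ (the kernel $\int_a^N q_t(x,\cdot)\,dt$ being bounded with Gaussian decay, one splits the $y$-integral over a large ball and its complement and invokes the vague convergence $M_n\to M$), the generalised dominated convergence theorem yields $E_x\bigl[\int_0^{T_U}\eta(t)f(B_t)\,dF^n_t\bigr]\to E_x\bigl[\int_0^{T_U}\eta(t)f(B_t)\,dF_t\bigr]$, and equating the two limits proves \eqref{eq:FM} for such $\eta$ and $f$.

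Finally, a monotone-class argument extends \eqref{eq:FM} to all Borel $\eta,f\geq 0$. Both functionals $\Phi(\eta,f):=E_x\bigl[\int_0^{T_U}\eta(t)f(B_t)\,dF_t\bigr]$ and $\Psi(\eta,f):=\int_0^\infty\int_U\eta(t)f(y)q^U_t(x,y)\,M(dy)\,dt$ are additive and positively homogeneous in each argument, finite when $\eta$ and $f$ are bounded with compact support in $(0,\infty)$ and in $U$ respectively, and preserved under increasing pointwise limits by monotone convergence. Holding $\eta\in C_c\bigl((0,\infty),[0,\infty)\bigr)$ fixed, the functional monotone class theorem applied to the multiplicatively closed generating family $C_c(U)$ extends $\Phi(\eta,f)=\Psi(\eta,f)$ from $f\in C_c(U,[0,\infty))$ to all bounded Borel $f\geq 0$ with compact support in $U$; repeating the argument in $\eta$ gives it for all bounded Borel $\eta\geq 0$ with compact support in $(0,\infty)$; and a final monotone-convergence step (with $\eta(0)$ irrelevant since $dF_t$ and $dt$ charge no mass at $0$) removes the boundedness and compact-support restrictions, proving \eqref{eq:FM} in general.

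The step I expect to be the main obstacle is the interchange of limit and $E_x$ on the left-hand side in the second paragraph: the almost-sure convergence of the Stieltjes integrals forces the restriction to continuous $\eta$ with support away from $0$ and to $f$ with compact support in $U$ — the former so that $q^U_t$ stays bounded on the relevant compact set, the latter so that the integrand does not jump at the exit time $T_U$ and so that $dF^n\rightharpoonup dF$ can be applied — while the uniform control needed to run the generalised dominated convergence theorem rests on the convergence $E_x[F^n_N-F^n_a]\to E_x[F_N-F_a]$, which must be extracted from the vague convergence $M_n\to M$ by a truncation argument. The remaining steps (the regularised identity and the monotone-class extension) are routine.
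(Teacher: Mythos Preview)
Your overall plan matches the paper's: reduce by a monotone-class argument to $\eta\in C_c((0,\infty))$ and $f\in C_c(U)$, write down the approximate identity for $F^n$ via Tonelli, and let $n\to\infty$ on both sides. The right-hand-side convergence via vague convergence of $M_n$ (the function $y\mapsto\int\eta(t)f(y)q^U_t(x,y)\,dt$ is indeed in $C_c(U)$) and the $P_x$-a.s.\ convergence of the Stieltjes integrals are handled correctly.

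The gap is in the exchange of limit and $E_x$. You appeal to the generalised dominated convergence theorem with dominating sequence $G_n:=\|\eta\|_\infty\|f\|_\infty(F^n_N-F^n_a)$ and assert $E_x[G_n]\to E_x[G]$ for $G=\|\eta\|_\infty\|f\|_\infty(F_N-F_a)$. Your truncation argument at best shows
\[
E_x[F^n_N-F^n_a]=\int_a^N\!\!\int_{\bbR^2} q_t(x,y)\,M_n(dy)\,dt
\;\longrightarrow\;
\int_a^N\!\!\int_{\bbR^2} q_t(x,y)\,M(dy)\,dt,
\]
but identifying this limit with $E_x[F_N-F_a]$ is exactly the special case $U=\bbR^2$, $\eta=\indicator_{[a,N]}$, $f\equiv 1$ of the proposition you are proving, so the step is circular. (Fatou's lemma gives the inequality $E_x[F_N-F_a]\le\int_a^N\!\int q_t(x,y)\,M(dy)\,dt$ for free, but the reverse direction --- preventing mass from escaping --- is the whole point.) A secondary issue: even the convergence of the $M_n$-integrals requires uniform-in-$n$ control on $M_n$ over large balls, which vague convergence alone does not provide; you would need to add a quantitative bound on $\sup_n M_n(B(R))$ to your list of $\bbP$-a.s.\ environment properties.

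The paper avoids the circularity by proving uniform $P_x$-integrability directly: it includes the uniform volume decay $\sup_{n}M_n(B(x,r))\le C r^{\alpha_2-\varepsilon}$ among the properties of the fixed environment and uses it to bound
\[
\sup_{n\ge 1}E_x\biggl[\Bigl(\int_0^{T_U\wedge t} f(B_s)\,dF^n_s\Bigr)^{\!2}\biggr]<\infty
\]
by an explicit double integral against $M_n\times M_n$ (this is the content of the paper's Lemma~\ref{lem:unif_int}). The $L^2$-bound yields uniform integrability of the Stieltjes integrals, and then $P_x$-a.s.\ convergence plus UI gives convergence of expectations without reference to the limiting identity. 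To repair your argument you should replace the generalised-DCT step by such a direct UI bound.
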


We need to prepare a few preliminary facts. First, by \cite[Theorem~2.2]{GRV13},
$\bbP$-a.s., for any $\varepsilon>0$ and any $R\geq 1$ there exists
$C_{21}=C_{21}(X,\gamma,R,\varepsilon)>0$ such that
\begin{align} \label{eq:voldec_n}
 M_n\bigl(B(x,r)\bigr) \leq C_{21} r^{\alpha_2-\varepsilon},
	\qquad \forall x\in B(R), \, r\in(0,1], \, n\in\bbN.
\end{align}

In the rest of this section, we fix any environment $\om \in \Omega$
such that $(M_n)_{n\geq 1}$ converges to $M$ vaguely on $\bbR^2$, the
conclusions of Proposition~\ref{prop:pcaf} i), iv) hold and \eqref{eq:voldec_n}
is valid for all $\varepsilon>0$ and $R\geq 1$.
Then by Proposition~\ref{prop:pcaf} i), ii), for all $x\in\bbR^2$,
\begin{align} \label{eq:vague_Fn_F}
 \text{$(dF^n_s)_{n\geq 1}$ converges to $dF_s$ weakly on $[t,u]$ for any $0<t\leq u$,}
	\quad\text{$P_x$-a.s.}
\end{align}

\begin{lemma} \label{lem:unif_int}
%$\bbP$-a.s.,
For any non-empty open set $U\subset \bbR^2$, any $x\in \bbR^2$,
any $t>0$ and any bounded Borel measurable function
$f: U \rightarrow [0,\infty)$ with $f^{-1}\bigl((0,\infty)\bigr)$ bounded,
$\bigl\{ \int_0^{T_{U}\wedge t} f(B_s) \, dF^n_s \bigr\}_{n\geq 1}$
is uniformly $P_x$-integrable.
\end{lemma}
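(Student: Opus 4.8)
The plan is to prove the stronger statement that the family is bounded in $L^2(P_x)$, which immediately yields uniform $P_x$-integrability. First I would reduce the problem: since $0\le\int_0^{T_U\wedge t}f(B_s)\,dF^n_s\le\|f\|_\infty\int_0^t\indicator_{B(R)}(B_s)\,dF^n_s$ whenever $f^{-1}\bigl((0,\infty)\bigr)\subset B(R)$ with $R\ge 1$, it suffices to bound $\sup_{n\ge 1}E_x\bigl[\bigl(\int_0^t\indicator_{B(R)}(B_s)\,dF^n_s\bigr)^2\bigr]$. Here I would use that $\bbE\bigl[X_n(\cdot)^2\bigr]$ is constant on $\bbR^2$ (the diagonal $g_n^{(m)}(x,x)$ of the covariance in \eqref{eq:def_kn} does not depend on $x$), so that by \eqref{eq:dfn_Fn} one has $dF^n_s=V_n(B_s)\,ds$ for the fixed \emph{continuous} positive function $V_n:=\exp\bigl(\gamma X_n-\tfrac{\gamma^2}{2}\bbE[X_n(\cdot)^2]\bigr)$, with $M_n(dy)=V_n(y)\,dy$. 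Thus $\int_0^t\indicator_{B(R)}(B_s)\,dF^n_s=\int_0^t(\indicator_{B(R)}V_n)(B_s)\,ds$ is an ordinary time integral along Brownian paths, and expanding its square and applying Tonelli together with the Chapman--Kolmogorov identity for the Gaussian kernel $q_r$ gives
\begin{align*}
 E_x\Bigl[\Bigl(\int_0^t\indicator_{B(R)}(B_s)\,dF^n_s\Bigr)^2\Bigr]
 =2\int_0^t\int_{B(R)}\int_{B(R)} q_{s_1}(x,y_1)\Bigl(\int_0^{t-s_1} q_r(y_1,y_2)\,dr\Bigr)\,M_n(dy_2)\,M_n(dy_1)\,ds_1 .
\end{align*}

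The proof then rests on two elementary inputs. The first is the pointwise bound $\int_0^{t}q_r(y,z)\,dr\le C_t\bigl(1+\log_+(1/|y-z|)\bigr)$ for all $y,z\in\bbR^2$ (where $\log_+ s:=\max(\log s,0)$), which follows from the substitution $v=|y-z|^2/(2r)$ turning the integral into $\tfrac{1}{2\pi}\int_{|y-z|^2/(2t)}^{\infty} v^{-1}e^{-v}\,dv$. The second input, which is the only place uniformity in $n$ is needed, is the claim that
\begin{align*}
 \sup_{n\ge 1}\ \sup_{y\in\bbR^2}\int_{B(R)}\bigl(1+\log_+(1/|y-z|)\bigr)\,M_n(dz)<\infty .
\end{align*}
I would prove this by observing that the $\log_+$-term vanishes unless $z\in B(y,1)$, so only $y\in B(R+1)$ contribute there; for such $y$, decomposing $B(y,1)$ into the dyadic annuli $B(y,2^{-j})\setminus B(y,2^{-j-1})$ and invoking the uniform-in-$n$ volume estimate \eqref{eq:voldec_n} on $B(R+1)$ with $\varepsilon:=\alpha_2/2\in(0,\alpha_2)$ makes $\sum_j (j+1)\log 2\cdot C_{21}2^{-j(\alpha_2-\varepsilon)}$ converge; the $M_n(B(R))$ part is bounded by covering $B(R)$ with finitely many unit balls and applying \eqref{eq:voldec_n} again. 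Feeding both inputs into the displayed identity (first bounding $\int_0^{t-s_1}q_r(y_1,y_2)\,dr$, then the $M_n(dy_2)$-integral, then $\int_0^t q_{s_1}(x,y_1)\,ds_1$, then the $M_n(dy_1)$-integral) bounds $\sup_{n}E_x\bigl[(\,\cdot\,)^2\bigr]$ by a finite constant depending only on $X,\gamma,R$ and $t$ (and, incidentally, not on $x$).

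Finally, $L^2$-boundedness of the family gives uniform $P_x$-integrability by the standard criterion. I expect the only genuine obstacle to be exactly this uniform-in-$n$ control, which is precisely what the uniform volume bound \eqref{eq:voldec_n} of \cite{GRV13} supplies; everything else becomes routine once one notes that each $F^n$ is absolutely continuous with a fixed continuous positive density along Brownian paths, reducing all moment computations to integrals against $q_r$ and $M_n$. A minor point to be handled with care is that the starting point $x$ — and the intermediate variable $y_1$ — may lie far from $B(R)$, but this causes no difficulty since $\log_+(1/|\,\cdot\,|)$ is supported in the closed unit ball, so the relevant suprema are in fact taken over all of $\bbR^2$ without extra cost.
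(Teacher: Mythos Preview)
Your proposal is correct and follows essentially the same approach as the paper: prove uniform $L^{2}(P_x)$-boundedness by rewriting $\int_0^{T_U\wedge t} f(B_s)\,dF^n_s$ as an ordinary time integral with density $V_n$, expanding the square via the Markov property into a double integral against $M_n\otimes M_n$, bounding $\int_0^t q_r(\cdot,\cdot)\,dr$ by a logarithm, and then controlling $\int_{B(R)}\bigl(1+\log_+(1/|y-z|)\bigr)\,M_n(dz)$ uniformly in $n$ and $y$ by a dyadic annulus decomposition together with the volume bound \eqref{eq:voldec_n}. The only cosmetic differences are that the paper keeps general $f$ throughout (choosing $R$ so that $\{x\}\cup\supp f\subset B(R)$) rather than reducing to $\indicator_{B(R)}$, and writes the logarithmic bound as $\tfrac{1}{2\pi}\bigl(1+\log^+\frac{t}{|\cdot|^2}\bigr)$ rather than absorbing the $t$-dependence into a constant $C_t$.
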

\begin{proof}
It suffices to prove that
\begin{align} \label{eq:unif_L2}
 \sup_{n\geq 1}
	E_x\biggl[ \Bigl( \int_0^{T_{U}\wedge t} f(B_s)\, dF^n_s\Bigr)^2 \biggr]
	<\infty.
\end{align}
For any Borel measurable $h: U \rightarrow [0,\infty]$,
the Markov property of $B$ yields
\begin{align*}
 E_x\biggl[ \Bigl( \int_0^{T_{U}\wedge t} h(B_s)\, ds\Bigr)^2 \biggr]
	\leq 2\int_{U} \int_{U} h(y) h(z) \int_0^t \int_s^t q_s(x,y) q_{u-s}(y,z) \,  du \, ds\, dz\, dy.
\end{align*}
Then since
\begin{align*}
 \int_0^t \int_s^t q_s(x,y) q_{u-s}(y,z) \,  du \, ds  
	&\leq \int_0^t q_s(x,y) \, ds \int_0^t q_u(y,z) \, du \\
	&=\frac{1}{4\pi^2} \int_0^{t/|y-x|^2} s^{-1} e^{-\frac{1}{2s}} ds
		\int_0^{t/|z-y|^2} u^{-1} e^{-\frac{1}{2u}} du \\
& \leq \frac{1}{4\pi^2} \Bigl( 1+ \log^+\frac{t}{|y-x|^2} \Bigr)
	\Bigl( 1+ \log^+\frac{t}{|z-y|^2} \Bigr),
\end{align*}
where $\log^+=\log( \cdot \vee 1)$, setting
$h(y):=f(y)\exp\bigl(\gamma X_n(y)-\frac{\gamma^2}{2} \bbE[X_n(y)^2]\bigr)$,
recalling \eqref{eq:dfn_Fn} and \eqref{eq:dfn_Mn} and choosing
$R\geq 1$ such that $\{x\}\cup f^{-1}\bigl((0,\infty)\bigr)\subset B(R)$,
we obtain
\begin{align} 
 &E_x\biggl[ \Bigl( \int_0^{T_{U}\wedge t} f(B_s)\, dF^n_s\Bigr)^2 \biggr]
	=E_x\biggl[ \Bigl( \int_0^{T_{U}\wedge t} h(B_s)\, ds\Bigr)^2 \biggr] \nonumber \\
 &\mspace{30mu}\leq \frac{1}{2\pi^2} \int_{U}\int_{U} h(y) h(z)
	\Bigl( 1+ \log^+\frac{t}{|y-x|^2} \Bigr) \Bigl( 1+ \log^+\frac{t}{|z-y|^2} \Bigr)
	\, dz \, dy \nonumber \\
 &\mspace{30mu}\leq \frac{\|f\|_{\infty}^2}{2\pi^2} \int_{B(R)} \int_{B(R)}
	\Bigl( 1+ \log^+\frac{t}{|y-x|^2} \Bigr) \Bigl( 1+ \log^+\frac{t}{|z-y|^2} \Bigr)
	\, M_n(dz) \, M_n(dy).
	\label{eq:unif_L2_prf}
 \end{align}
%Setting $D_k(y):=B(y,2^{1-k}R)\setminus B(y,2^{-k} R)$, $k\geq 0$, and
Using \eqref{eq:voldec_n} with $\varepsilon=\alpha_{2}/2$,
for all $y\in B(R)$ and $n\geq 1$ we further get
\begin{align}
 &\int_{B(R)} \Bigl( 1+ \log^+\frac{t}{|z-y|^2} \Bigr) \, M_n(dz) \nonumber \\
 &\mspace{30mu}\leq M_n\bigl(B(R)\bigr) + \sum_{k=0}^\infty
	\int_{B(y,2^{1-k}R)\setminus B(y,2^{-k} R)} \log^+\frac{t}{(2^{-k}R)^2} \, M_n(dz)
	\nonumber \\
 &\mspace{30mu}\leq C + C \sum_{k=0}^\infty \Bigl(2k+\log^+ \frac{t}{R^{2}}\Bigr)(2^{1-k}R)^{\alpha_{2}/2}
	=:C'(X,\gamma,R,t)<\infty
	\label{eq:Mnlog}
\end{align}
for some constant $C=C(X,\gamma,R)>0$. \eqref{eq:Mnlog} is in fact valid
with $y=x$ by $x\in B(R)$, and then \eqref{eq:unif_L2} is immediate
from \eqref{eq:unif_L2_prf} and \eqref{eq:Mnlog}, completing the proof.
\end{proof}

Now we prove Proposition~\ref{prop:corrF_M} on the basis of
\eqref{eq:vague_Fn_F}, Lemma~\ref{lem:unif_int} and
the vague convergence on $\bbR^2$ of $M_n$ to $M$.

\begin{proof} [Proof of Proposition~\ref{prop:corrF_M}]
By a monotone class argument it suffices to consider continuous functions
$\eta$ and $f$ with compact supports in $(0,\infty)$ and $U$, respectively.
First note that by \eqref{eq:dfn_Fn}, Fubini's theorem and \eqref{eq:dfn_Mn}
we have for every $n\in \bbN$,
\begin{align}\label{eq:FM_n}
 E_x\Bigl[ \int_0^{T_U} \eta(t) f(B_t) \, dF^n_t \Bigr]
	=\int_0^\infty \int_{U} \eta(t) f(y) q^U_t(x,y) \, M_n(dy) \, dt,
\end{align}
and we need to show that letting $n\to\infty$ on both sides of
\eqref{eq:FM_n} results in \eqref{eq:FM}. The left-hand side of
\eqref{eq:FM_n} indeed converges to that of \eqref{eq:FM}
by \eqref{eq:vague_Fn_F} and the uniform $P_x$-integrability of
$\bigl\{ \int_0^{T_U}\eta(t) f(B_t) \, dF^n_t \bigr\}_{n\geq 1}$ implied
by Lemma~\ref{lem:unif_int}. On the other hand, the convergence of the
right-hand side of \eqref{eq:FM_n} to that of \eqref{eq:FM} follows from the
vague convergence on $\bbR^2$ of $M_n$ to $M$ together with the fact
that the function $U\ni y\mapsto \int_0^\infty \eta(t) f(y) q^U_t(x,y) \, dt$
is continuous with compact support in $U$ by virtue of dominated convergence
using the continuity of $q^U_t(x,\cdot)$ on $U$ and $0\leq q^U_t(x,y)\leq q_t(x,y)$.
Thus the proof of Proposition~\ref{prop:corrF_M} is complete.
\end{proof}

\section{Negative moments of the Liouville measure}

\begin{lemma} \label{lem:neg_mom}
Let $q>0$ and set
$\tilde \xi(q):=(2+\frac{\gamma^{2}}{2})q+\frac{\gamma^2}{2}q^2$. Then there
exists $c_4=c_4(\gamma,q)>0$ such that for any $x\in\bbR^2$ and any $r\in(0,1]$,
\begin{align} \label{eq:neg_mom}
 \bbE\Bigl[M\bigl(B(x,r)\bigr)^{-q}\Bigr] \vee
	\sup_{n\geq 1} \bbE\Bigl[M_n\bigl(B(x,r)\bigr)^{-q}\Bigr]
	\leq c_4 r^{-\tilde \xi(q)}.
\end{align}
\end{lemma}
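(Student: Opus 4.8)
The plan is to reduce, by translation invariance and a self-similarity (scaling) argument via Kahane's convexity inequality, to the single estimate $\sup_{n\ge1}\bbE[M_n(B(0,1))^{-q}]<\infty$, and then to obtain the latter by comparing $M_n$ with a reference Gaussian multiplicative chaos measure whose negative moments are already known to be finite.

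Since $g^{(m)}$ and each $g^{(m)}_n$ are translation invariant, so are the laws of $M$ and of all the $M_n$, and we may take $x=0$. For the scaling step, fix $r\in(0,1]$ and $n\ge1$. Substituting $\sigma=rs$ in the integral defining the covariance of $X_n$ gives, for all $u,v\in B(0,1)$,
\begin{equation*}
 \bbE\bigl[X_n(ru)X_n(rv)\bigr]=\int_r^{ra_n}\frac{k^{(m)}\bigl(\sigma(u-v)\bigr)}{\sigma}\,d\sigma
 \le \log\tfrac1r+\bbE\bigl[X_n(u)X_n(v)\bigr],
\end{equation*}
where one splits the integral at $1$ when $ra_n>1$ and uses $k^{(m)}\le k^{(m)}(0)=1$ together with $r\le1$. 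Let $\Omega_r\sim N(0,\log\tfrac1r)$ be independent of an independent copy $X_n'$ of $X_n$; then the continuous Gaussian field $\Omega_r+X_n'$ on $B(0,1)$ has covariance dominating that of $u\mapsto X_n(ru)$. Applying Kahane's convexity inequality to the chaos integrals over $B(0,1)$ of these two fields, with the bounded convex functions $t\mapsto(t+\varepsilon)^{-q}$ and letting $\varepsilon\downarrow0$ by monotone convergence, then changing variables $w=ru$ in the left-hand integral and using the independence of $\Omega_r$ and $X_n'$ on the right, we obtain
\begin{align*}
 r^{2q}\,\bbE\bigl[M_n(B(0,r))^{-q}\bigr]
 &\le \bbE\bigl[e^{-\gamma q\Omega_r}\bigr]\,r^{-\gamma^2q/2}\,\bbE\bigl[M_n(B(0,1))^{-q}\bigr]\\
 &= r^{-\gamma^2q^2/2-\gamma^2q/2}\,\bbE\bigl[M_n(B(0,1))^{-q}\bigr],
\end{align*}
i.e.\ $\bbE[M_n(B(0,r))^{-q}]\le r^{-\tilde\xi(q)}\,\bbE[M_n(B(0,1))^{-q}]$ since $\tilde\xi(q)=2q+\tfrac{\gamma^2}{2}q+\tfrac{\gamma^2}{2}q^2$. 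The corresponding bound for $M$ then follows from Fatou's lemma once we know $M_n(B(0,r))\to M(B(0,r))$ $\bbP$-a.s., which holds because $M(\partial B(0,r))=0$ $\bbP$-a.s.\ (indeed $\bbE[M(U)]\le\mathrm{Leb}(U)$ for every open $U$ by Fatou and the vague convergence $M_n\to M$, applied to thin annuli around the circle).

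It thus remains to bound $c_4:=\sup_{n\ge1}\bbE[M_n(B(0,1))^{-q}]$, which by the $r=1$ case above and Fatou also dominates $\bbE[M(B(0,1))^{-q}]$. Here I would use the comparison announced before the statement. On $B(0,1)\times B(0,1)$ one has $\bbE[X_n(u)X_n(v)]\le g^{(m)}(u,v)\le\log\tfrac1{|u-v|}+c_0$ for a finite constant $c_0$ independent of $u,v$ and $n$ (the first inequality because the covariance of $X_n$ equals $\int_1^{a_n}k^{(m)}(s(u-v))\,\tfrac{ds}{s}\le\int_1^\infty k^{(m)}(s(u-v))\,\tfrac{ds}{s}=g^{(m)}(u,v)$, and $g^{(m)}(u,v)-\log\tfrac1{|u-v|}$ is bounded above on $B(0,1)^2$). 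Let $\bar M$ be the Gaussian multiplicative chaos measure with parameter $\gamma$ associated with a kernel of the form $\log^+\tfrac1{|u-v|}+c_1$ obtained through the convolution cutoff procedure covered by \cite{RV10}, with $c_1$ large enough that this kernel dominates $g^{(m)}(u,v)-c_0$ on $B(0,1)^2$; then $\bbE[\bar M(B(0,1))^{-q}]<\infty$ for all $q>0$ by \cite{RV10}. Adjoining to the reference field an independent $N(0,c_0)$ random variable and applying Kahane's inequality once more with $t\mapsto(t+\varepsilon)^{-q}$, $\varepsilon\downarrow0$, yields, uniformly in $n$,
\begin{equation*}
 \bbE\bigl[M_n(B(0,1))^{-q}\bigr]\le e^{\gamma^2c_0q(q+1)/2}\,\bbE\bigl[\bar M(B(0,1))^{-q}\bigr]<\infty .
\end{equation*}
(To stay at the level of genuine continuous-field approximations before passing to the chaos limit, one matches the cutoff parameter of $\bar M$ to $n$ and invokes the uniformity in that parameter in \cite{RV10}; alternatively one may quote the general comparison principle for chaos measures, see \cite{RV13a}, or Kahane's original negative-moment estimates in \cite{Ka85}.) Combining the two displayed inequalities with this value of $c_4$ completes the proof.

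The conceptual content is just the two applications of Kahane's inequality — the first producing the exact exponent $\tilde\xi(q)$ by self-similarity, the second giving finiteness by comparison — so the main obstacle is technical bookkeeping: legitimising Kahane's convexity inequality for the unbounded test function $t\mapsto t^{-q}$ (handled by the $(t+\varepsilon)^{-q}$ truncation) and, more delicately, reconciling the two distinct regularisation schemes, namely the $Y_k$-cutoff producing the $M_n$ versus the convolution cutoff underlying \cite{RV10}, so that a single covariance inequality can be fed into Kahane's inequality; this is why the argument is carried out for the continuous fields $X_n$ and the limit measure $M$ is reached only afterwards through Fatou's lemma.
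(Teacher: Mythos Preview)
Your proof is correct and takes a genuinely different route from the paper's. The paper makes a \emph{single} application of Kahane's convexity inequality: it compares $X_n$ directly with the convolution-regularised field $X^0_\varepsilon$ (with covariance $\psi_\varepsilon * h^{(m)}$), using the intermediate inequality $h^{(m)}_n \le \psi_\varepsilon * h^{(m)}_{n+1} \le \psi_\varepsilon * h^{(m)}$ on a fixed ball to justify the comparison between the two cutoff schemes; it then lets $\varepsilon\downarrow 0$ and quotes \cite[Proposition~3.7]{RV10} for the full estimate $\bbE[M^0(B(0,r))^{-q}]\le c\,r^{-\tilde\xi(q)}$, so the scaling exponent is inherited wholesale from that reference. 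To pass from bounded convex $\eta$ to $t\mapsto t^{-q}$, the paper uses the Laplace-transform identity $t^{-q}=\Gamma(q)^{-1}\int_0^\infty \lambda^{q-1}e^{-\lambda t}\,d\lambda$ rather than your $(t+\varepsilon)^{-q}$ truncation.

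By contrast, you split the argument into a self-similarity step (one Kahane application against $\Omega_r+X_n'$) that produces the exact exponent $\tilde\xi(q)$ from first principles, followed by a separate finiteness step at $r=1$ (a second Kahane comparison with the \cite{RV10} measure). This has the pedagogical merit of making the origin of $\tilde\xi(q)$ transparent, but it does not avoid the ``reconciling two regularisation schemes'' issue --- your second step still needs exactly the kind of careful continuous-field comparison the paper carries out (your parenthetical remark about ``matching the cutoff parameter'' is essentially a placeholder for the paper's $h^{(m)}_n\le\psi_\varepsilon * h^{(m)}_{n+1}$ argument). So the paper's proof is slightly shorter and more self-contained on the technical side, while yours is conceptually cleaner about where the scaling exponent comes from; both rest on the same two pillars, Kahane's inequality and the negative-moment input from \cite{RV10}.
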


\begin{proof}
Since the left-hand side of \eqref{eq:neg_mom} is independent of $x\in\bbR^2$
by the translation invariance of the laws of $M$ and $M_n$, $n\geq 1$,
it suffices to show \eqref{eq:neg_mom} for $x=0$.

The proof is based on a comparison with the moment estimates established in
\cite{RV10}, where the random Radon measure $M^0=M^0_{\gamma}$ on $\bbR^2$
associated with the covariance function $\gamma^2 g^{(m)}$ has been constructed
as follows. Note that $g^{(m)}$ can be written as $g^{(m)}(x,y)=h^{(m)}(x-y)$
with $h^{(m)}:=g^{(m)}(\cdot,0)$, which is easily seen from \eqref{eq:mGreenFunc}
to be of the form $h^{(m)}(x)= \log^+(|x|^{-1})+\Psi^{(m)}(x)$ for some
bounded continuous function $\Psi^{(m)}:\mathbb{R}^2 \to \bbR$. Define
$\psi:\bbR^2\to[0,\infty)$ by $\psi(x):=u*u(x)=\int_{\bbR^2}u(y)u(x-y)\,dy$
with $u(x):=\frac{3}{\pi}(1-|x|)^+$, so that $\psi$ is Lipschitz continuous,
$\psi|_{B(0,2)^{c}}=0$, $\int_{\bbR^2}\psi(x)\,dx=1$ and it is
positive definite, i.e.\ such that $\bigl(\psi(x-y)\bigr)_{x,y\in \Xi}$ is a
non-negative definite real symmetric matrix for any finite $\Xi\subset\bbR^2$.
Now for each $\varepsilon>0$, let $X^0_\varepsilon$ be a continuous Gaussian
field on $\bbR^2$ with mean $0$ and covariance
\begin{align*}
 \bbE\bigl[X_\varepsilon^0(x) X_\varepsilon^0(y) \bigr]
	= \psi_\varepsilon * h^{(m)}(x-y)
%	=\int_{\bbR^2}\psi_{\varepsilon}(z)h^{(m)}(x-y-z)\,dz
\end{align*}
for
$\psi_{\varepsilon}:=\varepsilon^{-2} \psi\bigl(\varepsilon^{-1}(\cdot)\bigr)$,
where such $X^0_\varepsilon$ can be constructed in exactly the same way as that described after
\eqref{eq:def_kn} since $\psi_\varepsilon * h^{(m)}$ is easily shown to be
positive definite and Lipschitz continuous. Then \cite[Theorem~2.1]{RV10}
(see also \cite[Theorem~3.2]{RV13a}) states that, as $\varepsilon\downarrow 0$,
the associated random Radon measure $M^0_{\varepsilon}=M^0_{\gamma,\varepsilon}$
on $\bbR^2$ defined by
\begin{align*}
 M^0_{\varepsilon}(dx)
	:=\exp\Bigl(\gamma X^0_{\varepsilon}(x)-\tfrac{\gamma^2}{2}
	\bbE\bigl[X^0_\varepsilon(x)^2\bigr]\Bigr) \, dx
\end{align*}
converges to some $M^0=M^0_{\gamma}$ in law in the space $\mathcal{M}(\bbR^2)$
of Radon measures on $\bbR^2$ equipped with the topology of vague convergence,
and $M^0$ satisfies the moment estimates as in \eqref{eq:neg_mom} by
\cite[Proposition~3.7]{RV10}.

Returning to \eqref{eq:def_kn}, for each $n\geq 1$ define
$h^{(m)}_{n}:=\sum_{k=1}^{n}g^{(m)}_{k}(\cdot,0)$, which is the covariance
kernel of $X_n=\sum_{k=1}^n Y_k$, and let $R\geq 1$ and $n\in\bbN$. Then
$h^{(m)}_{n+1}-h^{(m)}_{n}$ is $(0,\infty)$-valued and continuous,
$\lim_{\varepsilon\downarrow 0}\psi_{\varepsilon}*h^{(m)}_{n+1}=h^{(m)}_{n+1}$
uniformly on $\bbR^2$ by the uniform continuity of $h^{(m)}_{n+1}$ on $\bbR^2$,
and $h^{(m)}_{n+1}(x)<h^{(m)}(x)$ for any $x\in\bbR^2$,
so that there exists $\varepsilon_0>0$ such that
for all $\varepsilon\in(0,\varepsilon_0]$,
\begin{align} \label{eq:comparison_covariance}
 h^{(m)}_{n}(x)\leq \psi_\varepsilon * h^{(m)}_{n+1}(x)
	\leq \psi_\varepsilon * h^{(m)}(x), \qquad \forall x\in B(0,2R).
\end{align}
Let $f:\bbR^2\to[0,\infty)$ be continuous and satisfy $f|_{B(0,R)^{c}}=0$
and let $\eta:[0,\infty)\to \bbR$ be bounded, continuous and convex.
Also let $\varepsilon\in(0,\varepsilon_0]$. Then by
\eqref{eq:comparison_covariance}, we can apply Kahane's convexity inequality
(see \cite[Theorem~2.1]{RV13a} or \cite{Ka85}) to get
\begin{align*}
 \bbE\biggl[ \eta\biggl(\sum_{x\in k^{-1}\bbZ^2} \frac{f(x)}{k^{2}} e^{\gamma X_n(x)-\tfrac{\gamma^2}{2} \bbE[X_n(x)^2]}\biggr) \biggr]
	\leq \bbE\biggl[ \eta\biggl(\sum_{x\in k^{-1}\bbZ^2} \frac{f(x)}{k^{2}} e^{\gamma X^0_\varepsilon(x)-\tfrac{\gamma^2}{2} \bbE[X^0_\varepsilon(x)^2]}\biggr) \biggr]
\end{align*}
for all $k\in \bbN$, and by using dominated convergence to let $k\to\infty$, we obtain
\begin{align*}
 \bbE\Bigl[ \eta\Bigl( \int_{\bbR^2} f(x)e^{\gamma X_n(x)-\tfrac{\gamma^2}{2} \bbE[X_n(x)^2]} \, dx \Bigr) \Bigr]
	\leq \bbE\Bigl[ \eta\Bigl( \int_{\bbR^2} f(x)e^{\gamma X^0_\varepsilon(x)-\tfrac{\gamma^2}{2} \bbE[X^0_\varepsilon(x)^2]} \, dx \Bigr) \Bigr],
\end{align*}
which means that
$\bbE\bigl[ \eta\bigl( \Phi_{f}(M_n)\bigr) \bigr]
	\leq \bbE\bigl[ \eta\bigl( \Phi_{f}(M^0_\varepsilon)\bigr) \bigr]$
for the continuous function $\Phi_{f}:\mathcal{M}(\bbR^2)\to[0,\infty)$ given by
$\Phi_{f}(\mu):=\int_{\bbR^2}f\,d\mu$. Now since $M^0_\varepsilon$ converges
in law to $M^0$ as $\varepsilon\downarrow 0$ and
$\eta\circ\Phi_{f}:\mathcal{M}(\bbR^2)\to\bbR$ is bounded and continuous,
letting $\varepsilon\downarrow 0$ yields
\begin{align} \label{eq:E_eta_Phi_Mn}
 \bbE\bigl[ \eta\bigl( \Phi_{f}(M_n)\bigr) \bigr]
	\leq \lim_{\varepsilon\downarrow 0}\bbE\bigl[ \eta\bigl( \Phi_{f}(M^0_\varepsilon)\bigr) \bigr]
	= \bbE\bigl[ \eta\bigl( \Phi_{f}(M^0)\bigr) \bigr],
	\qquad \forall n\in\bbN,
\end{align}
whose limit as $n\to\infty$ results in
\begin{align} \label{eq:E_eta_Phi_M}
 \bbE\bigl[ \eta\bigl( \Phi_{f}(M)\bigr) \bigr]
	\leq \bbE\bigl[ \eta\bigl( \Phi_{f}(M^0)\bigr) \bigr]
\end{align}
by dominated convergence together with the fact that
$\lim_{n\to\infty}M_n = M$ in $\mathcal{M}(\bbR^2)$ $\bbP$-a.s.
Finally, letting $\eta(t)=\frac{1}{\Gamma(q)}\lambda^{q-1} e^{-\lambda t}$
with $\lambda>0$ and taking the $d\lambda$-integrals on $(0,\infty)$
in \eqref{eq:E_eta_Phi_Mn} and \eqref{eq:E_eta_Phi_M}, by
$\frac{1}{\Gamma(q)} \int_0^\infty \lambda^{q-1} e^{-\lambda t} \, d\lambda=t^{-q}$
we conclude that
\begin{align} \label{eq:E_Phi_M_minus_q}
 \bbE\bigl[ \Phi_{f}(M)^{-q}\bigr]\vee \sup_{n\geq 1} \bbE\bigl[ \Phi_{f}(M_n)^{-q}\bigr]
	\leq \bbE\bigl[ \Phi_{f}(M^0)^{-q}\bigr],
\end{align}
and \eqref{eq:neg_mom} for $x=0$ follows from \eqref{eq:E_Phi_M_minus_q}
with $f(y)=(2-2|y|/r)^{+}\wedge 1$ and the corresponding bound for
$\bbE\bigl[ M^0\bigl(B(0,r/2)\bigr)^{-q}\bigr]$ implied by \cite[Proposition~3.7]{RV10}. 
\end{proof}

\subsection*{Acknowledgements}
This paper was written while the second author was visiting the University of Bonn
in the summer term 2014. He thanks Kobe University for its financial and
administrative supports for his visit. He also would like to express his deepest
gratitude toward the stochastics research groups of the University of Bonn for
their heartfelt hospitality.
The topic of Liouville Brownian motion was suggested to the authors
by Karl-Theodor Sturm, for which they would like to thank him.

\bibliographystyle{plain}
\bibliography{literature}

\end{document}